\numberwithin{equation}{section}
\numberwithin{figure}{section}
\theoremstyle{plain}
\newtheorem{thm}{\protect\theoremname}[section]
\theoremstyle{definition}
\newtheorem{example}[thm]{\protect\examplename}
\theoremstyle{plain}
\newtheorem{lem}[thm]{\protect\lemmaname}
\theoremstyle{plain}
\newtheorem{prop}[thm]{\protect\propositionname}
\theoremstyle{definition}
\newtheorem{defn}[thm]{\protect\definitionname}
\theoremstyle{remark}
\newtheorem{rem}[thm]{\protect\remarkname}
\theoremstyle{plain}
\newtheorem{cor}[thm]{\protect\corollaryname}
\theoremstyle{plain}
\tikzset{curve/.style={settings={#1},to path={(\tikztostart)
    .. controls ($(\tikztostart)!\pv{pos}!(\tikztotarget)!\pv{height}!270:(\tikztotarget)$)
    and ($(\tikztostart)!1-\pv{pos}!(\tikztotarget)!\pv{height}!270:(\tikztotarget)$)
    .. (\tikztotarget)\tikztonodes}},
    settings/.code={\tikzset{quiver/.cd,#1}
        \def\pv##1{\pgfkeysvalueof{/tikz/quiver/##1}}},
    quiver/.cd,pos/.initial=0.35,height/.initial=0}
\tikzset{tail reversed/.code={\pgfsetarrowsstart{tikzcd to}}}
\tikzset{2tail/.code={\pgfsetarrowsstart{Implies[reversed]}}}
\tikzset{2tail reversed/.code={\pgfsetarrowsstart{Implies}}}
\tikzset{no body/.style={/tikz/dash pattern=on 0 off 1mm}}
\let\myTOC\tableofcontents
\renewcommand{\tableofcontents}{%
  \frontmatter
  \pdfbookmark[1]{\contentsname}{}
  \myTOC
  \mainmatter }
\providecommand{\corollaryname}{Corollary}
\providecommand{\definitionname}{Definition}
\providecommand{\examplename}{Example}
\providecommand{\lemmaname}{Lemma}
\providecommand{\propositionname}{Proposition}
\providecommand{\remarkname}{Remark}
\providecommand{\theoremname}{Theorem}
\providecommand{\corollaryname}{Corollary}
\providecommand{\definitionname}{Definition}
\providecommand{\examplename}{Example}
\providecommand{\factname}{Fact}
\providecommand{\lemmaname}{Lemma}
\providecommand{\propositionname}{Proposition}
\providecommand{\remarkname}{Remark}
\providecommand{\theoremname}{Theorem}
\begin{document}
\newcommandx\suc[5][usedefault, addprefix=\global, 1=N, 2=M, 3=K, 4=, 5=]{#1\overset{#4}{\hookrightarrow}#2\overset{#5}{\twoheadrightarrow}#3}%

\newcommandx\p[2][usedefault, addprefix=\global, 1=\mathcal{A}, 2=\mathcal{B}]{\left(#1, #2\right)}%

\global\long\def\Ext{\mathrm{Ext}}%

\global\long\def\Hom{\mathrm{Hom}}%

\global\long\def\End{\mathrm{End}}%

\global\long\def\A{\mathcal{A}}%
\global\long\def\B{\mathcal{B}}%
\global\long\def\I{\mathcal{I}}%
\global\long\def\C{\mathcal{C}}%
\global\long\def\F{\mathcal{F}}%
\global\long\def\G{\mathcal{G}}%
\global\long\def\Q{\mathcal{P}}%
\global\long\def\T{\mathcal{T}}%
\global\long\def\X{\mathcal{X}}%
\global\long\def\Y{\mathcal{Y}}%
\global\long\def\Z{\mathcal{Z}}%
\global\long\def\W{\mathcal{W}}%

\global\long\def\pd{\mathrm{pd}}%
 
\global\long\def\Gpd{\mathrm{Gpd}}%
 
\global\long\def\WGpd{\mathrm{WGpd}}%
 
\global\long\def\WGid{\mathrm{WGid}}%
 
\global\long\def\Dpd{\mathrm{Dpd}}%
 
\global\long\def\Proj{\mathrm{Proj}}%
 
\global\long\def\proj{\mathrm{proj}}%
 
\global\long\def\Inj{\mathrm{Inj}}%
 
\global\long\def\inj{\mathrm{inj}}%
 
\global\long\def\id{\mathrm{id}}%
 
\global\long\def\Gid{\mathrm{Gid}}%
 
\global\long\def\resdim{\mathrm{resdim}}%
 
\global\long\def\Findim{\mathrm{Fin.dim}\, }%
 
\global\long\def\findim{\mathrm{fin.dim}\, }%
 
\global\long\def\coresdim{\mathrm{coresdim}}%
 
\global\long\def\add{\mathrm{add}}%
 
\global\long\def\smd{\mathrm{smd}}%
 
\global\long\def\Add{\mathrm{Add}}%
 
\global\long\def\Prod{\mathrm{Prod}}%
 
\global\long\def\Flat{\mathrm{Flat}}%
 
\global\long\def\FP{\mathrm{FP}}%
 
\global\long\def\FPInj{\mathrm{FPInj}}%
 
\global\long\def\modu{\mathrm{mod}}%
 
\global\long\def\Mod{\mathrm{Mod}}%
 
\global\long\def\Ker{\mathrm{Ker}}%
 
\global\long\def\Ima{\mathrm{Im}}%
\global\long\def\im{\mathrm{Im}}%
 
\global\long\def\GP{\mathcal{GP}}%
 
\global\long\def\Gproj{\mathrm{Gproj}}%
 
\global\long\def\GI{\mathcal{GI}}%
 
\global\long\def\Ginj{\mathrm{Ginj}}%
 
\global\long\def\FGPD{\mathrm{FGPD}}%
 
\global\long\def\FGID{\mathrm{FGID}}%
 
\global\long\def\GPD{\mathrm{GPD}}%
 
\global\long\def\GID{\mathrm{GID}}%
 
\global\long\def\WGPD{\mathrm{WGPD}}%
 
\global\long\def\WGID{\mathrm{WGID}}%
 
\global\long\def\glPD{\mathrm{gl.PD}}%

\global\long\def\gldim{\mathrm{gl.dim}}%
 
\global\long\def\glID{\mathrm{gl.ID}}%
 
\global\long\def\glGPD{\mathrm{gl.GPD}}%
 
\global\long\def\glGpd{\mathrm{gl.Gpd}}%
 
\global\long\def\glGID{\mathrm{gl.GID}}%
 
\global\long\def\glGid{\mathrm{gl.Gid}}%
 
\global\long\def\glDPD{\mathrm{gl.DPD}}%
 
\global\long\def\glDID{\mathrm{gl.DID}}%
 
\global\long\def\FPD{\mathrm{FPD}}%
 
\global\long\def\fpd{\mathrm{fpd}}%
 
\global\long\def\FID{\mathrm{FID}}%
 
\global\long\def\fid{\mathrm{fid}}%
 
\global\long\def\WFGPD{\mathrm{WFGPD}}%
 
\global\long\def\glWGID{\mathrm{gl.WGID}}%
 
\global\long\def\glWGPD{\mathrm{gl.WGPD}}%
 
\global\long\def\WFGID{\mathrm{WFGID}}%
 
\global\long\def\DP{\mathcal{DP}}%
 
\global\long\def\DI{\mathcal{DI}}%
 
\global\long\def\FDPD{\mathrm{FDPD}}%
 
\global\long\def\DPD{\mathrm{DPD}}%
 
\global\long\def\Coker{\mathrm{Coker}}%
\global\long\def\Cok{\mathrm{Coker}}%
\global\long\def\colim{\mathrm{colim}}%
\global\long\def\Rep{\operatorname{Rep}}%
\global\long\def\lmcn{\mathrm{lmcn}}%
\global\long\def\rmcn{\mathrm{rmcn}}%
\global\long\def\mcn{\mathrm{mcn}}%
\global\long\def\lccn{\mathrm{lccn}}%
\global\long\def\rccn{\mathrm{rccn}}%
\global\long\def\ltccn{\mathrm{ltccn}}%
\global\long\def\rtccn{\mathrm{rtccn}}%
\global\long\def\ccn{\mathrm{ccn}}%
\global\long\def\rscn{\mathrm{rscn}}%
\global\long\def\lscn{\mathrm{lscn}}%
\global\long\def\scn{\mathrm{scn}}%

\global\long\def\tccn{\mathrm{tccn}}%
\global\long\def\Supp{\mathrm{Supp}}%
\global\long\def\Fun{\mathrm{Fun}}%
\global\long\def\Sets{\mathrm{Sets}}%
\global\long\def\Ab{\mathrm{Ab}}%

\title{Categories of quiver representations and relative cotorsion pairs}
\author{Alejandro Argud\'in-Monroy}
\email{argudin@ciencias.unam.mx}
\address{Centro de Ciencias Matem\'aticas, Universidad Nacional Aut\'onoma
de M\'exico, Morelia, Michoac\'an, MEXICO. }
\thanks{The first named author was supported by a postdoctoral fellowship
from DGAPA-UNAM}
\author{Octavio Mendoza-Hern\'andez}
\email{omendoza@matem.unam.mx}
\address{Instituto de Matem\'aticas, Universidad Nacional Aut\'onoma de M\'exico,
Ciudad de M\'exico, MEXICO. }
\keywords{abelian category, quiver representations, cotorsion pairs}
\subjclass[2000]{16G20; 18A40; 18E10; 18G25 }
\begin{abstract}
We study the category $\Rep(Q,\C)$ of representations of a quiver
$Q$ with values in an abelian category $\C$. For this purpose we
introduce the mesh and the cone-shape cardinal
numbers associated to the quiver $Q$ and we use them
to impose conditions on $\C$ that allow us to prove interesting homological properties of $\Rep (Q,\C)$  that can be constructed from $\C.$ For example, we 
compute the global dimension of $\Rep(Q,\C)$ in terms of the global one of $\C.$ We also review a result of H. Holm and P. J{\o}rgensen which states that
(under certain conditions on $\C$)  every hereditary complete cotorsion pair $\p$ in $\C$ induces the
hereditary complete cotorsion pairs $(\operatorname{Rep}(Q,\mathcal{A}),\operatorname{Rep}(Q,\mathcal{A})^{\bot_{1}})$
and $(^{\bot_{1}}\Psi(\mathcal{B}),\Psi(\mathcal{B}))$ in $\Rep(Q,\C)$,
and then we  obtain a strengthened version of this and others related
results. Finally, we will apply the above developed theory to study the following full abelian subcategories of $\Rep(Q,\C),$ 
finite-support, finite-bottom-support and finite-top-support representations. We show that the above mentioned cotorsion pairs in $\Rep(Q,\C)$ can be restricted nicely on the aforementioned subcategories and under mild conditions we also get hereditary complete cotorsion pairs.
\end{abstract}

\maketitle

\section{Introduction }

Let $Q$ be a quiver, that is, a directed graph given by a set of
vertices $Q_{0}$ and a set of arrows $Q_{1}$. From $Q$ we can define
the so-called \emph{free category}, or \emph{category of paths}, generated
by $Q$ and denoted by $\C_{Q}$. Now, given an abelian category $\C$,
we may understand the category $\Rep(Q,\C)$ of $\C$-representations of the quiver
$Q$ as the category of functors from $\C_{Q}$ to $\C$. 

Categories of quiver representations have been studied for decades and
for different reasons. Among their applications, they  are relevant for the study of module
categories over algebras, graded module categories over graded algebras
(see \cite{gabriel1992representations} and \cite[Chap. IX]{mitchell}),
and more recently, they have been used in topological data analysis
 \cite{escolar2016persistence, bauer2020cotorsion}.

To motivate our work, we would like to highlight the work of H. Holm
and P. J{\o}rgensen in \cite{holm2019cotorsion}. In this work, inspired
by a series of remarkable results obtained in \cite{enochs2005projective,enochs2004flat,eshraghi2013total},
Holm and J{\o}rgensen proved a first version of the following theorem (see \cite[Thms. 7.4 and 7.9]{holm2019cotorsion}
and \cite[Thm. B, Prop. 7.2 (b)]{holm2019cotorsion}), which
would later be revisited by S. Odaba{\c{s}}{\i} in \cite[Thm. 4.6]{odabacsi2019completeness}.

\begin{thm}\label{thm:carcajes 1} \label{thm:carcajes 2} \cite{holm2019cotorsion,odabacsi2019completeness}
Let $\mathcal{C}$ be an AB4 and AB4{*} abelian category having enough
projectives and injectives, $\p[\A][\B]$ be a cotorsion pair in $\mathcal{C}$
and $Q$ be a quiver. Then, the following statements hold true. 
\begin{itemize}
\item[$\mathrm{(a)}$] $(\operatorname{Rep}(Q,\mathcal{A}),\operatorname{Rep}(Q,\mathcal{A})^{\bot_{1}})$
and $(^{\bot_{1}}\Psi(\mathcal{B}),\Psi(\mathcal{B}))$ are cotorsion
pairs in the abelian category $\operatorname{Rep}(Q,\mathcal{C})$
such that $^{\bot_{1}}\Psi(\mathcal{B})\subseteq\operatorname{Rep}(Q,\mathcal{A})$. 
\item[$\mathrm{(b)}$] If $\p[\A][\B]$ is complete and hereditary 
and $Q$ is right-rooted, then we have that  $(\operatorname{Rep}(Q,\mathcal{A}),\Psi(\mathcal{B}))$
is a complete hereditary cotorsion pair in $\operatorname{Rep}(Q,\mathcal{C})$
and $\Psi(\B)\subseteq\operatorname{Rep}(Q,\B).$ 
\end{itemize}
\end{thm}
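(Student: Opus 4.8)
The plan is to reduce everything to the evaluation functors $e_{v}\colon\Rep(Q,\C)\to\C$, $M\mapsto M(v)$, and their adjoints. First I would assemble the toolbox. Since $\C$ has arbitrary coproducts and products, each $e_{v}$ has a left adjoint $e_{v}^{\lambda}$ and a right adjoint $e_{v}^{\rho}$, with $e_{v}^{\lambda}(C)(w)=\bigoplus_{p\colon v\to w}C$ and $e_{v}^{\rho}(C)(w)=\prod_{p\colon w\to v}C$ (the (co)products over the paths of $Q$); the AB4 and AB4${}^{*}$ hypotheses make $e_{v}^{\lambda}$ and $e_{v}^{\rho}$ exact, and since they preserve projectives resp.\ injectives one obtains natural isomorphisms $\Ext^{n}_{\Rep(Q,\C)}(e_{v}^{\lambda}(C),M)\cong\Ext^{n}_{\C}(C,M(v))$ and $\Ext^{n}_{\Rep(Q,\C)}(M,e_{v}^{\rho}(C))\cong\Ext^{n}_{\C}(M(v),C)$ for all $n$. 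Writing $\psi_{v}^{M}\colon M(v)\to\prod_{a}M(t(a))$ for the canonical comparison morphism (the product over the arrows $a$ with source $v$), I would also record, for each $v$, the short exact sequence $0\to\bigoplus_{a}e_{t(a)}^{\lambda}(C)\to e_{v}^{\lambda}(C)\to s_{v}(C)\to0$ with $s_{v}(C)$ the stalk at $v$, and the elementary fact that a morphism $s_{v}(C)\to N$ is the same as a map $C\to N(v)$ whose composite with $\psi_{v}^{N}$ vanishes.

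For (a), recall that for any class $\mathcal{S}$ of objects the pair $({}^{\bot_{1}}(\mathcal{S}^{\bot_{1}}),\mathcal{S}^{\bot_{1}})$ is a cotorsion pair; it therefore suffices to realize $\Rep(Q,\A)$ as a left $\bot_{1}$-class and $\Psi(\B)$ as a right $\bot_{1}$-class. The first follows at once from $\A={}^{\bot_{1}}\B$ and the second Ext-isomorphism: $M\in\Rep(Q,\A)$ iff $\Ext^{1}_{\C}(M(v),B)=0$ for all $v$ and all $B\in\B$ iff $M\in{}^{\bot_{1}}\{e_{v}^{\rho}(B):v\in Q_{0},\,B\in\B\}$, so $(\Rep(Q,\A),\Rep(Q,\A)^{\bot_{1}})$ is a cotorsion pair. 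For $\Psi(\B)$ I would apply $\Hom_{\Rep(Q,\C)}(-,N)$ to the stalk sequence, translate with the Ext-isomorphisms, and use $\B=\A^{\bot_{1}}$, the commutation of $\Ext^{1}_{\C}(A,-)$ with products (AB4${}^{*}$), and the existence in $\C$ of enough projectives (which lie in $\A$) to see that $\Ext^{1}_{\Rep(Q,\C)}(s_{v}(A),N)=0$ for all $v$ and all $A\in\A$ if and only if $N\in\Psi(\B)$; that is, $\Psi(\B)=\{s_{v}(A):v\in Q_{0},\,A\in\A\}^{\bot_{1}}$, so $({}^{\bot_{1}}\Psi(\B),\Psi(\B))$ is a cotorsion pair. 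Finally, a direct inspection of the structure maps of $e_{v}^{\rho}(B)$ shows $e_{v}^{\rho}(B)\in\Psi(\B)$ for every $B\in\B$, hence ${}^{\bot_{1}}\Psi(\B)\subseteq{}^{\bot_{1}}\{e_{v}^{\rho}(B)\}=\Rep(Q,\A)$.

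For (b) I would first use right-rootedness: $Q$ then carries the transfinite filtration $\varnothing=W_{0}\subseteq W_{1}\subseteq\cdots$ with $W_{\alpha+1}=\{v : t(a)\in W_{\alpha}\text{ for every arrow }a\text{ with source }v\}$ and $\bigcup_{\alpha}W_{\alpha}=Q_{0}$; a transfinite induction along it, using that $\B$ is closed under products (AB4${}^{*}$) and under extensions, shows $N(v)\in\B$ for all $v$ whenever $N\in\Psi(\B)$, i.e.\ $\Psi(\B)\subseteq\Rep(Q,\B)$. To see that the two cotorsion pairs of (a) coincide it then suffices to prove $\Rep(Q,\A)\subseteq{}^{\bot_{1}}\Psi(\B)$. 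Given an exact sequence $0\to N\to E\to M\to0$ with $M\in\Rep(Q,\A)$ and $N\in\Psi(\B)$, I would construct a section $s=(s_{v})_{v}$ of $E\to M$ by transfinite recursion along the filtration, treating a vertex $v$ only after all the $t(a)$ with $a$ out of $v$: at the step at $v$ the liftings of $\id_{M(v)}$ along $E(v)\to M(v)$ form a nonempty $\Hom_{\C}(M(v),N(v))$-torsor (nonempty since $M(v)\in\A$ and $N(v)\in\B$), and the constraint imposed by the arrows out of $v$ amounts to lifting a prescribed morphism $M(v)\to\prod_{a}N(t(a))$ along the epimorphism $\psi_{v}^{N}$, whose kernel lies in $\B$; the obstruction is an element of $\Ext^{1}_{\C}(M(v),\Ker\psi_{v}^{N})=0$, so a compatible $s_{v}$ exists. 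Thus the sequence splits, $\Ext^{1}_{\Rep(Q,\C)}(\Rep(Q,\A),\Psi(\B))=0$, and with (a) this gives $\Rep(Q,\A)^{\bot_{1}}=\Psi(\B)$; that this pair is hereditary follows because heredity of $(\A,\B)$ makes $\A$ resolving in $\C$, hence $\Rep(Q,\A)$ resolving in $\Rep(Q,\C)$.

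I expect \emph{completeness} of $(\Rep(Q,\A),\Psi(\B))$ to be the main obstacle. The plan is yet another transfinite recursion along the right-rooted filtration: at the sinks one uses completeness of $(\A,\B)$ in $\C$ to form special $\B$-approximations of the $X(v)$, and then, moving backwards through the $W_{\alpha}$, enlarges the approximation at each vertex compatibly with the pieces already fixed at the targets of its outgoing arrows -- the compatibility obstructions again being killed by $\Ext^{1}_{\C}(\A,\B)=0$ through pushout/pullback arguments, using that products of objects of $\B$ remain in $\B$ and that $\A$ is closed under the relevant kernels and cokernels. (Alternatively, since $\Rep(Q,\C)$ has enough projectives and injectives, Salce's lemma reduces the problem to producing special $\Rep(Q,\A)$-precovers.) Either way, the delicate point -- and where right-rootedness, AB4${}^{*}$ and the hereditary hypothesis are all genuinely used -- is to keep all syzygies inside $\Rep(Q,\A)$ and all cosyzygies inside $\Psi(\B)$ throughout the construction, and to verify that the recursion terminates.
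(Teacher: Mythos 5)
Most of your argument is sound and, for the parts it covers, close to how this theorem is actually established. Your treatment of (a) is essentially the standard one: realizing $\Rep(Q,\A)$ as ${}^{\bot_{1}}g_{*}(\B)$ and $\Psi(\B)$ as $s_{*}(\A)^{\bot_{1}}$ via the exactness of the adjoints $f_{v},g_{v}$ of the evaluation functors (this is where AB4, AB4${}^{*}$ and enough projectives enter), together with the stalk sequence $\coprod_{a}f_{t(a)}\hookrightarrow f_{v}\twoheadrightarrow s_{v}$; this is the content of Theorem \ref{thm:cotrsion pairs rep dual}(a,b) and Propositions \ref{cisi-iso}, \ref{Bcogen-Agen} here. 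Your proof that $\Psi(\B)\subseteq\Rep(Q,\B)$ by induction along the $W_{\alpha}$ is the argument of Proposition \ref{FiRep}(b), and your vertex-by-vertex transfinite construction of a section to show $\Rep(Q,\A)\subseteq{}^{\bot_{1}}\Psi(\B)$ is a correct and pleasantly direct alternative to the route taken here, which filters $M\in\Rep(Q,\A)$ along the $W_{\alpha}$ with successive quotients equal to coproducts of stalks $s_{i}(M(i))$ and then invokes Eklof's lemma (proof of Theorem \ref{thm:cotrsion pairs rep}(c), dualized). The heredity claim also goes through, since $\Rep(Q,\C)$ has enough projectives and they lie in $\Rep(Q,\A)$ (Proposition \ref{prop:f y g vs clases ortogonales}, Lemma \ref{lem:garcia-rozas}).

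The genuine gap is the \emph{completeness} of $(\Rep(Q,\A),\Psi(\B))$ in part (b), which you only outline and explicitly leave open (``verify that the recursion terminates'', keeping syzygies in $\Rep(Q,\A)$ and cosyzygies in $\Psi(\B)$). This is not a routine verification: one must produce, for every $X\in\Rep(Q,\C)$, a special $\Psi(\B)$-preenvelope (equivalently, by Salce's argument, a special $\Rep(Q,\A)$-precover), and the vertex-wise special approximations supplied by completeness of $\p$ do not automatically assemble into a morphism of representations. At a vertex $v$ one has to correct the approximating object and its structure maps along all arrows leaving $v$ simultaneously while keeping $\Ker\psi_{v}$ of the approximation in $\B$ and the cokernel data in $\A$, and at limit ordinals one must show that the (co)limit of the partial approximations still lies in the correct classes; none of this is carried out, and it is precisely the delicate content of the completeness theorem. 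Note that the present paper does not reprove this step either: it is quoted as Theorem \ref{Thm-completeness} from \cite{di2021completeness} (the theorem itself being taken from \cite{holm2019cotorsion,odabacsi2019completeness}), and part (b) is then assembled from that result together with Theorem \ref{thm:cotrsion pairs rep dual} and Propositions \ref{hered-1}, \ref{hered-2}. So as written your proposal establishes (a), the inclusion $\Psi(\B)\subseteq\Rep(Q,\B)$, the identification ${}^{\bot_{1}}\Psi(\B)=\Rep(Q,\A)$ and heredity, but not completeness; you should either carry out the transfinite preenvelope construction in full detail or cite the completeness result as is done here.
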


Here $\Rep(Q,\mathcal{A})$ denotes the representations $F\in\Rep(Q,\C)$
such that $F_i\in\mathcal{A}$ for all $i\in Q_{0}$, and $\Psi(\mathcal{B})$
denotes the class of the representations $G\in\Rep(Q,\C)$ such that
the canonical morphism $\psi_{i}:G_i\rightarrow\prod_{i\rightarrow j\in Q_{1}}G_j$
is an epimorphism and $G(i),\Ker(\psi_{i})\in\mathcal{B}$ for all
$i\in Q_{0}$ (see Definition \ref{def:phi, psi}). We point out that,  under the hypotheses given in Theorem \ref{thm:carcajes 1}, it is shown in \cite{holm2019cotorsion, odabacsi2019completeness} the following:  every
cotorsion pair $\p$ in $\C$ induces two cotorsion pairs in $\Rep(Q,\C)$ which are
$(\operatorname{Rep}(Q,\mathcal{A}),\operatorname{Rep}(Q,\mathcal{A})^{\bot_{1}})$
and $(^{\bot_{1}}\Psi(\mathcal{B}),\Psi(\mathcal{B}));$ and there
are conditions for these two cotorsion pairs to coincide and to be complete
and hereditary. It is worth mentioning that the authors in \cite{holm2019cotorsion, odabacsi2019completeness}  have pointed out that the
hypotheses of Theorem \ref{thm:carcajes 1}, and other related results, could be weakened if we assume some conditions on the quiver $Q$ instead of conditions on the abelian category $\C $. One of the
first objectives of our work will be to give specific conditions that
allow us to make this change of hypotheses. This will lead us to define
certain invariants that measure the geometric complexity of the quiver
(see Definitions \ref{def:mesh cardinal} and \ref{def:cone shape cardinal}).
Aided by these measures, we will be able to give a strengthened version
of some results from \cite{holm2019cotorsion}. In
particular, Theorem \ref{hccp+ep} and its dual can be seen as our modified version
of Theorem \ref{thm:carcajes 1}.

A second objective of this paper is to study some full abelian subcategories
of $\Rep(Q,\C)$. Specifically, we  consider the categories of \emph{finite-support},
\emph{finite-bottom-support} and \emph{finite-top-support} 
representations, which are respectively denoted by $\Rep^{f}(Q,\C),$ $\Rep^{fb}(Q,\C)$ and $\Rep^{ft}(Q,\C).$  Notice that these categories can be understood
as a generalization of the finite-dimension\-al representations previously
studied in \cite{paquette2013auslander,paquette2016irreducible,bautista2013representation}.
Among our results, we will see that under certain conditions, we can
restrict the cotorsion pairs of Theorem \ref{thm:carcajes 1} on the
subcategories aforementioned, see Corollary \ref{fcsq-hcp} and Theorems \ref{ThmX} and \ref{ThmY}, and thus we get hereditary complete cotorsion pairs in such subcategories. We also describe, under very general conditions, the projectives and the injectives in all these categories (see Proposition \ref{prop:f y g vs clases ortogonales} and the aforementioned theorems). Furthermore, we describe the global dimensions of all mentioned categories of representations in terms of the global dimension of the abelian category $\C,$ see Theorem \ref{thm:dimensiones homologicas}, Propositions \ref{prop:Repft2} and \ref{prop:Repft2-dual} and Corollary \ref{basic-fcsq}. As an application of the above, we obtain the Hochschild-Mitchell's theorem (see Example \ref{E-HM}) and the Eilenberg-Rosenberg-Zelinsky-Michell's theorem (see Example \ref{E-ERZM}).

In what follows,  we describe the structure of the paper as well as
some of the main results. In order to do that, let us recall that, for each vertex $x\in Q_{0},$ we have the \emph{evaluation functor} $e_{x}:\Rep(Q,\C)\rightarrow\C,\;
(F\xrightarrow{\alpha}G)\mapsto (F_x\xrightarrow{\alpha_x} G_x)$. Throughout the paper, we will see that under certain conditions on $Q$ and $\C,$ the functor
$e_{x}$ admits a left adjoint  $f_{x}$ and a right adjoint $g_{x}$.

In section 2 we start by giving some necessary preliminaries
on category theory, abelian categories, quivers and quiver representations to develop the paper.
It is worth mentioning Definitions \ref{def:mesh cardinal} and \ref{def:cone shape cardinal}
where it is introduced the \emph{mesh} and the \emph{cone-shape} cardinal numbers of the quiver $Q.$ These numbers consist of certain cardinals that
can be associated to the shape (mesh and cone) of the quiver $Q$ to measure its complexity. In subsection
2.11 we will see how these cardinal numbers help us to give more precise
statements about the nature and existence of the functors $f_{x},g_{x}:\C\to\Rep(Q,\C).$

In section 3, we construct  the functor $f:\C^{op}_Q\to\Fun(\C,\Rep(Q,\C))$
induced by  the family $\{f_x:\C\to \Rep(Q,\C)\}_{x\in Q_0}.$ It is also shown that, for any 
$F\in\Rep(Q,\C),$ there exists an exact sequence 
$
\eta_{F}:\;\suc[\coprod_{\rho\in Q_{1}}f_{t(\rho)}e_{s(\rho)}(F)][\coprod_{i\in Q_{0}}f_{i}e_{i}(F)][F][{}]
$
which is known as the canonical presentation induced by the family of functors $\{f_x:\C\to \Rep(Q,\C)\}_{x\in Q_0}.$ It is worth mentioning that this result was previously proved for
finite and acyclic quivers in \cite{bauer2020cotorsion}. In contrast, our result will be valid
for any quiver in exchange for requiring certain conditions on the
category $\C$ which depend on the \emph{cone-shape cardinal} numbers
of $Q,$ see the details in Corollary \ref{cor:presentacion canonica}. The rest of this section is devoted to show some
applications of the existence of the canonical presentation. First,
we  give conditions for the functor $g_{x}:\C\to\Rep(Q,\C)$ (resp. $f_x$) to have a right (resp. left) adjoint,
a result also previously proved for finite acyclic quivers in \cite{bauer2020cotorsion}, see Propositions \ref{prop:exactitud g} and \ref{prop:exactitud f}.
Second, we show that under some conditions we can compute  the global dimension 
$\gldim\,\Rep(Q,\C)$ of $\Rep(Q,\C)$ 
in terms of the global dimension $\gldim(\C)$ of $\C,$ see Theorem \ref{thm:dimensiones homologicas}. We also show how to use the canonical presentation  to get a generator set of $\Rep(Q,\C)$ from a generator one of $\mathcal{C}.$

In section 4 we will do the necessary work  to fulfil the first of
our objectives. The idea is to use the \emph{mesh} 
and the \emph{cone-shape} cardinal numbers to give conditions on the
category $\C$ so that we can develop a theory that allows us to get
the central results of \cite{odabacsi2019completeness,holm2019cotorsion}
with more general hypotheses. Specifically, in Theorem \ref{thm:cotrsion pairs rep dual}
we give conditions under which, for a given cotorsion pair $\p$ in $\C$,
we can construct the cotorsion pairs $(\operatorname{Rep}(Q,\mathcal{A}),\operatorname{Rep}(Q,\mathcal{A})^{\bot_{1}})$
and $(^{\bot_{1}}\Psi(\mathcal{B}),\Psi(\mathcal{B}))$ in $\Rep(Q,\C).$  Moreover, in the dual of Theorem \ref{hccp+ep} it is given enough
 conditions for these cotorsion pairs to be hereditary and complete. We close this section with a special description, see Proposition \ref{Descrip-Phi-Psi}, of the classes $\Psi(\B)$ and $\Phi(\A)$ that will be very useful in \cite{AM22} to study the 
 tilting cotorsion pairs in $\Rep(Q,\C ),$ for a finite-cone-shape quiver $Q.$

Finally, in section 5, we  use the previous work to study the categories
$\Rep^{f}(Q,\C),$ $\Rep^{fb}(Q,\C)$ and $\Rep^{ft}(Q,\C).$ 
First, we show that each of these categories is a full abelian subcategory of
$\Rep(Q,\C)$. Our strategy will be to study the poset $\F\B(Q)$
of finite-bottom subquivers and the poset $\F\T(Q)$ of finite-top
subquivers together with their induced subcategories of representations.
It should be noted that this type of strategy has been used previously
in the study of finite-dimensional algebras \cite{assem1998strongly,green2017convex}.
In Propositions \ref{prop:Repft} and \ref{prop:Repft2-dual}, we show that most of the results (and their duals) proved for $\Rep(Q,\C)$ can be restricted on
$\Rep^{fb}(Q,\C)$ and $\Rep^{ft}(Q,\C)$. Our central results will
be Theorem \ref{ThmX}, Theorem \ref{ThmY} and Corollary \ref{fcsq-hcp},
where conditions are given for which the cotorsion pairs $(\operatorname{Rep}(Q,\mathcal{A}),\operatorname{Rep}(Q,\mathcal{A})^{\bot_{1}})$
and $(^{\bot_{1}}\Psi(\mathcal{B}),\Psi(\mathcal{B}))$ can be restricted nicely
on the categories $\Rep^{fb}(Q,\C)$, $\Rep^{ft}(Q,\C)$ and $\Rep^{f}(Q,\C)$. It is worth mentioning that ``these good restrictions preserving nice properties'' is a phenomenon
which is rarely observed for arbitrary full abelian subcategories of an abelian category. 

\section{Preliminaries }

\subsection{Categories of functors}

Let $\mathcal{A}$ and $\mathcal{B}$ be categories. It is well-known
that, in case $\mathcal{A}$ is small, the functors from $\mathcal{A}$
to $\mathcal{B}$ and the natural transformations between them constitute
a category usually denoted by $\Fun(\mathcal{A},\mathcal{B}).$ We also set 
$\End(\A):=\Fun(\A,\A).$
Moreover, if $\mathcal{B}$ is an abelian category, then $\Fun(\mathcal{A},\mathcal{B})$
is also an abelian category where limits, colimits, kernels and cokernels
are computed point-wise \cite[Prop. 1.4.4]{borceux1994handbook}.
In case $\mathcal{A}$ is not small and $\mathcal{B}$ is an abelian
category, we will take the liberty to continue thinking of $\Fun(\mathcal{A},\mathcal{B})$
as an abelian category where limits, colimits, kernels and cokernels
are computed point-wise. 

\subsection{Godement product }

Recall that, for functors $F_{1}:\mathcal{X}\rightarrow\mathcal{Y}$,
$F_{2}:\mathcal{X}\rightarrow\mathcal{Y}$, $G_{1}:\mathcal{Y}\rightarrow\mathcal{Z}$ and
$G_{2}:\mathcal{Y}\rightarrow\mathcal{Z}$, and natural transformations
$a:F_{1}\rightarrow F_{2}$ and $b:G_{1}\rightarrow G_{2}$, the \textbf{Godement
product} $b\cdot a:G_{1}F_{1}\rightarrow G_{2}F_{2}$ is the natural transformation 
defined by 
$
\left(b\cdot a\right)_{X}:=b_{F_{2}X}\circ G_{1}(a_{X})=G_{2}(a_{X})\circ b_{F_{1}X}\quad\text{\ensuremath{\forall X\in\mathcal{X}}}.
$
Moreover, we set $b\cdot F_{1}:=b\cdot1_{F_{1}}$ and $G_{1}\cdot a:=1_{G_{1}}\cdot a.$ Notice that, for any functor $F:\mathcal{X}\rightarrow\mathcal{Y},$ the Godement product induces the functor
 $$F^{*}:\Fun(\mathcal{Y},\mathcal{Z})\rightarrow\Fun(\X,\Z),\;(G_1\xrightarrow{b} G_2)\mapsto (G_1F\xrightarrow{b\cdot F} G_2F),$$
which preserves monomorphisms and epimorphisms. In particular, $F^*$ is an exact functor if $\mathcal{Z}$ is abelian.

\subsection{Abelian categories and cardinal numbers}

Throughout the paper,  $\C$ denotes an abelian category. It is also used the Grothendieck's notation $AB3,$ $AB4$ and $AB5$. We will also consider the dual conditions $AB3^*,$  $AB4^*$ and $AB5^*.$ 
\

For an infinite cardinal $\kappa,$ we recall from \cite[Def.  A.3.1]{neeman2001triangulated}
the following conditions over the abelian category $\C.$ It is said
that $\mathcal{C}$ is $AB3(\kappa)$ if it admits coproducts
of sets of cardinality $<\kappa$; and $\mathcal{C}$ is $AB4(\kappa)$
if it is $AB3(\kappa)$ and the coproduct functor over all set-indexed of cardinality $<\kappa$ is exact. As usual, the dual conditions are denoted, respectively,
by $AB3^{*}(\kappa)$ and $AB4^{*}(\kappa).$ Notice that every abelian
category is $AB4(\aleph_{0})$ and $AB4^{*}(\aleph_{0}).$ Furthermore,
for any set $I,$ we denote by $|I|$ its cardinal number.

We recall \cite[Lem. 3.4]{jech2003set} that
for any set $X$ of cardinal numbers $\sup(X)$ is also a cardinal
number. In particular, for each cardinal number $\alpha,$ there exists
the least cardinal number greater than $\alpha,$ that is the cardinal
successor of $\alpha$ and it is usually denoted by $\alpha^{+}.$ Thus, for any $n\in\mathbb{N},$ the cardinal number $\aleph_n$ is 
defined recursively as follows $\aleph_0:=|\mathbb{N}|$ and $\aleph_{n+1}:=\aleph^{+}_n.$ Recall that a cardinal $\lambda$ 
is \textbf{regular} if it is an
infinite cardinal which is not a sum of less than $\lambda$ cardinals,
all smaller than $\lambda$. In what follows, we give a list of examples
illustrating the above notions.

\begin{example} Let $\kappa$ be a regular cardinal. 
\

(a)  Let $k$ be a field and $\mathcal{C}$ be the category of $k$-vector
spaces of dimension $<\kappa$. It can be shown that $\mathcal{C}$
is an AB4($\kappa$) abelian category.
\

(b) Let $R$ be a ring and $\modu_{\kappa}(R)$ be the category of left
$R$-modules with a generating set of cardinality $<\kappa$. In general
$\modu_{\kappa}(R)$ is not an abelian subcategory of $\Mod(R)$ since
it is not closed under submodules. However, if every left ideal of
$R$ is generated by a set of cardinality $<\kappa$, then $\modu_{\kappa}(R)$
is an abelian subcategory of $\Mod(R),$ see \cite[Lem. 6.31]{Approximations}.
Moreover, $\modu_{\kappa}(R)$ is AB4($\kappa$).
\

(c)  Let $\mathcal{G}$ be a category. We recall \cite[Def. 1.13]{adamek1994locally}
that, for a regular cardinal $\lambda$, an object $X\in\mathcal{G}$
is \textbf{$\lambda$-presentable} if $\Hom_{\mathcal{G}}(X,-)$ preserves
$\lambda$-directed colimits. Let $\mathcal{G_{\lambda}}$ be the
class of $\lambda$-presentable objects. It is a known fact that,
see \cite[Cor. 2.5.23]{krause2021homological}, for every Grothendieck
abelian category $\mathcal{G}$, there is a regular cardinal $\kappa$
such that $\mathcal{G}_{\lambda}$ is an abelian subcategory of $\mathcal{G}$
for all regular cardinals $\lambda\geq\kappa.$ Moreover, for $\lambda\geq\kappa$,
$\mathcal{G}_{\lambda}$ is AB4($\lambda$). Indeed, a coproduct indexed
by a set $I$ with $|I|<\lambda$ can be seen as a colimit indexed
by $F(I):=\{J\subseteq I\,|\:|J|<\aleph_{0}\}$. Therefore, since
$|F(I)|=|I|$ (see \cite[p.52]{jech2003set}), we can conclude that
$\mathcal{G}_{\lambda}$ is closed under coproducts of $<\lambda$
objects by \cite[Lem. 2.5.11]{krause2021homological} and hence,
$\mathcal{G}_{\lambda}$ is AB4($\lambda$) since $\mathcal{G}$ is
AB4.
\

(d)  Assume the generalized continuum hypothesis. In particular, we have
that $\aleph_{0}^{\aleph_{0}}=2^{\aleph_{0}}=\aleph_{1}$ \cite[p.55]{jech2003set}.
Let $\kappa:=\aleph_{2}$ and $\mathcal{C}:=\modu_{\kappa}(\mathbb{Z})\cap\mathcal{T}$,
where $\mathcal{T}$ is the category of torsion abelian groups. On
one hand, it is a known fact that $\mathcal{T}$ is an abelian subcategory
of $\Mod(\mathbb{Z})$ which is closed under coproducts. On the other hand,
it follows from (b) that $\modu_{\kappa}(\mathbb{Z})$ is an AB4($\kappa$)
abelian subcategory of $\Mod(\mathbb{Z}).$ Thus, $\mathcal{C}$ is
an abelian AB4($\kappa$) category. Moreover, we know that the torsion
radical $t:\Mod(\mathbb{Z})\rightarrow\mathcal{T}$ maps products
in $\Mod(\mathbb{Z})$ to products in $\mathcal{T}$ \cite[p.63, Ex. 4]{Popescu}.
Therefore, since every countable product of objects in $\modu_{\kappa}(\mathbb{Z})$
(which are sets of cardinality $<\kappa$) is of cardinality $<\kappa^{\aleph_{0}}=\aleph_{2}^{\aleph_{0}}=\aleph_{2},$
we have that $\mathcal{C}$ is AB3{*}($\aleph_{1}$) (the last equality
follows from the Hausdorff formula, see \cite[p.57]{jech2003set}).
However, note that it is not AB4{*}($\aleph_{1}$)  \cite[p.63, Ex. 4]{Popescu}. 
\end{example}

Let $\kappa$ be an infinite cardinal number and $\X\subseteq\C.$ We denote
by $\smd(\X)$ the class of all the direct summands in $\C$ of objects in
$\X,$ and by $\coprod_{<\kappa}\X$ the class of all the coproducts in $\C$
of families $\{X_{i}\}_{i\in I}$ of objects in $\X$ such that $|I|<\kappa;$
and $\Add_{<\kappa}(\X):=\smd(\coprod_{<\kappa}\X).$ Dually, we introduce
$\prod_{<\kappa}\X$ and $\Prod_{<\kappa}(\X):=\smd(\prod_{<\kappa}\X).$
Similarly, we can introduce the same notions for $\leq$ instead of
$<.$ Note that $\Add_{\leq\kappa}(\X)=\Add_{<\kappa^{+}}(\X)$ and
the same is true for the product. Moreover, we also have the class
$\add(\X):=\Add_{<\aleph_0}(\X).$

\subsection{The Yoneda's extension groups}

Let $\C$ be an abelian category. Given $A,B\in\mathcal{C}$, the
class of all equivalence classes of short exact sequences in $\C$
of the form $\eta:\suc[B][E][A][\,][\,]$ is denoted by $\Ext_{\C}^{1}(A,B),$
where $\hookrightarrow$ stands for a monomorphism in $\C$
and $\twoheadrightarrow$ represents an epimorphism in $\C$. Moreover
the respective equivalence class associated to $\eta$ is denoted
by $\overline{\eta}.$ Similarly, for $n>1$, the class of all
equivalence classes of exact sequences in $\C$ of the form $\eta:\suc[B][E_{n}\rightarrow\cdots\rightarrow E_{1}][A][\,][\,]$
is denoted by $\Ext_{\C}^{n}(A,B)$. It is a known fact, see more details in \cite[Chap. VII]{mitchell}, that  we get 
an additive bifunctor $\Ext_{\C}^{n}(?,-):\C^{op}\times\C\rightarrow\Ab.$

\subsection{Adjoint functors}

Let $S:\mathcal{D}\rightarrow\mathcal{C}$ and $T:\mathcal{C}\rightarrow\mathcal{D}$
be covariant functors. Recall that $T$ is \textbf{right adjoint}
for $S$ (or $S$ is \textbf{left adjoint} for $T$) if there is a
natural equivalence $h:\Hom_{\C}(S(?),-)\xrightarrow{\sim}\Hom_{\mathcal{D}}(?,T(-))$ of bifunctors. 
In this case, we say that $(S,T)$ is an \textbf{adjoint pair} between
$\mathcal{C}$ and $\mathcal{D}$. For all $C\in\C$ and $D\in\mathcal{D}$,
we will consider the morphisms
$\varphi_{D}:=h_{D,S(D)}(1_{S(D)}):D\rightarrow TS(D)$ and $\psi_{C}:=h_{T(C),C}^{-1}(1_{T(C)}):ST(C)\rightarrow C.$ 
It is a known fact that, for all $\alpha\in\Hom_{\C}(S(D),C)$ and
$\beta\in\Hom_{\mathcal{D}}(D,T(C))$, we have that $h_{D,C}(\alpha)=T(\alpha)\varphi_{D}$
and $h_{D,C}^{-1}(\beta)=\psi_{C}S(\beta)$ \cite[p.118]{mitchell}.
Moreover, it turns out that these morphisms define natural transformations
$\varphi:1_{\mathcal{D}}\rightarrow TS$ and $\psi:ST\rightarrow1_{\C}$
\cite[Chap. V, Prop. 1.1]{mitchell}, which are known, respectively, as the unit and the co-unit of the adjoint pair.

Let us recall the following well-known facts about the unit and the co-unit.

\begin{lem} \cite[Chap. V, Prop. 1.1]{mitchell}\label{lem: mitchell adjoint pairs}
For the adjoint pair $(S,T)$ between $\C$ and $\mathcal{D},$ the following statements hold true. 
\begin{enumerate}
\item For $\beta:B\rightarrow T(A)$ in $\mathcal{D}$, $\alpha:=h^{-1}(\beta):S(B)\rightarrow A$
is the unique morphism  such that $T(\alpha)\circ\varphi_{B}=\beta$. 
\item For $\gamma:S(B)\rightarrow A$ in $\mathcal{C}$, $\delta:=h(\gamma):B\rightarrow T(A)$
is the unique morphism  such that $\psi_{A}\circ S(\delta)=\gamma$. 
\end{enumerate}
\end{lem}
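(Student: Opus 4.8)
The statement to prove is Lemma~\ref{lem: mitchell adjoint pairs}, about the characterizing property of the unit and co-unit of an adjoint pair.

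\medskip

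The plan is to prove the two statements directly from the definition of the natural equivalence $h$ and the known formulas $h_{D,C}(\alpha)=T(\alpha)\varphi_{D}$ and $h^{-1}_{D,C}(\beta)=\psi_C S(\beta)$ recalled just before the lemma. These formulas already do most of the work; what remains is to package them with the bijectivity of $h$.

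\medskip

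For part (1): given $\beta:B\to T(A)$, set $\alpha:=h^{-1}_{B,A}(\beta):S(B)\to A$. Applying $h_{B,A}$ to $\alpha$ and using the formula $h_{B,A}(\alpha)=T(\alpha)\varphi_B$, we get $\beta=h_{B,A}(h^{-1}_{B,A}(\beta))=h_{B,A}(\alpha)=T(\alpha)\circ\varphi_B$, so $\alpha$ has the required property. For uniqueness, suppose $\alpha':S(B)\to A$ also satisfies $T(\alpha')\circ\varphi_B=\beta$. Then $h_{B,A}(\alpha')=T(\alpha')\varphi_B=\beta=h_{B,A}(\alpha)$, and since $h_{B,A}$ is a bijection, $\alpha'=\alpha$. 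Part (2) is entirely dual: given $\gamma:S(B)\to A$, put $\delta:=h_{B,A}(\gamma):B\to T(A)$; then $h^{-1}_{B,A}(\delta)=h^{-1}_{B,A}(h_{B,A}(\gamma))=\gamma$, and using $h^{-1}_{B,A}(\delta)=\psi_A S(\delta)$ we obtain $\psi_A\circ S(\delta)=\gamma$. Uniqueness follows again from injectivity of $h^{-1}_{B,A}$ (equivalently, surjectivity of $h_{B,A}$): if $\psi_A\circ S(\delta')=\gamma$ then $h^{-1}_{B,A}(\delta')=\gamma=h^{-1}_{B,A}(\delta)$, hence $\delta'=\delta$.

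\medskip

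There is essentially no obstacle here — the lemma is a standard bookkeeping fact and the cited reference \cite[Chap. V, Prop. 1.1]{mitchell} already contains it; the only point requiring a line of care is to make sure one invokes the naturality of $h$ precisely where the formulas $h(\alpha)=T(\alpha)\varphi_B$ and $h^{-1}(\beta)=\psi_A S(\beta)$ are used, since those identities are themselves consequences of naturality applied to the morphisms $\alpha$ and $\beta$. Since these identities were already recalled in the text immediately preceding the lemma, the cleanest exposition is simply to cite them and reduce both parts to the bijectivity of the components of $h$, as above.
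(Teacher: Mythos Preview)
Your proof is correct and is the standard argument. The paper itself does not give a proof of this lemma; it simply records it as a well-known fact with a citation to \cite[Chap.~V, Prop.~1.1]{mitchell}, so there is nothing to compare against beyond noting that your write-up is exactly the expected unpacking of that reference.
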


\begin{lem}\label{lem: coproducto pares adjuntos} Let $\kappa$ be an infinite cardinal,
$\mathcal{C}$ be $AB3(\kappa)$ and $\mathcal{D}$
be $AB3^{*}(\kappa).$  Then, for a set $\{(S_{i},T_{i})\}_{i\in I}$
of adjoint pairs between $\mathcal{C}$ and $\mathcal{D}$ with $|I|<\kappa$,
we have that $(\coprod_{i\in I}S_{i},\prod_{i\in I}T_{i})$ is an
adjoint pair between $\mathcal{C}$ and $\mathcal{D}.$
\end{lem}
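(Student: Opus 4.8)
The plan is to construct the required natural equivalence of bifunctors directly from the given family of adjunctions. Write $S := \coprod_{i\in I}S_i$ and $T := \prod_{i\in I}T_i$; both functors exist by the hypotheses ($\mathcal{C}$ is $AB3(\kappa)$ so the coproduct of $<\kappa$ objects exists, and dually for $\mathcal{D}$), and they are functors $\mathcal{D}\to\mathcal{C}$ and $\mathcal{C}\to\mathcal{D}$ respectively once one recalls that in a functor category colimits and limits are computed objectwise, or more simply that $\coprod$ and $\prod$ of a set of functors are defined objectwise. I would then fix $D\in\mathcal{D}$ and $C\in\mathcal{C}$ and exhibit the chain of natural isomorphisms
\[
\Hom_{\mathcal{C}}\Bigl(\coprod_{i\in I}S_i(D),\,C\Bigr)\;\cong\;\prod_{i\in I}\Hom_{\mathcal{C}}\bigl(S_i(D),C\bigr)\;\xrightarrow{\ \prod_i h^{(i)}_{D,C}\ }\;\prod_{i\in I}\Hom_{\mathcal{D}}\bigl(D,T_i(C)\bigr)\;\cong\;\Hom_{\mathcal{D}}\Bigl(D,\,\prod_{i\in I}T_i(C)\Bigr),
\]
where the first isomorphism is the universal property of the coproduct in $\mathcal{C}$, the middle one is the product of the adjunction isomorphisms $h^{(i)}$ of the pairs $(S_i,T_i)$, and the last is the universal property of the product in $\mathcal{D}$. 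Composing these gives the desired bijection $h_{D,C}$.

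The second and essentially only real task is to verify naturality of $h_{D,C}$ in both variables $D$ and $C$. This follows because each of the three displayed isomorphisms is natural: the universal-property isomorphisms are natural in all arguments by the usual Yoneda-type argument (naturality in $C$ is immediate; naturality in $D$ uses that $\coprod_i S_i$ is a functor, i.e. that the structure maps of the coproduct are compatible with the $S_i(\delta)$), and the product of the $h^{(i)}$ is natural because each $h^{(i)}$ is. One then assembles these by composing natural transformations; I would spell out that a morphism $\delta\colon D'\to D$ in $\mathcal{D}$ induces $S(\delta)=\coprod_i S_i(\delta)$ and that the first square commutes by the definition of the coproduct functor, and similarly on the product side, leaving the middle square to the naturality of each $h^{(i)}$.

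The main obstacle — really the only subtlety — is bookkeeping: making sure the cardinality bound $|I|<\kappa$ is invoked exactly where needed (to know that $\coprod_{i\in I}S_i(D)$ exists in $\mathcal{C}$ and $\prod_{i\in I}T_i(C)$ exists in $\mathcal{D}$, and nowhere else), and being careful that ``$\prod_{i\in I}T_i$'' denotes the objectwise product functor, so that $\Hom_{\mathcal{D}}(D,\prod_i T_i(C))\cong\prod_i\Hom_{\mathcal{D}}(D,T_i(C))$ is literally the universal property of that product. No exactness or $AB4$-type hypothesis is needed, only $AB3(\kappa)$ and $AB3^*(\kappa)$, so I would note explicitly that this lemma lives one level below the $AB4$ considerations elsewhere in the section. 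With naturality in hand, $h\colon\Hom_{\mathcal{C}}(S(?),-)\xrightarrow{\sim}\Hom_{\mathcal{D}}(?,T(-))$ is a natural equivalence of bifunctors, which is exactly the assertion that $(\coprod_{i\in I}S_i,\prod_{i\in I}T_i)$ is an adjoint pair.
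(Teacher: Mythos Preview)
Your argument is correct and is the standard proof: the chain of natural isomorphisms you write down is exactly the right one, and your remarks about where the cardinality hypotheses enter and why no $AB4$-type condition is needed are accurate. The paper itself states this lemma without proof, treating it as a well-known fact, so there is nothing further to compare.
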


\subsection{Adjoint functors and extension groups}

Let $\mathcal{C}$ and $\mathcal{D}$ be abelian categories.
If there is an equivalence $\Hom_{\C}(S(?),-)\xrightarrow{\sim}\Hom_{\mathcal{D}}(?,T(-)),$ it is quite natural to ask if the above equivalence induces a natural one $\Ext_{\C}^{n}(S(D),C)\xrightarrow{\sim}\Ext_{\mathcal{D}}^{n}(D,T(C))\text{.}$
For example, in the literature we can find the following results \cite[Prop. 3.10]{odabacsi2019completeness},
\cite[Lem. 5.1]{holm2019cotorsion}, \cite[Thm. 4.3 and 4.4]{argudin2019yoneda},
\cite[Thm. 4.2]{argudin2022exactness}. Inspired by them, in this
section we will seek to give some basic results in a general context.

Throughout this section, we will consider the functors $S:\mathcal{D}\rightarrow\mathcal{C}$
and $T:\mathcal{C}\rightarrow\mathcal{D},$ where $(S,T)$ is an adjoint
pair between the abelian categories $\mathcal{C}$ and $\mathcal{D}.$
In this case, it is known that $S$ is right exact and $T$ is left
exact \cite[Chap. V, Sect. 5]{maclane}. For an exact sequence
$\eta:\suc[B][E][A][\,][\,]$ and an exact functor $F$, we denote by
$F(\eta)$ the exact sequence $F(B)\hookrightarrow F(E_{1})\twoheadrightarrow F(A)$. 

\begin{lem}\label{lem:Ext vs adjoint 1} For the adjoint pair $(S,T)$ between $\C$ and $\mathcal{D},$ the following
statements hold true. 
\begin{enumerate}
\item Let $T$ be exact and $n\geq1.$ Then the map 
\[
\tau{}_{D,C}^{n}:\Ext_{\C}^{n}(S(D),C)\rightarrow\Ext_{\mathcal{D}}^{n}(D,T(C)),\quad\overline{\eta}\mapsto\overline{T(\eta)}\cdot\varphi_{D},
\]
induces the morphism $\tau^{n}:\Ext_{\C}^{n}(S(?),-)\rightarrow\Ext_{\mathcal{D}}^{n}(?,T(-))$
of functors. Moreover $\tau{}_{D,C}^{1}$ is injective, for all $D\in\mathcal{D}$
and $C\in\mathcal{C}$. 
\item Let $S$ be exact and $n\geq1.$ Then the map 
\[
\sigma{}_{D,C}^{n}:\Ext_{\mathcal{D}}^{n}(D,T(C))\rightarrow\Ext_{\C}^{n}(S(D),C),\quad\overline{\eta}\mapsto\psi_{D}\cdot\overline{S(\eta)},
\]
induces the morphism $\sigma^{n}:\Ext_{\mathcal{D}}^{n}(?,T(-))\rightarrow\Ext_{\C}^{n}(S(?),-)$
of functors. Moreover $\sigma{}_{D,C}^{1}$ is injective, for all
$D\in\mathcal{D}$ and $C\in\mathcal{C}$. 
\end{enumerate}
\end{lem}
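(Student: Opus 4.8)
### Proof strategy for Lemma \ref{lem:Ext vs adjoint 1}

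The two statements are formally dual (replace $(S,T)$ by $(T^{op}, S^{op})$ acting on the opposite categories, which interchanges ``$T$ exact'' with ``$S$ exact'', units with co-units, and reverses the direction of extensions), so I would prove part (1) in detail and then obtain part (2) by dualizing. So let $T$ be exact.

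\emph{Step 1: well-definedness of $\tau^n_{D,C}$.} Given a representative $\eta:\suc[C][E_n\to\cdots\to E_1][S(D)][\,][\,]$, apply the exact functor $T$ to get an exact sequence $T(\eta):\suc[T(C)][T(E_n)\to\cdots\to T(E_1)][TS(D)][\,][\,]$ in $\mathcal{D}$; this is an element of $\Ext^n_{\mathcal{D}}(TS(D),T(C))$. Then pull back along the unit $\varphi_D:D\to TS(D)$, i.e. form the Yoneda composite (Baer sum/pullback on the right-hand variable) $\overline{T(\eta)}\cdot\varphi_D\in\Ext^n_{\mathcal{D}}(D,T(C))$. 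I would check that equivalent representatives $\eta,\eta'$ yield equivalent $T(\eta),T(\eta')$ — immediate since $T$ sends a commutative ladder realizing the equivalence to another such ladder — and that the right-multiplication $(-)\cdot\varphi_D$ is well-defined on equivalence classes, which is standard for Yoneda Ext. This gives the map $\tau^n_{D,C}$.

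\emph{Step 2: naturality.} I must show that for $d:D'\to D$ in $\mathcal{D}$ and $c:C\to C'$ in $\mathcal{C}$, the square relating $\tau^n_{D,C}$ and $\tau^n_{D',C'}$ commutes, i.e. $\tau^n(T(c)\cdot\overline{\eta}\cdot S(d)) = T(c)\cdot\tau^n(\overline{\eta})\cdot d$. For the contravariant variable this reduces to $\overline{T(\eta)}\cdot T(S(d))\cdot\varphi_{D'} = \overline{T(\eta)}\cdot\varphi_D\cdot d$, which is exactly the naturality square of the unit $\varphi:1_{\mathcal{D}}\to TS$ applied to $d$, combined with associativity of Yoneda composition. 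For the covariant variable one uses that $T$ is a functor so $T(c)\cdot\overline{T(\eta)} = \overline{T(c\cdot\eta)}$ (here I would be careful that $T$ applied to the left-hand Yoneda action agrees with the left-hand Yoneda action of $T(c)$ — again a routine ladder-diagram check, using exactness of $T$ to keep everything exact), and that left and right Yoneda actions commute with each other. So naturality follows by assembling these standard compatibilities; no real obstacle here.

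\emph{Step 3: injectivity of $\tau^1_{D,C}$ — this is the main point.} Suppose $\eta:\suc[C][E][S(D)][\,][\,]$ has $\overline{T(\eta)}\cdot\varphi_D = 0$ in $\Ext^1_{\mathcal{D}}(D,T(C))$; I must show $\overline\eta = 0$, i.e. $\eta$ splits. The vanishing means the pullback of $T(\eta)$ along $\varphi_D$ splits, so there is a morphism $D\to T(E)$ lifting $\varphi_D$ through $T(p):T(E)\twoheadrightarrow TS(D)$ (where $p$ is the epimorphism of $\eta$); equivalently, there is $\beta:D\to T(E)$ with $T(p)\circ\beta = \varphi_D$. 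Now I invoke the adjunction: by Lemma \ref{lem: mitchell adjoint pairs}(1), $\beta$ corresponds to a unique $\gamma = h^{-1}(\beta):S(D)\to E$ with $T(\gamma)\circ\varphi_D = \beta$. Applying $T$ to the relation $p\circ\gamma$ and using that $T(p)\circ T(\gamma)\circ\varphi_D = T(p)\circ\beta = \varphi_D = T(1_{S(D)})\circ\varphi_D$, the uniqueness clause of Lemma \ref{lem: mitchell adjoint pairs}(1) (applied to the morphism $\varphi_D$ viewed as corresponding to some $S(D)\to S(D)$, namely $1_{S(D)}$) forces $p\circ\gamma = 1_{S(D)}$. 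Hence $\gamma$ is a section of $p$ and $\eta$ splits. This is the step where both the exactness of $T$ (so that $T(\eta)$ is genuinely an extension and the pullback description of vanishing applies) and the sharp uniqueness in the adjunction lemma are essential; I expect the delicate bookkeeping to be exactly this translation between "pullback along $\varphi_D$ splits'' and "$p$ admits a section'', so I would write it carefully.

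\emph{Step 4: part (2).} Apply part (1) to the adjoint pair $(T^{op},S^{op})$ between $\mathcal{C}^{op}$ and $\mathcal{D}^{op}$: now $S^{op}$ exact means $S$ exact, the unit of this dual pair is the co-unit $\psi$ of the original pair, $\Ext^n_{\mathcal{C}^{op}}(-,-) = \Ext^n_{\mathcal{C}}(-,-)$ with variables swapped, and the resulting natural transformation is precisely $\sigma^n_{D,C}:\overline\eta\mapsto\psi_D\cdot\overline{S(\eta)}$, with $\sigma^1_{D,C}$ injective. I would just spell out this dictionary and cite Step 3. The only thing to double-check is that the formula $\overline{\eta}\mapsto\psi_D\cdot\overline{S(\eta)}$ that comes out of the dualization matches the one in the statement, which it does after unwinding that left Yoneda action in $\mathcal{C}$ is right Yoneda action in $\mathcal{C}^{op}$.
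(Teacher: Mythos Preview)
Your proposal is correct and follows essentially the same approach as the paper. The only cosmetic difference is in Step~3: you obtain the section of $p$ by invoking the uniqueness clause of Lemma~\ref{lem: mitchell adjoint pairs}(1) (equivalently, injectivity of the adjunction bijection $h$), whereas the paper writes the section out explicitly as $\psi_E\circ S(\beta)$ and checks $g\circ(\psi_E\circ S\beta)=\psi_{SD}\circ S\varphi_D=1_{SD}$ using naturality of $\psi$ and the triangle identity --- but since $h^{-1}(\beta)=\psi_E\circ S(\beta)$, this is the same morphism and the same computation viewed from two angles.
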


\begin{minipage}[t]{0.65\columnwidth}%
\begin{proof}
Let $T$ be exact and $n\geq1.$ Firstly, note that, for any $\overline{\eta}\in\Ext_{\C}^{n}(X,Y),$
$g:Y\to Y'$ and $f:X'\to X,$ we have that $\overline{T(g\cdot\eta)}=T(g)\cdot\overline{T(\eta)}$
and $\overline{T(\eta\cdot f)}=\overline{T(\eta)}\cdot T(f)$ since
$T:\C\to\mathcal{D}$ is an exact functor. Thus, the naturality of
$\tau{}_{D,C}^{n}$ follows straightforward by using \cite[Chap. V, Prop. 1.1]{mitchell}
and \cite[Chap. VII, Lem.1.3]{mitchell}. \
 Let us show that $\tau{}_{D,C}^{1}$ is injective. Consider an exact
sequence $\eta:\;\suc[C][E][SD][f][g]$ such that $\tau{}_{D,C}^{1}(\overline{\eta})=0.$
Then, there is $\beta\in\Hom_{\mathcal{D}}(D,TE)$ such that $\left(Tg\right)\circ\beta=\varphi_{D}$.
Therefore $\overline{\eta}=0$ since
\[
g\circ(\psi_{E}\circ S\beta)=\psi_{SD}\circ S\varphi_{D}=h_{D,SD}^{-1}(\varphi_{D})=1_{SD}\text{.}
\]
\end{proof}
\end{minipage}\hfill{}%
\fbox{\begin{minipage}[t]{0.3\columnwidth}%
\[ \begin{tikzpicture}[-,>=to,shorten >=1pt,auto,node distance=1cm,main node/.style=,x=2cm,y=1.5cm]
 \node[main node] (1) at (0,0){$E$};  \node[main node] (2) at (1,0){$S(D)$};  \node[main node] (3) at (0,1){$ST(E)$};  \node[main node] (4) at (1,1){$STS(D)$};  \node[main node] (5) at (0,2){$S(E')$};  \node[main node] (6) at (1,2){$SD$};
\draw[->>, thin]   (5)  to  node  {$$}    (6); \draw[->>, thin]   (3)  to  [below] node  {$ST(g)$}    (4); \draw[->>, thin]   (1)  to  node  {$g$}    (2); \draw[->, thin]   (5)  to  node  {$$}    (3); \draw[->, thin]   (3)  to  node  {$\psi _E$}    (1); \draw[->, thin]   (6)  to  node  {$S(\varphi_{D})$}    (4); \draw[->, thin]   (4)  to  node  {$\psi _{S(D)}$}    (2); \draw[->, thin]   (6)  to [above left]  node  {$S(\beta)$}    (3);
   
    \end{tikzpicture} \]
\end{minipage}}
\begin{prop}\label{lem:Ext vs adjoint 3} For the adjoint pair $(S,T)$ between $\C$ and $\mathcal{D},$ the following
statements hold true.
\begin{enumerate}
\item Let $T:\C\to\mathcal{D}$ be exact. Then, the following statements
are equivalent. 
\begin{enumerate}
\item $\tau^{1}:\Ext_{\C}^{1}(S(?),-)\xrightarrow{\sim}\Ext_{\mathcal{D}}^{1}(?,T(-)).$ 
\item $S:\mathcal{D}\to\C$ is exact. 
\item $\tau^{n}:\Ext_{\C}^{n}(S(?),-)\xrightarrow{\sim}\Ext_{\mathcal{D}}^{n}(?,T(-))$
$\forall\,n\geq1.$ 
\end{enumerate}
\item Let $S:\mathcal{D}\to\C$ be exact. Then, the following statements
are equivalent. 
\begin{enumerate}
\item $\sigma^{1}:\Ext_{\mathcal{D}}^{1}(?,T(-))\xrightarrow{\sim}\Ext_{\C}^{1}(S(?),-).$ 
\item $T:\C\to\mathcal{D}$ is exact. 
\item $\sigma^{n}:\Ext_{\mathcal{D}}^{n}(?,T(-))\xrightarrow{\sim}\Ext_{\C}^{n}(S(?),-)$
$\forall\,n\geq1.$ 
\end{enumerate}
\item If $S$ and $T$ are exact functors, then $\left(\tau^{n}\right)^{-1}=\sigma^{n}$
$\forall\,n\geq1.$ 
\end{enumerate}
\end{prop}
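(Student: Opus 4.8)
The plan is to dispose first of part (3) and the formal implications, and then to concentrate on the one genuinely new step, the implication $(\mathrm{a})\Rightarrow(\mathrm{b})$ of part (1); part (2) will then follow from part (1) by passing to opposite categories. Assume to begin with that $S$ and $T$ are \emph{both} exact, so that $\tau^{n}$ and $\sigma^{n}$ are both defined for every $n\geq1$. I would check directly that $\sigma^{n}_{D,C}$ and $\tau^{n}_{D,C}$ are mutually inverse: for $\overline{\eta}\in\Ext^{n}_{\C}(S(D),C)$ one has $\sigma^{n}_{D,C}(\tau^{n}_{D,C}(\overline{\eta}))=\psi_{C}\cdot\overline{S(T(\eta)\cdot\varphi_{D})}=\psi_{C}\cdot\overline{ST(\eta)}\cdot S(\varphi_{D})$, where the second equality uses that the exact functor $S$ commutes with Yoneda (co)composition, exactly as in the proof of Lemma \ref{lem:Ext vs adjoint 1}; naturality of the counit $\psi\colon ST\to1_{\C}$ then gives $\psi_{C}\cdot\overline{ST(\eta)}=\overline{\eta}\cdot\psi_{S(D)}$, and the triangle identity $\psi_{S(D)}\circ S(\varphi_{D})=1_{S(D)}$ (already noted in that proof) yields $\sigma^{n}_{D,C}\circ\tau^{n}_{D,C}=1$. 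The other composite is handled symmetrically, with the roles of $\varphi$ and $\psi$ exchanged and with the triangle identity $T(\psi_{C})\circ\varphi_{T(C)}=1_{T(C)}$. This proves (3), and since $S$ and $T$ are both exact in the situations of (1b) and (2b), the same computation gives $(1\mathrm{b})\Rightarrow(1\mathrm{c})$ and $(2\mathrm{b})\Rightarrow(2\mathrm{c})$; the implications $(1\mathrm{c})\Rightarrow(1\mathrm{a})$ and $(2\mathrm{c})\Rightarrow(2\mathrm{a})$ are just the case $n=1$.

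To prove $(1\mathrm{a})\Rightarrow(1\mathrm{b})$ I would assume $T$ exact and $\tau^{1}$ a natural isomorphism, and show $S$ is exact. As a left adjoint $S$ is right exact, so it suffices to show $S$ preserves monomorphisms. Fix a monomorphism $\mu\colon D_{1}\to D_{2}$ in $\mathcal{D}$, complete it to a short exact sequence $\xi\colon\suc[D_{1}][D_{2}][D_{3}][\mu][\pi]$, and apply $S$; by right exactness $S(\pi)$ is an epimorphism whose kernel $L$ equals the image of $S(\mu)$, so $S(\mu)$ factors as $S(D_{1})\overset{q}{\twoheadrightarrow}L\overset{j}{\hookrightarrow}S(D_{2})$ with $q$ an epimorphism. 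Setting $K:=\Ker(S(\mu))=\Ker(q)$, the goal is $K=0$; note that we now also have short exact sequences $\alpha\colon\suc[L][S(D_{2})][S(D_{3})][j][S(\pi)]$ and $\beta\colon\suc[K][S(D_{1})][L][k][q]$ in $\C$.

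The idea is to compare, for a fixed $C\in\C$, three long exact sequences: the one obtained from $\xi$ by applying $\Ext^{\bullet}_{\mathcal{D}}(-,T(C))$, and the ones obtained from $\alpha$ and from $\beta$ by applying $\Ext^{\bullet}_{\C}(-,C)$. The adjunction isomorphisms $\Hom_{\mathcal{D}}(D_{i},T(C))\cong\Hom_{\C}(S(D_{i}),C)$ and the isomorphisms $\tau^{1}_{D_{i},C}$ identify the first sequence with an exact sequence $\cdots\to\Hom_{\C}(S(D_{2}),C)\xrightarrow{S(\mu)^{*}}\Hom_{\C}(S(D_{1}),C)\xrightarrow{\delta'}\Ext^{1}_{\C}(S(D_{3}),C)\xrightarrow{S(\pi)^{*}}\Ext^{1}_{\C}(S(D_{2}),C)$ (naturality of the adjunction and of $\tau^{1}$ makes all the maps the expected ones, and exactness is preserved because we identified along isomorphisms). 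A direct computation with the unit $\varphi$ — the same ``morphism of short exact sequences'' device used to prove Lemma \ref{lem:Ext vs adjoint 1}, now applied to the commuting ladder from $\xi$ to $T(\alpha)$ with vertical maps $T(q)\circ\varphi_{D_{1}}$, $\varphi_{D_{2}}$, $\varphi_{D_{3}}$ — shows that $\delta'\circ q^{*}=\delta_{\alpha}$, where $\delta_{\alpha}\colon\Hom_{\C}(L,C)\to\Ext^{1}_{\C}(S(D_{3}),C)$ is the connecting map coming from $\alpha$. Combining this with $\Ima(\delta')=\Ker(S(\pi)^{*})=\Ima(\delta_{\alpha})$ and $\Ker(\delta')=\Ima(S(\mu)^{*})=q^{*}(\Ima j^{*})$, a short diagram chase forces $q^{*}\colon\Hom_{\C}(L,C)\to\Hom_{\C}(S(D_{1}),C)$ to be surjective; by the long exact sequence of $\beta$ this says that $k^{*}\colon\Hom_{\C}(S(D_{1}),C)\to\Hom_{\C}(K,C)$ is zero for every $C$, and taking $C=S(D_{1})$ and evaluating at $1_{S(D_{1})}$ gives $k=0$. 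Since $k$ is a monomorphism, $K=0$, hence $S(\mu)$ is monic and $S$ is exact; this completes part (1).

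Finally, part (2) should be obtained by applying part (1) to the adjoint pair $(T^{\mathrm{op}},S^{\mathrm{op}})$ between $\mathcal{D}^{\mathrm{op}}$ and $\C^{\mathrm{op}}$: the unit and the counit swap, so the ``$\tau$'' of $(T^{\mathrm{op}},S^{\mathrm{op}})$ is precisely the $\sigma$ of $(S,T)$ under $\Ext^{n}_{\C^{\mathrm{op}}}(X,Y)=\Ext^{n}_{\C}(Y,X)$, while ``the right adjoint is exact'' becomes ``$S$ exact'' (the standing hypothesis of (2)) and ``the left adjoint is exact'' becomes ``$T$ exact''; part (1) then yields exactly the three equivalences of (2). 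I expect the main obstacle to be precisely $(1\mathrm{a})\Rightarrow(1\mathrm{b})$: everything else is either formal (the case $n=1$, and the duality) or a routine manipulation of Yoneda extensions with the (co)unit, whereas extracting exactness of $S$ from the single isomorphism $\tau^{1}$ depends on the three-way comparison of long exact sequences above, and in particular on the identity $\delta'\circ q^{*}=\delta_{\alpha}$ that links the abstract connecting map attached to $\xi$ with the concrete one coming from the factorization of $S(\xi)$.
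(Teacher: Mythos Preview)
Your argument is correct. Part (3) and the implications $(\mathrm{b})\Rightarrow(\mathrm{c})$ are handled exactly as in the paper (which defers to \cite[Lem.~5.1]{holm2019cotorsion}; your triangle-identity computation is essentially that proof), and your duality reduction of (2) to (1) is fine.

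For the substantive step $(\mathrm{a1})\Rightarrow(\mathrm{a2})$, however, the paper takes a much shorter route than your three-way comparison of long exact sequences. Given a short exact sequence $\xi:\,X\overset{a}{\hookrightarrow}Y\overset{b}{\twoheadrightarrow}Z$ in $\mathcal{D}$, the paper uses surjectivity of $\tau^{1}_{Z,S(X)}$ \emph{once}: the pushout $\eta':=\varphi_{X}\cdot\overline{\xi}\in\Ext^{1}_{\mathcal{D}}(Z,TS(X))$ must lie in the image of $\tau^{1}$, so there is a genuine short exact sequence $\eta'':\,S(X)\overset{f'}{\hookrightarrow}Y''\twoheadrightarrow S(Z)$ in $\C$ with $\overline{T(\eta'')}\cdot\varphi_{Z}=\eta'$. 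This produces a commutative ladder $S(\xi)\to S(\eta')\to\eta''$ whose leftmost vertical composite is $\psi_{S(X)}\circ S(\varphi_{X})=1_{S(X)}$; commutativity of the left squares then gives that a certain composite with first factor $S(a)$ equals the monomorphism $f'$, so $S(a)$ is monic. This avoids entirely the factorization $S(\mu)=j\circ q$, the auxiliary sequences $\alpha,\beta$, the identification of connecting maps, and the diagram chase to get $q^{*}$ surjective. Your approach works and is robust (it isolates exactly what the long exact sequences force), but the paper's is shorter and more transparent: it converts ``$\tau^{1}$ is onto'' directly into ``every short exact sequence in $\mathcal{D}$ maps, after applying $S$, to a short exact sequence in $\C$ via the triangle identity''.
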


\begin{proof}
Note that (b) is the dual of (a), and (c) follows by the proof of
\cite[Lem. 5.1]{holm2019cotorsion} and Lemma \ref{lem:Ext vs adjoint 1}.
\
 Let us show (a). Since $T$ is exact, for each $n\geq1,$ we have
by Lemma \ref{lem:Ext vs adjoint 1} the morphism of functors $\tau^{n}:\Ext_{\C}^{n}(S(?),-)\rightarrow\Ext_{\mathcal{D}}^{n}(?,T(-))$.
Now, the implication $(a2)\Rightarrow(a3)$ follows by the proof of
\cite[Lem. 5.1]{holm2019cotorsion}; and $(a3)\Rightarrow(a1)$ is
trivial. Thus, it is enough to prove $(a1)\Rightarrow(a2)$. Indeed,
let $\eta:\:\suc[X][Y][Z][a][b]$ be an exact sequence in $\mathcal{D}$.
Since $S$ is right exact, we need only to show that $S(a)$ is a
monomorphism. Consider the exact sequence $\eta':\:\suc[TS(X)][Y'][Z][f][g]$
in $\mathcal{D}$ such that $\overline{\eta'}=\varphi_{X}\cdot\overline{\eta}.$
Since $\tau{}_{Z,SX}^{1}:\Ext_{\C}^{1}(S(Z),S(X))\xrightarrow{\sim}\Ext_{\mathcal{D}}^{1}(Z,TS(X)),$
there is an exact sequence $\eta'':\:\suc[S(X)][Y''][S(Z)][f'][g']$
in $\mathcal{C}$ such that $\overline{T(\eta'')}\cdot\varphi_{Z}=\overline{\eta'}.$
Thus, by using that $\overline{T(\eta'')}\cdot\varphi_{Z}=\overline{\eta'}=\varphi_{X}\cdot\overline{\eta},$
$\psi:ST\to1_{\C}$ and $\psi_{SZ}\circ S(\varphi_{Z})=1_{SZ},$ we
get the following exact and commutative diagram in $\C$\\
\begin{minipage}[t]{1\columnwidth}%
\[  \begin{tikzpicture}[-,>=to,shorten >=1pt,auto,node distance=2.5cm,main node/.style=,x=1.2cm,y=1.2cm]    

      \node[main node] (2) at (0,0)  {$STS(X)$};  
     \node[main node] (3) [right of=2]  {$S(Y') $};  
     \node[main node] (4) [right of=3]  {$S(Z)$};  

       \node[main node] (2') at (0,-1)   {$ S(X)$}; 
      \node[main node] (3') [right of=2']  {$ Y''$};        \node[main node] (4') [right of=3']  {$S(Z) $};   

     \node[main node] (2'') at (0,1)   {$S(X)$};      
\node[main node] (3'') [right of=2'']  {$S(Y)$};  
     \node[main node] (4'') [right of=3'']  {$S(Z) $};  
\node[main node] (A) at (-1,-1)  {$\eta '':$};       
\node[main node] (B) at (-1,1)  {$S(\eta ) :$};      
\node[main node] (B) at (-1,0)  {$S(\eta ') :$};

\draw[->, thin]   (2)  to node  {$\scriptstyle S( f)$}  (3); 
\draw[->>, thin]   (3)  to node  {$\scriptstyle S(  g)$}  (4);
\draw[right hook->, thin]   (2')  to node  {$\scriptstyle   f'$}  (3');

\draw[->>, thin]   (3')  to node  {$\scriptstyle   g'$}  (4');
\draw[->, thin]   (2'')  to node  {$\scriptstyle   S(a)$}  (3'');
\draw[->>, thin]   (3'')  to node  {$\scriptstyle  S(b)$}  (4''); 
\draw[->, thin]   (2)  to node  {$  \psi _{SX}$}  (2');
\draw[->, thin]   (3)  to node  {$  \psi '$}  (3');
\draw[-, double]   (4)  to node  {$$}  (4');
\draw[->, thin]   (2'')  to node  {$S(\varphi _{X})$}  (2); 
\draw[->, thin]   (3'')  to node  {$S(\varphi ')$}  (3);
\draw[-, double]   (4'')  to node  {$$}  (4);  
  \end{tikzpicture} 
\]
\end{minipage}\\
Now, since $\psi_{SX}\circ S(\varphi_{X})=1_{SX}$, we have that $\psi'\circ S(\varphi')\circ S(a)$
is a monomorphism. Therefore, $S(a)$ is a monomorphism. 
\end{proof}

\subsection{Cotorsion pairs}

Let $\mathcal{C}$ be an abelian category, $\mathcal{A},\mathcal{B}\subseteq\mathcal{C}$
and $\mathcal{X}\subseteq\mathcal{C}$. Following \cite[sect. 3]{AM21},
we recall that $\p$ is a \textbf{left} (resp. \textbf{right}) cotorsion pair in $\mathcal{X}$
 if $\mathcal{A}\cap\mathcal{X}={}^{\bot_{1}}\mathcal{B}\cap\mathcal{X}$
(resp. $\mathcal{B}\cap\mathcal{X}=\mathcal{A}^{\bot_{1}}\cap\mathcal{X}$
). Moreover $\p$ is a \textbf{cotorsion pair in $\mathcal{X}$} if
it is both left and right cotorsion pair in $\mathcal{X}$. Cotorsion
pairs are known for their relation with approximations. Namely, given
$\mathcal{Z}\subseteq\mathcal{C}$, we say that a morphism $f:Z\rightarrow M$
is called a \textbf{$\mathcal{Z}$-precover} if $Z\in\mathcal{Z}$
and $\Hom_{\mathcal{C}}(Z',f)$ is an epimorphism $\forall Z'\in\mathcal{Z}$.
In case $f$ fits in a short exact sequence $\suc[M'][Z][M][\,][\,]$,
where $M'\in\mathcal{Z}^{\bot_{1}}$, we say that $f$ is a \textbf{special
$\mathcal{Z}$-precover}. It is said \cite[Def. 3.12]{AM21}
that $\mathcal{Z}$ is \textbf{special precovering in} $\mathcal{X}$
if any $X\in\mathcal{X}$ admits an exact sequence 
$\suc[B][A][X],$ with $A\in\mathcal{Z}\cap\mathcal{X}$ and $B\in\mathcal{Z}^{\bot_{1}}\cap\mathcal{X}.$
The notions of \textbf{$\mathcal{Z}$-preenvelope}, \textbf{special
$\mathcal{Z}$-preenvelope} and \textbf{special preenveloping in
$\mathcal{X}$} are defined dually.

We recall that a cotorsion pair $\p$ is \textbf{left complete} if
$\mathcal{A}$ is special precovering in $\mathcal{C}$. As a generalization,
see \cite[Def. 3.13]{AM21}, we say that a (not necessarily
cotorsion) pair $\p\subseteq\mathcal{C}^{2}$ is \textbf{left $\mathcal{X}$-complete} if any $X\in\mathcal{X}$ admits an exact sequence $\suc[B][A][X]$,
with $A\in\mathcal{A}\cap\mathcal{X}$ and $B\in\mathcal{B}\cap\mathcal{X}$.
\textbf{Right $\mathcal{X}$-completeness} is defined dually, and
\textbf{$\mathcal{X}$-completeness} means right and left $\mathcal{X}$-completeness.
Finally, by \cite[Def. 3.7]{AM21}, we say that a pair is \textbf{$\mathcal{X}$-hereditary}
if $\Ext_{\mathcal{C}}^{i}(A,B)=0$ for all $i\geq1,$ $\forall\,A\in\mathcal{A}\cap\mathcal{X}$
and $\forall\,B\in\mathcal{B}\cap\mathcal{X}$.

\subsection{Quivers and subquivers}

In what follows, $Q$ is a quiver (an oriented graph) defined by a
set of vertices $Q_{0}$ and a set of arrows $Q_{1}$, together with
two functions: a source map $s:Q_{1}\rightarrow Q_{0}$ and a target
map $t:Q_{1}\rightarrow Q_{0}$. We will give no restrictions on this
quiver unless otherwise mentioned. In particular, $Q_{0}$ and $Q_{1}$
are allowed to be infinite sets. The category of paths of $Q$ is
the category $\mathcal{C}_{Q}$ whose objects are the vertices in
$Q_{0};$ the morphisms (paths $\gamma$ of length $n,$ for $n\geq0$) from
a vertex $i$ to a vertex $j$ are given by the finite sequences of
$n$-arrows $\gamma:=\alpha_{n}\cdots\alpha_{1},$ where $s(\alpha_{k+1})=t(\alpha_{k})$
$\forall k\in[1,n-1]$ such that $s(\gamma):=s(\alpha_{1})=i$ and $t(\gamma):=t(\alpha_{n})=j$; the identity morphism of each vertex $k\in Q_0$ (paths $\epsilon_k$ of length zero) and the
composition in $\mathcal{C}_{Q}$ is given by the concatenation of
these paths. Throughout the paper $Q(i,j):=\mathcal{C}_{Q}(i,j)$ denotes
the set of paths from $i$ to $j$. If $Q$ does not have oriented
cycles, we say that $Q$ is \textbf{acyclic}.

Following \cite{holm2019cotorsion}, for each $i\in Q_{0}$, we consider
$Q_{1}^{i\rightarrow*}:=\{\alpha\in Q_{1}\,|\:s(\alpha)=i\}$ (the right mesh at $i$) and
$Q_{1}^{*\rightarrow i}:=\{\alpha\in Q_{1}\,|\:t(\alpha)=i\}$ (the left mesh at $i$). We
say that: $Q$ is \textbf{locally finite }if $Q_{1}^{*\rightarrow i}$
and $Q_{1}^{i\rightarrow*}$ are finite sets for all $i\in Q_{0}$;
$Q$ is \textbf{interval-finite} if $Q(i,j)$ is finite for all $i,j\in Q_{0}$;
$Q$ is \textbf{strongly locally finite} if it is locally finite and
interval-finite \cite{bautista2013representation}. 

Following \cite{enochs2004flat}, we recall that a quiver $Q$ is
\textbf{right-rooted} if there exists no infinite sequence $\bullet\rightarrow\bullet\rightarrow\bullet\rightarrow\bullet\rightarrow\cdots$
of not necessarily different composable arrows in $Q$. Dually, $Q$
is \textbf{left-rooted} if the opposite quiver $Q^{op}$ is right-rooted.
Moreover $Q$ is \textbf{rooted} if it is left and right rooted. In
particular, any right or left rooted quiver is always acyclic.

\begin{defn}
A quiver $Q$ is of \textbf{finite-cone-shape } type if the \textbf{ left
cone} $Q(-,j):=\bigcup_{i\in Q_{0}}Q(i,j)$ and the \textbf{ right
cone} $Q(j,-):=\bigcup_{i\in Q_{0}}Q(j,i)$ are finite sets $\forall j\in Q_{0}$. We also say that $Q$ is of {\bf right support finite} type if the set of vertices $t(Q(i,-))$ is finite $\forall\, i\in Q_0.$ Dually, $Q$  is of {\bf left support finite} type if the opposite quiver $Q^{op}$ is of right support finite type. Finally, $Q$ is of {\bf support finite} type if it is of left and right support finite type. 
\end{defn}

Notice that if $Q$ is finite-cone-shape then it is also support finite. However the converse fails in general. For example, take a finite-cone-shape quiver $Q$ and construct a new one $Q'$ from $Q$ by adding loops at some vertices of $Q.$ Then $Q'$ will be support finite but not finite-cone-shape. We point out that a finite-cone-shape quiver is strongly locally finite, acyclic
and rooted.

\begin{example} Let $Q$ be $A_{\infty}^{\infty}$, $D_{\infty}$ or $A_{\infty}$
with the zig-zag orientation (see the diagrams below). In this case, we have that $Q$ is a finite-cone-shape quiver. 
\\
\begin{minipage}[t]{1\columnwidth}%
\[ \begin{tikzpicture}[-,>=to,shorten >=1pt,auto,node distance=3cm,main node/.style=,x=0.866025cm,y=1cm]
  \coordinate  (a)   at (4,0);   \coordinate  (b)   at (6,0);   \coordinate  (c)   at (5,1);   \coordinate  (a'')   at (-2,0);   \coordinate  (b'')   at (0,0);   \coordinate  (c'')   at (-1,1);
\node (1) at (0,0) {$1$}; \node (3) at (2,0) {$3$}; \node (2) at (1,1) {$2$}; \node (5) at (4,0) {$5$}; \node (4) at (3,1) {$4$};
\node (u) at (8,0) {$$}; \node (v) at (9,1) {$$};
\node (c) at (barycentric cs:a=1 ,b=1 ,c=1) {$\cdots $}; \node (c'') at (barycentric cs:a''=1 ,b''=1 ,c''=1) {$A_\infty :$};
\draw[->, thick]  (2)  to  node  {$$} (1); \draw[->, thick]  (2)  to  node  {$$} (3); \draw[->, thick]  (4)  to  node  {$$} (3); \draw[->, thick]  (4)  to  node  {$$} (5);
\end{tikzpicture} \]
\end{minipage}\\
\begin{minipage}[t]{1\columnwidth}%
\[ \begin{tikzpicture}[-,>=to,shorten >=1pt,auto,node distance=3cm,main node/.style=,x=0.866025cm,y=1cm]
  \coordinate  (a)   at (4,0);   \coordinate  (b)   at (6,0);   \coordinate  (c)   at (5,1);   \coordinate  (a'')   at (-2,0);   \coordinate  (b'')   at (0,0);   \coordinate  (c'')   at (-1,1);
\node (0) at (0,0) {$0$}; \node (1) at (1,0) {$1$}; \node (3) at (2,0) {$3$}; \node (2) at (1,1) {$2$}; \node (5) at (4,0) {$5$}; \node (4) at (3,1) {$4$};
\node (u) at (8,0) {$$}; \node (v) at (9,1) {$$};
\node (c) at (barycentric cs:a=1 ,b=1 ,c=1) {$\cdots $}; \node (c'') at (barycentric cs:a''=1 ,b''=1 ,c''=1) {$D_\infty :$};
\draw[->, thick]  (2)  to  node  {$$} (0); \draw[->, thick]  (2)  to  node  {$$} (1); \draw[->, thick]  (2)  to  node  {$$} (3); \draw[->, thick]  (4)  to  node  {$$} (3); \draw[->, thick]  (4)  to  node  {$$} (5);
\end{tikzpicture} \]
\end{minipage}\\
\begin{minipage}[t]{1\columnwidth}%
\[ \begin{tikzpicture}[-,>=to,shorten >=1pt,auto,node distance=3cm,main node/.style=,x=0.866025cm,y=1cm]
  \coordinate  (a)   at (4,0);   \coordinate  (b)   at (6,0);   \coordinate  (c)   at (5,1);   \coordinate  (c')   at (7,1);
\coordinate  (A)   at (2,0);   \coordinate  (B)   at (0,0);   \coordinate  (C)   at (1,1);   \coordinate  (A')   at (-2,0);   \coordinate  (B')   at (-4,0);   \coordinate  (C')   at (-1,1);
\node (1) at (0,0) {$-3$}; \node (3) at (2,0) {$-1$}; \node (2) at (1,1) {$-2$}; \node (5) at (4,0) {$1$}; \node (4) at (3,1) {$0$}; \node (6) at (5,1) {$2$}; \node (7) at (6,0) {$3$};
\node (u) at (8,0) {$$}; \node (v) at (9,1) {$$};
\node (c) at (barycentric cs:b=1 ,c'=1 ,c=1) {$\cdots $}; \node (C) at (barycentric cs:B=1 ,C'=1 ,C=1) {$\cdots $}; \node (c'') at (barycentric cs:A'=1 ,B=1 ,C'=1) {$A_\infty ^{\infty} :$};
\draw[->, thick]  (4)  to  node  {$$} (3); \draw[->, thick]  (4)  to  node  {$$} (5); \draw[->, thick]  (6)  to  node  {$$} (5); \draw[->, thick]  (6)  to  node  {$$} (7); \draw[->, thick]  (6)  to  node  {$$} (5); \draw[->, thick]  (6)  to  node  {$$} (7); \draw[->, thick]  (2)  to  node  {$$} (1); \draw[->, thick]  (2)  to  node  {$$} (3);
\end{tikzpicture} \]
\end{minipage}\\

We point out that 
I. Reiten and M. Van den Bergh studied the representations of these quivers in \cite[Sect. III.3]{reiten2002noetherian}.
\end{example}

\begin{minipage}[t]{0.5\columnwidth}%

\begin{rem}\label{VW} \cite[Sect. 3]{enochs2004flat}\label{rem:transfinite sequence and rooted quivers}
Let $Q$ be a quiver. For every ordinal $\gamma$ there is a set $V_{\gamma}\subseteq Q_{0}$
which is defined by transfinite induction as follows: $V_{0}:=\emptyset,$
\[ V_{\beta +1} := (Q_0 - t(Q_1(Q_0 - V_{\beta} , -))) \cup V_{\beta} \]
and $V_{\gamma}:=\bigcup_{\beta<\gamma}V_{\beta}$ for a limit ordinal
$\gamma.$ It turns out that $Q$ is left-rooted if and only if there
is an ordinal $\gamma$ such that $Q_{0}=V_{\gamma}$. \\
Dually, $Q$ is right-rooted if and only if there is an ordinal $\gamma$
such that $Q_{0}=W_{\gamma}$, where $W_{0}:=\emptyset$, 
 \[ W_{\alpha +1} = (Q_0 - s(Q_1 (-,Q_0 -W_{\alpha}))) \cup W_\alpha \]
and $W_{\gamma}:=\bigcup_{\alpha<\gamma}W_{\alpha}$ for a limit ordinal
$\gamma.$
\end{rem}

\end{minipage}\hfill{}%
\fbox{\begin{minipage}[t]{0.45\columnwidth}%
\[ \begin{tikzpicture}[-,>=to,shorten >=1pt,auto,node distance=1.5cm,main node/.style=]
\node (1) at (0,0) {$\bullet$}; \node (2) at (0,-1) {$\bullet$}; \node (3) at (0,-2) {$\bullet$}; \node (c1) at (0,-3) {$\vdots$}; \node (n) at (0,-4) {$\bullet$}; \node (w) at (3,-4) {$\bullet$}; \node (c2) at (0,-5) {$\vdots$};
\node (v1) at (-.2,0) {$\scriptstyle V_1$}; \node (v2) at (-.5,-.8) {$\scriptstyle V_2$}; \node (v3) at (-.6,-1.6) {$\scriptstyle V_3$}; \node (vn) at (-.7,-3.4) {$\scriptstyle V_n$}; \node (vo) at (-.8,-5.2) {$\scriptstyle V_\omega$}; \node (vo') at (2.8,-5) {$\scriptstyle V_{\omega +1}$};
\node (d) at (-1,1) {$\bullet$}; \node (a) at (0,1) {$\bullet$}; \node (b) at (1,1) {$\bullet$}; \node (c) at (2,1) {$\bullet$}; \node (v) at (3,1) {$\cdots$};
\draw[-, very thick, gray] (0,0) ellipse (.4cm and .4cm); \draw[-, very thick, gray] (0,-.5) ellipse (.75cm and 1cm); \draw[-, very thick, gray] (0,-1) ellipse (1.cm and 1.6cm); \draw[-, very thick, gray] (0,-2) ellipse (1.5cm and 2.7cm); \draw[-, very thick, gray] (0,.8)  arc(90:180:2cm and 6.5cm); \draw[-, very thick, gray] (0,.8)  arc(90:0:2cm and 6.5cm); \draw[-, very thick, gray] (0,.9)  arc(90:180:2cm and 6.7cm); \draw[-, very thick, gray] (0,.9)  arc(90:0:3.5cm and 6.7cm);
\draw[->, thin]  (1)  to  node  {$$} (2); \draw[->, thin]  (2)  to  node  {$$} (3); \draw[->, thin]  (3)  to  node  {$$} (c1); \draw[->, thin]  (c1)  to  node  {$$} (n); \draw[->, thin]  (n)  to  node  {$$} (c2);
\draw[->, thin]  (1)  to  node  {$$} (w); \draw[->, thin]  (2)  to  node  {$$} (w); \draw[->, thin]  (3)  to  node  {$$} (w); \draw[->, thin]  (c1)  to  node  {$$} (w); \draw[->, thin]  (n)  to  node  {$$} (w);
\draw[->, thin]  (1)  to  node  {$$} (a); \draw[<-, thin]  (d)  to  node  {$$} (a); \draw[<-, thin]  (a)  to  node  {$$} (b); \draw[<-, thin]  (b)  to  node  {$$} (c); \draw[<-, thin]  (c)  to  node  {$$} (v);
\end{tikzpicture} \]
\end{minipage}}

In order to reflect some properties from the abelian category $\C$
to the category of representations $\Rep(Q,\C),$ we will make use
of the following cardinal numbers which are (somehow) a measure of
the geometric complexity of the quiver $Q.$ Namely, the mesh and
the cone-shape cardinal numbers associated with the quiver $Q.$ In
order to do that,  for a set $X$ of cardinal numbers, we define the \textbf{size}
of $X,$ denoted by $\mathrm{size}(X),$ as follows:  $\mathrm{size}(X):=\sup(X)$
if $x<\sup(X)$ $\forall\,x\in X;$ and $\mathrm{size}(X):=\sup(X)^{+}$
if $x=\sup(X)$ for some $x\in X.$

\begin{defn}\label{def:mesh cardinal}For a quiver $Q,$ we have the following
mesh cardinal numbers. 
\begin{itemize}
\item[(a)] The \textbf{left mesh cardinal} number of $Q,$ $\lmcn(Q):=\mathrm{size}(\{|Q_{1}^{*\rightarrow i}|\}_{i\in Q_{0}}).$ 
\item[(b)] The \textbf{right mesh cardinal} number of $Q,$ $\rmcn(Q):=\mathrm{size}(\{|Q_{1}^{i\rightarrow*}|\}_{i\in Q_{0}}).$ 
\item[(c)] The \textbf{mesh cardinal} number of $Q,$ $\mcn(Q):=\max\{\lmcn(Q),\rmcn(Q)\}.$ 
\end{itemize}
\end{defn}

\begin{defn}\label{def:cone shape cardinal}For a quiver $Q,$ we have the following
cone-shape cardinal numbers. 
\begin{itemize}
\item[(a)] For each $i\in Q_{0},$ we have that: 
\begin{itemize}
\item[(a1)] $\lccn_{i}(Q):=\mathrm{size}(\{|Q(j,i)|\}_{j\in Q_{0}})$ is the
\textbf{left $i$-cone-shape cardinal} number of $Q;$ 
\item[(a2)] $\rccn_{i}(Q):=\mathrm{size}(\{|Q(i,j)|\}_{j\in Q_{0}})$ the \textbf{right
$i$-cone-shape cardinal} number of $Q;$ 
\end{itemize}
Hence $\lccn(Q):=\sup(\{\lccn_{i}(Q)\}_{i\in Q_{0}})$ is the \textbf{left
cone-shape cardinal} number of $Q;$ and $\rccn(Q):=\sup(\{\rccn_{i}(Q)\}_{i\in Q_{0}})$
the \textbf{right cone-shape cardinal} number of $Q.$ 
\item[(b)] $\ltccn(Q):=\mathrm{size}(\{|Q(-,i)|\}_{i\in Q_{0}})$ is the \textbf{left
thick cone-shape cardinal} number of $Q.$ 
\item[(c)] $\rtccn(Q):=\mathrm{size}(\{|Q(i,-)|\}_{i\in Q_{0}})$ is the \textbf{right
thick cone-shape cardinal} number of $Q.$ 
\item[(d)] $\ccn(Q):=\max\{\lccn(Q),\rccn(Q)\}$ is the \textbf{cone-shape cardinal}
number of $Q;$ and $\tccn(Q):=\max\{\ltccn(Q),\rtccn(Q)\}$ is the
\textbf{thick cone-shape cardinal} number of $Q.$ 
\end{itemize}
\end{defn}

Let $Q$ be a quiver. A \textbf{subquiver} of $Q$ is a quiver $Q'=(Q'_{0},Q'_{1})$
such that $Q'_{0}\subseteq Q_{0},$ $Q'_{1}\subseteq Q_{1}$, $s'=s|_{Q'_{1}}$
and $t'=t|_{Q'_{1}}.$ A subquiver
$Q'$ of $Q$ is \textbf{0-finite} if $Q'_{0}$ is finite. A subquiver
$Q'$ of $Q$ is \textbf{full} if every arrow $\alpha\in Q_{1},$
with $s(\alpha),t(\alpha)\in Q'_{0},$ belongs to $Q'_{1}$. Note
that a full subquiver $Q'$ of $Q$ is uniquely determined by its
vertex set $Q'_{0}.$ Hence, for the given full subquivers $Q'$ and
$Q''$ of $Q$, we can define $Q'\cup Q''$ as the full subquiver
with vertex set $Q'_{0}\cup Q''_{0}.$ We denote by $\F(Q)$ the set
of all the full subquivers of $Q$ which are $0$-finite. Now, let
$S$ be a full subquiver of $Q.$ Following \cite{green2017convex},
we consider the sets 
\begin{center}
$S^{-}:=\{i\in Q_{0}-S_{0}\;|\:\;\text{there is a path }\gamma\text{ with }i=s(\gamma)\text{ and }t(\gamma)\in S_{0}\},$ 
\par\end{center}

\begin{center}
$S^{+}:=\{i\in Q_{0}-S_{0}\;|\:\;\text{there is a path }\gamma\text{ with }i=t(\gamma)\text{ and }s(\gamma)\in S_{0}\}.$ 
\par\end{center}

We say that $S$ is in the \textbf{bottom} of $Q$ if the set $S^{-}$
is finite. In case $S^{+}$ is finite, we say that $S$ is in the
\textbf{top} of $Q.$ 

\begin{rem}\label{main-quivers} For a quiver $Q,$ the following statements
hold true. 
\begin{itemize}
\item[$\mathrm{(a)}$] If $\{H_i\}_{i\in I}$ is a non empty family of full subquivers of $Q,$ then we have that $(\cup_{i\in I}H_i)^{-}\subseteq \cup_{i\in I}H_i^{-}$
and $(\cup_{i\in I}H_i)^{+}\subseteq \cup_{i\in I}H_i^{+}.$ 
\item[$\mathrm{(b)}$] If $H$ and $S$ are full subquivers of $Q$ and $H\subseteq S,$
then $H^{-}\subseteq S^{-}\cup(S-H)$ and $H^{+}\subseteq S^{+}\cup(S-H).$ 
\end{itemize}
\end{rem}

\begin{defn}
Let $Q$ be a quiver. We denote by $\B(Q)$ (respectively, $\T(Q)$)
the set of all the full subquivers of $Q$ which are in the bottom
(respectively, top) of $Q.$ Thus $\F\B(Q):=\F(Q)\cap\B(Q)$ is the
set of all the full subquivers of $Q$ which are $0$-finite and in
the bottom of $Q.$ Similarly, we have $\F\T(Q):=\F(Q)\cap\T(Q)$
and $\F\B\T(Q):=\F(Q)\cap\B(Q)\cap\T(Q).$ 
\end{defn}

\subsection{Quiver representations}

Let $\mathcal{C}$ be an abelian category and $Q$ be a quiver. The
category of \textbf{$\mathcal{C}$-valued representations} $\operatorname{Rep}(Q,\mathcal{C})$
of a quiver $Q$ can be identified with the category of functors $\mathcal{C}_{Q}\rightarrow\mathcal{C},$
see more details in \cite{holm2019cotorsion}. It is a well known
fact that $\operatorname{Rep}(Q,\mathcal{C})$ inherits many properties
from $\mathcal{C}$. For example, it is an abelian category and, in
case $\mathcal{C}$ is AB3, AB4 or AB5, $\operatorname{Rep}(Q,\mathcal{C})$
will also have these properties \cite[Sect. 2]{holm2019cotorsion}.
Moreover, kernels, images and cokernels are computed vertex-wise in
$\C.$ Therefore, a sequence $X\rightarrow Y\rightarrow Z$ in $\operatorname{Rep}(Q,\mathcal{C})$ is exact
 if, and only if, the sequence
$X_i\rightarrow Y_i\rightarrow Z_i$ is exact in $\C$ $\forall i\in Q_{0}.$
Finally, for a class $\X\subseteq\C,$ we set $\operatorname{Rep}(Q,\mathcal{\X}):=\left\{ F\in\operatorname{Rep}(Q,\mathcal{C})\,|\:F_i\in\X\;\forall i\in Q_{0}\right\} .$

\subsection{Extensions and restrictions of representations}

Let $\mathcal{C}$ be an abelian category and $Q$ be a quiver. For
every subquiver $Q'\subseteq Q$, there is an exact, full and faithful
functor $\iota_{Q'}:\operatorname{Rep}(Q',\mathcal{C})\rightarrow\operatorname{Rep}(Q,\mathcal{C})$ called the {\bf extension} functor
which is defined as follows: for $F\in\operatorname{Rep}(Q',\mathcal{C}),$
set $(\iota_{Q'}(F))_i:=F_i$ $\forall i\in Q'_{0},$ $(\iota_{Q'}(F))_\alpha:=F_\alpha$
$\forall\alpha\in Q'_{1},$ $(\iota_{Q'}(F))_i:=0$ $\forall i\in Q_{0}-Q'_{0}$
and $(\iota_{Q'}(F))_\alpha:=0$ $\forall\alpha\in Q_{1}-Q'_{1}$.
Given a morphism $\gamma:F\rightarrow F'$ in $\operatorname{Rep}(Q',\mathcal{C}),$
we set $(\iota_{Q'}(\gamma))_{i}:=\gamma_{i}$ $\forall i\in Q'_{0},$
and $(\iota_{Q'}(\gamma))_{i}:=0$ $\forall i\in Q_{0}-Q'_{0}$. We
also have the \textbf{restriction} functor $\pi_{Q'}:\operatorname{Rep}(Q,\mathcal{C})\rightarrow\operatorname{Rep}(Q',\mathcal{C})$
which sends $F$ to $F|_{Q'}.$ Note that $\pi_{Q'}$ and $\iota_{Q'}$
are additive exact functors such that $\pi_{Q'}\circ\iota_{Q'}=1_{\operatorname{Rep}(Q',\mathcal{C})}.$
This leads us to the following interesting property. 

\begin{lem}\label{lem: Ext vs pi, iota} Let $Q$ be a quiver, $S$ be a subquiver of $Q,$
and $\mathcal{C}$ be an abelian category. Then, for $n\geq 1$, the
following statements hold true: 
\begin{enumerate}
\item The map $\Ext_{\Rep(S,\C)}^{n}(F,G)\rightarrow\Ext_{\Rep(Q,\C)}^{n}(\iota_{S}F,\iota_{S}G)$,
$\overline{\eta}\mapsto\overline{\iota_{S}(\eta)}$, induces a morphism
of functors $\iota_{S}^{n}:\Ext_{\Rep(S,\C)}^{n}(-,?)\rightarrow\Ext_{\Rep(S,\C)}^{n}(\iota_{S}(-),\iota_{S}(?))$.
\item The map $\Ext_{\Rep(Q,\C)}^{n}(F,G)\rightarrow\Ext_{\Rep(S,\C)}^{n}(\pi_{S}F,\pi_{S}G)$,
$\overline{\eta}\mapsto\overline{\pi_{S}(\eta)}$, induces a morphism
of functors $\pi_{S}^{n}:\Ext_{\Rep(Q,\C)}^{n}(-,?)\rightarrow\Ext_{\Rep(S,\C)}^{n}(\pi_{S}(-),\pi_{S}(?))$.
\item $\pi_{S}^{n}\circ\iota_{S}^{n}=1.$
\end{enumerate}
\end{lem}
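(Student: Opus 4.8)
The plan is to reduce everything to one general observation about exact functors and then apply it twice. First I would establish the following: for any exact functor $F$ between abelian categories, the rule $\overline{\eta}\mapsto\overline{F(\eta)}$ (sending the class of an $n$-fold exact sequence to the class of its image under $F$) is well defined on equivalence classes and defines a morphism of bifunctors $F^{n}\colon\Ext^{n}(-,?)\to\Ext^{n}(F(-),F(?))$ for every $n\geq 1$. Well-definedness holds because $F$ carries a morphism of $n$-fold exact sequences to a morphism of $n$-fold exact sequences, hence respects the equivalence relation defining $\Ext^{n}$; for $n=1$ one additionally invokes the five lemma in the target category to see that the induced comparison map remains an isomorphism. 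Additivity with respect to the Baer sum holds since $F$ preserves finite biproducts together with the pullbacks and pushouts entering the definition of the Baer sum, and naturality in each variable is exactly the pair of identities $\overline{F(g\cdot\eta)}=F(g)\cdot\overline{F(\eta)}$ and $\overline{F(\eta\cdot f)}=\overline{F(\eta)}\cdot F(f)$, which are valid because an exact functor commutes with the pullback and pushout constructions used to define the action of morphisms on $\Ext^{n}$ (cf. \cite[Chap. VII, Lem. 1.3]{mitchell}). These are precisely the compatibilities already invoked in the proof of Lemma \ref{lem:Ext vs adjoint 1}, so I would either quote them or reproduce that short argument.

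Granting this observation, statement (1) is the instance $F=\iota_{S}\colon\Rep(S,\C)\to\Rep(Q,\C)$ and statement (2) is the instance $F=\pi_{S}\colon\Rep(Q,\C)\to\Rep(S,\C)$; both $\iota_{S}$ and $\pi_{S}$ are exact, as recorded just before the statement.

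For (3), I would evaluate the composite at a pair $F,G\in\Rep(S,\C)$. Because $\pi_{S}\circ\iota_{S}=1_{\Rep(S,\C)}$, the codomain $\Ext^{n}_{\Rep(S,\C)}(\pi_{S}\iota_{S}F,\pi_{S}\iota_{S}G)$ of $\pi_{S}^{n}$ evaluated at $(\iota_{S}F,\iota_{S}G)$ coincides with $\Ext^{n}_{\Rep(S,\C)}(F,G)$, and for any exact sequence $\eta$ representing a class there one has $\pi^{n}_{S}(\iota^{n}_{S}(\overline{\eta}))=\overline{\pi_{S}(\iota_{S}(\eta))}=\overline{(\pi_{S}\circ\iota_{S})(\eta)}=\overline{\eta}$; hence $\pi_{S}^{n}\circ\iota_{S}^{n}=1$.

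The only part that is not purely formal is the general observation in the first paragraph --- specifically the compatibility of $\overline{\eta}\mapsto\overline{F(\eta)}$ with Yoneda composition and with Baer sums --- and this I expect to be the main (though routine and essentially standard) obstacle; everything else is bookkeeping, and (3) is immediate once (1) and (2) are set up.
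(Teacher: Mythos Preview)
Your proposal is correct and is exactly the natural argument; the paper in fact states this lemma without proof, treating the three claims as routine consequences of the exactness of $\iota_{S}$ and $\pi_{S}$ together with $\pi_{S}\circ\iota_{S}=1_{\operatorname{Rep}(S,\mathcal{C})}$, which is precisely what you unpack.
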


Let $S$ be a subquiver of $Q$. In general, we can say nothing more
about the functors $\pi_{S}:\operatorname{Rep}(Q,\mathcal{C})\rightarrow\operatorname{Rep}(S,\mathcal{C})$
and $\iota_{S}:\operatorname{Rep}(S,\mathcal{C})\rightarrow\operatorname{Rep}(Q,\mathcal{C})$.
However, in case $S$ is a full subquiver with $S^{-}=\emptyset$
or $S^{+}=\emptyset$, we will have that these functors form an adjoint
pair. 

\begin{lem}\label{lem:par adjunto i p}\label{adjoint-ip} Let $Q$ be a quiver,
$S$ be a full subquiver of $Q$ with $S^{-}=\emptyset$ (respectively, $S^{+}=\emptyset$)
and let $\mathcal{C}$ be an abelian category. Then, $\iota_{S}:\operatorname{Rep}(S,\mathcal{C})\to\operatorname{Rep}(Q,\mathcal{C})$
is right (respectively, left) adjoint for $\pi_{S}:\operatorname{Rep}(Q,\mathcal{C})\to\operatorname{Rep}(S,\mathcal{C}).$ 
\end{lem}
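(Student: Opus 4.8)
The plan is to prove this by exhibiting the natural isomorphism of Hom-bifunctors directly, exploiting the fact that $\pi_S \circ \iota_S = 1_{\operatorname{Rep}(S,\C)}$ (already noted in the excerpt) together with the combinatorial consequence of the hypothesis $S^- = \emptyset$. First I would focus on the case $S^- = \emptyset$; the case $S^+ = \emptyset$ is dual (pass to $Q^{op}$, under which $\iota_S$ and $\pi_S$ are interchanged with their opposite-quiver counterparts and ``right adjoint'' becomes ``left adjoint'', while the condition $S^+ = \emptyset$ for $S \subseteq Q$ becomes $S^- = \emptyset$ for $S \subseteq Q^{op}$). So it suffices to show $\iota_S$ is right adjoint to $\pi_S$, i.e.\ to produce a natural equivalence
\[
h \colon \Hom_{\operatorname{Rep}(Q,\C)}\bigl(F, \iota_S(G)\bigr) \xrightarrow{\ \sim\ } \Hom_{\operatorname{Rep}(S,\C)}\bigl(\pi_S(F), G\bigr)
\]
for $F \in \operatorname{Rep}(Q,\C)$ and $G \in \operatorname{Rep}(S,\C)$, by the obvious ``restrict to $S_0$'' map, and to check it is bijective.

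The key observation is the following. Because $S$ is a full subquiver with $S^- = \emptyset$, there is \emph{no arrow} $\alpha \in Q_1$ with $s(\alpha) \in Q_0 - S_0$ and $t(\alpha) \in S_0$ — indeed such an arrow would be a length-one path witnessing $s(\alpha) \in S^-$. Equivalently, every arrow landing in a vertex of $S_0$ already has its source in $S_0$, and since $S$ is full, belongs to $S_1$. Now given $\varphi \colon F \to \iota_S(G)$ in $\operatorname{Rep}(Q,\C)$, for each $i \in Q_0 - S_0$ we have $(\iota_S G)_i = 0$, so $\varphi_i = 0$; thus $\varphi$ is entirely determined by its components $\varphi_i$ for $i \in S_0$, i.e.\ by $\pi_S(\varphi) \colon \pi_S(F) \to \pi_S \iota_S(G) = G$. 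This shows injectivity of $h$. For surjectivity, given $\psi \colon \pi_S(F) \to G$ in $\operatorname{Rep}(S,\C)$, define $\widetilde{\psi}_i := \psi_i$ for $i \in S_0$ and $\widetilde{\psi}_i := 0$ for $i \in Q_0 - S_0$; the only thing to verify is that this is a natural transformation $F \to \iota_S(G)$, i.e.\ that the square commutes for every $\alpha \in Q_1$. If both endpoints of $\alpha$ lie in $Q_0 - S_0$, both relevant maps into $(\iota_S G)_{t(\alpha)} = 0$ vanish; if $s(\alpha) \in S_0$ and $t(\alpha) \in S_0$ then $\alpha \in S_1$ and the square is the naturality square for $\psi$; if $s(\alpha) \in S_0$ and $t(\alpha) \notin S_0$ then $(\iota_S G)_{t(\alpha)} = 0$ again; and the remaining case $s(\alpha) \notin S_0$, $t(\alpha) \in S_0$ is \emph{exactly the case ruled out} by $S^- = \emptyset$. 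So $\widetilde{\psi}$ is well-defined and $h(\widetilde{\psi}) = \psi$.

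Finally I would note that $h$ is natural in both $F$ and $G$: naturality in $F$ follows because $h$ is defined by precomposition with (the components of) the identity-type data and $\pi_S$ is a functor; naturality in $G$ is immediate since $\pi_S \iota_S = 1$. This completes the proof in the case $S^- = \emptyset$, and the dual case follows by applying what we have proved to $Q^{op}$, using that $\operatorname{Rep}(Q^{op},\C) \cong \operatorname{Rep}(Q, \C^{op})^{op}$ exchanges left and right adjoints and that the extension and restriction functors are compatible with passing to the opposite quiver.

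The main obstacle — really the only nontrivial point — is verifying the naturality (commutativity of the arrow squares) for the extended morphism $\widetilde{\psi}$: one must carefully enumerate the four cases for where $s(\alpha)$ and $t(\alpha)$ sit relative to $S_0$ and recognize that precisely one of them is the forbidden configuration excluded by the hypothesis $S^- = \emptyset$ (or $S^+ = \emptyset$ in the dual). Everything else is a routine bookkeeping check that the candidate map $h$ is a bijection natural in both variables. It may also be worth recording explicitly the unit and co-unit of this adjunction: the co-unit $\pi_S \iota_S \to 1$ is the identity, and the unit $1 \to \iota_S \pi_S$ has components that are identities on $S_0$ and the zero map $F_i \to 0$ on $Q_0 - S_0$; these will be convenient later.
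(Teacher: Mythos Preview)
Your proof is correct and follows essentially the same approach as the paper: define the bijection via restriction $\pi_S$, build the inverse by extending a morphism by zero outside $S_0$, and verify the naturality squares by case analysis on where the endpoints of an arrow lie---with the crucial case $s(\alpha)\notin S_0$, $t(\alpha)\in S_0$ excluded precisely by $S^-=\emptyset$. Your four-case enumeration is slightly more explicit than the paper's (which collapses the analysis to ``either both endpoints are in $S_0$, or $t(\alpha)\notin S_0$''), and your remark on the unit and co-unit is a useful addition, but there is no substantive difference.
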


\begin{proof}
Assume that $S^{-}=\emptyset.$ Let $F\in\operatorname{Rep}(Q,\C)$
and $G\in\operatorname{Rep}(S,\C).$ Since $\pi_{S}\iota_{S}=1_{\operatorname{Rep}(S,\mathcal{C})},$
the map $\pi_{S}:\Hom_{\operatorname{Rep}(Q,\C)}(F,\iota_{S}(G))\to\Hom_{\operatorname{Rep}(S,\C)}(\pi_{S}(F),G)$
is well defined. Now, we define a map $\nu_{S}:\Hom_{\operatorname{Rep}(S,\C)}(\pi_{S}(F),G)\to\Hom_{\operatorname{Rep}(Q,\C)}(F,\iota_{S}(G))$
as follows: for $f=\left\{ f_{s}:F_s\rightarrow G_s\right\} _{s\in S_{0}}\in\Hom_{\operatorname{Rep}(S,\C)}(\pi_{S}(F),G),$
we set \\
$
\nu_{S}(f):=\left\{ \hat{f}_{i}:F_i\rightarrow(\iota_{S}(G))_i\right\} _{i\in Q_{0}},
$
where $\hat{f}_{s}=f_{s}$ $\forall s\in S_{0}$ and $\hat{f}_{i}=0$
$\forall i\in Q_{0}-S_{0}.$ Observe that this family defines a morphism
$\nu_{S}(f):F\rightarrow\iota_{S}\left(G\right)$ in $\operatorname{Rep}(Q,\mathcal{C}).$
Indeed, let $\alpha:i\rightarrow j$ be an arrow in $Q_{1}$. We need
to show that $(\iota_{S}(G))_\alpha\hat{f}_{i}=\hat{f}_{j}F_\alpha$.
In case that $i,j\in S_{0}$, this follows from the fact that $S$
is a full subquiver. Since $S^{-}=\emptyset$, the only remaining
case is when $j\in Q_{0}-S_{0}$. Thus, in this case, we have $(\iota_{S}(G))_\alpha\hat{f}_{i}=0=\hat{f}_{j}F_\alpha.$
Finally, it is straightforward to check the naturality of $\pi_{S}$
and that $\pi_{S}$ is an isomorphism with inverse $\nu_{S}.$ The
case when $S^{+}=\emptyset$ is quite similar and it is left to the
reader. 
\end{proof}

The adjoint pair shown above will be of particular importance to us
because of the following lemma which is a consequence of Lemma \ref{adjoint-ip} and Proposition
\ref{lem:Ext vs adjoint 3}.
 
\begin{lem}\label{lem:Ext subcarcaj} For a quiver $Q,$ a full subquiver $S\subseteq Q,$
$n\geq0$ and an abelian category $\C,$ the following statements
hold true. 
\begin{itemize}
\item[$\mathrm{(a)}$] $\Ext_{\operatorname{Rep}(Q,\C)}^{n}(?,\iota_{S}(-))\cong\Ext_{\operatorname{Rep}(S,\C)}^{n}(\pi_{S}(?),-)$
if $S^{-}=\emptyset.$ 
\item[$\mathrm{(b)}$] $\Ext_{\operatorname{Rep}(Q,\C)}^{n}(\iota_{S}(?),-)\cong\Ext_{\operatorname{Rep}(S,\C)}^{n}(?,\pi_{S}(-))$
if $S^{+}=\emptyset.$ 
\end{itemize}
\end{lem}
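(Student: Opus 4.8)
The plan is to derive Lemma \ref{lem:Ext subcarcaj} directly from the adjunctions established in Lemma \ref{adjoint-ip} together with the $\Ext$-comparison results of Proposition \ref{lem:Ext vs adjoint 3}. First I would treat part (a). Assume $S^{-}=\emptyset$. By Lemma \ref{adjoint-ip}, the pair $(S,T):=(\pi_{S},\iota_{S})$ is an adjoint pair between $\operatorname{Rep}(Q,\C)$ and $\operatorname{Rep}(S,\C)$; that is, $\pi_{S}$ is left adjoint and $\iota_{S}$ is right adjoint. Crucially, both $\pi_{S}$ and $\iota_{S}$ are exact functors, as recorded just before Lemma \ref{lem: Ext vs pi, iota} (restriction is exact because everything is computed vertex-wise, and the extension functor $\iota_{S}$ is exact as stated in the definition of the extension functor). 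Therefore Proposition \ref{lem:Ext vs adjoint 3} applies with $S$ and $T$ both exact: by part (1) (with hypothesis $T=\iota_{S}$ exact, conclusion (a2)$\Rightarrow$(a3) using that $S=\pi_{S}$ is exact) we get natural isomorphisms $\tau^{n}\colon\Ext_{\operatorname{Rep}(Q,\C)}^{n}(\pi_{S}(?),-)\xrightarrow{\sim}\Ext_{\operatorname{Rep}(S,\C)}^{n}(?,\iota_{S}(-))$ for all $n\geq 1$. Reversing the roles, one may instead view the same adjunction as giving $\Ext_{\operatorname{Rep}(Q,\C)}^{n}(?,\iota_{S}(-))\cong\Ext_{\operatorname{Rep}(S,\C)}^{n}(\pi_{S}(?),-)$, which is exactly the asserted formula after renaming the (co/contra)variant slots; for $n=0$ this is just the Hom-adjunction itself, $\Hom_{\operatorname{Rep}(Q,\C)}(F,\iota_{S}G)\cong\Hom_{\operatorname{Rep}(S,\C)}(\pi_{S}F,G)$.

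For part (b), assume $S^{+}=\emptyset$. Now Lemma \ref{adjoint-ip} gives that $\iota_{S}$ is \emph{left} adjoint to $\pi_{S}$, i.e. $(S,T):=(\iota_{S},\pi_{S})$ is an adjoint pair between $\operatorname{Rep}(S,\C)$ and $\operatorname{Rep}(Q,\C)$. Again both functors are exact, so Proposition \ref{lem:Ext vs adjoint 3} yields natural isomorphisms $\Ext_{\operatorname{Rep}(Q,\C)}^{n}(\iota_{S}(?),-)\cong\Ext_{\operatorname{Rep}(S,\C)}^{n}(?,\pi_{S}(-))$ for all $n\geq 1$, and the $n=0$ case is once more the defining Hom-adjunction. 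This makes part (b) literally the dual of part (a) under $Q\mapsto Q^{op}$ (which swaps $S^{-}=\emptyset$ and $S^{+}=\emptyset$ and swaps $\iota$, $\pi$ appropriately), so I would either run the same argument verbatim or simply invoke duality.

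In short, the proof I envisage is: (i) invoke Lemma \ref{adjoint-ip} to get the relevant adjoint pair depending on whether $S^{-}$ or $S^{+}$ is empty; (ii) note both $\pi_{S}$ and $\iota_{S}$ are exact, so the hypotheses of Proposition \ref{lem:Ext vs adjoint 3}(1)--(3) are met with $S,T$ both exact; (iii) conclude $\tau^{n}$ (equivalently $\sigma^{n}=(\tau^{n})^{-1}$) is a natural isomorphism for every $n\geq 1$, and handle $n=0$ by the Hom-adjunction. There is essentially no obstacle here — the lemma is deliberately set up as an immediate corollary, and the only thing worth double-checking is the matching of covariant/contravariant slots in the statement of Proposition \ref{lem:Ext vs adjoint 3} against the placement of $\iota_{S}$ and $\pi_{S}$ in the two claimed formulas. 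The one point requiring a line of justification (and the closest thing to a "hard part") is recording that $\iota_{S}$ is exact, not merely additive; but this is already built into its vertex-wise definition, since $(\iota_{S}F)_{i}$ is either $F_{i}$ or $0$ and exactness in $\operatorname{Rep}(Q,\C)$ is tested vertex-wise in $\C$.
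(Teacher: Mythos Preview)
Your proposal is correct and follows exactly the approach the paper indicates: the lemma is stated there as ``a consequence of Lemma \ref{adjoint-ip} and Proposition \ref{lem:Ext vs adjoint 3}'', with no further proof given. Your only slip is a harmless swap of the ambient categories in the displayed $\tau^{n}$ for part (a) (it should read $\Ext_{\operatorname{Rep}(S,\C)}^{n}(\pi_{S}(?),-)\xrightarrow{\sim}\Ext_{\operatorname{Rep}(Q,\C)}^{n}(?,\iota_{S}(-))$), which you effectively correct in the next sentence.
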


\subsection{Left and right adjoints for the evaluation functor}

Let $\mathcal{C}$ be an abelian category and $Q$ be a quiver. For
$i\in Q_{0}$, we recall that there is the evaluation functor $e_{i}:\Rep(Q,\C)\rightarrow\C,\;(F\xrightarrow{\alpha}G)\mapsto(F_i\xrightarrow{\alpha_i}G_i).$
 In this section, by following \cite{enochs2004flat, enochs1999homotopy} and \cite[Sect. 3]{holm2019cotorsion}, we describe the 
left and right  adjoints for $e_{i}.$ 

\begin{defn}\cite{enochs2004flat, enochs1999homotopy, holm2019cotorsion} Let $Q$ be a quiver, $\mathcal{C}$
an abelian category and $\kappa$ an infinite cardinal number. 
\begin{enumerate}
\item If $\kappa\geq\rccn(Q)$ and $\mathcal{C}$ is $AB3(\kappa),$ we
define the following: For every $C\in\mathcal{C}$ and $i\in Q_{0},$
let $f_{i}(C):=C{}^{(Q(i,-))}\in\operatorname{Rep}(Q,\mathcal{C}).$
That is, for every $j\in Q_{0}$, 
\[
(f_{i}(C))_j:=C{}^{(Q(i,j))}:=\coprod_{p\in Q(i,j)}C_{p},
\]
where $C_{p}:=C$ $\forall\,p\in Q(i,j);$ and for every arrow $\alpha:k\rightarrow l$ in $Q$
define
 $(f_{i}(C))_\alpha:(f_{i}(C))_k\to (f_{i}(C))_l$ as the (unique) morphism in $\C$  such that $(f_{i}(C))_\alpha\circ\mu_{\rho}^{C}=\mu_{\alpha\rho}^{C}$ $\forall\rho\in Q(i,k)$,
where $\mu_{\rho}^{C}:C\rightarrow C^{(Q(i,k))}$ and $\mu_{\alpha\rho}^{C}:C\rightarrow C^{(Q(i,l))}$
are the canonical inclusions. Note that the map $C\mapsto f_{i}(C)$
defines a functor $f_{i}:\mathcal{C}\rightarrow\operatorname{Rep}(Q,\mathcal{C})$ 
as follows: Let $C\xrightarrow{h}C'$ in $\C,$ defines, for each $k\in Q_0,$ 
$(f_i(h))_k:(f_i(C))_k\to (f_i(C'))_k$ as the unique morphism in $\C$ such that 
$(f_i(h))_k\circ\mu^{C}_\rho=\mu^{C'}_{\rho}\circ h$ $\forall\rho\in Q(i,k).$
For a class $\mathcal{S}\subseteq\mathcal{C},$ we consider the class
$f_{*}(\mathcal{S}):=\{f_{i}(S)\in\Rep(Q,\C)\,|\:i\in Q_{0}\text{ and }S\in\mathcal{S}\}$.
In case it is necessary to highlight in which quiver we are working,
we will use the notation $f_{i}^{Q}=f_{i}$.  

\item If $\kappa\geq\lccn(Q)$ and $\mathcal{C}$ is $AB3^{*}(\kappa),$
we define the following: For every $C\in\mathcal{C}$ and $i\in Q_{0},$
let $g_{i}(C):=C{}^{Q(-,i)}\in\operatorname{Rep}(Q,\mathcal{C}).$
That is, for every $j\in Q_{0},$ 
\[
(g_{i}(C))_j:=C{}^{Q(j,i)}:=\prod_{p\in Q(j,i)}C_{p},
\]
where $C_{p}:=C$ $\forall\,p\in Q(i,j);$ and for every arrow $\alpha:k\rightarrow l$ in $Q$,
define $(g_{i}(C))_\alpha:(g_i(C))_k\to (g_i(C))_l$ as the (unique) morphism in $\C$
satisfying that $\pi_{\rho}^{C}\circ (g_{i}(C))_\alpha=\pi_{\rho\alpha}^{C}$ $\forall\rho\in Q(l,i)$,
where $\pi_{\rho}^{C}:C^{Q(l,i)}\rightarrow C$ and $\pi_{\rho\alpha}^{C}:C^{Q(k,i)}\rightarrow C$
are the canonical projections. Note that the map $C\mapsto g_{i}(C)$
defines a functor $g_{i}:\mathcal{C}\rightarrow\operatorname{Rep}(Q,\mathcal{C})$.
For a class $\mathcal{S}\subseteq\mathcal{C},$ we consider the class
$g_{*}(\mathcal{S}):=\{g_{i}(S)\in\Rep(Q,\C)\,|\:i\in Q_{0}\text{ and }S\in\mathcal{S}\}$.
In case it is necessary to highlight in which quiver we are working,
we will use the notation $g_{i}^{Q}=g_{i}$. 
\end{enumerate}
\end{defn}

As we will see below, under mild conditions, the functors $f_{i},g_{i}:\mathcal{C}\rightarrow\operatorname{Rep}(Q,\mathcal{C})$
defined above are important because they are, respectively, the left
and right adjoint functors for the evaluation functor $e_{i}:\operatorname{Rep}(Q,\mathcal{C})\rightarrow\mathcal{C}$
 which is an exact functor. Thus \cite[Thm. 3.7]{holm2019cotorsion}
can be strengthened as follows.

\begin{prop}\label{rem:adjuntos de ei} For a quiver $Q,$ an infinite cardinal
number $\kappa$ and an abelian category $\C,$ the following statements
are true. 
\begin{itemize}
\item[(a)] Let $\C$ be AB3($\kappa$) and $\kappa\geq\rccn(Q).$ Then 
\begin{center}
$\Hom_{\operatorname{Rep}(Q,\mathcal{C})}(f_{i}\left(-\right),?)\cong\Hom_{\mathcal{C}}(-,e_{i}(?)),$
for any $i\in Q_{0}.$ 
\par\end{center}

Moreover $f_{*}({}^{\perp_{1}}\X)\subseteq{}^{\perp_{1}}\Rep(Q,\X)$
$\forall\,\X\subseteq\C.$ 
\item[(b)] Let $\C$ be AB3{*}($\kappa$) and $\kappa\geq\lccn(Q).$ Then 
\begin{center}
$\Hom_{\operatorname{Rep}(Q,\mathcal{C})}(-,g_{i}(?))\cong\Hom_{\mathcal{C}}(e_{i}(-),?),$
for any $i\in Q_{0}.$ 
\par\end{center}

Moreover $g_{*}(\X^{\perp_{1}})\subseteq\Rep(Q,\X)^{\perp_{1}}$ $\forall\,\X\subseteq\C.$ 
\end{itemize}
\end{prop}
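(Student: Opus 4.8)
The plan is to establish the adjunction $\Hom_{\Rep(Q,\C)}(f_i(-),?)\cong\Hom_\C(-,e_i(?))$ directly, by constructing the natural bijection on hom-sets, and then deduce the orthogonality statement $f_*({}^{\perp_1}\X)\subseteq{}^{\perp_1}\Rep(Q,\X)$ as a formal consequence; part (b) will follow by dualizing. First I would fix $C\in\C$, $F\in\Rep(Q,\C)$ and unwind what a morphism $\theta:f_i(C)\to F$ in $\Rep(Q,\C)$ consists of: a family $\{\theta_j:C^{(Q(i,j))}\to F_j\}_{j\in Q_0}$ compatible with all arrows. Since $C^{(Q(i,j))}=\coprod_{p\in Q(i,j)}C_p$ and $\C$ is $AB3(\kappa)$ with $\kappa\ge\rccn(Q)$ (so that $|Q(i,j)|<\kappa$ and the coproduct genuinely exists in $\C$), the morphism $\theta_j$ is determined by its components $\theta_j\circ\mu^C_p:C\to F_j$ for $p\in Q(i,j)$. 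The commutativity of $\theta$ with an arrow $\alpha:k\to l$, together with the defining relation $(f_i(C))_\alpha\circ\mu^C_\rho=\mu^C_{\alpha\rho}$, forces $\theta_l\circ\mu^C_{\alpha\rho}=F_\alpha\circ\theta_k\circ\mu^C_\rho$ for all $\rho\in Q(i,k)$. By induction on path length (using that every path from $i$ to $j$ is a composite of arrows starting from the length-zero path $\epsilon_i$), this shows that the single component $\theta_i\circ\mu^C_{\epsilon_i}:C\to F_i$ determines $\theta$ completely, and conversely any morphism $g:C\to F_i$ in $\C$ extends uniquely to such a $\theta$ by setting $\theta_j\circ\mu^C_p:=F_p\circ g$ for each path $p\in Q(i,j)$ (here $F_p$ denotes the image of the path $p$ under $F$). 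This gives the bijection $h_{C,F}:\Hom_{\Rep(Q,\C)}(f_i(C),F)\xrightarrow{\sim}\Hom_\C(C,e_i(F))$, $\theta\mapsto\theta_i\circ\mu^C_{\epsilon_i}$.

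Next I would verify naturality of $h$ in both variables. Naturality in $F$ is immediate since postcomposing $\theta$ with a morphism $F\to F'$ in $\Rep(Q,\C)$ postcomposes the $i$-component with the corresponding morphism $F_i\to F'_i$. Naturality in $C$ requires checking that the construction $g\mapsto\theta$ is compatible with precomposition by a morphism $C'\to C$ in $\C$; this follows directly from the functoriality of $f_i$, specifically the defining property $(f_i(h))_k\circ\mu^C_\rho=\mu^{C'}_\rho\circ h$ of the morphism $f_i(h)$. Thus $(f_i,e_i)$ is an adjoint pair.

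For the orthogonality claim, let $\X\subseteq\C$ and $C\in{}^{\perp_1}\X$, so $\Ext^1_\C(C,X)=0$ for all $X\in\X$; I must show $f_i(C)\in{}^{\perp_1}\Rep(Q,\X)$, i.e. $\Ext^1_{\Rep(Q,\C)}(f_i(C),F)=0$ for every $F\in\Rep(Q,\X)$. Since $e_i:\Rep(Q,\C)\to\C$ is exact (kernels and cokernels in $\Rep(Q,\C)$ are computed vertex-wise, as recalled in subsection 2.10) and it is the right adjoint of $f_i$, I can invoke Lemma \ref{lem:Ext vs adjoint 1}(1) with $S=f_i$, $T=e_i$: the map $\tau^1_{C,F}:\Ext^1_{\Rep(Q,\C)}(f_i(C),F)\to\Ext^1_\C(C,e_i(F))$ is injective. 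But $e_i(F)=F_i\in\X$ since $F\in\Rep(Q,\X)$, and $C\in{}^{\perp_1}\X$, so the target vanishes; hence $\Ext^1_{\Rep(Q,\C)}(f_i(C),F)=0$, as desired. Part (b) is the formal dual: one works in $\C^{op}$, or equivalently repeats the argument replacing coproducts by products, $\rccn$ by $\lccn$, $AB3(\kappa)$ by $AB3^{*}(\kappa)$, left adjoint by right adjoint, and $\tau^1$ by $\sigma^1$ from Lemma \ref{lem:Ext vs adjoint 1}(2).

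The main obstacle I anticipate is bookkeeping rather than conceptual: correctly organizing the inductive argument over path length that reduces a natural transformation $f_i(C)\to F$ to its single value at $C$, and keeping the canonical-inclusion indices $\mu^C_\rho$, $\mu^C_{\alpha\rho}$ straight through the commutativity diagrams for arbitrary arrows $\alpha:k\to l$. The hypothesis $\kappa\ge\rccn(Q)$ enters precisely to guarantee every cone $Q(i,j)$ has cardinality $<\kappa$, so that all the coproducts $C^{(Q(i,j))}$ appearing in the definition of $f_i(C)$ actually live in $\C$; this is the only place the cone-shape cardinal is used, and it should be flagged explicitly.
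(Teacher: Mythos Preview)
Your proposal is correct and follows essentially the same route as the paper: the paper obtains the adjunction by citing \cite[Thm.~3.7]{holm2019cotorsion} (whose explicit isomorphism $h_{C,F}:\theta\mapsto\theta_i\circ\mu^{C}_{\epsilon_i}$ is precisely the one you construct, and which the paper itself spells out just before Proposition~\ref{prop: Ext vs f vs e}), and then deduces the orthogonality inclusion exactly as you do, via Lemma~\ref{lem:Ext vs adjoint 1}(a) using that $e_i$ is exact. One small bookkeeping slip: in your naturality-in-$C$ paragraph, with $h:C'\to C$ the defining relation reads $(f_i(h))_k\circ\mu^{C'}_\rho=\mu^{C}_\rho\circ h$, not with $C$ and $C'$ as you wrote.
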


\begin{proof}
The adjointness in (a) follows as in the proof of \cite[Thm. 3.7]{holm2019cotorsion}.
Finally, for $\X\subseteq\C,$ the inclusion $f_{*}({}^{\perp_{1}}\X)\subseteq{}^{\perp_{1}}\Rep(Q,\X)$
follows from Lemma \ref{lem:Ext vs adjoint 1} (a) since $(f_{i},e_{i})$
is an adjoint pair and $e_{i}$ is an exact functor $\forall\,i\in Q_{0}.$ 
\end{proof}

In the next proposition we can see that, under some mild conditions
on $\mathcal{C}$, there is a similar isomorphism for the extensions
groups. Note that this result is a generalization of \cite[Prop. 5.2]{holm2019cotorsion} and follows from Propositions \ref{lem:Ext vs adjoint 3} and \ref{rem:adjuntos de ei} and the description, see in \cite[Thm 3.7]{holm2019cotorsion}, of the isomorphism 
$$h_{C,F}:\Hom_{\Rep(Q,\C)}(f_i(C),F)\to \Hom_\C(C,e_i(F)),\;t\mapsto t_i\circ\mu_{\epsilon_{i}}^{C},$$ where $\epsilon_{i}\in Q(i,i)$ is the path of length $0$ and $\mu_{\epsilon_{i}}^{C}:C\rightarrow C^{(Q(i,i))}$ is the natural inclusion.

\begin{prop}\label{prop: Ext vs f vs e} For a quiver $Q,$ an infinite cardinal
number $\kappa,$ an abelian category $\C$ and $i\in Q_{0},$ the
following statements are true. 
\begin{itemize}
\item[(a)] Let $\C$ be AB3($\kappa$) and $\kappa\geq\rccn(Q).$ Then $f_{i}:\C\to\Rep(Q,\C)$
is exact if, and only if, $\tau^{n}:\operatorname{Ext}_{\operatorname{Rep}(Q,\mathcal{C})}^{n}(f_{i}(?),-)\xrightarrow{\sim}\operatorname{Ext}_{\mathcal{C}}^{n}(?,e_{i}(-))\;$
$\forall\,n\geq1.$
\item[(b)] Let $\C$ be AB3{*}($\kappa$) and $\kappa\geq\lccn(Q).$ Then $g_{i}:\C\to\Rep(Q,\C)$
is exact if, and only if, $\sigma^{n}:\operatorname{Ext}_{\operatorname{Rep}(Q,\mathcal{C})}^{n}(?,g_{i}(-))\xrightarrow{\sim}\operatorname{Ext}_{\mathcal{C}}^{n}(e_{i}(?),-)\;$
$\forall\,n\geq1.$ 
\end{itemize}
Moreover, for $C\in \C$ and $G\in \Rep(Q,\C),$ we have that $\tau^n_{C,G}(\overline{\eta})=\overline{e_i(\eta)}\cdot \mu^C_{\epsilon_i}$ and 
$\sigma^n_{G,C}(\overline{\delta})=\pi^C_{\epsilon_i}\cdot\overline{e_i(\delta)},$ where $\pi^C_{\epsilon_i}:C^{Q(i,i)}\to C$ is the canonical projection.
\end{prop}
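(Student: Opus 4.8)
The plan is to deduce Proposition~\ref{prop: Ext vs f vs e} directly from the general machinery of Proposition~\ref{lem:Ext vs adjoint 3} applied to the adjoint pair $(f_i,e_i)$ (resp. $(e_i,g_i)$), using that $e_i$ is an exact functor. Let me describe part (a); part (b) is dual.

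First I would invoke Proposition~\ref{rem:adjuntos de ei}(a): under the hypotheses $\C$ is AB3($\kappa$) and $\kappa\geq\rccn(Q)$, the pair $(f_i,e_i)$ is an adjoint pair between $\C$ and $\Rep(Q,\C)$. Since $e_i$ is exact (kernels and cokernels in $\Rep(Q,\C)$ are computed vertex-wise), we are exactly in the situation of Proposition~\ref{lem:Ext vs adjoint 3}(1), with $S=f_i$, $T=e_i$, $\mathcal D = \C$ and $\C$ (of that proposition) equal to $\Rep(Q,\C)$. That proposition gives the equivalence of the three conditions: $f_i$ exact $\iff$ $\tau^1$ is a natural isomorphism $\iff$ $\tau^n$ is a natural isomorphism for all $n\geq 1$. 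This is precisely the biconditional claimed in (a): $f_i$ exact if and only if $\tau^n\colon \Ext^n_{\Rep(Q,\C)}(f_i(?),-)\xrightarrow{\sim}\Ext^n_{\C}(?,e_i(-))$ for all $n\geq 1$. So the first half of the statement is essentially a citation of the already-proved Proposition~\ref{lem:Ext vs adjoint 3}, once the exactness of $e_i$ and the adjointness are noted.

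For the ``Moreover'' part, I need to identify the unit $\varphi_C\colon C\to e_if_i(C)$ of the adjoint pair $(f_i,e_i)$ explicitly, so that the general formula $\tau^n_{C,G}(\overline\eta)=\overline{e_i(\eta)}\cdot\varphi_C$ from Lemma~\ref{lem:Ext vs adjoint 1}(1) becomes the stated $\overline{e_i(\eta)}\cdot\mu^C_{\epsilon_i}$. Here I would use the description of the adjunction isomorphism recalled just before the proposition, namely $h_{C,F}(t)=t_i\circ\mu^C_{\epsilon_i}$; by definition $\varphi_C=h_{C,f_i(C)}(1_{f_i(C)})=(1_{f_i(C)})_i\circ\mu^C_{\epsilon_i}=\mu^C_{\epsilon_i}$, since $e_i f_i(C)=(f_i(C))_i=C^{(Q(i,i))}$ and the $i$-component of the identity is the identity. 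So $\varphi_C=\mu^C_{\epsilon_i}$, which plugged into Lemma~\ref{lem:Ext vs adjoint 1}(1) yields the claimed formula for $\tau^n_{C,G}$. Dually, for part (b) the co-unit $\psi_C\colon e_ig_i(C)\to C$ of the pair $(e_i,g_i)$ is identified with the canonical projection $\pi^C_{\epsilon_i}\colon C^{Q(i,i)}\to C$, using the dual description of the adjunction isomorphism $\Hom_{\Rep(Q,\C)}(F,g_i(C))\cong\Hom_\C(e_i(F),C)$ from \cite[Thm.~3.7]{holm2019cotorsion}, and Lemma~\ref{lem:Ext vs adjoint 1}(2) then gives $\sigma^n_{G,C}(\overline\delta)=\pi^C_{\epsilon_i}\cdot\overline{e_i(\delta)}$.

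The only mildly delicate point — and the one I would be most careful about — is making sure the hypotheses line up so that Proposition~\ref{lem:Ext vs adjoint 3} genuinely applies: we need $(f_i,e_i)$ to be a bona fide adjoint pair of \emph{additive} functors between \emph{abelian} categories with $e_i$ exact, and this is guaranteed precisely by the cardinality condition $\kappa\geq\rccn(Q)$ together with AB3($\kappa$), which is what ensures $f_i(C)=C^{(Q(i,-))}$ is well-defined in $\Rep(Q,\C)$ in the first place. No genuine obstacle remains beyond bookkeeping; the substantive work was already done in Proposition~\ref{lem:Ext vs adjoint 3} and in establishing the adjunction, so the proof here should be short.
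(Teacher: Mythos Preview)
Your proposal is correct and matches the paper's own argument essentially verbatim: the paper also derives the result directly from Propositions~\ref{lem:Ext vs adjoint 3} and~\ref{rem:adjuntos de ei} together with the explicit description of the adjunction isomorphism $h_{C,F}(t)=t_i\circ\mu^C_{\epsilon_i}$ from \cite[Thm.~3.7]{holm2019cotorsion}. Your identification of the unit as $\varphi_C=\mu^C_{\epsilon_i}$ (and dually the co-unit as $\pi^C_{\epsilon_i}$) is exactly the computation the paper has in mind for the ``Moreover'' clause.
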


In what follows we discuss enough conditions to have the exactness of $f_i$ and $g_i.$ 

\begin{rem}\label{AB3+Inj=AB4} For an abelian category $\C$ and an infinite cardinal number $\kappa,$ we have the following:
\

(a) By the beginning of the proof of \cite[Prop. 5.2]{holm2019cotorsion},
it can be shown that: If $\C$ is $AB4(\kappa)$ and $\kappa\geq\rccn(Q),$
then $f_{i}:\C\to\Rep(Q,\C)$ is exact $\forall\,i\in Q_{0}.$ Dually,
we also have that: If $\C$ is $AB4^{*}(\kappa)$ and $\kappa\geq\lccn(Q),$
then $g_{i}:\C\to\Rep(Q,\C)$ is exact $\forall\,i\in Q_{0}.$ 
\

(b) By the proof of \cite[Cor. 2.9]{Popescu}, it can be shown that:
If $\C$ has enough injectives and $\C$ is $AB3(\kappa),$ then $\C$ is $AB4(\kappa).$ 
\

(c) Let $i\in Q_{0}.$ If $\rccn_{i}(Q)\leq\aleph_{0}$ then $f_{i}:\C\to\Rep(Q,\C)$
is exact. Dually, If $\lccn_{i}(Q)\leq\aleph_{0}$ then $g_{i}:\C\to\Rep(Q,\C)$
is exact. 
\end{rem}

As we have seen in Remark \ref{AB3+Inj=AB4}, an enough condition
to get that $f_{i}$ (respectively, $g_{i}$) is exact $\forall\,i\in Q_{0}$
is that $\C$ be AB4($\kappa$) with $\kappa\geq\rccn(Q)$ (respectively,
AB4{*}($\kappa$) with $\kappa\geq\lccn(Q)$). In what follows, we
give an example where $g_{i}$ exists $\forall\,i\in Q_{0}.$ However
$\exists\,j\in Q_{0}$ such that $g_{j}$ is not exact but $g_{i}$
is exact $\forall\,i\in Q_{0}-\{j\}$ (see Proposition \ref{prop: Ext vs f vs e}
and \cite[Prop. 3.9]{odabacsi2019completeness}).

\begin{example}
Let $Ab$ be the category of abelian groups. Consider the full subcategory
$\C$ whose objects are the torsion groups. Note that, for any family
$\{X_{i}\}_{i\in I}$ of objects in $\C,$ the coproduct $\coprod_{i\in I}X_{i}$
in $Ab$ is also the coproduct in $\C.$ Moreover, the product in
$\C,$ denoted by $\prod_{i\in I}^{\C}X_{i},$ also exists and it
is the torsion subgroup of the product $\prod_{i\in I}X_{i}$ in $Ab.$
Thus $\C$ is an abelian category which is $AB3$ and $AB3^{*}.$ %
\begin{minipage}[t]{0.7\columnwidth}%
Now, let $Q$ be the quiver consisting of 
\begin{alignat*}{1}
Q_{0} & :=\{x,y\}\cup\{n\in\mathbb{Z}\,|\:n>0\}\\
Q_{1} & :=\{\alpha_{n}:x\rightarrow n\}_{n>0}\cup\{\beta_{n}:n\rightarrow y\}_{n>0}.
\end{alignat*}

Note that $\lccn(Q)=\aleph_{1}$ and thus $g_{i}$ exists for all
$i\in Q_{0}.$ We assert that $g_{y}:\C\to\Rep(Q,\C)$ is not an exact
functor. Indeed, for each $n>0,$ consider the morphism
\begin{center}
$\pi_{n}:\mathbb{Z}_{2^{n}}\rightarrow\mathbb{Z}_{2},\;a+2^{m}\mathbb{Z}\mapsto a+2\mathbb{Z}.$
\par\end{center}%
\end{minipage}\hfill{}%
\fbox{\begin{minipage}[t]{0.25\columnwidth}%
\[ \begin{tikzpicture}[-,>=to,shorten >=1pt,auto,node distance=3cm,main node/.style=,x=1cm,y=.5cm]
\node (Q) at (-.5,0) {$Q:$}; \node (x) at (0,0) {$x$}; \node (y) at (2,0) {$y$}; \node (2) at (1,.5) {$2$}; \node (1) at (1,1.5) {$1$}; \node (d) at (1,-.5) {$\vdots$};
\node (d1) at (1,-2) {$\vdots$}; \node (n) at (1,-1.5) {$n$};
\draw[->, thick]  (x)  to  node [above left] {$\scriptstyle \alpha _1$} (1); \draw[->, thick]  (x)  to  node [below right] {$$} (2); \draw[->, thick]  (x)  to  node [below left] {$\scriptstyle \alpha _n$} (n); \draw[->, thick]  (1)  to  node [above right] {$\scriptstyle \beta _1$} (y); \draw[->, thick]  (2)  to  node [below left] {$$} (y); \draw[->, thick]  (n)  to  node [below right] {$\scriptstyle \beta _n$} (y);
\end{tikzpicture} \]
\end{minipage}}

Thus, we have that 
\[
\pi:=\coprod_{n>0}\pi_{n}:\coprod_{n>0}\mathbb{Z}_{2^{n}}\rightarrow\coprod_{n>0}\mathbb{Z}_{2}
\]
is an epimorphism in $\mathcal{C}.$ Let $h:=(g_{y}(\pi))_x=\prod_{p\in Q(x,y)}^{\C}\pi.$
Then 
\begin{center}
$h:=\prod_{k>0}^{\C}\pi=\prod_{k>0}^{\C}\left(\coprod_{n>0}\pi_{n}\right):\prod_{k>0}^{\C}\left(\coprod_{n>0}\mathbb{Z}_{2^{n}}\right)\rightarrow\prod_{k>0}^{\C}\left(\coprod_{n>0}\mathbb{Z}_{2}\right).$ 
\par\end{center}
Let us show that $h$ is not an epimorphism in $\mathcal{C}.$ Indeed,
firstly we recall that $A:=\prod_{k>0}^{\C}\left(\coprod_{n>0}\mathbb{Z}_{2^{n}}\right)$
is the torsion subgroup of $\prod_{k>0}\left(\coprod_{n>0}\mathbb{Z}_{2^{n}}\right)$,
and $B:=\prod_{k>0}^{\C}\left(\coprod_{n>0}\mathbb{Z}(2)\right)=\prod_{k>0}\left(\coprod_{n>0}\mathbb{Z}(2)\right)$.
Now, consider the object $b:=(b_{k})_{k>0}\in B$ with $b_{k}:=(b{}_{n}^{k})_{n>0}$
defined as $b_{k}^{k}=1+2\mathbb{Z}$ and $b_{n}^{k}=0$ for all $n\neq k$.
If we assume that there is $a:=(a_{k})_{k>0}\in A$, with $a_{k}=(a{}_{n}^{k})_{n>0}$,
such that $h(a)=b$, then $\pi_{k}(a_{k}^{k})=1+2\mathbb{Z}$ for
all $k>0$. Observe that this implies that $a_{k}^{k}$ is an odd
number, and therefore the order of $a_{k}^{k}$ is $2^{k}$ for all
$k>0.$ Hence $a$ is necessarily of infinite order, which is a contradiction
since $A$ is a torsion group. Finally, the discussion above shows
also that $\C$ is not $AB4^{*}(\aleph_{1}).$ Moreover, note that
$\lccn_{y}(Q)=\aleph_{1}.$ On the other hand, for $i\in Q_{0}-\{y\},$
we have that $\lccn_{i}(Q)=2;$ and thus $g_{i}$ is exact for those
vertices $i.$ 
\end{example}

The next lemma will be useful in the following section.

\begin{lem}\label{LemAux} For a quiver $Q,$ an infinite cardinal number $\kappa,$
an abelian category $\C$ and a family $\{X_{i}\}_{i\in Q_{0}}$ of
objects in $\C,$ the following statements hold true. 
\begin{itemize}
\item[(a)] Let $\kappa\geq\max\{\rccn(Q),\ltccn(Q)\}$ and $\C$ be $AB3(\kappa).$
Then, there exists $\coprod_{i\in Q_{0}}f_{i}(X_{i})$ and $\coprod_{\rho\in Q_{1}}f_{t(\rho)}(X_{s(\rho)})$
in $\Rep(Q,\C).$ 
\item[(b)] Let $\kappa\geq\max\{\lccn(Q),\rtccn(Q)\}$ and $\C$ be $AB3^{*}(\kappa).$
Then, there exists $\prod_{i\in Q_{0}}g_{i}(X_{i})$ and $\prod_{\rho\in Q_{1}}g_{s(\rho)}(X_{t(\rho)})$
in $\Rep(Q,\C).$ 
\end{itemize}
\end{lem}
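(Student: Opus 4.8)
The plan is to prove part (a) and obtain part (b) by duality, since (b) is the statement for $Q^{op}$ with $f$ replaced by $g$ (note that $\rccn(Q^{op})=\lccn(Q)$ and $\ltccn(Q^{op})=\rtccn(Q)$). For part (a), the issue is purely one of \emph{existence} of certain coproducts in $\Rep(Q,\C)$: since colimits in $\Rep(Q,\C)$ are computed vertex-wise in $\C$, a coproduct $\coprod_{i\in Q_0}f_i(X_i)$ exists provided each vertex $j\in Q_0$ receives a coproduct in $\C$, namely $\bigl(\coprod_{i\in Q_0}f_i(X_i)\bigr)_j=\coprod_{i\in Q_0}(f_i(X_i))_j=\coprod_{i\in Q_0}X_i^{(Q(i,j))}$. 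So the whole point is to bound the cardinality of the indexing sets that appear and to see that they are $<\kappa$, hence covered by the $AB3(\kappa)$ hypothesis.

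First I would rewrite the vertex-$j$ component: $\coprod_{i\in Q_0}X_i^{(Q(i,j))}$ is, up to the obvious reindexing, a coproduct of copies of the $X_i$ indexed by the set $\bigsqcup_{i\in Q_0}Q(i,j)=Q(-,j)$, the left cone at $j$. By definition of $\ltccn(Q)$ as $\mathrm{size}(\{|Q(-,i)|\}_{i\in Q_0})$ we have $|Q(-,j)|<\ltccn(Q)\le\kappa$ for every $j$ (this is exactly what the $\mathrm{size}$ operation is engineered to guarantee: if the supremum is attained one passes to its successor). Hence the coproduct at vertex $j$ is a set-indexed coproduct of cardinality $<\kappa$, which exists because $\C$ is $AB3(\kappa)$. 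Doing this for all $j\in Q_0$ and checking — routinely — that the structure maps assemble into a representation and that the universal property holds vertex-wise, we get $\coprod_{i\in Q_0}f_i(X_i)\in\Rep(Q,\C)$. The role of the hypothesis $\kappa\ge\rccn(Q)$ is simply to guarantee that each individual functor $f_i$ is defined in the first place (that is the standing assumption in the Definition of $f_i$).

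For the second coproduct, $\coprod_{\rho\in Q_1}f_{t(\rho)}(X_{s(\rho)})$, I would argue the same way: its component at a vertex $j$ is $\coprod_{\rho\in Q_1}X_{s(\rho)}^{(Q(t(\rho),j))}$, which is a coproduct of copies of objects of $\C$ indexed by the set $\bigsqcup_{\rho\in Q_1}Q(t(\rho),j)$. Each path $p\in Q(i,j)$ of positive length factors uniquely as $p=p'\rho$ with $\rho\in Q_1^{i\to *}$ and $p'\in Q(t(\rho),j)$, so this disjoint union injects into $Q(-,j)$ (a path of length $0$ is not hit, but that only makes the set smaller); more crudely, $\bigl|\bigsqcup_{\rho\in Q_1}Q(t(\rho),j)\bigr|\le|Q(-,j)|<\ltccn(Q)\le\kappa$. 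Again $AB3(\kappa)$ supplies the coproduct at each vertex and the representation is assembled vertex-wise. Then part (b) follows by applying part (a) to $Q^{op}$ and dualizing, using that products in $\Rep(Q,\C)$ are computed vertex-wise and that $\prod_{<\kappa}$ in $\C$ exists by $AB3^{*}(\kappa)$.

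The main obstacle is conceptual rather than technical: one has to recognize that the relevant cardinal controlling the \emph{total} indexing set $\bigsqcup_{i\in Q_0}Q(i,j)$ at a fixed target vertex $j$ is precisely the thick cone-shape cardinal $\ltccn(Q)$ and not merely $\rccn(Q)$ (which only controls each $f_i$ separately), and to verify that $\mathrm{size}(\cdot)$ gives a \emph{strict} upper bound for each member of the family, so that $|Q(-,j)|<\kappa$ holds uniformly in $j$. Once that bookkeeping is in place, everything else is a routine vertex-wise verification of the universal property, which I would not spell out in detail.
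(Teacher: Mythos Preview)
Your proof is correct and follows essentially the same strategy as the paper: both arguments reduce to showing that, at each vertex $j$, the relevant indexing set has cardinality strictly below $\kappa$ by comparing it to the left cone $Q(-,j)$, whose size is controlled by $\ltccn(Q)$. The paper phrases this slightly differently---it first bounds the number of vertices (resp.\ arrows) that contribute nontrivially at $j$ by injecting $H_j=\{k:Q(k,j)\neq\emptyset\}$ and $H'_j=\{\rho\in Q_1:Q(t(\rho),j)\neq\emptyset\}$ into $Q(-,j)$, and then takes a $<\kappa$-indexed coproduct of the already-existing objects $X_t^{(Q(t,j))}$---whereas you bound the \emph{total} indexing set $\bigsqcup_i Q(i,j)=Q(-,j)$ (resp.\ $\bigsqcup_{\rho}Q(t(\rho),j)\hookrightarrow Q(-,j)$ via $(\rho,p')\mapsto p'\rho$) in one step; your version is marginally more direct but the content is the same.
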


\begin{proof}
We only prove (a). Note that $f_{i}$ exists for each $i\in Q_{0}$
since $\kappa\geq\rccn(Q).$ Now, for each $j\in Q_{0},$ consider
the sets $H_{j}:=\{k\in Q_{0}\,|\:Q(k,j)\neq\emptyset\}$ and $H'_{j}:=\{\rho\in Q_{1}\,|\:Q(t(\rho),j)\neq\emptyset\}$. We assert that $\max\{|H_j|,|H'_j|\}<\kappa.$ Indeed, for $j\in Q_{0}$ and $\rho\in H'_j,$ we fix some $\tilde{\rho}\in Q(t(\rho),j).$ Thus, the map $H'_j\to \bigcup_{i\in Q_{0}}Q(i,j),\;\rho\mapsto \tilde{\rho}\rho,$ is injective and therefore $|H'_j|\leq |\bigcup_{i\in Q_{0}}Q(i,j)|<\ltccn(Q)\leq\kappa.$ Similarly, it can be shown that $|H_j|<\kappa$ and so our assertion follows. In particular, we get that $\coprod_{t\in H_{j}}X_{t}^{(Q(t,j))}$ and $\coprod_{\rho\in H'_{j}}X_{s(\rho)}^{(Q(t(\rho),j))}$
exists in $\C,$ for any $j\in Q_{0}.$

 Consider the representation $\overline{X}\in\operatorname{Rep}(Q,\mathcal{C})$
defined by 
$\overline{X}_j:=\coprod_{t\in H_{j}}X_{t}^{(Q(t,j))},$ for all $j\in Q_{0};$ 
and for each $\alpha:k\to l$ in $Q_{1},$ define $\overline{X}_\alpha:\overline{X}_k\rightarrow\overline{X}_l$
as the (unique) morphism satisfying that $\overline{X}_\alpha\circ\mu_{i}^{k}=\mu_{i}^{l}\circ (f_{i}(X_{i}))_\alpha\;$
$\forall i,$ where $\mu_{i}^{k}:X_{i}^{(Q(i,k))}\rightarrow\coprod_{i\in H_{k}}X_{i}^{(Q(i,k))}=\overline{X}_k$
and $\mu_{i}^{l}:X_{i}^{(Q(i,l))}\rightarrow\coprod_{i\in H_{l}}X_{i}^{(Q(i,l))}=\overline{X}_l$
are the canonical inclusions. We let the reader to show that $\mu_{i}:=\{\mu_{i}^{k}\}_{k\in Q_{0}}:f_{i}(X_{i})\to \overline{X}$
is the coproduct in $\Rep(Q,\C)$ of the family $\{f_{i}(X_{i})\}_{i\in Q_{0}}.$
The construction for $\coprod_{\rho\in Q_{1}}f_{t(\rho)}(X_{s(\rho)})$
is obtained in a similar way.
\end{proof}

\section{The canonical presentation given by a family of functors}

Let $\C$ be an abelian category and $Q$ be a quiver. In this section
we will prove under mild conditions that, for every $F\in\Rep(Q,\C),$ there is an exact sequence of functors
$
\suc[\coprod_{\rho\in Q_{1}}f_{t(\rho)}e_{s(\rho)}(F)][\coprod_{i\in Q_{0}}f_{i}e_{i}(F)][F]
$
called the canonical presentation given by the family $\{f_i\}_{i\in Q_0}$ (the dual result also holds and it is also stated). Once this is
done, we will use this exact sequence to bound the global dimension
of $\Rep(Q,\C)$, to give sufficient conditions for $g_{i}$ to have a right adjoint, and we will give a new proof of some known results. 

\subsection{The canonical presentation}

Let us begin observing that, when exists, the families of functors $\{f_{i}\}_{i\in Q_0}$ and $\{g_{i}\}_{i\in Q_0},$ which were introduced before Proposition \ref{rem:adjuntos de ei},  induce
the functors given below.

\begin{prop}\label{funtor-f}
For a quiver $Q$ and an infinite cardinal number $\kappa$ the following
statements are true. 
\begin{enumerate}
\item Let $\C$ be $AB3(\kappa)$ and $\kappa\geq\rccn(Q).$ Then, the family of functors $\{f_{i}\}_{i\in Q_0}$
induces the functor 
$
f:\mathcal{C}_{Q}^{op}\rightarrow\Fun(\mathcal{C},\Rep(Q,\mathcal{C}))
$
 such that $f(i)(C)=f_i(C):=C{}^{(Q(i,-))}$ for all $C\in\C$ and $i\in Q_0.$
 
\item Let $\C$ be $AB3^{*}(\kappa)$ and $\kappa\geq\lccn(Q).$ Then, the family of functors $\{g_{i}\}_{i\in Q_0}$
induces the functor 
$
g:\mathcal{C}_{Q}^{op}\rightarrow\Fun(\mathcal{C},\Rep(Q,\mathcal{C}))
$
 such that $g(i)(C)=g_{i}(C):=C{}^{Q(-,i)}$ for all $C\in\C$ and $i\in Q_0.$
\end{enumerate}
\end{prop}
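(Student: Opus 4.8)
I would prove this as a direct construction: show that the assignments $i \mapsto f_i$ (for vertices) together with the arrows of $\mathcal{C}_Q$ assemble into a functor $f:\mathcal{C}_Q^{op}\to\Fun(\mathcal{C},\Rep(Q,\mathcal{C}))$, and dually for $g$. Since it suffices to prove part (1) (part (2) being dual via $Q\mapsto Q^{op}$ and $\mathcal{C}\mapsto\mathcal{C}^{op}$), I will focus there. First I would note that the hypotheses $\C$ is $AB3(\kappa)$ with $\kappa\geq\rccn(Q)$ are exactly what is needed for each $f_i(C)=C^{(Q(i,-))}$ to exist in $\Rep(Q,\C)$, and that each $f_i:\C\to\Rep(Q,\C)$ is already known to be a functor from the Definition preceding Proposition \ref{rem:adjuntos de ei}. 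So the content of the statement is the assignment on \emph{morphisms} of $\mathcal{C}_Q^{op}$, i.e.\ on paths of $Q$.

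\textbf{Construction on morphisms.} For a path $\gamma\in Q(i,j)$ (a morphism $i\to j$ in $\mathcal{C}_Q$, hence a morphism $j\to i$ in $\mathcal{C}_Q^{op}$), I need a natural transformation $f(\gamma):f_j\Rightarrow f_i$ of functors $\C\to\Rep(Q,\C)$. For $C\in\C$ and $k\in Q_0$, the component $(f(\gamma)_C)_k:(f_j(C))_k=C^{(Q(j,k))}\to C^{(Q(i,k))}=(f_i(C))_k$ should be induced by the map of index sets $Q(j,k)\to Q(i,k)$, $\rho\mapsto\rho\gamma$ (precomposition with $\gamma$); concretely, $(f(\gamma)_C)_k$ is the unique morphism in $\C$ with $(f(\gamma)_C)_k\circ\mu^C_\rho=\mu^C_{\rho\gamma}$ for all $\rho\in Q(j,k)$, using the universal property of the coproduct. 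Then I would verify in turn: (i) this family $\{(f(\gamma)_C)_k\}_{k\in Q_0}$ is a morphism in $\Rep(Q,\C)$, i.e.\ commutes with the structure maps $(f_j(C))_\alpha$ and $(f_i(C))_\alpha$ for each arrow $\alpha$ — this reduces, after composing with the canonical inclusions $\mu^C_\rho$, to the associativity identity $(\alpha\rho)\gamma=\alpha(\rho\gamma)$ of path concatenation; (ii) naturality of $f(\gamma)$ in $C$, which again reduces on inclusions to the defining equations of $f_i(h)$ for $h:C\to C'$; (iii) functoriality: $f(\epsilon_i)=1_{f_i}$ since $\rho\epsilon_i=\rho$, and $f(\gamma\gamma')=f(\gamma')\circ f(\gamma)$ (order reversed, as we are in $\mathcal{C}_Q^{op}$) since $\rho(\gamma\gamma')=(\rho\gamma)\gamma'$ — here one checks equality of two morphisms out of a coproduct by precomposing with each $\mu^C_\rho$.

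\textbf{The main obstacle.} Honestly, there is no deep obstacle: every verification above collapses, via the universal property of coproducts, to an identity of paths (associativity/unitality of concatenation in $\mathcal{C}_Q$) together with the already-established functoriality of each $f_i$ in its argument. The one point requiring a little care is making sure the coproducts used actually exist at every stage — but this is guaranteed by $\kappa\geq\rccn(Q)$, since $\rccn_i(Q)=\mathrm{size}(\{|Q(i,j)|\}_{j\in Q_0})$ bounds all the index sets $Q(i,k)$ in play, and $\C$ is $AB3(\kappa)$. So I would present the proof compactly: define $f$ on objects and morphisms as above, and then state that claims (i)--(iii) each follow by checking on canonical inclusions $\mu^C_\rho$, where they become the identities $(\alpha\rho)\gamma=\alpha(\rho\gamma)$, $\rho\epsilon_i=\rho$, and $\rho(\gamma\gamma')=(\rho\gamma)\gamma'$ respectively; part (2) is dual, replacing coproducts/inclusions by products/projections and precomposition $\rho\mapsto\rho\gamma$ by postcomposition $\rho\mapsto\gamma\rho$ on the index sets $Q(k,i)\to Q(k,j)$.
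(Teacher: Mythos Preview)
Your proposal is correct and follows essentially the same approach as the paper: the paper defines $f_\rho$ on a path $\rho\in Q(i,j)$ by exactly the same formula $(f_\rho(X))_k\circ\mu^X_\lambda=\mu^X_{\lambda\rho}$ for $\lambda\in Q(j,k)$, and then simply declares the remaining verifications (that $f_\rho(X)$ is a morphism of representations, naturality in $X$, and functoriality in $\rho$) to be ``straightforward to check''. Your write-up is just a more explicit version of the same argument, spelling out that each check reduces on canonical inclusions to an identity of path concatenation.
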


\begin{proof}
Let us prove (a). For $i\in Q_{0}$, we have already defined the functor
$f_{i}\in\Fun(\mathcal{C},\Rep(Q,\mathcal{C}))$. Now, for a path
$\rho\in Q(i,j)$, define the natural transformation $f(\rho):=f_{\rho}:f_{j}\rightarrow f_{i}$
as follows. Let $X\in\C$. For each $k\in Q_{0}$, define the morphism $(f_{\rho}(X))_k:(f_j(X))_k\rightarrow (f_i(X))_k$
as the unique one such that 
$(f_{\rho}(X))_k\circ\mu_{\lambda}^{X}=\mu_{\lambda\rho}^{X}$ $\forall\,\lambda\in Q(j,k),$
where $\mu_{\lambda}^{X}:X\rightarrow X^{(Q(j,k))}$ and $\mu_{\lambda\rho}^{X}:X\rightarrow X^{(Q(i,k))}$
are the canonical inclusions. It is straightforward to check that
$f_{\rho}(X):f_{j}(X)\rightarrow f_{i}(X)$ is a morphism of representations
 and that $f_{\rho}:=\{f_{\rho}(X)\}_{X\in\C}$
is a natural transformation.
\end{proof}
%

We also have that the family of evaluation functors $\{e_{i}:\Rep(Q,\mathcal{C})\to \C\}_{i\in Q}$ induces a functor, as can be seen below.

\begin{prop}\label{funtor-e}
Let $Q$ be a quiver and $\C$ be an abelian category. Then, the family of evaluation functors $\{e_{i}\}_{i\in Q_0}$ induces the functor  $e:\mathcal{C}_{Q}\rightarrow\Fun(\Rep(Q,\mathcal{C}),\mathcal{C})$ such that 
 $e(i)(F)=e_{i}(F):=F_i$ for all $F\in\Rep(Q,\mathcal{C})$ and $i\in Q_0.$
\end{prop}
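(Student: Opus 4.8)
The plan is to construct $e$ explicitly as a functor from the path category $\mathcal{C}_Q$ to $\Fun(\Rep(Q,\mathcal{C}),\mathcal{C})$, mirroring the construction of $f$ in Proposition~\ref{funtor-f} but with the variance reversed, since here the source is $\mathcal{C}_Q$ (not $\mathcal{C}_Q^{op}$). On objects we set $e(i) := e_i$, the evaluation functor at $i$, which is already known to be an exact functor in $\Fun(\Rep(Q,\mathcal{C}),\mathcal{C})$. On morphisms, given a path $\rho \in Q(i,j)$, we must produce a natural transformation $e(\rho) = e_\rho : e_i \Rightarrow e_j$. The natural candidate is $(e_\rho)_F := F_\rho : F_i \to F_j$ for each representation $F \in \Rep(Q,\mathcal{C})$, where $F_\rho$ is the morphism assigned by the functor $F : \mathcal{C}_Q \to \mathcal{C}$ to the path $\rho$ (note that if $\rho = \alpha_n \cdots \alpha_1$ then $F_\rho = F_{\alpha_n} \circ \cdots \circ F_{\alpha_1}$, and for the length-zero path $\epsilon_i$ we get $F_{\epsilon_i} = 1_{F_i}$).

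**The verification** then splits into three routine checks. First, for a fixed path $\rho \in Q(i,j)$, the family $e_\rho := \{F_\rho\}_{F \in \Rep(Q,\mathcal{C})}$ is a natural transformation $e_i \Rightarrow e_j$: given a morphism $\gamma : F \to F'$ in $\Rep(Q,\mathcal{C})$, the square with sides $F_\rho$, $F'_\rho$, $\gamma_i$, $\gamma_j$ commutes because $\gamma$ being a morphism of representations means $\gamma_{t(\alpha)} \circ F_\alpha = F'_\alpha \circ \gamma_{s(\alpha)}$ for every arrow $\alpha$, and one composes these squares along the arrows constituting $\rho$. Second, functoriality on composition: for composable paths $\rho \in Q(i,j)$ and $\lambda \in Q(j,k)$, the concatenation $\lambda\rho$ in $\mathcal{C}_Q$ satisfies $F_{\lambda\rho} = F_\lambda \circ F_\rho$ since $F$ is a functor, hence $e_{\lambda\rho} = e_\lambda \cdot e_\rho$ as a vertical composite (Godement product) of natural transformations. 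Third, $e$ preserves identities: $e_{\epsilon_i}$ has components $F_{\epsilon_i} = 1_{F_i}$, so $e_{\epsilon_i} = 1_{e_i}$.

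**No serious obstacle** is expected here; the statement is essentially a book-keeping consequence of the fact that each $F \in \Rep(Q,\mathcal{C})$ is by definition a functor $\mathcal{C}_Q \to \mathcal{C}$, so the assignment $(i \xrightarrow{\rho} j) \mapsto (F_i \xrightarrow{F_\rho} F_j)$ is functorial in $F$ and the collection over all $F$ assembles into the desired natural-transformation data. The only mild subtlety worth mentioning explicitly in the write-up is the variance: unlike $f$ and $g$, which send a path $\rho : i \to j$ to a transformation $f_j \Rightarrow f_i$ (contravariant, hence the $\mathcal{C}_Q^{op}$ in the domain), the evaluation construction is genuinely covariant, which is why the domain is $\mathcal{C}_Q$ itself. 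I would state the proof in one short paragraph: define $e(\rho) := \{F_\rho : F_i \to F_j\}_{F \in \Rep(Q,\mathcal{C})}$, remark that naturality follows from the commutativity conditions defining morphisms in $\Rep(Q,\mathcal{C})$, and that the functor axioms for $e$ follow from the functor axioms for each $F$ applied to the composition and identities in $\mathcal{C}_Q$.
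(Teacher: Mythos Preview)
Your proposal is correct and follows essentially the same approach as the paper: define $e(i):=e_i$ on objects and $(e_\rho)_F:=F_\rho$ on paths, then verify naturality and functoriality. The paper's proof is in fact briefer than yours, leaving the verification that $e_\rho$ is a natural transformation (and implicitly the functor axioms) to the reader, so your outline actually supplies more detail than the original. One minor terminological point: the composite $e_{\lambda\rho}=e_\lambda\circ e_\rho$ is the ordinary \emph{vertical} composition of natural transformations, not the Godement (horizontal) product; you should drop that parenthetical in the write-up.
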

\begin{proof} For each $F\in\Rep(Q,\mathcal{C})$ and $i\in Q_0,$ we set $e(i)(F):=e_{i}(F).$ Consider a path $\rho\in Q(i,j),$ and define $e(\rho):=e_\rho:e_i\to e_j$ as follows. For each $F\in \Rep(Q,\mathcal{C}),$ we can extend $F$ on the path $\rho$ by taking $F_\rho:=F_{\alpha_n}F_{\alpha_{n-1}}\cdots F_{\alpha_1}$ if $\rho=\alpha_n\alpha_{n-1}\cdots \alpha_1$ and $\alpha_i\in Q_1$ $\forall\,i.$ Then, define 
$(e_{\rho})_F:=F_\rho:e_i(F)\to e_j(F).$ We let to the reader to show that $e_\rho:e_i\to e_j$ is a morphism of functors.
\end{proof}

\begin{rem}\label{fefe-nat} Let $\C$ be $AB3(\kappa)$ and $\kappa\geq\rccn(Q)$ be an infinite cardinal. For every $\rho\in Q(j,i)$, by Propositions \ref{funtor-f} and \ref{funtor-e}, we get the following diagram of functors and natural transformations
\[\begin{tikzcd}
	\Rep(Q,\mathcal{C}) &&& \C &&& \Rep(Q,\mathcal{C})
	\arrow[""{name=0, anchor=center, inner sep=0}, "e_i"', curve={height=24pt}, from=1-1, to=1-4]
	\arrow[""{name=1, anchor=center, inner sep=0}, "f_i", curve={height=-24pt}, from=1-4, to=1-7]
	\arrow[""{name=2, anchor=center, inner sep=0}, "f_j"', curve={height=24pt}, from=1-4, to=1-7]
	\arrow[""{name=3, anchor=center, inner sep=0}, "e_j", curve={height=-24pt}, from=1-1, to=1-4]
	\arrow["e_{\rho}", shorten <=6pt, shorten >=6pt, Rightarrow, from=3, to=0]
	\arrow["f_{}\rho", shorten <=6pt, shorten >=6pt, Rightarrow, from=1, to=2]
\end{tikzcd}\]

Thus $f_{i}\cdot e_{\rho}:f_{i}\circ e_{j}\rightarrow f_{i}\circ e_{i}$ and $f_{\rho}\cdot e_{j}:f_{i}\circ e_{j}\rightarrow f_{j}\circ e_{j}$ are natural transformations (use Godement's product). In particular, $(f_{i}\cdot e_{\rho})_F=f_i(F_\rho):f_i(F_j)\to f_i(F_i)$ and $(f_{\rho}\cdot e_{j})_F=f_\rho(F_j):f_i(F_j)\to f_j(F_j),$ for any $F\in\Rep(Q,\C).$
\end{rem}

\begin{lem}\label{LMf-Psi} For a quiver $Q,$ an infinite cardinal number $\kappa\geq\rccn(Q)$ and an abelian $AB3(\kappa)$ category $\C,$ the following statements hold true.
\begin{itemize}
\item[(a)] For each $i\in Q_0,$ let $\psi^{i}:f_{i}\circ e_{i}\rightarrow1_{\Rep(Q,\C)}$ be given by the adjoint pair $(f_{i},e_{i})$ (see Proposition \ref{rem:adjuntos de ei} (a)). Then, for $F\in\Rep(Q,\C)$ 
and $k\in Q_{0},$ we have that $(\psi_{F}^{i})_k:(f_i(F_i))_k\rightarrow F_k$
is the unique morphism such that $(\psi_{F}^{i})_k\circ\mu^{F_i}_{\lambda}=F_\lambda$
for all $\lambda\in Q(i,k),$ were $\mu^{F_i}_{\lambda}:F_i\rightarrow F_i^{(Q(i,k))}$ is the canonical inclusion.
\item[(b)] For any $\rho\in Q(j,i),$ the following diagram commutes
\[\begin{tikzcd}
	{f_i\circ e_j} && {f_i\circ e_i} \\
	\\
	{f_j\circ e_j} && 1_{\Rep(Q,\C)}.
	\arrow["{f_i\cdot e_\rho}", from=1-1, to=1-3]
	\arrow["{f_\rho\cdot e_j}"', from=1-1, to=3-1]
	\arrow["{\psi^i}", from=1-3, to=3-3]
	\arrow["{\psi^j}"', from=3-1, to=3-3]
\end{tikzcd}\]
\end{itemize}
\end{lem}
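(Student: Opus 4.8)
The plan is to prove both statements by reducing them to the defining universal property of the coproducts $F_i^{(Q(i,k))}$ and the characterization of the counit of an adjoint pair from Lemma \ref{lem: mitchell adjoint pairs}(2). For part (a), I would start from the fact that $(f_i, e_i)$ is an adjoint pair (Proposition \ref{rem:adjuntos de ei}(a)) whose unit/counit description is recorded in the excerpt: the isomorphism $h_{C,F}\colon\Hom_{\Rep(Q,\C)}(f_i(C),F)\to\Hom_\C(C,e_i(F))$ sends $t\mapsto t_i\circ\mu^{C}_{\epsilon_i}$. By definition the counit $\psi^i_F\colon f_i(e_i(F))\to F$ is $h^{-1}_{F_i,F}(1_{F_i})$, i.e.\ the unique morphism of representations whose composite with $\mu^{F_i}_{\epsilon_i}$ at vertex $i$ is $1_{F_i}$. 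I would then argue vertex-wise: since morphisms in $\Rep(Q,\C)$ are natural transformations, $\psi^i_F$ is determined at each vertex $k\in Q_0$ by the requirement that the squares
\[
\begin{tikzcd}
F_i \arrow[r, "\mu^{F_i}_{\epsilon_i}"] \arrow[d, "F_\lambda"'] & (f_i(F_i))_i \arrow[d, "(f_i(F_i))_\lambda"] \\
F_k \arrow[r, "(\psi^i_F)_k \circ (f_i(F_i))_\lambda \circ \mu^{F_i}_{\epsilon_i}"', dashed] & (f_i(F_i))_k
\end{tikzcd}
\]
commute; combined with $(f_i(F_i))_\lambda\circ\mu^{F_i}_{\epsilon_i}=\mu^{F_i}_\lambda$ (which is the defining relation of $f_i$ applied to the path $\lambda$, unwinding $\lambda=\lambda\cdot\epsilon_i$) this yields $(\psi^i_F)_k\circ\mu^{F_i}_\lambda=F_\lambda$ for all $\lambda\in Q(i,k)$. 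Since $(f_i(F_i))_k=F_i^{(Q(i,k))}$ is a coproduct indexed by $Q(i,k)$, the maps $\{\mu^{F_i}_\lambda\}_{\lambda\in Q(i,k)}$ are jointly epic, so $(\psi^i_F)_k$ is the unique morphism with this property, which is exactly the claimed description.

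For part (b), I would verify commutativity of the square of natural transformations by checking it objectwise, i.e.\ for an arbitrary $F\in\Rep(Q,\C)$, and then vertex-wise, i.e.\ at each $k\in Q_0$. Using Remark \ref{fefe-nat} to identify $(f_i\cdot e_\rho)_F=f_i(F_\rho)$ and $(f_\rho\cdot e_j)_F=f_\rho(F_j)$, and part (a) together with the analogous description of $\psi^j$, the claim becomes the equality
\[
(\psi^j_F)_k \circ (f_\rho(F_j))_k = (\psi^i_F)_k \circ (f_i(F_\rho))_k
\]
of morphisms $F_j^{(Q(j,k))}=(f_j(F_j))_k\to F_k$. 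Since the source is a coproduct over $Q(j,k)$, it suffices to precompose with each canonical inclusion $\mu^{F_j}_\lambda$, $\lambda\in Q(j,k)$. On the left, $(f_\rho(F_j))_k\circ\mu^{F_j}_\lambda=\mu^{F_j}_{\lambda\rho}$ by the definition of $f_\rho$ in Proposition \ref{funtor-f}, and then $(\psi^j_F)_k\circ\mu^{F_j}_{\lambda\rho}=(F_j)_{\lambda\rho}=F_{\lambda\rho}$ — wait, here one must be careful: $\psi^j$ uses the path $\lambda\rho\in Q(j,k)$, giving $F_{\lambda\rho}=F_\lambda\circ F_\rho$. On the right, $(f_i(F_\rho))_k\circ\mu^{F_j}_\lambda=\mu^{F_i}_\lambda\circ F_\rho$ by the definition of $f_i$ on morphisms, and then $(\psi^i_F)_k\circ\mu^{F_i}_\lambda\circ F_\rho=F_\lambda\circ F_\rho$ by part (a) (applied to $\lambda\in Q(i,k)$). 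Both sides equal $F_\lambda\circ F_\rho=F_{\lambda\rho}$, so the two morphisms agree after precomposition with every $\mu^{F_j}_\lambda$, hence are equal.

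The only place requiring genuine care — the "main obstacle" — is bookkeeping the indices of the canonical inclusions correctly: one must keep straight that $\mu^{F_i}_\lambda$ for $\lambda\in Q(i,k)$ lives in $f_i$, that $\mu^{F_j}_\lambda$ for $\lambda\in Q(j,k)$ lives in $f_j$, that $f_\rho$ relabels $\lambda\mapsto\lambda\rho$ (shifting from the $Q(j,-)$-indexing to the $Q(i,-)$-indexing), and that $f_i(F_\rho)$ acts inside a fixed coproduct $F_i^{(Q(i,k))}\to F_i^{(Q(i,k))}$ coordinatewise. Once the diagram-chase is organized around ``precompose with $\mu^{F_j}_\lambda$ and use the defining relations of $f_i$, $f_\rho$, and $\psi^i$, $\psi^j$ in turn'', everything reduces to the single identity $F_{\lambda\rho}=F_\lambda\circ F_\rho$, which holds because $F$ is a functor on $\C_Q$. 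No AB3$(\kappa)$-type hypothesis is used beyond guaranteeing that the functors $f_i$ and the relevant coproducts exist, which is ensured by $\kappa\geq\rccn(Q)$ and $\C$ being $AB3(\kappa)$.
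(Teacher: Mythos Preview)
Your argument is correct and follows essentially the same route as the paper: for (a) the paper simply quotes the explicit description of $h_i^{-1}$ from \cite[Thm.~3.7]{holm2019cotorsion} (whereas you re-derive it from the forward map $h_i$ together with the fact that $\psi^i_F$ is a morphism of representations and $(f_i(F_i))_\lambda\circ\mu^{F_i}_{\epsilon_i}=\mu^{F_i}_\lambda$), and for (b) both proofs reduce to precomposing with the canonical coproduct inclusions and invoking $F_{\lambda\rho}=F_\lambda\circ F_\rho$. One small slip to fix in (b): the common source of both composites is $(f_i(F_j))_k=F_j^{(Q(i,k))}$, not $F_j^{(Q(j,k))}$, so the inclusions you precompose with are indexed by $\lambda\in Q(i,k)$ --- you in fact use this tacitly when you apply part (a) to $\lambda\in Q(i,k)$ on the right-hand side.
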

\begin{proof} (a) Let $i\in Q_0.$ By the proof of \cite[Thm 3.7]{holm2019cotorsion}, we have the isomorphism $h_i:\Hom_{\Rep(Q,\C)}(f_i(C),F)\to \Hom_\C(C,e_i(F)),\;t\mapsto t_i\circ\mu_{\epsilon_{i}}^{C},$ where $\epsilon_{i}\in Q(i,i)$ is the path of length $0$ and $\mu_{\epsilon_{i}}^{C}:C\rightarrow C^{(Q(i,i))}$ is the natural inclusion. Moreover, for $x\in \Hom_\C(C,e_i(F))$ and $j\in Q_0,$ we have that
$(h^{-1}_i(x))_j:C^{(Q(i,j))}\to F_j$ is defined as the unique morphism such that the following diagram commutes
\[\begin{tikzcd}
	C && {F_i} \\
	\\
	{C^{(Q(i,j))}} && {F_j,}
	\arrow["x", from=1-1, to=1-3]
	\arrow["{\mu_{\lambda}^{C}}"', from=1-1, to=3-1]
	\arrow["{(h^{-1}_i(x))_j}"', from=3-1, to=3-3]
	\arrow["{F_\lambda}", from=1-3, to=3-3]
\end{tikzcd}\]

where $\lambda\in Q(i,j)$ and $\mu_{\lambda}^{C}:C\rightarrow C^{(Q(i,j))}$ is the canonical inclusion. Finally, by subsection 2.5, we know that $(\psi^{i}_{F})_j=(h^{-1}_i(1_{F_i}))_j$ and thus (a) follows.
\

(b) It follows from the universal property of coproducts, see the diagram below\\
\begin{minipage}[t]{1\columnwidth}%
\[
\begin{tikzpicture}[-,>=to,shorten >=1pt,auto,node distance=2.5cm,main node/.style=,x=2.5cm,y=2.5cm,framed]

 \begin{scope}[rotate=-45]
\node (1) at (0,0)  {$F_j^{(Q(i,k))}$};
\node (2) at (1,0)  {$F_j^{(Q(j,k))}$};
\node (3) at (0,-1)  {$F_i^{(Q(i,k))}$};
\node (4) at (1,-1)  {$F_k$};
\node (5) at (-.7,-1.7)  {$F_i$};
\node (6) at (-.5,.5)  {$F_j$};
\node (7) at (1.7,.7)  {$F_j$};
   \end{scope}

\draw[->, thin]  (6)  to  node  {$\mu^{F_j}_{\lambda}$} (1);
\draw[->, thin]  (1)  to [above left] node  {$\footnotesize{(f_i(F_\rho))_k}$} (3);
\draw[->, thin]  (1)  to [above right] node  {$\footnotesize{(f_\rho(F_j))_k}$} (2);
\draw[->, thin]  (2)  to [above left] node  {$\footnotesize{\psi^{j}_{F,k}} $} (4);
\draw[->, thin]  (3)  to  node  {$\footnotesize{\psi^{i}_{F,k}}$} (4);
\draw[->, thin]  (6)  to [above left] node  {$F_\rho$} (5);
\draw[->, thin]  (6)  to  node  {$1$} (7);
\draw[->, thin]  (5)  to [below left] node  {$F_\lambda$} (4);
\draw[->, thin]  (7)  to  node  {$F_{\lambda \rho}$} (4);
\draw[->, thin]  (5)  to [above] node  {$\mu^{F_i}_{\lambda}$} (3);
\draw[->, thin]  (7)  to [above] node  {$\mu^{F_j}_{\lambda \rho}$} (2);

\end{tikzpicture}
\]%
\end{minipage}\\
\end{proof}

Our goal now is to show, see Proposition \ref{POEndRep}, that the commutative diagram in Lemma \ref{LMf-Psi} (b) is a push-out in $\End(\Rep(Q,\C)):=\Fun(\Rep(Q,\C),\Rep(Q,\C)).$ That is, we will show that, for any family $\{\beta^{i}:f_{i}e_{i}\rightarrow\chi\}_{i\in Q_{0}}$ in $\End(\Rep(Q,\C))$  such that 
$\beta^{i}\circ(f_{i}\cdot e_{\rho})=\beta^{j}\circ(f_{\rho}\cdot e_{j})$
for all $\rho:j\to i$ in $Q_1,$ 
there exists a unique morphism $\beta':1_{\Rep(Q,\C)}\rightarrow\chi$ in $\End(\Rep(Q,\C))$ such that $\beta'\circ\psi^{i}=\beta^{i}$ for all $i\in Q_{0}.$
In order to prove that, we start with the following Lemma.
 
\begin{lem}\label{Fact3.3}
For a quiver $Q,$ an infinite cardinal $\kappa\geq\rccn(Q)$, an $AB3(\kappa)$ abelian category $\C,$ a family $\{\beta^{i}:f_{i}e_{i}\rightarrow\chi\}_{i\in Q_{0}}$ in 
$\End(\Rep(Q,\C))$ and the isomorphism $h_i:\Hom_{\Rep(Q,\C)}(f_i(?),-)\xrightarrow{\sim}\Hom_\C(?,e_i(-)),$ for each $i\in Q_0,$ the following statements hold true, where $\epsilon_{i}\in Q(i,i)$ is the trivial path and $\mu_{\epsilon_{i}}^{C}:C\to C^{(Q(i,i))}$ is the inclusion map in $\C.$
\begin{itemize}
\item[(a)] For $F\in\Rep(Q,\C)$ and $i\in Q_{0}$, let $\alpha_{F}^{i}:e_{i}\left(F\right)\rightarrow e_{i}\left(\chi(F)\right)$
be the morphism defined by $\alpha_{F}^{i}:=h_{i}(\beta_{F}^{i})$. Then: 
 \begin{itemize}
 \item[(a1)] $\alpha_{F}^{i}=(\beta_{F}^{i})_i\circ\mu_{\epsilon_{i}}^{F_i}$ and  $\alpha_{F}^{i}:F_i\rightarrow(\chi(F))_i$ is the unique morphism 
such that $\psi_{\chi(F)}^{i}\circ f_{i}(\alpha_{F}^{i})=\beta_{F}^{i}.$
 \item[(a2)] The family $\alpha^{i}:=\{\alpha_{F}^{i}\}_{F\in\Rep(Q,\C)}$ defines the natural transformation $\alpha^{i}:e_i\to e_i\circ\chi.$
 \end{itemize}

 \item[(b)] Let $\rho\in Q(j,i)$ be such that $\beta^{i}\circ(f_{i}\cdot e_{\rho})=\beta^{j}\circ(f_{\rho}\cdot e_{j}).$ For $F\in\Rep(Q,\C),$ let
 $\gamma_{F}^{\rho}:e_{j}\left(F\right)\rightarrow e_{i}\left(\chi(F)\right)$
be the morphism defined by\\
 $\gamma_{F}^{\rho}:=h_{i}(\beta_{F}^{j}\circ(f_{\rho}\cdot e_{j})_{F})=h_{i}(\beta_{F}^{i}\circ(f_{i}\cdot e_{\rho})_{F})$.
Then:
  \begin{itemize}
  \item[(b1)] $\gamma_{F}^{\rho}=(\beta_{F}^{i})_i\circ ((f_{i}\cdot e_{\rho})_{F})_i\circ\mu_{\epsilon_{i}}^{F_j}=(\beta_{F}^{j})_i\circ ((f_{\rho}\cdot e_{j})_{F})_i\circ\mu_{\epsilon_{i}}^{F_j}$.
Moreover, $\gamma_{F}^{\rho}:F_j\rightarrow (\chi(F))_i$ is the unique morphism 
such that\\ $\psi_{\chi(F)}^{i}\circ f_{i}(\gamma_{F}^{\rho})=\beta_{F}^{j}\circ(f_{\rho}\cdot e_{j})_{F}=\beta_{F}^{i}\circ(f_{i}\cdot e_{\rho})_{F}$.
 \item[(b2)] The family $\gamma^{\rho}:=\{\gamma_{F}^{\rho}\}_{F\in\Rep(Q,\C)}$ defines the natural transformation $\gamma^{\rho}:e_j\to e_i\circ\chi.$
  \end{itemize}
\end{itemize}
\end{lem}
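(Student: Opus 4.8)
The plan is to exploit the adjunction isomorphism $h_i$ systematically, deriving everything from its explicit description (given in the statement and in the proof of Lemma \ref{LMf-Psi}(a)) together with the naturality of $h_i$ in both variables. First I would observe that $\alpha_F^i := h_i(\beta_F^i)$ is, by definition of $h_i$, given by $\alpha_F^i = (\beta_F^i)_i \circ \mu_{\epsilon_i}^{F_i}$; this is the first formula in (a1). For the uniqueness statement in (a1), I would invoke Lemma \ref{lem: mitchell adjoint pairs}(1) (or equivalently the description of $h_i^{-1}$ via the co-unit $\psi^i$): since $\beta_F^i = h_i^{-1}(\alpha_F^i) = \psi_{\chi(F)}^i \circ f_i(\alpha_F^i)$, and $h_i$ is a bijection, $\alpha_F^i$ is the unique morphism with this property. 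For (a2), naturality of $\alpha^i = \{\alpha_F^i\}_F$ amounts to checking that for a morphism $\phi : F \to F'$ in $\Rep(Q,\C)$ the square relating $\alpha_F^i, \alpha_{F'}^i$ and $e_i(\phi), e_i(\chi(\phi))$ commutes. This follows from the naturality of $h_i$ as a natural isomorphism of bifunctors $\Hom_{\Rep(Q,\C)}(f_i(?),-) \cong \Hom_\C(?, e_i(-))$, together with the fact that $\beta^i : f_i e_i \to \chi$ is itself a natural transformation (so $\{\beta_F^i\}_F$ is natural in $F$); chasing these two naturality conditions through the formula $\alpha_F^i = h_i(\beta_F^i)$ gives the claim.

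Next, for part (b), the key preliminary point is that the two expressions $h_i(\beta_F^j \circ (f_\rho \cdot e_j)_F)$ and $h_i(\beta_F^i \circ (f_i \cdot e_\rho)_F)$ agree; but this is immediate from the hypothesis $\beta^i \circ (f_i \cdot e_\rho) = \beta^j \circ (f_\rho \cdot e_j)$ evaluated at $F$, so $\gamma_F^\rho$ is well defined. Then (b1) follows exactly as (a1) did: apply the explicit formula for $h_i$ to get $\gamma_F^\rho = (\beta_F^i)_i \circ ((f_i \cdot e_\rho)_F)_i \circ \mu_{\epsilon_i}^{F_j}$ (note the source of $(f_i \cdot e_\rho)_F$ is $f_i(F_j)$, so $\mu_{\epsilon_i}^{F_j}$ is the right inclusion), and likewise for the $\beta^j$ version; the uniqueness clause again comes from Lemma \ref{lem: mitchell adjoint pairs}(1) applied to the adjoint pair $(f_i, e_i)$. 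For (b2), naturality of $\gamma^\rho$ is checked the same way as (a2): it combines naturality of $h_i$ with naturality of the composite $\beta^j \circ (f_\rho \cdot e_j) : f_i e_j \to \chi$ (a composite of natural transformations, hence natural), so that $\{\beta_F^j \circ (f_\rho \cdot e_j)_F\}_F$ is natural in $F$, and then $\gamma_F^\rho = h_i$ of this natural family is natural by functoriality of $h_i$.

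The only mildly delicate bookkeeping is keeping track of which inclusion map $\mu$ appears where: in (a1) the relevant object is $f_i(F_i)$ evaluated at vertex $i$, namely $F_i^{(Q(i,i))}$, so the inclusion is $\mu_{\epsilon_i}^{F_i}$; in (b1) the relevant object is $f_i(F_j)$ evaluated at vertex $i$, namely $F_j^{(Q(i,i))}$, so the inclusion is $\mu_{\epsilon_i}^{F_j}$. Once that is pinned down, the explicit formula for $h_i$ from the proof of Lemma \ref{LMf-Psi}(a) plugs in directly. I expect the main obstacle, such as it is, to be purely notational — untangling the Godement products $(f_i \cdot e_\rho)_F = f_i(F_\rho)$ and $(f_\rho \cdot e_j)_F = f_\rho(F_j)$ as recorded in Remark \ref{fefe-nat}, and verifying that after evaluating at the vertex $i$ and precomposing with $\mu_{\epsilon_i}^{F_j}$ the two routes through the push-out square of Lemma \ref{LMf-Psi}(b) genuinely coincide; there is no conceptual difficulty, since everything is forced by the universal property of the coproduct $F_j^{(Q(i,k))}$ used to define $f_i$ and $f_\rho$.
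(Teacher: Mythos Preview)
Your proposal is correct and follows essentially the same approach as the paper's proof, which simply records that (a1) and (b1) follow from the explicit description of $h_i$ (given in the proof of Lemma~\ref{LMf-Psi}(a)) together with Lemma~\ref{lem: mitchell adjoint pairs}, and leaves the naturality in (a2) and (b2) to the reader. One small point: the uniqueness clause uses part (b) of Lemma~\ref{lem: mitchell adjoint pairs} (for $\gamma:S(B)\to A$, $h(\gamma)$ is the unique $\delta$ with $\psi_A\circ S(\delta)=\gamma$), not part (a), so adjust your citation accordingly.
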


\begin{proof} Notice that (a1) and (b1) follow from the explicit description of $h_i,$ given in the proof of 
Lemma \ref{LMf-Psi} (a), and Lemma \ref{lem: mitchell adjoint pairs} (b). Finally the naturality in (a2) and (b2) is left to the reader.
\end{proof}

As a consequence of Lemma \ref{Fact3.3}, we get the following result.

\begin{cor}\label{coro3.4}
For a quiver $Q,$ an infinite cardinal $\kappa\geq\rccn(Q)$, an $AB3(\kappa)$ abelian category $\C$ and a family $\{\beta^{i}:f_{i}e_{i}\rightarrow\chi\}_{i\in Q_{0}}$ in 
$\End(\Rep(Q,\C)),$ the following statements hold true.
\begin{enumerate}
\item For $i\in Q_{0}$, there is a unique natural transformation $\alpha^{i}:e_{i}\rightarrow e_{i}\circ\chi$
such that $\left(\psi^{i}\cdot\chi\right)\circ\left(f_{i}\cdot\alpha^{i}\right)=\beta^{i}$.

\item Let $\rho\in Q(j,i)$ be such that $\beta^{i}\circ(f_{i}\cdot e_{\rho})=\beta^{j}\circ(f_{\rho}\cdot e_{j}).$  Then, there is a unique
natural transformation $\gamma^{\rho}:e_{j}\rightarrow e_{i}\circ\chi$
such that\\ $\left(\psi^{i}\cdot\chi\right)\circ\left(f_{i}\cdot\gamma^{\rho}\right)=\beta^{j}\circ(f_{\rho}\cdot e_{j})=\beta^{i}\circ(f_{i}\cdot e_{\rho})$. 
\end{enumerate}
\end{cor}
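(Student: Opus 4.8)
The plan is to lift the pointwise data already manufactured in Lemma \ref{Fact3.3} to genuine natural transformations in $\End(\Rep(Q,\C))=\Fun(\Rep(Q,\C),\Rep(Q,\C))$, and then to verify the two displayed identities by evaluating the relevant Godement products objectwise. Nothing new needs to be constructed; the content is an organized unwinding of definitions.

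For (1), I would start from the family $\alpha^{i}=\{\alpha_{F}^{i}\}_{F\in\Rep(Q,\C)}$ with $\alpha_{F}^{i}=h_{i}(\beta_{F}^{i})$, which by Lemma \ref{Fact3.3} (a2) is a natural transformation $\alpha^{i}:e_{i}\to e_{i}\circ\chi$. It then remains to check $(\psi^{i}\cdot\chi)\circ(f_{i}\cdot\alpha^{i})=\beta^{i}$, and this can be done objectwise: for $F\in\Rep(Q,\C)$, unwinding the Godement product gives $(f_{i}\cdot\alpha^{i})_{F}=f_{i}(\alpha_{F}^{i})$ and $(\psi^{i}\cdot\chi)_{F}=\psi_{\chi(F)}^{i}$, so $\bigl((\psi^{i}\cdot\chi)\circ(f_{i}\cdot\alpha^{i})\bigr)_{F}=\psi_{\chi(F)}^{i}\circ f_{i}(\alpha_{F}^{i})$, which equals $\beta_{F}^{i}$ by Lemma \ref{Fact3.3} (a1). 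For uniqueness I would note that if $\widetilde{\alpha}^{i}:e_{i}\to e_{i}\circ\chi$ also satisfies $(\psi^{i}\cdot\chi)\circ(f_{i}\cdot\widetilde{\alpha}^{i})=\beta^{i}$, the same objectwise computation yields $\psi_{\chi(F)}^{i}\circ f_{i}(\widetilde{\alpha}_{F}^{i})=\beta_{F}^{i}$ for every $F$, whence the uniqueness clause of Lemma \ref{Fact3.3} (a1) forces $\widetilde{\alpha}_{F}^{i}=\alpha_{F}^{i}$ for all $F$, i.e.\ $\widetilde{\alpha}^{i}=\alpha^{i}$.

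For (2) I would run the identical recipe with $\gamma^{\rho}=\{\gamma_{F}^{\rho}\}_{F}$, which is a natural transformation $e_{j}\to e_{i}\circ\chi$ by Lemma \ref{Fact3.3} (b2); here the hypothesis $\beta^{i}\circ(f_{i}\cdot e_{\rho})=\beta^{j}\circ(f_{\rho}\cdot e_{j})$ is exactly what makes the two formulas $\gamma_{F}^{\rho}=h_{i}(\beta_{F}^{j}\circ(f_{\rho}\cdot e_{j})_{F})=h_{i}(\beta_{F}^{i}\circ(f_{i}\cdot e_{\rho})_{F})$ coincide. Evaluating at $F$, using $(f_{i}\cdot\gamma^{\rho})_{F}=f_{i}(\gamma_{F}^{\rho})$ and $(\psi^{i}\cdot\chi)_{F}=\psi_{\chi(F)}^{i}$ together with the characterization of $\gamma_{F}^{\rho}$ in Lemma \ref{Fact3.3} (b1) as the unique morphism with $\psi_{\chi(F)}^{i}\circ f_{i}(\gamma_{F}^{\rho})=\beta_{F}^{j}\circ(f_{\rho}\cdot e_{j})_{F}=\beta_{F}^{i}\circ(f_{i}\cdot e_{\rho})_{F}$, yields both $(\psi^{i}\cdot\chi)\circ(f_{i}\cdot\gamma^{\rho})=\beta^{j}\circ(f_{\rho}\cdot e_{j})=\beta^{i}\circ(f_{i}\cdot e_{\rho})$ and the uniqueness of $\gamma^{\rho}$.

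I do not expect a genuine obstacle here, since the proof is bookkeeping. The only steps that need care are the correct identification of the objectwise values of the Godement products $\psi^{i}\cdot\chi$, $f_{i}\cdot\alpha^{i}$ and $f_{i}\cdot\gamma^{\rho}$ from the definition in Subsection 2.2, and — if one wishes to be self-contained — filling in the naturality checks that Lemma \ref{Fact3.3} (a2), (b2) leaves to the reader, which amount to chasing the defining square of $\alpha_{F}^{i}$ (resp.\ $\gamma_{F}^{\rho}$) along a morphism $F\to F'$ in $\Rep(Q,\C)$ and invoking that $h_{i}$ is natural in both variables.
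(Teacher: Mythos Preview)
Your proposal is correct and matches the paper's approach: the paper simply states that the corollary is a consequence of Lemma~\ref{Fact3.3}, and what you have written is precisely the objectwise unwinding of that lemma (existence and naturality from (a2)/(b2), the displayed identities and uniqueness from (a1)/(b1)) together with the evaluation of the Godement products.
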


We are ready to prove the universal property of the family {\small{ $\{\psi^i : f_ie_i\to 1_{\Rep(Q,\C)}\}_{i\in Q_0}.$}}

\begin{thm}\label{POEndRep}
Let $Q$ be a quiver,  $\kappa\geq\rccn(Q)$  be an infinite cardinal, $\C$
be an $AB3(\kappa)$ abelian category and $\{\beta^{i}:f_{i}e_{i}\rightarrow\chi\}_{i\in Q_{0}}$
be a family in $\End(\Rep(Q,\C))$ such that $\beta^{i}\circ(f_{i}\cdot e_{\rho})=\beta^{j}\circ(f_{\rho}\cdot e_{j})$
for all $j\xrightarrow{\rho} i\in Q_{1}$. Then, there is
a unique natural transformation $\alpha:1_{\Rep(Q,\C)}\rightarrow\chi$
such that $\alpha\circ\psi^{i}=\beta^{i}$ for all $i\in Q_{0}$.
\end{thm}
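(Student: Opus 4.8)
\emph{Strategy.} The statement is exactly the universal property making the family $\{\psi^{i}\colon f_{i}e_{i}\to 1_{\Rep(Q,\C)}\}_{i\in Q_{0}}$ a colimit cocone (a ``push-out'' of the squares of Lemma \ref{LMf-Psi}(b) over all $i$ and $\rho$). The plan is to build $\alpha$ vertex-by-vertex from the natural transformations produced by Corollary \ref{coro3.4}, to check the resulting family is a morphism of representations by a uniqueness argument, and then to verify naturality, the factorization, and uniqueness as essentially formal consequences.

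\emph{Construction of $\alpha$.} First I would apply Corollary \ref{coro3.4}: for each $i\in Q_{0}$ it yields a unique natural transformation $\alpha^{i}\colon e_{i}\to e_{i}\circ\chi$ with $(\psi^{i}\cdot\chi)\circ(f_{i}\cdot\alpha^{i})=\beta^{i}$, and for each arrow $\rho\colon j\to i$ in $Q_{1}$ a unique natural transformation $\gamma^{\rho}\colon e_{j}\to e_{i}\circ\chi$ with $(\psi^{i}\cdot\chi)\circ(f_{i}\cdot\gamma^{\rho})=\beta^{j}\circ(f_{\rho}\cdot e_{j})=\beta^{i}\circ(f_{i}\cdot e_{\rho})$ (here the hypothesis $\beta^{i}\circ(f_{i}\cdot e_{\rho})=\beta^{j}\circ(f_{\rho}\cdot e_{j})$ is used). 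Fixing $F\in\Rep(Q,\C)$, the candidate is $\alpha_{F}:=\{\alpha^{i}_{F}\colon F_{i}\to(\chi(F))_{i}\}_{i\in Q_{0}}$, and the point requiring work is that this family is a morphism of representations, i.e.\ that $(\chi(F))_{\rho}\circ\alpha^{j}_{F}=\alpha^{i}_{F}\circ F_{\rho}$ for every $\rho\colon j\to i$ in $Q_{1}$.

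\emph{Main obstacle: compatibility over arrows of $Q$.} I would prove this by showing that \emph{both} sides equal $\gamma^{\rho}_{F}$, using the uniqueness clause of Lemma \ref{Fact3.3}(b1): $\gamma^{\rho}_{F}$ is the unique morphism $F_{j}\to(\chi(F))_{i}$ with $\psi^{i}_{\chi(F)}\circ f_{i}(-)=\beta^{i}_{F}\circ(f_{i}\cdot e_{\rho})_{F}$. For $\alpha^{i}_{F}\circ F_{\rho}$ one writes $F_{\rho}=(e_{\rho})_{F}$, so $f_{i}(\alpha^{i}_{F}\circ F_{\rho})=f_{i}(\alpha^{i}_{F})\circ(f_{i}\cdot e_{\rho})_{F}$, and composing with $\psi^{i}_{\chi(F)}$ and using the defining identity of $\alpha^{i}$ at $F$ gives $\beta^{i}_{F}\circ(f_{i}\cdot e_{\rho})_{F}$. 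For $(\chi(F))_{\rho}\circ\alpha^{j}_{F}$ one writes $(\chi(F))_{\rho}=(e_{\rho})_{\chi(F)}$, uses Lemma \ref{LMf-Psi}(b) evaluated at $\chi(F)$ to replace $\psi^{i}_{\chi(F)}\circ f_{i}((\chi(F))_{\rho})$ by $\psi^{j}_{\chi(F)}\circ f_{\rho}((\chi(F))_{j})$, then uses naturality of $f_{\rho}\colon f_{i}\to f_{j}$ at the morphism $\alpha^{j}_{F}$ to rewrite $f_{\rho}((\chi(F))_{j})\circ f_{i}(\alpha^{j}_{F})=f_{j}(\alpha^{j}_{F})\circ f_{\rho}(F_{j})$, and finally the defining identity of $\alpha^{j}$ at $F$ to obtain $\beta^{j}_{F}\circ(f_{\rho}\cdot e_{j})_{F}$. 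Since $\beta^{i}_{F}\circ(f_{i}\cdot e_{\rho})_{F}=\beta^{j}_{F}\circ(f_{\rho}\cdot e_{j})_{F}$, uniqueness in Lemma \ref{Fact3.3}(b1) forces the two sides to agree; hence $\alpha_{F}\colon F\to\chi(F)$ is a morphism in $\Rep(Q,\C)$ and $e_{i}(\alpha_{F})=\alpha^{i}_{F}$.

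\emph{Remaining checks.} Naturality of $\alpha:=\{\alpha_{F}\}_{F}$ as a transformation $1_{\Rep(Q,\C)}\to\chi$ is verified vertex-wise, where at vertex $i$ it is exactly the naturality of $\alpha^{i}$. The identity $\alpha\circ\psi^{i}=\beta^{i}$ follows by evaluating at $F$ and applying naturality of the counit $\psi^{i}$ to the morphism $\alpha_{F}\colon F\to\chi(F)$: this gives $\alpha_{F}\circ\psi^{i}_{F}=\psi^{i}_{\chi(F)}\circ f_{i}e_{i}(\alpha_{F})=\psi^{i}_{\chi(F)}\circ f_{i}(\alpha^{i}_{F})=\beta^{i}_{F}$. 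For uniqueness, if $\alpha'$ also satisfies $\alpha'\circ\psi^{i}=\beta^{i}$ for all $i$, then naturality of $\psi^{i}$ at $\alpha'_{F}$ yields $\psi^{i}_{\chi(F)}\circ f_{i}(e_{i}(\alpha'_{F}))=\beta^{i}_{F}$, that is $(\psi^{i}\cdot\chi)\circ(f_{i}\cdot(e_{i}\cdot\alpha'))=\beta^{i}$; the uniqueness part of Corollary \ref{coro3.4}(1) then gives $e_{i}\cdot\alpha'=\alpha^{i}=e_{i}\cdot\alpha$ for every $i$, whence $\alpha'=\alpha$ since morphisms with values in $\Rep(Q,\C)$ are compared vertex-wise.
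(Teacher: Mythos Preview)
Your proof is correct and follows the same overall architecture as the paper: construct $\alpha$ vertex-wise from the $\alpha^{i}$ produced by the adjunction, verify it is a morphism of representations, then check naturality, the factorization $\alpha\circ\psi^{i}=\beta^{i}$, and uniqueness. The one notable difference is in how you establish the compatibility $(\chi(F))_{\rho}\circ\alpha^{j}_{F}=\alpha^{i}_{F}\circ F_{\rho}$: the paper carries out a direct computation using the explicit formula $\alpha^{i}_{F}=(\beta^{i}_{F})_{i}\circ\mu^{F_i}_{\epsilon_{i}}$ and a chain of equalities involving the coproduct inclusions $\mu_{\lambda}$, whereas you instead invoke the uniqueness clause of Lemma \ref{Fact3.3}(b1) and identify both sides with $\gamma^{\rho}_{F}$, using Lemma \ref{LMf-Psi}(b) at $\chi(F)$ and naturality of $f_{\rho}$. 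Your route is cleaner and avoids the element-level bookkeeping with the $\mu$'s; the paper's route is more concrete and makes the computation transparent at the level of coproduct components. Both buy the same thing, and the remaining steps (naturality via naturality of $\alpha^{i}$, factorization via naturality of the counit, uniqueness via Corollary \ref{coro3.4}(1)) match the paper essentially verbatim.
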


\begin{proof}
Let $\alpha:=\{\alpha_{F}:F\rightarrow\chi(F)\}_{F\in\Rep(Q,\C)}$,
where $(\alpha_{F})_i:=\alpha_{F}^{i}: F_i\rightarrow (\chi(F))_i$ 
for all $i\in Q_{0},$ see Lemma \ref{Fact3.3} (a).  We assert that $\alpha:1_{\Rep(Q,\C)}\rightarrow\chi$ 
is a natural transformation. In order to do that, we start by proving that $\alpha_{F}:F\rightarrow\chi(F)$ is a morphism in $\Rep(Q,\C).$ That is, we will show that 
$(\chi(F))_\rho\circ\alpha_{F}^{j}=\alpha_{F}^{i}\circ F_\rho;$
for all $\overset{\rho}{j\rightarrow i}\in Q_{1}.$ Consider $\overset{\rho}{j\rightarrow i}\in Q_{1}.$ Since $\beta^{i}\circ(f_{i}\cdot e_{\rho})=\beta^{j}\circ(f_{\rho}\cdot e_{j}),$ we get the following commutative diagram
\[\begin{tikzcd}
	{f_i(F_j)} && {f_i(F_i)} \\
	\\
	{f_j(F_j)} && {\chi(F)}
	\arrow["{f_i(F_\rho)}", from=1-1, to=1-3]
	\arrow["{f_\rho(F_j)}"', from=1-1, to=3-1]
	\arrow["{\beta^j_F}"', from=3-1, to=3-3]
	\arrow["{\beta^i_F}", from=1-3, to=3-3]
\end{tikzcd}\]
In particular, from $\beta^j_F:f_j(F_j)\to \chi(F),$ we get 
$(\chi(F))_\rho\circ(\beta^j_F)_j=(\beta^j_F)_i\circ (f_j(F_j))_\rho.$ Moreover, we also have the following commutative diagram
\[\begin{tikzcd}
	{F_i} && {(f_i(F_i))} && {(\chi(F))_i} \\
	\\
	{F_j} && {(f_i(F_j))} && {(f_j(F_i))_i}
	\arrow["{\alpha^i_F}", curve={height=-30pt}, from=1-1, to=1-5]
	\arrow["{\mu^{F_i}_{\epsilon_i}}", from=1-1, to=1-3]
	\arrow["{(\beta^i_F)_i}", from=1-3, to=1-5]
	\arrow["{F_\rho}", from=3-1, to=1-1]
	\arrow["{(\beta^j_F)_i}"', from=3-5, to=1-5]
	\arrow["{\mu^{F_j}_\rho}"', curve={height=30pt}, from=3-1, to=3-5]
	\arrow["{\mu^{F_j}_{\epsilon_i}}"', from=3-1, to=3-3]
	\arrow["{(f_\rho(F_j))_i}"', from=3-3, to=3-5]
	\arrow["{(f_i(F_\rho))_i}"', from=3-3, to=1-3]
\end{tikzcd}\]
and thus
\begin{alignat*}{1}
(\chi(F))_\rho\circ\alpha_{F}^{j} & =(\chi(F))_\rho\circ(\beta_{F}^{j})_j
\circ\mu_{\epsilon_{j}}^{F_j}\\
 & =(\beta_{F}^{j})_i\circ (f_{j}(F_j))_\rho\circ\mu_{\epsilon_{j}}^{F_j}\\
 & =(\beta_{F}^{j})_i\circ\mu_{\rho}^{F_j}\\
 & =(\beta_{F}^{j})_i\circ(f_\rho(F_j))_i\circ\mu_{\epsilon_{i}}^{F_j}\\
 & =(\beta_{F}^{j})_i\circ\mu_{\epsilon_{i}}^{F_i}\circ F_\rho\\
 & =\alpha^i_F\circ F_\rho;
\end{alignat*}
proving that $\alpha_{F}:F\rightarrow\chi(F)$ is a morphism in $\Rep(Q,\C).$ Finally, to show that $\alpha:1_{\Rep(Q,\C)}\rightarrow\chi$ 
is natural, consider $\theta:F\to G$ in $\Rep(Q,\C)$ and let us prove that $\chi(\theta)\circ\alpha_F=\alpha_G\circ\theta.$ However, this equality follows from Lemma \ref{Fact3.3} (a2).
\

Now, by applying the natural morphism $\psi^{i}:f_i\circ e_i\to 1_{Rep(Q,\C)}$ to 
each morphism $\alpha_F:F\to \chi(F),$ for all $F\in\Rep(Q,\C),$ we get that  
$\alpha_{F}\circ\psi_{F}^{i}=\psi_{\chi(F)}^{i}\circ f_{i}(\alpha^i_{F})=\beta_{F}^{i}$ (see Lemma \ref{Fact3.3} (a1)). Therefore $\alpha\circ\psi^{i}=\beta^{i}$
for all $i\in Q_{0}$. Similarly, if $\alpha':1_{\Rep(Q,\C)}\rightarrow\chi$
is a natural transformation such that $\alpha'\circ\psi^{i}=\beta^{i}$
for all $i\in Q_{0}$, we have that 
$
\psi_{\chi(F)}^{i}\circ f_{i}(\alpha'_{F})_i=\alpha'_{F}\circ\psi_{F}^{i}=\beta_{F}^{i}\quad\forall i\in Q_{0},\forall F\in\Rep(Q,\C)\text{.}
$
Thus, by the unicity property in Lemma \ref{Fact3.3} (a1), we get that 
$\alpha'_{F}=\alpha_{F},$ for all $F\in\Rep(Q,\C).$
Therefore, $\alpha:1_{\Rep(Q,\C)}\to \chi$ is the unique natural transformation
such that $\alpha\circ\psi^{i}=\beta^{i}$ for all $i\in Q_{0}$.
\end{proof}
\begin{defn}
Let $Q$ be a quiver, $\kappa\geq\rccn(Q)$ be an infinite cardinal and
$\C$ be an $AB3(\kappa)$ abelian category. Consider the category
$S_{Q}$ with $\operatorname{Ob}(S_{Q}):=Q_{0}\cup Q_{1}$ and whose
only non-identity morphisms are, for every $\rho\in Q_{1}$, the morphisms
$\rho\rightarrow s(\rho)$ and $\rho\rightarrow t(\rho)$. 
\begin{enumerate}
\item Define a functor $\mathfrak{F}:S_{Q}\rightarrow\End(\Rep(Q,\C))$
as  $\mathfrak{F}(\rho):=f_{i}\circ e_{j}$ for all $\overset{\rho}{j\rightarrow i}\in Q_{1}$,
$\mathfrak{F}(i)=f_{i}\circ e_{i}$ for all $i\in Q_{0}$ and, for $\overset{\rho}{j\rightarrow i}\in Q_{1}$
define $\mathfrak{F}(\rho\rightarrow i):=f_{i}\cdot e_{\rho}$ and $\mathfrak{F}(\rho\rightarrow j)=f_{\rho}\cdot e_{j}$. 
\item Similarly, for $F\in\Rep(Q,\C)$, define a functor $\mathfrak{F}(F):S_{Q}\rightarrow\Rep(Q,\C)$
as follows: $\mathfrak{F}(F)(\rho):=f_{i}\circ e_{j}(F)=f_i(F_j)$ for all 
$\overset{\rho}{j\rightarrow i}\in Q_{1}$,
$\mathfrak{F}(F)(i):=f_{i}\circ e_{i}(F)=f_i(F_i)$ for all $i\in Q_{0}$ and, for
$\overset{\rho}{j\rightarrow i}\in Q_{1},$ define $\mathfrak{F}(F)(\rho\rightarrow i):=\left(f_{i}\cdot e_{\rho}\right)_{F}=f_i(F_\rho)$
and $\mathfrak{F}(F)(\rho\rightarrow j):=\left(f_{\rho}\cdot e_{j}\right)_{F}=f_\rho(F_j).$ 
\end{enumerate}
\end{defn}

By using the definition above, we get the following new version of Theorem \ref{POEndRep}  in terms of the functor  $\mathfrak{F}:S_{Q}\rightarrow\End(\Rep(Q,\C)).$
 
\begin{thm}\label{thm:colim fe} Let $Q$ be a quiver, $\kappa\geq\rccn(Q)$ be an infinite cardinal and
$\C$ be an $AB3(\kappa)$ abelian category. Then, $\colim(\mathfrak{F})=1_{\Rep(Q,\C)}$.
In particular, $\colim(\mathfrak{F}(F))=F,$ for all $F\in\Rep(Q,\C)$.
\end{thm}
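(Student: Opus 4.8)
The plan is to unwind the definition of colimit in the functor category $\End(\Rep(Q,\C))$ and show that the pair $\left(1_{\Rep(Q,\C)},\{\psi^i\}_{i\in Q_0}\right)$ is precisely a colimit cocone for the diagram $\mathfrak{F}:S_Q\to\End(\Rep(Q,\C))$. First I would describe what a cocone on $\mathfrak{F}$ is: since the only non-identity morphisms in $S_Q$ are $\rho\to s(\rho)$ and $\rho\to t(\rho)$ for $\rho\in Q_1$, a cocone with vertex $\chi\in\End(\Rep(Q,\C))$ consists of a family of natural transformations $\{\beta^i:f_ie_i\to\chi\}_{i\in Q_0}$ together with $\{\beta^\rho:f_{t(\rho)}e_{s(\rho)}\to\chi\}_{\rho\in Q_1}$, subject to the commutativity conditions $\beta^{t(\rho)}\circ\mathfrak{F}(\rho\to t(\rho))=\beta^\rho$ and $\beta^{s(\rho)}\circ\mathfrak{F}(\rho\to s(\rho))=\beta^\rho$. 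Writing $\rho:j\to i$, these say $\beta^i\circ(f_i\cdot e_\rho)=\beta^\rho=\beta^j\circ(f_\rho\cdot e_j)$, so a cocone is exactly the data of a family $\{\beta^i\}_{i\in Q_0}$ satisfying the compatibility hypothesis of Theorem \ref{POEndRep}, with the $\beta^\rho$ then determined.

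Next I would exhibit the candidate cocone: take $\chi:=1_{\Rep(Q,\C)}$, $\beta^i:=\psi^i$, and $\beta^\rho:=\psi^i\circ(f_i\cdot e_\rho)$. By Lemma \ref{LMf-Psi}(b) this is a well-defined cocone, since the square there gives $\psi^i\circ(f_i\cdot e_\rho)=\psi^j\circ(f_\rho\cdot e_j)$. Then the universal property is exactly Theorem \ref{POEndRep}: for any other cocone $\{\beta^i:f_ie_i\to\chi\}_{i\in Q_0}$ (with its induced $\beta^\rho$), the conclusion of that theorem provides a unique $\alpha:1_{\Rep(Q,\C)}\to\chi$ with $\alpha\circ\psi^i=\beta^i$ for all $i$, and one checks immediately that such $\alpha$ also satisfies $\alpha\circ\beta^\rho=\beta^\rho$ for every $\rho$ — indeed $\alpha\circ\psi^i\circ(f_i\cdot e_\rho)=\beta^i\circ(f_i\cdot e_\rho)=\beta^\rho$ — so $\alpha$ is the unique mediating morphism of cocones. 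Hence $\colim(\mathfrak{F})=1_{\Rep(Q,\C)}$.

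For the "in particular" clause, I would invoke the fact recalled in subsection 2.1 that colimits in a functor category $\Fun(\A,\B)$ with $\B$ (co)complete-enough are computed pointwise; applying this with $\A=\Rep(Q,\C)$ and evaluating the diagram $\mathfrak{F}$ at a fixed $F\in\Rep(Q,\C)$ turns $\mathfrak{F}$ into the diagram $\mathfrak{F}(F):S_Q\to\Rep(Q,\C)$ from the Definition, so $\colim(\mathfrak{F}(F))=(\colim\mathfrak{F})(F)=1_{\Rep(Q,\C)}(F)=F$. Alternatively, and more self-containedly, one can rerun the whole argument with the evaluation-at-$F$ versions of everything: the cocone $\{\psi^i_F:f_i(F_i)\to F\}_{i\in Q_0}$ on $\mathfrak{F}(F)$ has the required universal property because, given a competing cocone, Lemma \ref{Fact3.3} and Corollary \ref{coro3.4} produce the comparison morphisms $\alpha^i_F$ and $\gamma^\rho_F$ pointwise, and the proof of Theorem \ref{POEndRep} shows these assemble into a unique morphism $F\to\chi(F)$ of representations. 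The main (and only) real obstacle is purely bookkeeping: making sure the indexing category $S_Q$ and its cocones are translated correctly into the compatibility condition of Theorem \ref{POEndRep}, and citing the pointwise-colimit principle in the right generality; no new computation is needed since all the hard work was already done in Theorem \ref{POEndRep}.
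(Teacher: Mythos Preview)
Your proposal is correct and is exactly the argument the paper has in mind: the paper presents Theorem~\ref{thm:colim fe} without a separate proof, describing it simply as a ``new version'' of Theorem~\ref{POEndRep} rephrased in terms of the diagram $\mathfrak{F}$, and your unpacking of the cocone data over $S_Q$ together with the appeal to Lemma~\ref{LMf-Psi}(b) and Theorem~\ref{POEndRep} is precisely that rephrasing made explicit. (One small typo: where you write ``$\alpha\circ\beta^\rho=\beta^\rho$'' you mean $\alpha\circ\bigl(\psi^i\circ(f_i\cdot e_\rho)\bigr)=\beta^\rho$, as your next line correctly verifies.)
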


As a consequence of the above theorem, we get the following result
which is a generalization of \cite[Lem. 3.5]{bauer2020cotorsion},
where the statement was proved for finite acyclic quivers. The exact
sequence appearing in the statement will be called the canonical presentation
given by the family of functors $\{f_i\}_{i\in Q_0}.$

\begin{cor}\label{cor:presentacion canonica} Let $Q$ be a quiver, $\kappa\geq\max\{\rccn(Q),\ltccn(Q)\}$ be an infinite cardinal and $\C$ be an $AB3(\kappa)$
abelian category. Then, there is an exact sequence 
\[
\eta:\;\suc[\coprod_{\rho\in Q_{1}}f_{t(\rho)}e_{s(\rho)}][\coprod_{i\in Q_{0}}f_{i}e_{i}][1_{\Rep(Q,\C)}][\Gamma]
\]
in $\End(\Rep(Q,\C))$. Moreover, for each $F\in\Rep(Q,\C)$,
the  exact sequence 
$
\eta_{F}:\;\suc[\coprod_{\rho\in Q_{1}}f_{t(\rho)}e_{s(\rho)}(F)][\coprod_{i\in Q_{0}}f_{i}e_{i}(F)][F][\Gamma_{F}]
$ in $\Rep(Q,\C)$
 splits vertex-wise in $\C.$
\end{cor}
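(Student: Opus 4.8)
The plan is to derive the exact sequence $\eta$ directly from the colimit presentation $\colim(\mathfrak{F}) = 1_{\Rep(Q,\C)}$ of Theorem \ref{thm:colim fe}. First I would observe that the diagram category $S_Q$ has a very simple shape: its objects split into the ``sink'' part $Q_0$ and the ``source'' part $Q_1$, with each $\rho\in Q_1$ mapping to $s(\rho)$ and $t(\rho)$ only. For such a shape the colimit of any functor $\mathfrak{H}\colon S_Q\to\mathcal{D}$ into an $AB3(\kappa')$ category (with $\kappa'$ large enough to form the relevant coproducts) is computed as the coequalizer, equivalently the cokernel, of the pair of maps $\coprod_{\rho\in Q_1}\mathfrak{H}(\rho)\rightrightarrows\coprod_{i\in Q_0}\mathfrak{H}(i)$ induced by $\rho\to t(\rho)$ and $\rho\to s(\rho)$; taking the difference of these two, one gets a single map $\Theta\colon\coprod_{\rho\in Q_1}\mathfrak{H}(\rho)\to\coprod_{i\in Q_0}\mathfrak{H}(i)$ whose cokernel is $\colim(\mathfrak{H})$. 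Applying this to $\mathfrak{H}=\mathfrak{F}$ and using Theorem \ref{thm:colim fe} gives an exact sequence $\coprod_{\rho\in Q_1}f_{t(\rho)}e_{s(\rho)}\xrightarrow{\Theta}\coprod_{i\in Q_0}f_i e_i\xrightarrow{\Gamma}1_{\Rep(Q,\C)}\to 0$ in $\End(\Rep(Q,\C))$. The coproducts here exist: $\coprod_{i\in Q_0}f_i e_i$ and $\coprod_{\rho\in Q_1}f_{t(\rho)}e_{s(\rho)}$ are computed vertex-wise, and the hypothesis $\kappa\geq\max\{\rccn(Q),\ltccn(Q)\}$ guarantees by Lemma \ref{LemAux}(a) that for each $F$ the objects $\coprod_{i\in Q_0}f_i(F_i)$ and $\coprod_{\rho\in Q_1}f_{t(\rho)}(F_{s(\rho)})$ exist in $\Rep(Q,\C)$, hence the corresponding functors exist in $\End(\Rep(Q,\C))$.

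Next I would promote this three-term right-exact sequence to a genuine short exact sequence by checking that $\Theta$ is a monomorphism. This is where the ``splits vertex-wise'' assertion does the real work, so I would in fact prove both claims simultaneously by arguing vertex-wise. Fix $F\in\Rep(Q,\C)$ and a vertex $k\in Q_0$. Evaluating $\eta_F$ at $k$ and using the explicit description of $f_i(F_i)$ and of the structure maps from Proposition \ref{funtor-f} and Remark \ref{fefe-nat}, the sequence $(\eta_F)_k$ becomes a sequence of coproducts indexed by paths: $\bigl(\coprod_{\rho\in Q_1}f_{t(\rho)}(F_{s(\rho)})\bigr)_k=\coprod_{\rho}F_{s(\rho)}^{(Q(t(\rho),k))}$, which by reindexing along $(\rho,\lambda)\mapsto\lambda\rho$ is $\coprod_{\gamma\ \text{path of length}\geq 1\ \text{ending at}\ k}F_{s(\gamma)}$, while $\bigl(\coprod_{i\in Q_0}f_i(F_i)\bigr)_k=\coprod_{i}F_i^{(Q(i,k))}=\coprod_{\gamma\ \text{path ending at}\ k}F_{s(\gamma)}$. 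The map $\Gamma_{F,k}$ is, componentwise, $F_\gamma\colon F_{s(\gamma)}\to F_k$, and on the trivial-path summand $\gamma=\epsilon_k$ it is the identity $F_k\to F_k$; so $\Gamma_{F,k}$ is a split epimorphism with the obvious section. The kernel is then the ``$\gamma$ of positive length'' part, and one checks $\Theta_{F,k}$ identifies this part as a direct summand: on the $\rho$-summand $\Theta$ sends the $\lambda$-copy ($\lambda\in Q(t(\rho),k)$) to the difference of the $\lambda\rho$-copy in the $t(\rho)$-block and (when $\lambda$ factors appropriately) a copy in the $s(\rho)$-block, which exhibits a triangular/unitriangular structure making $\Theta_{F,k}$ a split monomorphism onto $\ker\Gamma_{F,k}$. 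Thus $(\eta_F)_k$ is a split short exact sequence in $\C$ for every $k$, which is exactly the vertex-wise splitting claim, and in particular $\Theta_F$, hence $\Theta$, is a monomorphism; this upgrades $\eta$ to the asserted short exact sequence in $\End(\Rep(Q,\C))$ and $\eta_F$ to a short exact sequence in $\Rep(Q,\C)$.

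Finally I would record that $\eta_F$ is obtained from $\eta$ by evaluating the functors at $F$, which is an exact operation on $\End(\Rep(Q,\C))$ followed by nothing — more precisely, the functor $\End(\Rep(Q,\C))\to\Rep(Q,\C)$, $\mathfrak{H}\mapsto\mathfrak{H}(F)$, is exact, since kernels and cokernels in a functor category are computed pointwise — so the exactness of $\eta$ yields exactness of $\eta_F$, and the vertex-wise analysis above is what shows the latter splits in $\C$. The main obstacle, and the only step requiring genuine care, is the bookkeeping in the previous paragraph: writing down $\Theta_{F,k}$ explicitly in terms of the path reindexing and verifying that it realizes $\ker\Gamma_{F,k}$ as a split summand. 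The cleanest way to handle it is to note that the poset of paths ending at $k$, ordered by ``$\gamma'$ is a right-factor of $\gamma$'', is well-founded precisely because $Q$ is acyclic (which follows here since a finite-$\ltccn$, finite-$\rccn$ situation forces $Q$ to have no oriented cycles through $k$), so the triangular system defining the splitting can be solved by induction on path length; this is the substance behind the phrase ``splits vertex-wise'' and is where I would spend the bulk of the argument, everything else being a formal consequence of Theorem \ref{thm:colim fe} and the pointwise computation of (co)limits in functor categories.
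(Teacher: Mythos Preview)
Your overall strategy matches the paper's: derive the cokernel presentation from Theorem~\ref{thm:colim fe} via the standard coequalizer description of a colimit over $S_Q$, then verify vertex-wise that the difference map is a split monomorphism. Two points need correction.

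First, and most importantly: the hypotheses do \emph{not} force $Q$ to be acyclic. You only have $\kappa\geq\max\{\rccn(Q),\ltccn(Q)\}$ for some infinite $\kappa$; nothing says these cardinals are finite, and for instance the Jordan quiver (one vertex, one loop) satisfies the hypotheses with $\kappa=\aleph_1$. Fortunately the well-foundedness you need is free regardless: if $\gamma'$ is a proper right factor of $\gamma$ then $\gamma'$ has strictly smaller length, so the order is well-founded by induction on path length, cycles or not. The paper does exactly this, constructing an explicit retraction $\Lambda_k$ whose value on each path-indexed summand is given by a formula in the arrow decomposition $\gamma=\alpha_n\cdots\alpha_1$ of the path. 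So drop the acyclicity claim and run the induction on path length.

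Second, two smaller issues. In the statement (see the macro $\backslash\mathrm{suc}$), $\Gamma$ labels the \emph{monomorphism}, not the epimorphism; your $\Theta$ is the paper's $\Gamma$. And your componentwise description of that map at $k$ is garbled: on the $(\lambda,\rho)$-summand (with $\rho\in Q_1$, $\lambda\in Q(t(\rho),k)$) it is $\mu_{(\lambda\rho,\,s(\rho))}-\mu_{(\lambda,\,t(\rho))}\circ F_\rho$, so the leading term lands in the $s(\rho)$-block at the path $\lambda\rho$, and the correction term is always present --- there is no ``when $\lambda$ factors appropriately'' condition. With this corrected, the map is indeed unitriangular with respect to path length, and that is what makes the explicit retraction go through.
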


\begin{proof} Let $F\in\Rep(Q,\C).$ Then, by Lemma \ref{LemAux}, we get that $\coprod_{\rho\in Q_{1}}f_{t(\rho)}e_{s(\rho)}(F)$
and $\coprod_{i\in Q_{0}}f_{i}e_{i}(F)$ exist in $\Rep(Q,\C).$ Therefore, $\coprod_{\rho\in Q_{1}}f_{t(\rho)}e_{s(\rho)}$ and $\coprod_{i\in Q_{0}}f_{i}e_{i}(F)$ exist in $\End(\Rep(Q,\C)),$ and let $\mu_{\rho}:f_{t(\rho)}e_{s(\rho)}\rightarrow\coprod_{\rho\in Q_{1}}f_{t(\rho)}e_{s(\rho)}$
and $\mu_{i}:f_{i}e_{i}\rightarrow\coprod_{i\in Q_{0}}f_{i}e_{i}$ be the natural inclusions,  for all $\rho\in Q_{1}$ and $i\in Q_{0}.$
Now, for each arrow $\rho\in Q_1,$ consider the following diagram in  $\End(\Rep(Q,\C))$
\[\begin{tikzcd}
	{f_{t(\rho)}\circ e_{s(\rho)}} && {f_{t(\rho)}\circ e_{t(\rho)}} \\
	\\
	{f_{s(\rho)}\circ e_{s(\rho)}} && {\coprod_{i\in Q_0} f_i\circ e_i}
	\arrow["{f_{t(\rho)}\cdot e_{\rho}}", from=1-1, to=1-3]
	\arrow["{f_{\rho}\cdot e_{s(\rho)}}"', from=1-1, to=3-1]
	\arrow["{\mu_{s(\rho)}}"', from=3-1, to=3-3]
	\arrow["{\mu_{t(\rho)}}", from=1-3, to=3-3]
\end{tikzcd}\]
Therefore, by the universal property of the coproduct there are morphisms\\
 $\gamma_1,\gamma_2:\coprod_{\rho\in Q_{1}}f_{t(\rho)}\circ e_{s(\rho)}\to \coprod_{i\in Q_0} f_i\circ e_i$ such that 
 $\gamma_1\circ \mu_\rho=\mu_{t(\rho)}\circ(f_{t(\rho)}\cdot e_{\rho})$ and $\gamma_2\circ \mu_\rho=\mu_{s(\rho)}\circ(f_{\rho}\cdot e_{s(\rho)}).$ Thus $\Gamma:=\gamma_2-\gamma_1:\coprod_{\rho\in Q_{1}}f_{t(\rho)}\circ e_{s(\rho)}\to \coprod_{i\in Q_0} f_i\circ e_i$ is the unique morphism such that 
 \begin{center}
$
\Gamma\circ\mu_{\rho}=\mu_{s(\rho)}\circ(f_{\rho}\cdot e_{s(\rho)})-\mu_{t(\rho)}\circ(f_{t(\rho)}\cdot e_{\rho}).
$ 
\end{center}
Moreover, by Theorem \ref{thm:colim fe}, we get that $\Cok(\Gamma)=\colim(\mathfrak{F})=1_{\Rep(Q,\C)}.$ It  remains to show that 
$\Gamma:\coprod_{\rho\in Q_{1}}f_{t(\rho)}\circ e_{s(\rho)}\to \coprod_{i\in Q_0} f_i\circ e_i$ is a monomorphism. For this, we
will prove that, for $F\in\Rep(Q,\C),$  the morphism 
\begin{center}
$\Gamma_F:\coprod_{\rho\in Q_{1}}f_{t(\rho)}(F_{s(\rho)})\to \coprod_{i\in Q_0} f_i(F_i)$ 
\end{center}
satisfies that $(\Gamma_F)_k$ is a split monomorphism in $\C,$ for all $k\in Q_{0}$.
Let $k\in Q_{0}$. Consider the set  $M_{1}:=\{(\alpha,\alpha_{0})\in Q(i,k)\times Q_{1}(j,i)\}_{j,i\in Q_{0}}.$ Notice that 
$\coprod_{\rho\in Q_{1}}(f_{t(\rho)}(F_{s(\rho)}))_k=\coprod_{(\alpha,\alpha_{0})\in M_{1}}F_{s(\alpha\alpha_{0})}=:A.$ On the other hand, we also have that   
$\coprod_{i\in Q_{0}}(f_{i}(F_i))_k=\coprod_{\alpha\in Q(-,k)}F_{s(\alpha)}=:B. $ Consider the natural inclusions in the coproduct
\begin{gather*}
\mu_{(\alpha,\alpha_0)}:F_{s(\alpha_0)}\to  \coprod_{(\alpha,\alpha_{0})\in M_{1}}F_{s(\alpha\alpha_{0})}=A,\\
\mu_{(\gamma, s(\gamma))}:F_{s(\gamma)}\to  \coprod_{\gamma\in Q(-,k)}F_{s(\gamma)}=B.
\end{gather*}

We assert that $(\Gamma_F)_k :A\to B$ is the unique morphism in $\C$ such that 
\begin{center}
$(\Gamma_F)_k\circ\mu_{(\lambda,\rho)}=\mu_{(\lambda\rho,s(\rho))}-\mu_{(\lambda,s(\lambda))}\circ F_\rho,$ for all $\rho\in Q_1$ and $\lambda\in Q(t(\rho),k).$
\end{center}
Indeed, we know that 
$\Gamma\circ\mu_{\rho}=\mu_{t(\rho)}\circ(f_{t(\rho)}\cdot e_{\rho})-\mu_{s(\rho)}\circ(f_{\rho}\cdot e_{s(\rho)}).$ Therefore, 
\begin{center}
$(\Gamma_F)_k\circ((\mu_{\rho})_F)_k=((\mu_{t(\rho)})_F)_k\circ(f_{t(\rho)}(F_\rho))_k-((\mu_{s(\rho)})_F)_k\circ(f_{\rho}(F_{s(\rho)}))_k.$
\end{center}
After doing some calculations we get that:
\begin{gather*}
(\Gamma_F)_k\circ((\mu_{\rho})_F)_k\circ \mu_{\lambda}^{F_{s(\rho)}}=(\Gamma_F)_k\circ\mu_{(\lambda,\rho)},\\
((\mu_{t(\rho)})_F)_k\circ(f_{t(\rho)}(F_\rho))_k\circ \mu_{\lambda}^{F_{s(\rho)}}=((\mu_{t(\rho)})_F)_k\circ \mu_{\lambda}^{F_{t(\rho)}}\circ F_\rho=\mu_{(\lambda,t(\rho))}\circ F_\rho,\\
((\mu_{s(\rho)})_F)_k\circ(f_{\rho}(F_{s(\rho)}))_k\circ \mu_{\lambda}^{F_{s(\rho)}}= ((\mu_{s(\rho)})_F)_k\circ \mu_{\lambda\rho}^{F_{s(\rho)}}=\mu_{(\lambda\rho,s(\rho))},
\end{gather*}
and thus our assertion follows.
\

Notice that if $|Q(-,k)|=1,$ we get that $A=0$ and thus  $(\Gamma_F)_k :A\to B$ is an split-mono. Thus, we can assume that 
$|Q(-,k)|>1.$ Now, by using the universal property of the coproduct $\mu_{(\gamma, s(\gamma))}:F_{s(\gamma)}\to  \coprod_{\gamma\in Q(-,k)}F_{s(\gamma)}=B,$  we define $\Lambda_k:B\rightarrow A$ to 
be the unique morphism such that $\Lambda_k\circ\mu_{(\gamma,s(\gamma))}=\Lambda_{k,\gamma},$ where $\Lambda_{k,\gamma}:F_{s(\gamma)}\to A$ is defined, for each $\gamma\in Q(-,k),$ as follows: for the trivial path $\epsilon_k$ at the vertex $k,$ we set $\Lambda_{k,\epsilon_k}:=0,$ for an arrow $\alpha$ with $t(\alpha)=k,$ we set $\Lambda_{k,\alpha}:=\mu_{(\epsilon_k,\alpha)}.$ 
Let now $\gamma=\alpha_n\alpha_{n-1}\cdots\alpha_1\in Q(-,k)$ be a path of length $n\geq 2.$ Consider $\alpha_{[k,l]}:=\alpha_{k}\cdots\alpha_{l},$ for $n\geq k\geq l\geq1.$ Then, for such $\gamma=\alpha_n\alpha_{n-1}\cdots\alpha_1,$ we set
\[\Lambda_{k,\gamma}:=\mu_{(\epsilon_{t(\alpha_n)},\alpha_n)}\circ F_{\alpha_{[n-1,1]}}+\sum_{m=2}^{n-1}\mu_{(\alpha_{[n,m+1]}, \alpha_m})\circ F_{\alpha_{[m-1,1]}}+\mu_{(\alpha_{[n,2]},\alpha_1)}.\]
We assert that $\Lambda_k\circ (\Gamma_F)_k\circ \mu_{(\lambda,\rho)}=\mu_{(\lambda,\rho)},$ for each $\rho\in Q_1$ and 
$\lambda\in Q(t(\rho),k).$ Indeed, notice firstly that 
\begin{center}
$\Lambda_k\circ (\Gamma_F)_k\circ \mu_{(\lambda,\rho)}=\Lambda_{k,\lambda\rho}-\Lambda_{k,\lambda}\circ F_\rho.$
\end{center}
Thus, for $\lambda=\epsilon_k,$ we have $\Lambda_{k,\lambda\rho}=\Lambda_{k,\rho}=\mu_{(\epsilon_k,\rho)}$ and $\Lambda_{k,\epsilon_k}\circ F_\rho=0.$ Now, if $\lambda\rho=\alpha_2\alpha_1$ is a path of length 2, we have that 
$\Lambda_{k,\lambda\rho}=\mu_{(\epsilon_{t(\alpha_2)},\alpha_2)}\circ F_{\alpha_1}+\mu_{(\alpha_2,\alpha_1)}$ and 
$\Lambda_{k,\lambda}\circ F_\rho=\mu_{(\epsilon_k,\alpha_2)}\circ F_{\alpha_1}$ and thus $\Lambda_{k,\lambda\rho}-\Lambda_{k,\lambda}\circ F_\rho=\mu_{(\alpha_2,\alpha_1)}=\mu_{(\lambda,\rho)}.$ Finally, let $\lambda\rho=\alpha_n\alpha_{n-1}\alpha_2\alpha_1$ be a path of length $n\geq 3.$ Then 
$ \Lambda_{k,\lambda}=\Lambda_{k,\alpha_{[n,2]}}=\mu_{(\epsilon_{t(\alpha_n)},\alpha_n)}\circ F_{\alpha_{[n-1,2]}}+\sum_{m=3}^{n-1}\mu_{(\alpha_{[n,m+1]}, \alpha_m})\circ F_{\alpha_{[m-1,2]}}+\mu_{(\alpha_{[n,3]},\alpha_2)}$ and therefore
\begin{center}
$\Lambda_{k,\lambda}\circ F_\rho=\Lambda_{k,\lambda}\circ F_{\alpha_1}=\mu_{(\epsilon_{t(\alpha_n)},\alpha_n)}\circ F_{\alpha_{[n-1,1]}}+\sum_{m=2}^{n-1}\mu_{(\alpha_{[n,m+1]}, \alpha_m})\circ F_{\alpha_{[m-1,1]}}.$
\end{center}
Then, $\Lambda_{k,\lambda\rho}-\Lambda_{k,\lambda}\circ F_\rho=\mu_{(\alpha_{[n,2]},\alpha_1)}=\mu_{(\lambda,\rho)},$ proving our assertion, and thus we conclude that $\Lambda_k\circ (\Gamma_F)_k=1_A.$
\end{proof}

In some applications, the dual of the previous corollary will be useful (see \cite{AM22}). For the convenience  of the reader, we state this result below about the existence of the canonical co-presentation
given by the family of functors $\{g_i\}_{i\in Q_0}.$ 

\begin{cor}\label{cor:copresentacion canonica} Let $Q$ be a quiver, $\kappa\geq\max\{\rtccn(Q),\lccn(Q)\}$ be an infinite cardinal and $\C$ be an $AB3^*(\kappa)$
abelian category. Then, there is an exact sequence 
\[
\eta:\;\suc[1_{\Rep(Q,\C)}][\prod_{i\in Q_{0}}g_{i}e_{i}][\prod_{\rho\in Q_{1}}g_{s(\rho)}e_{t(\rho)}][][\Gamma]
\]
in $\End(\Rep(Q,\C))$. Moreover, for each $F\in\Rep(Q,\C)$,
the  exact sequence 
$
\eta_F:\;\suc[F][\prod_{i\in Q_{0}}g_{i}e_{i}(F)][\prod_{\rho\in Q_{1}}g_{s(\rho)}e_{t(\rho)}(F)][][\Gamma_F]$
 in $\Rep(Q,\C)$
 splits vertex-wise in $\C.$
\end{cor}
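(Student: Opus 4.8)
The plan is to deduce this statement from Corollary \ref{cor:presentacion canonica} by a careful dualization, so that no new homological work is needed. I would work with the opposite quiver $Q^{op}$ and the opposite category $\C^{op}$. Since the path category of $Q^{op}$ is $(\C_Q)^{op}$, one has $\Rep(Q^{op},\C^{op})=\Fun((\C_Q)^{op},\C^{op})=\Fun(\C_Q,\C)^{op}=\Rep(Q,\C)^{op}$, and hence $\End(\Rep(Q^{op},\C^{op}))=\End(\Rep(Q,\C))^{op}$. Under these identifications a coproduct computed in $\End(\Rep(Q^{op},\C^{op}))$ becomes a product in $\End(\Rep(Q,\C))$, a monomorphism becomes an epimorphism, and a short exact sequence is read in the reverse direction; also $\C^{op}$ is $AB3(\kappa)$ precisely because $\C$ is $AB3^{*}(\kappa)$.

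Next I would match up the relevant functors. For $C\in\C^{op}$ and $i,j\in Q_0$, one has $(f_i^{Q^{op}}(C))_j=\coprod_{p\in Q^{op}(i,j)}C$ computed in $\C^{op}$, i.e. it is the product $\prod_{p\in Q(j,i)}C=(g_i^{Q}(C))_j$ computed in $\C$; doing the analogous check on arrows identifies $f_i^{Q^{op}}$ (viewed in $\Rep(Q,\C)^{op}$) with $g_i^{Q}$, while the evaluation functor is manifestly self-dual, $e_i^{Q^{op}}=e_i^{Q}$. On the combinatorial side, straight from Definition \ref{def:cone shape cardinal} we get $\rccn(Q^{op})=\lccn(Q)$ and $\ltccn(Q^{op})=\rtccn(Q)$, so that $\kappa\geq\max\{\rccn(Q^{op}),\ltccn(Q^{op})\}$. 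Thus the hypotheses of Corollary \ref{cor:presentacion canonica} hold for the pair $(Q^{op},\C^{op})$, and that corollary supplies an exact sequence $\coprod_{\rho\in Q_1^{op}}f_{t(\rho)}^{Q^{op}}e_{s(\rho)}^{Q^{op}}\hookrightarrow\coprod_{i\in Q_0}f_i^{Q^{op}}e_i^{Q^{op}}\twoheadrightarrow 1_{\Rep(Q^{op},\C^{op})}$ in $\End(\Rep(Q^{op},\C^{op}))$ which, after evaluation at any representation, splits vertex-wise in $\C^{op}$.

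Finally I would translate this sequence back. Reading it in $\End(\Rep(Q,\C))=\End(\Rep(Q^{op},\C^{op}))^{op}$ reverses the arrows, turns the two coproducts into products, and replaces each $f^{Q^{op}}e^{Q^{op}}$ by $g^{Q}e^{Q}$; the only point requiring care is that an arrow $\rho\colon j\to i$ of $Q$ is the arrow $\rho\colon i\to j$ of $Q^{op}$, so $s^{Q^{op}}(\rho)=t(\rho)$ and $t^{Q^{op}}(\rho)=s(\rho)$, whence the term $f_{t(\rho)}^{Q^{op}}e_{s(\rho)}^{Q^{op}}=f_{s(\rho)}^{Q^{op}}e_{t(\rho)}^{Q^{op}}$ corresponds to $g_{s(\rho)}^{Q}e_{t(\rho)}^{Q}$. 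This produces exactly the exact sequence $1_{\Rep(Q,\C)}\hookrightarrow\prod_{i\in Q_0}g_ie_i\twoheadrightarrow\prod_{\rho\in Q_1}g_{s(\rho)}e_{t(\rho)}$ of the statement, and, since a vertex-wise split short exact sequence dualizes to a vertex-wise split short exact sequence, the sequence $\eta_F$ splits vertex-wise in $\C$ for every $F\in\Rep(Q,\C)$. I expect the main (indeed only) obstacle to be this bookkeeping: keeping straight the swap of $s$ and $t$ under $(-)^{op}$ and the coproduct/product interchange so that the indices and the direction of the sequence come out correctly.
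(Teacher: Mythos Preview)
Your proposal is correct and is exactly the approach the paper intends: the authors state this corollary as the dual of Corollary~\ref{cor:presentacion canonica} without giving a separate proof, so your careful passage to $(Q^{op},\C^{op})$ and the bookkeeping with $\rccn(Q^{op})=\lccn(Q)$, $\ltccn(Q^{op})=\rtccn(Q)$, $f_i^{Q^{op}}\leftrightarrow g_i^{Q}$, and the swap $s^{Q^{op}}\!=t$, $t^{Q^{op}}\!=s$ simply makes that duality explicit. The only minor imprecision is the displayed equality ``$f_{t(\rho)}^{Q^{op}}e_{s(\rho)}^{Q^{op}}=f_{s(\rho)}^{Q^{op}}e_{t(\rho)}^{Q^{op}}$'', where the two sides silently use different source/target conventions; writing the left-hand side as $f^{Q^{op}}_{t^{Q^{op}}(\rho)}e^{Q^{op}}_{s^{Q^{op}}(\rho)}$ would remove the ambiguity.
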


\subsection{A right adjoint for $g_{i}$ and a left one for $f_i$}

We start with the following two Lemmas. 

\begin{lem}\cite[Lem. 3.7]{bauer2020cotorsion}\label{lem:left adjointness closed under coker}
Let $F\hookrightarrow G\twoheadrightarrow H$ be an exact sequence
of functors between abelian categories $\mathcal{A}$ and $\mathcal{B}$.
If both $F,G:\mathcal{A}\rightarrow\mathcal{B}$ admit right adjoints,
then so does $H$. 
\end{lem}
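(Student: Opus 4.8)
The plan is to construct the right adjoint $H'$ of $H$ explicitly from the given right adjoints $F'$ and $G'$, using that the exact sequence exhibits $H$ as the cokernel of the natural transformation $f\colon F\to G$. First I would form the \emph{mate} of $f$: the adjunctions $(F,F')$ and $(G,G')$ convert $f\colon F\to G$ into a natural transformation $g'\colon G'\to F'$, characterised by the usual compatibility between $f$ and $g'$ in terms of the units and co-units (as in Lemma~\ref{lem: mitchell adjoint pairs}). Since $\mathcal{A}$ is abelian I can then define a functor $H'\colon\mathcal{B}\to\mathcal{A}$ by $H'(B):=\Ker\big(g'_{B}\colon G'(B)\to F'(B)\big)$, functoriality of kernels making this well defined.

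Next I would establish the adjunction $H\dashv H'$ by a chain of natural isomorphisms. For $A\in\mathcal{A}$ and $B\in\mathcal{B}$, left-exactness of $\Hom_{\mathcal{A}}(A,-)$ gives
\[
\Hom_{\mathcal{A}}(A,H'(B))\;\cong\;\Ker\!\Big(\Hom_{\mathcal{A}}(A,G'(B))\xrightarrow{\,g'_{B}\circ-\,}\Hom_{\mathcal{A}}(A,F'(B))\Big).
\]
Applying the two adjunction isomorphisms identifies the right-hand side with $\Ker\big(\Hom_{\mathcal{B}}(G(A),B)\to\Hom_{\mathcal{B}}(F(A),B)\big)$, the displayed map being precomposition with $f_{A}\colon F(A)\to G(A)$. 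Finally, since the exact sequence gives $H(A)\cong\Coker(f_{A})$ and $\Hom_{\mathcal{B}}(-,B)$ turns cokernels into kernels, this last group equals $\Hom_{\mathcal{B}}(H(A),B)$. Composing yields $\Hom_{\mathcal{A}}(A,H'(B))\cong\Hom_{\mathcal{B}}(H(A),B)$, natural in both variables, which is exactly the sought adjunction.

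The only non-formal point — and the main thing to check carefully — is that under the adjunction isomorphisms $\Hom_{\mathcal{A}}(A,G'(B))\cong\Hom_{\mathcal{B}}(G(A),B)$ and $\Hom_{\mathcal{A}}(A,F'(B))\cong\Hom_{\mathcal{B}}(F(A),B)$ the map induced by $g'_{B}$ on the left corresponds to precomposition by $f_{A}$ on the right; this is precisely the defining property of the mate $g'$ and follows by a diagram chase with the unit/co-unit description of the adjunction isomorphisms together with the naturality of $f$ (this is where one genuinely uses that $f$ is a natural transformation, not merely a pointwise family). Everything else is bookkeeping: naturality in $A$ and $B$ of each isomorphism in the chain, and additivity of $F'$, $G'$, which is automatic for right adjoints between abelian categories. (As a consistency check, $H$ is a pointwise cokernel of colimit-preserving functors, hence preserves all colimits; but the construction above produces the adjoint directly and so avoids invoking any adjoint functor theorem and its set-theoretic hypotheses.)
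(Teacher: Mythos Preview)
Your argument is correct and is the standard proof of this fact: construct the mate $g'\colon G'\to F'$ of $f\colon F\to G$, set $H':=\Ker(g')$, and verify the adjunction by the chain of natural isomorphisms you describe, the key point being that the mate is precisely characterised by the property that post-composition with $g'$ corresponds, under the adjunction bijections, to pre-composition with $f$.

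Note, however, that the paper does not actually prove this lemma: it simply quotes it from \cite[Lem.~3.7]{bauer2020cotorsion} and uses it as a black box. So there is no ``paper's own proof'' to compare against here; your write-up supplies exactly the argument one finds in the cited reference.
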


\begin{lem}\label{lem: lemita g}Let $Q$ be a quiver, $y,z\in Q_{0}$ such that
$|Q(y,z)|<\infty$, $\kappa\geq\rccn(Q)$ an infinite cardinal and 
 $\mathcal{C}$ be an $AB3(\kappa)$ abelian category. Then, for
$x\in Q_{0}$, we have that $f_{x}\circ e_{y}\circ g_{z}=\coprod_{\rho\in Q(y,z)}f_{x}$. 
\end{lem}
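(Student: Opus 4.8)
The plan is to compute both sides of the equality as functors $\mathcal{C}\to\Rep(Q,\C)$ by evaluating at an arbitrary object $C\in\C$ and at an arbitrary vertex $k\in Q_0$, using the explicit descriptions of $f_x$, $e_y$ and $g_z$ given in the definitions before Proposition \ref{rem:adjuntos de ei}. First I would unwind the left-hand side: for $C\in\C$ we have $g_z(C)\in\Rep(Q,\C)$ with $(g_z(C))_y = C^{Q(y,z)} = \prod_{p\in Q(y,z)}C_p$, a finite product since $|Q(y,z)|<\infty$; hence $e_y g_z(C) = C^{Q(y,z)}$. Applying $f_x$ gives $(f_x(C^{Q(y,z)}))_k = (C^{Q(y,z)})^{(Q(x,k))} = \coprod_{q\in Q(x,k)}(C^{Q(y,z)})_q$. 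Since $C^{Q(y,z)}$ is a \emph{finite} coproduct (equivalently product) of copies of $C$, it commutes with the coproduct defining $f_x$, so $(f_x e_y g_z(C))_k \cong \coprod_{q\in Q(x,k)}\coprod_{p\in Q(y,z)} C \cong \coprod_{p\in Q(y,z)}\coprod_{q\in Q(x,k)} C = \coprod_{p\in Q(y,z)}(f_x(C))_k = \big(\coprod_{\rho\in Q(y,z)}f_x(C)\big)_k$, where this last coproduct exists in $\Rep(Q,\C)$ because it is finite (again $|Q(y,z)|<\infty$) and $\C$ is $AB3(\kappa)$ with $\kappa\geq\rccn(Q)$.

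The key point that makes this work is that \emph{finite biproducts commute with arbitrary coproducts}: $g_z$ introduces only a finite product $C^{Q(y,z)}$, which as a finite biproduct is simultaneously a finite coproduct, and coproducts commute with coproducts. So after establishing the object-level isomorphism vertex by vertex, I would check that these isomorphisms are compatible with the structure maps $F_\alpha$ for arrows $\alpha\colon k\to l$ in $Q$ — that is, that the square relating $(f_x e_y g_z(C))_\alpha$ to $(\coprod_{\rho\in Q(y,z)}f_x(C))_\alpha$ commutes. Both structure maps are defined via the universal property of coproducts against the canonical inclusions $\mu^{(-)}_\lambda$ (for $\lambda$ ranging over paths $Q(x,k)$, resp. $Q(x,l)$), and the identification of indices $Q(x,k)\times Q(y,z)$ on both sides is the obvious one, so this is a routine diagram chase using the defining relations of $f_x$ from Proposition \ref{funtor-f} and the definition of $f_x$. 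Finally I would note naturality in $C$, which again follows since every map in sight is built functorially from the canonical (co)product data.

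The main obstacle, such as it is, is purely bookkeeping: keeping straight the nested (co)products indexed by $Q(x,k)$ and $Q(y,z)$ and verifying that the canonical isomorphism swapping them is the one that intertwines the two representation structures. There is no real difficulty hidden here because the finiteness of $Q(y,z)$ removes any exactness/commutation subtlety — we never need $g_z$ itself to be exact, only that the relevant product is finite. I would therefore present the argument as: (1) evaluate at $C$ and $k$ to get the object-level isomorphism $\theta_{C,k}\colon (f_x e_y g_z(C))_k\xrightarrow{\sim}(\coprod_{\rho\in Q(y,z)}f_x(C))_k$ from finite-biproduct/coproduct interchange; (2) check $\theta_{C,k}$ is natural in $k$, i.e.\ commutes with all $F_\alpha$, via the universal property of coproducts; (3) check naturality in $C$; conclude $f_x\circ e_y\circ g_z = \coprod_{\rho\in Q(y,z)}f_x$ as functors.
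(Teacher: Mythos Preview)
Your proposal is correct and follows exactly the same approach as the paper: evaluate at an object $C\in\C$, use that $e_y g_z(C)=C^{Q(y,z)}$ is a \emph{finite} biproduct by hypothesis, and interchange this finite coproduct with the coproduct $(-)^{(Q(x,-))}$ defining $f_x$. The paper compresses this into the single line $(C^{Q(y,z)})^{(Q(x,-))}=(C^{(Q(x,-))})^{(Q(y,z))}=\coprod_{\rho\in Q(y,z)}f_x(C)$ and omits the routine checks of compatibility with arrows and naturality in $C$ that you spell out.
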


\begin{proof} Let $C\in\C.$ Since $Q(y,z)$ is finite, we have that 
\begin{center}
$f_{x}\circ e_{y}\circ g_{z}(C)=\left(C^{Q(z,y)}\right)^{(Q(x,-))}=\left(C^{(Q(x,-))}\right)^{(Q(z,y))}=\coprod_{\rho\in Q(y,z)}f_{x}(C)$
\end{center}
and thus the lemma follows.
\end{proof}

Now, by considering the canonical presentation and Lemma \ref{lem:left adjointness closed under coker},
we get the following result. It should be mentioned that this is a
generalization of \cite[Lem. 3.8]{bauer2020cotorsion}, where it
is stated for finite and acyclic quivers. 

\begin{prop}\label{prop:exactitud g} Let $Q$ be a quiver, $z\in Q_{0},$  $\kappa\geq\max\{\rccn(Q),\ltccn(Q)\}$ be an infinite cardinal 
and $\mathcal{C}$ be an $AB3(\kappa)$ abelian category. Consider the following conditions:
\begin{itemize}
\item[(a)] the set $Q(-,z)$ is finite;
\item[(b)] $\mathcal{C}$ is $AB3^{*}(\kappa)$ and $\lccn_z(Q)\leq\aleph_0.$
\end{itemize}
If one of the above conditions holds true, then  the functor
$g_{z}:\C\to \Rep(Q,\C)$ exists and 
admits a right adjoint. 
\end{prop}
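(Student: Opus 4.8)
The strategy is to realise $g_{z}$ as the cokernel of a morphism between two functors which admit right adjoints, and then to apply Lemma \ref{lem:left adjointness closed under coker}. First I would observe that $g_{z}$ is well defined in both cases: under (a) every $Q(j,z)$ is a subset of the finite set $Q(-,z)$, while under (b) this is part of the hypothesis, so in either case $\lccn_{z}(Q)\le\aleph_{0}$ and each product $C^{Q(j,z)}$ occurring in the definition of $g_{z}$ is a finite product, hence exists in $\C$; thus $g_{z}:\C\to\Rep(Q,\C)$ exists. Next I would precompose the canonical presentation of Corollary \ref{cor:presentacion canonica} with $g_{z}$. Since $\eta_{F}$ is exact for every $F\in\Rep(Q,\C)$ and, by Lemma \ref{LemAux} (using $\kappa\ge\max\{\rccn(Q),\ltccn(Q)\}$ and that $\C$ is $AB3(\kappa)$), the coproducts $\coprod_{i\in Q_{0}}f_{i}e_{i}(g_{z}(C))$ and $\coprod_{\rho\in Q_{1}}f_{t(\rho)}e_{s(\rho)}(g_{z}(C))$ exist in $\Rep(Q,\C)$ for all $C$, we obtain an exact sequence of functors
\[
\coprod_{\rho\in Q_{1}}f_{t(\rho)}e_{s(\rho)}g_{z}\;\hookrightarrow\;\coprod_{i\in Q_{0}}f_{i}e_{i}g_{z}\;\twoheadrightarrow\;g_{z}
\]
in $\Fun(\C,\Rep(Q,\C))$.

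By Lemma \ref{lem:left adjointness closed under coker} it then suffices to prove that the two functors on the left admit right adjoints. For this I would use Lemma \ref{lem: lemita g}: since $\lccn_{z}(Q)\le\aleph_{0}$ forces $|Q(y,z)|<\infty$, one has $f_{x}e_{y}g_{z}=\coprod_{\rho\in Q(y,z)}f_{x}$ for all $x,y\in Q_{0}$. Re-indexing by paths ending at $z$ then gives $\coprod_{i\in Q_{0}}f_{i}e_{i}g_{z}\cong\coprod_{\lambda\in Q(-,z)}f_{s(\lambda)}$, and likewise $\coprod_{\rho\in Q_{1}}f_{t(\rho)}e_{s(\rho)}g_{z}$ becomes a coproduct of functors of the form $f_{t(\rho)}$, indexed by the pairs $(\rho,\lambda)$ with $\rho\in Q_{1}$ and $\lambda\in Q(s(\rho),z)$; note that only the summands with $s(\rho)$ admitting a path to $z$ are nonzero. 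Each $f_{x}$ has right adjoint $e_{x}$ by Proposition \ref{rem:adjuntos de ei}(a). Under hypothesis (a) the relevant index sets are finite (they inject into $Q(-,z)$ once the superfluous zero summands are discarded), so both coproducts are finite and their right adjoints are the corresponding finite products of evaluation functors; here one may alternatively pass to the full $0$-finite subquiver on the vertices admitting a path to $z$, which has empty ``$-$'' set, so that Lemma \ref{adjoint-ip} handles the inclusion functor and one recovers the finite acyclic situation of \cite[Lem. 3.8]{bauer2020cotorsion}. Under hypothesis (b), $\C$ — and hence $\Rep(Q,\C)$ — is $AB3^{*}(\kappa)$, and one checks, using $\kappa\ge\ltccn(Q)$, that the index sets of nonzero summands have cardinality $<\kappa$; Lemma \ref{lem: coproducto pares adjuntos} then produces the right adjoint of each coproduct as the corresponding product of evaluation functors. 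In either case both functors admit a right adjoint, and Lemma \ref{lem:left adjointness closed under coker} finishes the proof.

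The main obstacle is precisely the cardinal bookkeeping in this last step: one must guarantee that only $<\kappa$ (respectively, finitely many) of the summands in the two coproducts are nonzero, so that the ``product of the right adjoints'' actually exists in $\C$. This is exactly where the finiteness of $Q(-,z)$ in (a) and the conjunction of $AB3^{*}(\kappa)$ with $\kappa\ge\ltccn(Q)$ in (b) enter, and it is the delicate part of the argument; everything else is the formal machinery of the canonical presentation together with Lemmas \ref{lem:left adjointness closed under coker}, \ref{lem: coproducto pares adjuntos} and \ref{lem: lemita g}.
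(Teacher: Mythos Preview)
Your strategy coincides with the paper's: apply $(g_{z})^{*}$ to the canonical presentation of Corollary~\ref{cor:presentacion canonica}, simplify via Lemma~\ref{lem: lemita g} to obtain
\[
\coprod_{\rho\in Q_{1}}\;\coprod_{\rho'\in Q(s(\rho),z)}f_{t(\rho)}\ \hookrightarrow\ \coprod_{i\in Q_{0}}\;\coprod_{\rho'\in Q(i,z)}f_{i}\ \twoheadrightarrow\ g_{z},
\]
and then invoke Lemmas~\ref{lem: coproducto pares adjuntos} and~\ref{lem:left adjointness closed under coker}. Your re-indexing of the middle term as $\coprod_{\lambda\in Q(-,z)}f_{s(\lambda)}$ and your cardinal bookkeeping under (b) match the paper's argument essentially verbatim.

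There is, however, a gap in your treatment of the \emph{left-hand} coproduct under hypothesis~(a). Its index set is $I=\{(\rho,\lambda):\rho\in Q_{1},\ \lambda\in Q(s(\rho),z)\}$, and your claimed injection into $Q(-,z)$ fails: the map $(\rho,\lambda)\mapsto\lambda$ is not injective, since distinct arrows with a common source give distinct pairs with the same~$\lambda$. Concretely, take $Q$ with vertices $z,a,b_{1},b_{2},\dots$ and arrows $a\to z$ and $a\to b_{k}$ for $k\ge 1$; then $Q(-,z)=\{\epsilon_{z},\,a\to z\}$ is finite, yet $I$ contains $(a\to b_{k},\,a\to z)$ for every $k$ and is infinite. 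Your subquiver alternative does not rescue this: the condition $S^{-}=\emptyset$ makes $\iota_{S}$ a \emph{right} adjoint of $\pi_{S}$ by Lemma~\ref{adjoint-ip}, whereas transporting a right adjoint along $g_{z}=\iota_{S}\circ g_{z}^{S}$ would require $\iota_{S}$ to \emph{possess} a right adjoint (i.e.\ $S^{+}=\emptyset$), which fails in the same example. The paper's own proof asserts without further comment that ``the coproducts above defined are finite'' in case~(a), so your write-up is faithful to it; but the bound on $|I|$ is precisely the step that deserves more justification.
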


\begin{proof} Assume that $|Q(-,z)|<\infty$ or $\lccn_z(Q)\leq\aleph_0.$ Notice that $g_z$ exists and then induces the exact functor
\begin{center}
 $\left(g_{z}\right)^{*}:\End(\Rep(Q,\C))\rightarrow\Fun(\C,\Rep(Q,\C)),\;(H_1\xrightarrow{b}H_2)\mapsto(H_1\circ g_z\xrightarrow{b\cdot g_z}H_2\circ g_z).$
\end{center}
Thus, by Corollary \ref{cor:presentacion canonica}
and Lemma \ref{lem: lemita g}, we get an exact sequence 
\[
\suc[\coprod_{\rho\in Q_{1}}\coprod_{\rho'\in Q(s(\rho),z)}f_{t(\rho)}][\coprod_{i\in Q_{0}}\coprod_{\rho'\in Q(i,z)}f_{i}][g_{z}][\Gamma\cdot g_{z}].
\]

Let $|Q(-,z)|<\infty.$ Then, the coproducts above defined are finite and thus by Proposition \ref{rem:adjuntos de ei} (a) and Lemmas \ref{lem:left adjointness closed under coker}
and \ref{lem: coproducto pares adjuntos}, it follows that $g_{z}$ admits a right
adjoint. 
\

Let $\mathcal{C}$ be $AB3^{*}(\kappa)$ and $\lccn_z(Q)\leq\aleph_0.$ Consider  
$H:=\{t\in Q_0\;:\;Q(t,z)\neq \emptyset\}$ and $\tilde{H}:=\bigcup_{t\in H} Q(t,z).$ Observe that $\coprod_{i\in Q_{0}}\coprod_{\rho'\in Q(i,z)}f_{i}=\coprod_{\sigma\in\tilde{H}}f_{s(\sigma)}.$ Moreover, 
$|\tilde{H}|=\sum_{t\in H}|Q(t,z)|\leq |H|\,\lccn_z(Q)<\kappa $
since $|H|<\kappa$ (see the proof of Lemma \ref{LemAux}) and $\lccn_z(Q)\leq\aleph_0.$
\

Now, consider the sets $H':=\{\rho\in Q_1\;:\;Q(t(\rho),z)\neq \emptyset\}$ and $\tilde{H'}:=\bigcup_{\lambda\in H'}Q(t(\lambda),z).$ Then $\coprod_{\rho\in Q_{1}}\coprod_{\rho'\in Q(s(\rho),z)}f_{t(\rho)}=\coprod_{\sigma\in \tilde{H'}} f_{s(\sigma)}$
and $|\tilde{H'}|=\sum_{\sigma\in H'}|Q(s(\sigma),z)|\leq |H'|\,\lccn_z(Q)<\kappa $
since $|H'|<\kappa$ (see the proof of Lemma \ref{LemAux}) and $\lccn_z(Q)\leq\aleph_0.$ Therefore by Proposition \ref{rem:adjuntos de ei} (a) and Lemmas \ref{lem:left adjointness closed under coker}
and \ref{lem: coproducto pares adjuntos}, it follows that $g_{z}$ admits a right
adjoint. 
\end{proof}
We have the next result by duality. 

\begin{prop}\label{prop:exactitud f}Let $Q$ be a quiver, $z\in Q_{0},$ $\kappa\geq\max\{\lccn(Q),\rtccn(Q)\}$ be an infinite cardinal 
and $\mathcal{C}$ be an $AB3^{*}(\kappa)$ abelian category. Consider the following conditions:
\begin{itemize}
\item[(a)] the set $Q(z,-)$ is finite;
\item[(b)] $\mathcal{C}$ is $AB3(\kappa)$ and $\rccn_z(Q)\leq\aleph_0.$
\end{itemize}
If one of the above conditions holds true, then  the functor 
$f_{z}:\C\to \Rep(Q,\C)$ exists and admits a left adjoint. 
\end{prop}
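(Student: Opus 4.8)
The plan is to reduce Proposition~\ref{prop:exactitud f} to Proposition~\ref{prop:exactitud g} by passing to opposite categories. Recall the canonical isomorphism of categories $\Rep(Q,\C)^{op}\simeq\Rep(Q^{op},\C^{op})$. First I would check that, under this isomorphism, the functor $f_{z}^{Q}\colon\C\to\Rep(Q,\C)$ corresponds to $g_{z}^{Q^{op}}\colon\C^{op}\to\Rep(Q^{op},\C^{op})$. On objects this is immediate: at a vertex $j$ one has $(f_{z}^{Q}(C))_{j}=C^{(Q(z,j))}$ and $(g_{z}^{Q^{op}}(C))_{j}=C^{Q^{op}(j,z)}$, and $Q^{op}(j,z)=Q(z,j)$ while the product in $\C^{op}$ is the coproduct in $\C$; the only thing requiring (routine) verification is that the two functors also agree on the structure morphisms attached to the arrows of $Q$, which follows from the defining universal properties of $f$ and $g$ together with the bijection between paths of $Q$ and paths of $Q^{op}$.

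Next I would set up the dictionary that matches the hypotheses. We have: $\C$ is $AB3^{*}(\kappa)$ (resp. $AB3(\kappa)$) if and only if $\C^{op}$ is $AB3(\kappa)$ (resp. $AB3^{*}(\kappa)$); moreover $\lccn(Q)=\rccn(Q^{op})$ and $\rtccn(Q)=\ltccn(Q^{op})$, so $\max\{\lccn(Q),\rtccn(Q)\}=\max\{\rccn(Q^{op}),\ltccn(Q^{op})\}$; also $Q(z,-)=Q^{op}(-,z)$ and $\rccn_{z}(Q)=\lccn_{z}(Q^{op})$. Consequently the standing hypotheses together with each of the conditions (a), (b) of Proposition~\ref{prop:exactitud f} for $(Q,\C)$ translate verbatim into the standing hypotheses together with conditions (a), (b) of Proposition~\ref{prop:exactitud g} for $(Q^{op},\C^{op})$. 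Applying that proposition, $g_{z}^{Q^{op}}$ exists and admits a right adjoint; passing back through the isomorphism $\Rep(Q,\C)^{op}\simeq\Rep(Q^{op},\C^{op})$ and using that taking opposites exchanges left and right adjoints, $f_{z}^{Q}$ exists and admits a left adjoint. The only genuinely non-mechanical point in the whole argument is the identification $f_{z}^{Q}\simeq g_{z}^{Q^{op}}$ of the first paragraph (together with the remark that opposites swap left and right adjoints), and I expect this to be the main — though purely formal — obstacle.

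Alternatively, one can give a direct proof mirroring the one of Proposition~\ref{prop:exactitud g}. Since $f_{z}$ exists (finite coproducts suffice under (a); under (b) use that $\C$ is $AB3(\kappa)$ with $\rccn_{z}(Q)\leq\aleph_{0}$), the Godement product yields an exact functor $(f_{z})^{*}\colon\End(\Rep(Q,\C))\to\Fun(\C,\Rep(Q,\C))$ sending $H$ to $H\circ f_{z}$. Applying it to the canonical co-presentation of Corollary~\ref{cor:copresentacion canonica} and using the dual of Lemma~\ref{lem: lemita g} (namely $g_{x}\circ e_{y}\circ f_{z}=\prod_{\rho\in Q(z,y)}g_{x}$ whenever $|Q(z,y)|<\infty$) produces a short exact sequence of functors $\C\to\Rep(Q,\C)$ of the form $f_{z}\hookrightarrow\prod_{i\in Q_{0}}\prod_{\rho\in Q(z,i)}g_{i}\twoheadrightarrow\prod_{\rho\in Q_{1}}\prod_{\rho'\in Q(z,t(\rho))}g_{s(\rho)}$. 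Under (a) all these products are finite; under (b) one checks, by the same cardinal estimates used in the proof of Proposition~\ref{prop:exactitud g} (now invoking $\kappa\geq\rtccn(Q)$ and $\rccn_{z}(Q)\leq\aleph_{0}$), that both products are indexed by sets of cardinality $<\kappa$. In either case each $g_{i}$ has left adjoint $e_{i}$ by Proposition~\ref{rem:adjuntos de ei}(b), so Lemma~\ref{lem: coproducto pares adjuntos} shows the two right-hand terms admit left adjoints, and finally the dual of Lemma~\ref{lem:left adjointness closed under coker} — a functor occurring as the kernel in a short exact sequence of functors whose other two terms admit left adjoints itself admits a left adjoint — gives a left adjoint for $f_{z}$.
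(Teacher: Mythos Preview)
Your proposal is correct and matches the paper's approach: the paper simply states ``We have the next result by duality,'' and your first argument spells out exactly that duality reduction to Proposition~\ref{prop:exactitud g}. Your alternative direct proof is also correct and is precisely the dualized version of the paper's proof of Proposition~\ref{prop:exactitud g}, so either route is acceptable.
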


We end this section with the following corollaries that are very useful, for example, in tilting theory (see \cite{AM22}). In order to prove them,  we use the 
next well-known fact whose proof is given for the sake of completeness.

\begin{lem}\label{adj-prec-preen} Let $\C$ and $\mathcal{D}$ be abelian categories, and $(S:\mathcal{D}\rightarrow\C,T:\C\rightarrow\mathcal{D})$
be an adjoint pair between $\C$ and $\mathcal{D}$. Then, the following
statements hold true. 
\begin{enumerate}
\item If $\X$ is a precovering class in $\mathcal{D}$, then $S(\X)$ is
precovering in $\C$.
\item If $\mathcal{Y}$ is a preenveloping class in $\mathcal{C}$, then
$T(\Y)$ is preenveloping in $\mathcal{D}$.
\end{enumerate}
\end{lem}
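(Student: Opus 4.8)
The plan is to prove each of the two statements directly from the definition of an adjoint pair, using that $\Hom$ is turned into $\Hom$ under adjunction. Recall that a morphism $f\colon X'\to Y$ with $X'\in\mathcal{X}$ is an $\mathcal{X}$-precover of $Y$ precisely when every $g\colon X''\to Y$ with $X''\in\mathcal{X}$ factors through $f$; that is, $\Hom_{\mathcal{D}}(X'',f)$ is surjective for all $X''\in\mathcal{X}$. So for part (1), I would start with an arbitrary object $C\in\C$. Since $T\colon\C\to\mathcal{D}$ is a functor, $T(C)\in\mathcal{D}$, and by hypothesis there is an $\mathcal{X}$-precover $p\colon X\to T(C)$ with $X\in\mathcal{X}$. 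The natural candidate for an $S(\mathcal{X})$-precover of $C$ is the adjoint transpose $h^{-1}(p)=\psi_C\circ S(p)\colon S(X)\to C$, where $\psi\colon ST\to 1_\C$ is the co-unit of the adjoint pair (recall Lemma~\ref{lem: mitchell adjoint pairs}).

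The key step is then to verify the precovering property. Take any object of the form $S(X')$ with $X'\in\mathcal{X}$ together with a morphism $q\colon S(X')\to C$. Under the adjunction isomorphism $h\colon\Hom_{\C}(S(X'),C)\xrightarrow{\sim}\Hom_{\mathcal{D}}(X',T(C))$, the morphism $q$ corresponds to $h(q)\colon X'\to T(C)$. Since $p$ is an $\mathcal{X}$-precover and $X'\in\mathcal{X}$, there is $r\colon X'\to X$ with $p\circ r=h(q)$. Applying $h^{-1}$ and using naturality of the adjunction isomorphism — concretely, that $h^{-1}(p\circ r)=h^{-1}(p)\circ S(r)$, which follows from $h^{-1}(\beta)=\psi_C\circ S(\beta)$ and functoriality of $S$ — we obtain $h^{-1}(p)\circ S(r)=h^{-1}(p\circ r)=h^{-1}(h(q))=q$. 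Thus $q$ factors through $h^{-1}(p)\colon S(X)\to C$ via $S(r)$, and since $S(X)\in S(\mathcal{X})$, this shows $h^{-1}(p)$ is an $S(\mathcal{X})$-precover of $C$. As $C$ was arbitrary, $S(\mathcal{X})$ is precovering in $\C$.

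Part (2) is the formal dual: for $D\in\mathcal{D}$, take a $\mathcal{Y}$-preenvelope $j\colon S(D)\to Y$ with $Y\in\mathcal{Y}$ (using $S(D)\in\C$), and check that its adjoint transpose $h(j)=T(j)\circ\varphi_D\colon D\to T(Y)$ is a $T(\mathcal{Y})$-preenvelope of $D$, using the naturality identity $h(\alpha)=T(\alpha)\circ\varphi_D$ from Lemma~\ref{lem: mitchell adjoint pairs} in the analogous way. I do not expect any real obstacle here: the only point requiring a little care is to invoke the correct naturality formula for $h$ and $h^{-1}$ (the ones recalled in the ``Adjoint functors'' subsection), so that the factorization upstairs transports correctly to a factorization downstairs. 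Everything else is a direct diagram chase, so the proof is short.

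\begin{proof}
We only prove (1); statement (2) is dual. Let $C\in\C$. Since $T(C)\in\mathcal{D}$ and $\X$ is precovering in $\mathcal{D}$, there is an $\X$-precover $p\colon X\to T(C)$ with $X\in\X$. Let $q:=h^{-1}_{X,C}(p)=\psi_C\circ S(p)\colon S(X)\to C$, where $h$ denotes the adjunction isomorphism and $\psi\colon ST\to 1_\C$ the co-unit. We claim $q$ is an $S(\X)$-precover of $C$. Let $X'\in\X$ and $u\colon S(X')\to C$ be a morphism. Then $h_{X',C}(u)\colon X'\to T(C)$ and, since $p$ is an $\X$-precover, there is $v\colon X'\to X$ with $p\circ v=h_{X',C}(u)$. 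Applying $h^{-1}$ and using $h^{-1}_{X',C}(p\circ v)=\psi_C\circ S(p\circ v)=\psi_C\circ S(p)\circ S(v)=q\circ S(v)$, we get
\[
q\circ S(v)=h^{-1}_{X',C}(p\circ v)=h^{-1}_{X',C}(h_{X',C}(u))=u.
\]
Hence $u$ factors through $q$ via $S(v)$, and $S(X)\in S(\X)$. Since $C$ was arbitrary, $S(\X)$ is precovering in $\C$.
\end{proof}
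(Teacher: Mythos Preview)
Your proof is correct and follows essentially the same approach as the paper's: both take an $\X$-precover $p\colon X\to T(C)$, transport it to $\psi_C\circ S(p)\colon S(X)\to C$ via the adjunction, and verify the precover property by moving a test morphism $u\colon S(X')\to C$ across the adjunction, lifting it through $p$, and transporting back. The only cosmetic difference is that the paper writes the adjoint transpose explicitly as $T(u)\circ\varphi_{X'}$ rather than $h(u)$.
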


\begin{proof}
Let us prove (a). Consider $C\in\C$, and let $x:X\rightarrow T(C)$
be an $\X$-precover in $\mathcal{D}$. We claim that $\psi_{C}\circ S(x)$
is a $S(\X)$-precover. Indeed, given a $x':X'\rightarrow C$ with
$X'\in S(\X)$, say $X'=S(X_{0})$, consider the morphism $T(x')\circ\varphi_{X_{0}}$.
Since $x$ is an $\X$-precover, there is a morphism $z:X_{0}\rightarrow X$
such that $x\circ z=T(x')\circ\varphi_{X_{0}}$. Hence, $\psi_{C}\circ S(x)\circ S(z)=\psi_{C}\circ ST(x')\circ S(\varphi_{X_{0}})=x'$
(see Lemma \ref{lem: mitchell adjoint pairs}). Therefore, $\psi_{C}\circ S(x)$
is a $S(\X)$-precover.
\end{proof}

\begin{cor}\label{cor: g(X) es precovering II} Let $Q$ be a quiver, $\kappa\geq\max\{\rccn(Q),\ltccn(Q)\}$ be an infinite cardinal, $Z\subseteq Q_{0}$
be such that $|Z|<\kappa$ and $\mathcal{C}$
be an $AB3(\kappa)$ abelian category. 
Consider the following conditions:
\begin{itemize}
\item[(a)] the set $Q(-,z)$ is finite, for any $z\in Z;$
\item[(b)] $\mathcal{C}$ is $AB3^{*}(\kappa)$ and $\lccn_z(Q)\leq\aleph_0,$ for any $z\in Z.$
\end{itemize}
If one of the above conditions holds true and $\mathcal{X}$ is a precovering
class in $\C$, then for each $F\in\Rep(Q,\C)$ there exists  
$p:\coprod_{z\in Z}g_{z}(X_{z})\rightarrow F$ in $\Rep(Q,\C)$ which is an $\Add_{\leq |Z|}\left(\bigcup_{z\in Z}g_{z}(\X)\right)$-precover. Moreover, each $p_z:g_z(X_z)\to F$ is a $g_{z}(\mathcal{X})$-precover.
\end{cor}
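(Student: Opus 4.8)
The plan is to reduce everything to the adjoint‑pair machinery already available. First, under either condition (a) or (b), Proposition \ref{prop:exactitud g} applies to each $z\in Z$ (recall $\kappa\geq\max\{\rccn(Q),\ltccn(Q)\}$), so $g_{z}:\C\to\Rep(Q,\C)$ exists and admits a right adjoint $r_{z}$; write $\psi^{z}\colon g_{z}r_{z}\to 1_{\Rep(Q,\C)}$ for the counit of $(g_{z},r_{z})$. Now fix $F\in\Rep(Q,\C)$. Since $\X$ is precovering in $\C$, for each $z\in Z$ pick an $\X$-precover $x_{z}\colon X_{z}\to r_{z}(F)$ with $X_{z}\in\X$ and put $p_{z}:=\psi^{z}_{F}\circ g_{z}(x_{z})\colon g_{z}(X_{z})\to F$. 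By the proof of Lemma \ref{adj-prec-preen} (a), each $p_{z}$ is a $g_{z}(\X)$-precover of $F$, which already yields the ``moreover'' clause.

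Next I would glue the $p_{z}$ together. Because $\C$ is $AB3(\kappa)$ and colimits in $\Rep(Q,\C)$ are computed vertex-wise, $\Rep(Q,\C)$ is again $AB3(\kappa)$; since $|Z|<\kappa$, the coproduct $C:=\coprod_{z\in Z}g_{z}(X_{z})$ therefore exists in $\Rep(Q,\C)$. Let $\iota_{z}\colon g_{z}(X_{z})\to C$ be the canonical inclusions and let $p\colon C\to F$ be the unique morphism with $p\circ\iota_{z}=p_{z}$ for all $z\in Z$. By construction $C\in\coprod_{\leq|Z|}\bigl(\bigcup_{z\in Z}g_{z}(\X)\bigr)\subseteq\Add_{\leq|Z|}\bigl(\bigcup_{z\in Z}g_{z}(\X)\bigr)$, so it only remains to verify the lifting property of $p$ against this class.

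For that, take $G\in\Add_{\leq|Z|}\bigl(\bigcup_{z\in Z}g_{z}(\X)\bigr)$ and a morphism $g\colon G\to F$. Write $G$ as a direct summand of a coproduct $G'=\coprod_{i\in I}g_{z_{i}}(Y_{i})$ with $|I|\leq|Z|<\kappa$, $z_{i}\in Z$ and $Y_{i}\in\X$ (this coproduct exists by $AB3(\kappa)$). Since the precover property passes to direct summands, it suffices to factor through $p$ every morphism $G'\to F$, i.e., every family $(g_{i}\colon g_{z_{i}}(Y_{i})\to F)_{i\in I}$. As $g_{z_{i}}(Y_{i})\in g_{z_{i}}(\X)$ and $p_{z_{i}}$ is a $g_{z_{i}}(\X)$-precover, each $g_{i}$ factors as $g_{i}=p_{z_{i}}\circ h_{i}=p\circ(\iota_{z_{i}}\circ h_{i})$ for some $h_{i}\colon g_{z_{i}}(Y_{i})\to g_{z_{i}}(X_{z_{i}})$; assembling the $\iota_{z_{i}}\circ h_{i}$ over $i\in I$ produces the required factorization through $p$. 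Hence $p$ is an $\Add_{\leq|Z|}\bigl(\bigcup_{z\in Z}g_{z}(\X)\bigr)$-precover.

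I expect the only delicate point to be the cardinal bookkeeping: one must be sure that each $g_{z}$ is well-defined, that $C$ exists, and that the auxiliary coproduct $G'$ with $|I|\leq|Z|$ exists. These are precisely guaranteed by the hypotheses $\kappa\geq\max\{\rccn(Q),\ltccn(Q)\}$ and $|Z|<\kappa$ together with condition (a) or (b) via Proposition \ref{prop:exactitud g}; beyond this, the argument is a purely formal manipulation of the adjunctions $(g_{z},r_{z})$ and Lemma \ref{adj-prec-preen}.
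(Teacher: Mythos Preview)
Your proof is correct and follows essentially the same approach as the paper: invoke Proposition~\ref{prop:exactitud g} to obtain the right adjoint of each $g_z$, apply Lemma~\ref{adj-prec-preen}(a) to produce the individual $g_z(\X)$-precovers $p_z$, and then assemble them into $p$ via the coproduct. The paper leaves the verification that $p$ is an $\Add_{\leq|Z|}\bigl(\bigcup_{z\in Z}g_z(\X)\bigr)$-precover to the reader, whereas you have spelled out this routine step in full.
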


\begin{proof} Let $\mathbb{X}:=\Add_{\leq|Z|}\left(\bigcup_{z\in Z}g_{z}(\X)\right)$.
Consider $F\in\Rep(Q,\C)$ and $z\in Z.$ Then, by Proposition \ref{prop:exactitud g} and Lemma \ref{adj-prec-preen} (a), we get a $g_{z}(\mathcal{X})$-precover $p_{z}:g_{z}(X_{z})\rightarrow F.$ 
We let the reader to show that the induced morphism $p:\coprod_{z\in Z}g_{z}(X_{z})\rightarrow F$ is an
$\mathbb{X}$-precover. 
\end{proof}

\subsection{Global dimension of $\protect\Rep(Q,\protect\C)$ }

Let $\mathcal{C}$ be an abelian category and $C\in\C.$ The projective dimension of $C$ is $\pd(C):=\min\left\{ n\in\mathbb{N}\,|\:\Ext_{\C}^{k}(C,-)=0\:\forall k>n\right\}.$ Moreover, for $\mathcal{X}\subseteq\mathcal{C},$ its projective dimension is 
$\pd(\X):=\sup\{\pd(X)\,|\:X\in\mathcal{X}\}.$ Finally, the global dimension of $\C$ is defined as $\gldim(\C):=\pd(\C)$.

\begin{lem}\label{pd-coprod}
Let $\kappa$ be an infinite cardinal, $\C$ be an abelian $AB4(\kappa)$
category and $\{X_{i}\}_{i\in I}$  in $\mathcal{C}$
with $|I|<\kappa$. Then $\Ext_{\C}^{n}(\coprod_{i\in I}X_{i},C)\cong \prod_{i\in I}\Ext_{\C}^{n}(X,C),$ $\forall\,C\in\C.$ In particular, 
  $\pd(\coprod_{i\in I}X_{i})=\sup_{i\in I}(\pd(X_{i}))$. 
\end{lem}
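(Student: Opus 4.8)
The plan is to prove the isomorphism $\Ext_{\C}^{n}(\coprod_{i\in I}X_{i},C)\cong\prod_{i\in I}\Ext_{\C}^{n}(X_{i},C)$ by induction on $n$, and then deduce the statement about projective dimensions as an immediate consequence. For $n=0$ the claim is just the universal property of the coproduct: $\Hom_{\C}(\coprod_{i\in I}X_{i},C)\cong\prod_{i\in I}\Hom_{\C}(X_{i},C)$, which holds in any category with the relevant coproduct. For the inductive step, the key input is that $\C$ is $AB4(\kappa)$ and $|I|<\kappa$: for each $i\in I$ pick a short exact sequence $K_{i}\hookrightarrow P_{i}\twoheadrightarrow X_{i}$ with $P_{i}$ suitably chosen (if $\C$ has enough projectives, take $P_{i}$ projective; in general one can still run the dimension-shifting argument using an arbitrary short exact sequence, provided one is careful, but the cleanest route is to assume the $P_i$ can be chosen so that $\Ext^{k}_{\C}(P_i,-)$ behaves well — see the remark below). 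Since $|I|<\kappa$ and $\C$ is $AB3(\kappa)$, the coproduct $\coprod_{i\in I}P_{i}$ exists, and by exactness of $\coprod_{i\in I}$ (the $AB4(\kappa)$ hypothesis) the sequence $\coprod_{i}K_{i}\hookrightarrow\coprod_{i}P_{i}\twoheadrightarrow\coprod_{i}X_{i}$ is again exact in $\C$.

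Next I would apply the long exact sequence in $\Ext_{\C}^{\bullet}(-,C)$ to the exact sequence $\coprod_{i}K_{i}\hookrightarrow\coprod_{i}P_{i}\twoheadrightarrow\coprod_{i}X_{i}$ and, simultaneously, to each $K_{i}\hookrightarrow P_{i}\twoheadrightarrow X_{i}$; the products $\prod_{i\in I}$ of the latter long exact sequences remain exact because products are always exact in any abelian category ($AB4^{*}$ is not needed — arbitrary products of exact sequences of abelian groups are exact). The $n=0$ case together with naturality gives a morphism of long exact sequences, and one compares term by term: $\Ext^{n}_{\C}(\coprod X_i,C)$ sits between $\Ext^{n-1}_{\C}(\coprod P_i,C)$-type terms and $\Ext^{n}_{\C}(\coprod P_i,C)$-type terms, which (after dimension shifting / using that $\Ext^{\geq 1}_{\C}(\coprod P_i,C)$ is a product of the vanishing $\Ext^{\geq 1}_{\C}(P_i,C)$ when the $P_i$ are projective) reduces the claim for $n$ to the claim for $n-1$ applied to $\coprod_i K_i$. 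A diagram chase (or the five lemma applied to the comparison map of long exact sequences) then yields the isomorphism in degree $n$. For the "In particular" clause: $\pd(\coprod_{i}X_{i})\leq n$ iff $\Ext^{k}_{\C}(\coprod_{i}X_{i},-)=0$ for all $k>n$ iff (by the isomorphism) $\prod_{i}\Ext^{k}_{\C}(X_{i},-)=0$ for all $k>n$ iff each $\Ext^{k}_{\C}(X_{i},-)=0$ for all $k>n$, i.e. $\sup_{i}\pd(X_{i})\leq n$; taking the infimum over such $n$ gives the equality.

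The main obstacle I anticipate is handling the inductive step cleanly \emph{without} assuming $\C$ has enough projectives, since the lemma as stated does not include that hypothesis. If enough projectives are available (as in the ambient applications), the argument above is routine. In the fully general setting one instead observes that $\Ext^{n}_{\C}$ can be computed via Yoneda extensions (as recalled in subsection 2.4), and the required isomorphism follows by a direct manipulation of $n$-fold extensions: an $n$-extension of $\coprod_i X_i$ by $C$ restricts along each inclusion $X_i\hookrightarrow\coprod_i X_i$ to an $n$-extension of $X_i$ by $C$, giving the map to the product, while in the other direction one uses the $AB4(\kappa)$ hypothesis to take coproducts of a family of $n$-extensions (one for each $i$) and splice; checking that these two assignments are mutually inverse on equivalence classes is the technical heart. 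Either way, the essential point is the exactness of $\coprod_{<\kappa}$ supplied by $AB4(\kappa)$, which is exactly why that hypothesis appears.
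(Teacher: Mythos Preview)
Your primary inductive argument has a real gap: the dimension-shifting step requires choosing $P_i$ with $\Ext^{\geq 1}_{\C}(P_i,-)=0$, i.e.\ enough projectives, which the lemma does not assume. Without that, the five-lemma comparison of long exact sequences does not reduce degree $n$ to degree $n-1$, because the $\Ext$-terms for $\coprod_i P_i$ are no simpler than those for $\coprod_i X_i$. You correctly flag this and propose a direct Yoneda-extension construction as a fallback, and that idea can indeed be made to work, but you have not actually carried out the verification that pullback-along-inclusions and coproduct-of-extensions are mutually inverse on equivalence classes; that is the entire content of the lemma in this generality, so as written the proposal is a plan rather than a proof.

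The paper's argument is completely different and avoids both induction and any hypothesis on projectives. It views the family $\{X_i\}_{i\in I}$ as a single object $X$ of $\Rep(Q,\C)$ for the discrete quiver $Q$ with $Q_0=I$ and $Q_1=\emptyset$. The coproduct functor $S:\Rep(Q,\C)\to\C$, $F\mapsto\coprod_i F_i$, is then left adjoint to the diagonal $T:\C\to\Rep(Q,\C)$; $T$ is trivially exact, and $S$ is exact precisely by the $AB4(\kappa)$ hypothesis. Proposition~\ref{lem:Ext vs adjoint 3} (the Ext-comparison for an adjoint pair of exact functors, established earlier in the paper) then gives $\Ext^n_{\C}(S(X),C)\cong\Ext^n_{\Rep(Q,\C)}(X,T(C))$ for all $n\geq 1$, and since $Q_1=\emptyset$ the right-hand side is visibly $\prod_i\Ext^n_{\C}(X_i,C)$. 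This one-line reduction to the adjoint-pair machinery is what buys you the result without projectives and without any inductive bookkeeping; your approach, once patched, would be more elementary but substantially longer.
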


\begin{proof} We proceed following some ideas from \cite{argudin2022exactness}. 
Let $Q$ be the quiver with $Q_{0}=I$ and $Q_{1}=\emptyset$. Observe
that, by the universal property of coproducts, we have an adjoint
pair $(S,T)$, where $S:\Rep(Q,\C)\rightarrow\C$ is the functor defined
as $S(F):=\coprod_{i\in I}F_{i}$, and $T:\C\rightarrow\Rep(Q,\C)$
is the functor with $T(C)_{i}=C$ for all $i\in I$ and  $C\in\C.$
Moreover, $T$ is always exact, and $S$ is exact since $\C$ is $AB4(\kappa).$
Hence, by Proposition \ref{lem:Ext vs adjoint 3}, we have a natural
isomorphism $\Ext_{\C}^{n}(S(X),C)\cong\Ext_{\Rep(Q,\C)}^{n}(X,T(C))$
for all $n\geq1$. Lastly, note that $\Ext_{\Rep(Q,\C)}^{n}(X,T(C))\cong\prod_{i\in I}\Ext_{\C}^{n}(X_{i},C)$
since $Q_{1}=\emptyset$. Therefore, for all $n\geq 1$, we get 
$
\Ext_{\C}^{n}(\coprod_{i\in I}X_{i},C)=\Ext_{\C}^{n}(S(X),C)\cong\Ext_{\Rep(Q,\C)}^{n}(X,T(C))\cong\prod_{i\in I}\Ext_{\C}^{n}(X,C)
$
and thus the lemma follows. 
\end{proof}

\begin{cor}\label{cor:dimensiones homologicas}  Let $Q$ be a quiver, $\kappa\geq\max\{\rccn(Q),\ltccn(Q)\}$ be an infinite cardinal such that $\kappa>\max\{|Q_0|, |Q_1|\}$ and $\C$ be an $AB4(\kappa)$
abelian category. Then, the following statements hold true. 
\begin{enumerate}
\item $\pd(F)\leq\sup_{i\in Q_{0}}(\pd(F_i))+1,$ for any $F\in\Rep(Q,\C)$.
\item $\gldim\left(\Rep(Q,\C)\right)\leq\gldim(\mathcal{C})+1\text{.}$
\end{enumerate}
\end{cor}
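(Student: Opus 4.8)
The plan is to derive both statements from the canonical presentation of Corollary \ref{cor:presentacion canonica} together with the dimension-shifting behaviour of $\Ext$ along a short exact sequence, Lemma \ref{pd-coprod}, and Propositions \ref{lem:Ext vs adjoint 3} and \ref{prop: Ext vs f vs e}. First I would fix $F\in\Rep(Q,\C)$ and recall that, since $\kappa\geq\max\{\rccn(Q),\ltccn(Q)\}$, Corollary \ref{cor:presentacion canonica} gives an exact sequence in $\Rep(Q,\C)$
\[
\eta_F:\;\suc[\textstyle\coprod_{\rho\in Q_{1}}f_{t(\rho)}e_{s(\rho)}(F)][\textstyle\coprod_{i\in Q_{0}}f_{i}e_{i}(F)][F][\Gamma_F].
\]
Because $\kappa>\max\{|Q_0|,|Q_1|\}$, both coproducts are indexed by sets of cardinality $<\kappa$, so Lemma \ref{pd-coprod} applies to each of them (note $\C$ is $AB4(\kappa)$). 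Thus it suffices to control $\pd(f_i(C))$ for $C\in\C$ and $i\in Q_0$.

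Next I would show that $\pd\bigl(f_i(C)\bigr)\leq \pd(C)$ for every $C\in\C$ and $i\in Q_0$. Since $\C$ is $AB4(\kappa)$ with $\kappa\geq\rccn(Q)$, Remark \ref{AB3+Inj=AB4}(a) tells us $f_i:\C\to\Rep(Q,\C)$ is exact, and $e_i$ is always exact with $(f_i,e_i)$ an adjoint pair (Proposition \ref{rem:adjuntos de ei}(a)); hence by Proposition \ref{prop: Ext vs f vs e}(a) (equivalently Proposition \ref{lem:Ext vs adjoint 3}) we get $\Ext^{n}_{\Rep(Q,\C)}(f_i(C),G)\cong\Ext^{n}_{\C}(C,e_i(G))$ for all $n\geq 1$ and all $G\in\Rep(Q,\C)$. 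If $n>\pd(C)$ the right-hand side vanishes, so $\pd(f_i(C))\leq\pd(C)$. Applying this together with Lemma \ref{pd-coprod} to the two outer terms of $\eta_F$ yields
\[
\pd\Bigl(\textstyle\coprod_{i\in Q_{0}}f_ie_i(F)\Bigr)\leq\sup_{i\in Q_0}\pd(F_i),\qquad
\pd\Bigl(\textstyle\coprod_{\rho\in Q_{1}}f_{t(\rho)}e_{s(\rho)}(F)\Bigr)\leq\sup_{i\in Q_0}\pd(F_i).
\]

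Now I would invoke the standard fact that, for a short exact sequence $A\hookrightarrow B\twoheadrightarrow C$ in an abelian category, $\pd(C)\leq\max\{\pd(B),\pd(A)+1\}$ (dimension shifting in the long exact sequence of $\Ext$). Applied to $\eta_F$ with $C=F$, this gives $\pd(F)\leq\max\{\sup_{i}\pd(F_i),\,\sup_i\pd(F_i)+1\}=\sup_{i\in Q_0}\pd(F_i)+1$, which is (a). For (b), take the supremum over all $F\in\Rep(Q,\C)$: since $F_i\in\C$ for each $i$, we have $\sup_{i}\pd(F_i)\leq\gldim(\C)$, so $\pd(F)\leq\gldim(\C)+1$ for every $F$, hence $\gldim(\Rep(Q,\C))=\pd(\Rep(Q,\C))\leq\gldim(\C)+1$.

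The only mildly delicate point — and the step I would be most careful about — is the interchange of the $\pd$ bound with the (possibly infinite) coproducts: this is exactly where the hypotheses $\kappa\geq\max\{\rccn(Q),\ltccn(Q)\}$ (so that the coproducts exist and $f_i$ is exact) and $\kappa>\max\{|Q_0|,|Q_1|\}$ (so that Lemma \ref{pd-coprod} is applicable to the index sets $Q_0$ and $Q_1$) are used, and one must make sure $\C$ being $AB4(\kappa)$ is genuinely needed and available. Everything else is routine dimension shifting, so no serious obstacle is expected beyond bookkeeping of the cardinal conditions.
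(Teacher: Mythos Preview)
Your proposal is correct and follows essentially the same route as the paper: use the canonical presentation from Corollary~\ref{cor:presentacion canonica}, show $\pd(f_i(C))\leq\pd(C)$ via the exactness of $f_i$ (Remark~\ref{AB3+Inj=AB4}(a)) and Proposition~\ref{prop: Ext vs f vs e}, then apply Lemma~\ref{pd-coprod} and dimension shifting. One small clarification: when you invoke Lemma~\ref{pd-coprod}, the coproducts live in $\Rep(Q,\C)$, so what you actually need is that $\Rep(Q,\C)$ is $AB4(\kappa)$ --- which follows immediately from $\C$ being $AB4(\kappa)$ since (co)limits are computed vertex-wise, but it is worth stating explicitly.
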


\begin{proof} We only need to prove (a) since (b) follows from (a).
\

Let $F\in\Rep(Q,\C)$. By Corollary \ref{cor:presentacion canonica},
we have the exact sequence 
\[
\suc[\coprod_{\rho\in Q_{1}}f_{t(\rho)}(F_{s(\rho)})][\coprod_{i\in Q_{0}}f_{i}(F_i)][F][\Gamma_F]
\]
in $\Rep(Q,\C).$ Observe that, by Remark \ref{AB3+Inj=AB4} (a), the functor $f_{i}:\C\to \Rep(Q,\C)$ is exact for all $i\in Q_{0}.$ In particular, by 
 Proposition \ref{prop: Ext vs f vs e}, we conclude that $\pd(f_{i}(C))=\pd(C)$ for all
$C\in\C$ and $i\in Q_{0}.$ 
Hence, by the above exact sequence, we get
$
\pd(F) \leq\max\{\pd(\coprod_{i\in Q_{0}}f_{i}(F_{i})),\pd(\coprod_{\rho\in Q_{1}}f_{t(\rho)}(F_{s(\rho)}))+1\}.$
Moreover, by Lemma \ref{pd-coprod} $\pd(\coprod_{i\in Q_{0}}f_{i}(F_{i}))=\sup_{i\in Q_0}(\pd(f_i(F_i)))=\sup_{i\in Q_{0}}(\pd(F_i))$ and similarly 
$\pd(\coprod_{\rho\in Q_{1}}f_{t(\rho)}(F_{s(\rho)}))=\sup_{\rho\in Q_1}(\pd(F_{s(\rho)}))$ since $\Rep(Q,\C)$ is $AB4(\kappa)$ and 
$\kappa>\max\{|Q_0|, |Q_1|\}.$ Thus  $\pd(F)\leq\sup_{i\in Q_{0}}(\pd(F_i))+1$ and the result follows.
\end{proof}

\begin{rem} (a)  Let $R$ be a ring and $Q$ be any quiver. By Corollary \ref{cor:dimensiones homologicas}, we get that $\gldim(\Rep(Q,\Mod(R)))\leq \gldim(R)+1.$ In particular, if $R$ is semisimple, then $\Rep(Q,\C)$ is hereditary. Notice that this was previously proved
by Gabriel and Roiter in \cite[Sect. 8.2]{gabriel1992representations} if $R$ is a field.
\

(b) Let $K$ be a commutative ring, $\Sigma$ be a small $K$-category such
that $\Hom_{\Sigma}(x,y)$ is a projective (respectively, flat) $K$-module, $\C$ be an AB4 (respectively, AB5) abelian
category, and let $\mathcal{C}^{\Sigma}$ be the category of additive functors
from $\Sigma$ to $\C$. It is a known fact that $\mathcal{C}^{\Sigma}$
is an abelian category \cite[Prop. 1.4.5]{borceux1994handbook}).
In \cite[Cor. 13.4]{ringoids}, Mitchell proved that: 
(b1) $\pd(F)\leq\dim\left(\Sigma\right)+\sup_{s\in\Sigma}(\pd(F(s))$
$\forall F\in\mathcal{C}^{\Sigma},$ and 
(b2) $\gldim(\mathcal{C}^{\Sigma})\leq\dim\left(\Sigma\right)+\gldim(\mathcal{C})$,
where $\dim(\Sigma)$ is the Hochschild-Mitchell dimension of $\Sigma$
\cite[Sect. 12]{ringoids}. Observe that every quiver induces
an small category $\Sigma_{Q}$, where $\operatorname{Ob}(\Sigma_{Q})=Q_{0}$
and $\Hom_{\Sigma_{Q}}(x,y)=\mathbb{Z}^{(Q(x,y))}$ for all $x,y\in Q_{0}$.
Moreover, it can be shown that $\Rep(Q,\C)\cong\mathcal{C}^{\Sigma_{Q}}$
(see \cite[p.11]{ringoids}). Therefore, Corollary \ref{cor:dimensiones homologicas}
can be seen as a  specialization of \cite[Cor. 13.4]{ringoids}
with weaker hypotheses (e.g., if $Q$ if finite, the condition $AB4$ is not needed).
\end{rem}

We suspect that, in case $Q_{1}\neq\emptyset$, the equality $\gldim\left(\Rep(Q,\C)\right)=\gldim(\mathcal{C})+1$
holds true under mild conditions. In the following we will prove this for some
special cases. 

\begin{lem}\label{lem: lemita A2} Let $Q:=A_{2}=\{1\rightarrow2\}$, $\C$ be
an abelian category, $X\in\C$ be a non-zero object, $F:=(B\overset{f}{\rightarrow}A)$
be an object in $\Rep(Q,\C)$, and $c:f_{2}(X)\rightarrow F$
be a morphism in $\Rep(Q,\C)$. Then, there is $0\neq\overline{\delta_X}\in\Ext_{\Rep(Q,\C)}^{1}(g_{1}(X),f_{2}(X))$
satisfying that: $c\cdot\overline{\delta_X}=0$ $\Leftrightarrow$  $\exists h:X\rightarrow B$ in $\C$ such that $c_2=f\circ h.$
\end{lem}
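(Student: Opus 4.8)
The quiver here is $A_2 = \{1 \xrightarrow{\alpha} 2\}$. First I would write down explicit descriptions of the two representations in play. For $X \in \C$ nonzero, the representation $g_1(X)$ has $(g_1(X))_1 = X^{Q(1,1)} = X$ (since $Q(1,1)=\{\epsilon_1\}$) and $(g_1(X))_2 = X^{Q(2,1)} = 0$ (since there are no paths $2 \to 1$), so $g_1(X) = (X \to 0)$. On the other side, $f_2(X)$ has $(f_2(X))_1 = X^{(Q(1,2))} = X$ (since $Q(1,2) = \{\alpha\}$) and $(f_2(X))_2 = X^{(Q(2,2))} = X$, with the structure map $(f_2(X))_\alpha = 1_X$, so $f_2(X) = (X \xrightarrow{1_X} X)$.

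\textbf{Construction of $\overline{\delta_X}$.} With these descriptions, I would exhibit an explicit non-split short exact sequence in $\Rep(Q,\C)$ of the form
\[
\delta_X:\; f_2(X) \hookrightarrow E \twoheadrightarrow g_1(X),
\]
namely the one whose middle term $E$ is the representation $(X \oplus X \xrightarrow{(0\ 1)} X)$, where the mono $f_2(X) \to E$ is $\binom{1}{0}$ on the vertex $1$ component and $1_X$ on the vertex $2$ component, and the epi $E \to g_1(X)$ is $(1\ 0)$ on vertex $1$ and $0$ on vertex $2$. One checks readily that this is a short exact sequence vertex-wise. To see $\overline{\delta_X} \neq 0$: if it split, there would be a retraction $E \to f_2(X)$, which on vertex $1$ is a map $X \oplus X \to X$ splitting $\binom{1}{0}$ while commuting with the structure maps; tracing through the constraint imposed by the structure map at vertex $2$ forces a contradiction with $X \neq 0$. (Alternatively, and more conceptually, $g_1 = \iota_S$ for the full subquiver $S = \{1\}$ with $S^+ = \emptyset$, so by Lemma \ref{lem:Ext subcarcaj}(b) and the fact that $f_2(X) = (X \xrightarrow{1} X)$, one can compute $\Ext^1_{\Rep(Q,\C)}(g_1(X), f_2(X))$ directly and see it is nonzero — but the hands-on sequence above is cleaner for the equivalence we need.)

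\textbf{The equivalence.} Now fix $c : f_2(X) \to F$ with $F = (B \xrightarrow{f} A)$; write $c_1 : X \to B$ and $c_2 : X \to A$, so that commutativity at $\alpha$ gives $f \circ c_1 = c_2$. The pushout $c \cdot \overline{\delta_X} \in \Ext^1_{\Rep(Q,\C)}(g_1(X), F)$ vanishes iff $c$ factors through the mono $f_2(X) \hookrightarrow E$, i.e.\ iff there is $t : E \to F$ with $t \circ (\text{mono}) = c$. Unwinding this at each vertex: at vertex $1$ we need $t_1 : X \oplus X \to B$ with $t_1 \circ \binom{1}{0} = c_1$, so $t_1 = (c_1\ h)$ for some $h : X \to B$; at vertex $2$ we need $t_2 : X \to A$ with $t_2 = c_2$; and the compatibility of $t$ with the structure maps (namely $f \circ t_1 = t_2 \circ (0\ 1)$) reads $f \circ (c_1\ h) = (0\ c_2)$, i.e.\ $f \circ c_1 = 0$ (automatic, no—wait: this gives $f\circ c_1 = 0$ only in the first slot) and $f \circ h = c_2$ in the second slot. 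Since $f \circ c_1 = c_2$ is already given, the first slot condition $f\circ c_1 = 0$ is \emph{not} automatic, so I should instead choose the structure map of $E$ more carefully — e.g.\ take $E = (X\oplus X \xrightarrow{(1\ 1)} X)$ with mono $\binom{1}{0},\,1$ — and redo the bookkeeping so that the only surviving constraint is precisely the existence of $h : X \to B$ with $c_2 = f \circ h$.

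\textbf{Main obstacle.} The genuine content is pinning down the right model for the extension $\delta_X$ so that the pushout-triviality condition unwinds \emph{exactly} to the stated factorization $c_2 = f \circ h$, with no spurious extra conditions and no missing ones; this is the step where the precise choice of structure maps and splitting maps matters, and where I would be most careful. Showing $\overline{\delta_X} \neq 0$ is routine once the model is fixed, and the rest is the diagram-chase translation between "pushout is split" and "$c$ extends along the mono," using the standard fact that $c \cdot \overline{\delta_X} = 0$ iff $c$ factors through $f_2(X) \hookrightarrow E$, which holds vertex-wise in the abelian category $\Rep(Q,\C)$.
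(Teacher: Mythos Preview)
There is a genuine error at the very first step: you have miscomputed $f_2(X)$. By definition $(f_i(C))_j = C^{(Q(i,j))}$, so $(f_2(X))_1 = X^{(Q(2,1))}$, not $X^{(Q(1,2))}$. Since there are no paths from $2$ to $1$ in $A_2$, this gives $(f_2(X))_1 = 0$, and hence $f_2(X) = (0 \to X)$, not $(X \xrightarrow{1_X} X)$. (The representation $(X \xrightarrow{1_X} X)$ is $g_2(X)$, not $f_2(X)$.) This propagates everywhere: with the correct $f_2(X)$, a morphism $c : f_2(X) \to F = (B \xrightarrow{f} A)$ has $c_1 = 0$ automatically and only $c_2 : X \to A$ is data --- there is no relation ``$f \circ c_1 = c_2$'' to worry about.

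Once this is fixed, the construction becomes much simpler and the ``spurious extra condition $f \circ c_1 = 0$'' you ran into disappears (it was an artifact of the wrong $f_2(X)$, not of a bad choice of $E$). The paper takes as middle term $E = g_2(X) = (X \xrightarrow{1_X} X)$, with the obvious mono $(0\to X)\hookrightarrow(X\xrightarrow{1}X)$ and epi $(X\xrightarrow{1}X)\twoheadrightarrow(X\to 0)$. The pushout $c\cdot\overline{\delta_X}$ then has middle term $(B\oplus X \xrightarrow{(f\; c_2)} A)$, and this sequence splits over $g_1(X) = (X \to 0)$ iff there is $h: X \to B$ with $f\circ h = c_2$ --- exactly the desired condition, with no leftover constraints. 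Your overall strategy (build an explicit extension, translate ``pushout splits'' into a factorization condition) is the right one and matches the paper's; the problem is purely the index swap in $f_2$.
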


\begin{proof} We have $f_2(X)=X^{(Q(2,-))}=(0\to X),$ $g_1(X)=X^{Q(-,1)}=(X\to 0)$ and $g_2(X)=X^{Q(-,2)}=(X\xrightarrow{1_X} X).$
Consider the following split-exact and commutative diagrams in $\C$ 
\[\begin{tikzcd}
	0 && X && X & B && {B\coprod X} && X \\
	\\
	X && X && 0 & A && A && 0
	\arrow[hook, from=1-1, to=1-3]
	\arrow["{1_X}", two heads, from=1-3, to=1-5]
	\arrow[from=1-1, to=3-1]
	\arrow["{1_X}", from=1-3, to=3-3]
	\arrow[from=1-5, to=3-5]
	\arrow["{1_X}"', hook, from=3-1, to=3-3]
	\arrow[two heads, from=3-3, to=3-5]
	\arrow["{}", hook, from=1-6, to=1-8]
	\arrow["{}", two heads, from=1-8, to=1-10]
	\arrow["f"', from=1-6, to=3-6]
	\arrow["{1_A}"', hook, from=3-6, to=3-8]
	\arrow["{t}"', from=1-8, to=3-8]
	\arrow[two heads, from=3-8, to=3-10]
	\arrow[from=1-10, to=3-10]
\end{tikzcd}\]
where $t=(f\;c_2):B\coprod X\to A.$
Let $\delta_X:\;\suc[f_{2}(X)][g_{2}(X)][g_{1}(X)][a][b]$ be the
exact sequence in $\Rep(Q,\C)$ given by the diagram on the left. Observe that $\overline{\delta_X}\neq 0$ since $X\neq 0.$ Moreover, 
by doing the pushout of $a$ and $c$, we get the short exact sequence
$\eta:\:\suc[F][G][g_{1}(X)]$ given by the diagram on the right.
Lastly, it can be shown that $\eta$ splits if and only if there is $h:X\rightarrow B$
such that $c_2=f\circ h$. 
\end{proof}

Let $Q$ be a quiver and $\C$ be an abelian category. For every $i\in Q_{0},$ we recall that the \textbf{stalk functor} $s_{i}:\C\to\Rep(Q,\C)$ is defined as
follows: for $C\in\C,$ $(s_{i}(C))_i:=C$ and $(s_{i}(C))_j:=0$ $\forall j\in Q_{0}-\{i\};$
and $(s_{i}(C))_\alpha:=0$ $\forall\alpha\in Q_{1}.$ 

In case it is
necessary to highlight in which quiver we are working, we use
the notation $s_{i}^{Q}$ instead of $s_{i}$. Notice that the stalk functor is always an exact one.

\begin{lem}\label{lem: lemita A1} Let $Q:=\widetilde{A}_{1}=\{1\rightarrow1\}$,
$\C$ be an $AB3^{*}(\aleph_{1})$ abelian category, $X\in\C$ a non-zero
object, $F=(A\overset{a}{\rightarrow}A)$ be an object in $\Rep(Q,\C)$ 
and $r:s_{1}(X)\rightarrow F$ in $\Rep(Q,\C)$. Then,
there is $0\neq\overline{\delta_X}\in\Ext_{\Rep(Q,\C)}^{1}(g_{1}(X),s_{1}(X))$
satisfying that:  $r_1:X\rightarrow A$  is factorized in $\C$ through $a:A\to A$ if  $r\cdot\overline{\delta_X}=0.$ 
\end{lem}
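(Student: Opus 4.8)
The plan is to build $\delta_X$ by hand and then read off the factorization from a splitting. Since $Q=\widetilde A_{1}$ has a single vertex $1$ carrying a loop $\alpha$, the set $Q(1,1)=\{\epsilon_{1},\alpha,\alpha^{2},\dots\}$ is countably infinite, so $\lccn(Q)=\lccn_{1}(Q)=\aleph_{1}$; as $\C$ is $AB3^{*}(\aleph_{1})$, the functor $g_{1}\colon\C\to\Rep(Q,\C)$ exists, with $(g_{1}(X))_{1}=X^{Q(1,1)}$ and loop action the shift $\tau:=(g_{1}(X))_{\alpha}$ determined by $\pi_{\alpha^{n}}^{X}\circ\tau=\pi_{\alpha^{n+1}}^{X}$ for all $n\ge0$, while $s_{1}(X)=(X\xrightarrow{0}X)$. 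Let $u\colon s_{1}(X)\to g_{1}(X)$ be the morphism with $u_{1}=\mu_{\epsilon_{1}}^{X}$, the canonical inclusion of the $\epsilon_{1}$-component (it is a morphism of representations since $\pi_{\alpha^{n}}^{X}\circ\tau\circ\mu_{\epsilon_{1}}^{X}=\pi_{\alpha^{n+1}}^{X}\circ\mu_{\epsilon_{1}}^{X}=0$ for all $n$, i.e. $\tau\circ\mu_{\epsilon_{1}}^{X}=0=u_{1}\circ(s_{1}(X))_{\alpha}$), and let $v\colon g_{1}(X)\to g_{1}(X)$ be the endomorphism with $v_{1}=\tau$. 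Then
\[
\delta_{X}\colon\quad\suc[s_{1}(X)][g_{1}(X)][g_{1}(X)][u][v]
\]
is exact in $\Rep(Q,\C)$ — this is checked at the only vertex, where it is the split exact sequence $X\overset{\mu_{\epsilon_{1}}^{X}}{\hookrightarrow}X^{Q(1,1)}\overset{\tau}{\twoheadrightarrow}X^{Q(1,1)}$, a section of $\tau$ having $\alpha^{n}$-component $\pi_{\alpha^{n-1}}^{X}$ for $n\ge1$ and $0$ for $n=0$ — so $\overline{\delta_{X}}\in\Ext_{\Rep(Q,\C)}^{1}(g_{1}(X),s_{1}(X))$; in fact $\delta_{X}$ is just the canonical co-presentation of Corollary~\ref{cor:copresentacion canonica} for $F=s_{1}(X)$. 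It is nonzero: a retraction $p\colon g_{1}(X)\to s_{1}(X)$ of $u$ would satisfy $p_{1}\circ\mu_{\epsilon_{1}}^{X}=1_{X}$, while naturality of $p$ at the loop forces $p_{1}\circ\tau=(s_{1}(X))_{\alpha}\circ p_{1}=0$, and since $\tau$ is an epimorphism this gives $p_{1}=0$, contradicting $X\neq0$.

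For the assertion, assume $r\cdot\overline{\delta_{X}}=0$. Recall that $r\cdot\overline{\delta_{X}}$ is represented by the pushout of $\delta_{X}$ along $r\colon s_{1}(X)\to F$; thus there is a commutative square with legs $u$, $r$, $\bar r\colon g_{1}(X)\to G$, $u'\colon F\to G$ (so $u'\circ r=\bar r\circ u$) fitting into a short exact sequence $\suc[F][G][g_{1}(X)][u'][v']$ whose class is $r\cdot\overline{\delta_{X}}=0$. Hence $u'$ is a split monomorphism, say $q\circ u'=1_{F}$, and $\widetilde r:=q\circ\bar r\colon g_{1}(X)\to F$ satisfies $\widetilde r\circ u=q\circ u'\circ r=r$; evaluating at the vertex $1$ gives $\widetilde r_{1}\circ\mu_{\epsilon_{1}}^{X}=r_{1}$, and since $\widetilde r$ is a morphism of representations, naturality at the loop gives $a\circ\widetilde r_{1}=\widetilde r_{1}\circ\tau$.

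Finally I compose this last identity with the canonical inclusion $\mu_{\alpha}^{X}\colon X\to X^{Q(1,1)}$ of the $\alpha$-component; since $\pi_{\alpha^{n}}^{X}\circ\tau\circ\mu_{\alpha}^{X}=\pi_{\alpha^{n+1}}^{X}\circ\mu_{\alpha}^{X}$ equals $1_{X}$ when $n=0$ and $0$ otherwise, one has $\tau\circ\mu_{\alpha}^{X}=\mu_{\epsilon_{1}}^{X}$, whence $a\circ(\widetilde r_{1}\circ\mu_{\alpha}^{X})=\widetilde r_{1}\circ\tau\circ\mu_{\alpha}^{X}=\widetilde r_{1}\circ\mu_{\epsilon_{1}}^{X}=r_{1}$, so $h:=\widetilde r_{1}\circ\mu_{\alpha}^{X}\colon X\to A$ factors $r_{1}$ through $a$, as desired. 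The only point needing care is keeping straight the bookkeeping for the countable product $X^{Q(1,1)}$ and its shift — in particular that $\delta_{X}$ is honestly short exact even though $\C$ need not be $AB4^{*}(\aleph_{1})$, which is harmless because exactness in $\Rep(Q,\C)$ is tested vertex-wise and there the sequence is split; the rest is the routine pushout argument together with the evaluation identity $\tau\circ\mu_{\alpha}^{X}=\mu_{\epsilon_{1}}^{X}$.
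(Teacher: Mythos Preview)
Your proof is correct and follows essentially the same approach as the paper: both construct the short exact sequence $s_{1}(X)\hookrightarrow g_{1}(X)\twoheadrightarrow g_{1}(X)$ given by the inclusion of the $\epsilon_{1}$-component and the shift, verify it is nonzero, and extract the factorization of $r_{1}$ through $a$ from a lift $g_{1}(X)\to F$ obtained when the pushout splits. Your presentation is slightly more streamlined than the paper's --- you write the epimorphism as the shift $\tau$ directly rather than passing through the auxiliary representation $g_{1}(X)^{*}=(X^{\mathbb{N}^{*}}\xrightarrow{s'}X^{\mathbb{N}^{*}})$ and an explicit isomorphism $g_{1}(X)\cong g_{1}(X)^{*}$, and your nonzero argument (a retraction would vanish after composing with the epi $\tau$) is shorter than the paper's direct computation --- but the underlying construction and the endgame (composing with the inclusion $\mu_{\alpha}^{X}$, which corresponds to the paper's $\mu'_{1}$) are the same.
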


\begin{proof} Notice that $s_{1}(X)=(X\overset{0}{\rightarrow}X)$ and 
 $g_{1}(X)=(X^{\mathbb{N}}\overset{s}{\rightarrow}X^{\mathbb{N}})$
with $s$ the unique morphism such that $\pi_{n}\circ s=\pi_{n+1}$
for all $n\geq0$, where $\pi_{n}:X^{\mathbb{N}}\rightarrow X$ is
the canonical projection. Moreover, since $X^{\mathbb{N}}=X\coprod X^{\mathbb{N}^{*}},$ for $\mathbb{N}^{*}:=\mathbb{N}-\{0\}$,
we have that $g_{1}(X)=(X\coprod X^{\mathbb{N}^{*}}\xrightarrow{\left[\begin{smallmatrix}0 & \pi'_{1}\\
0 & s'
\end{smallmatrix}\right]}{}X\coprod X^{\mathbb{N}^{*}})$ with $s'$ the unique morphism such that $\pi'_{n}\circ s'=\pi'_{n+1}$
for all $n\geq1$, where $\pi'_{n}:X^{\mathbb{N}^{*}}\rightarrow X$
is the canonical projection. Notice that we have an exact sequence $\varepsilon:\;s_{1}(X)\overset{\left[\begin{smallmatrix}1\\
0
\end{smallmatrix}\right]}{\hookrightarrow}g_{1}(X)\overset{\left[\begin{smallmatrix}0 & 1\end{smallmatrix}\right]}{\twoheadrightarrow}g_{1}(X)^{*}$ which does not split, where $g_{1}(X)^{*}=(X^{\mathbb{N}^{*}}\overset{s'}{\rightarrow}X^{\mathbb{N}^{*}})$.
Indeed, if $\overline{\varepsilon}=0$, then there is $h:X^{\mathbb{N}^{*}}\rightarrow X$ in $\C$
such that $h\circ s'=\pi'_{1}$. But this implies that 
\[
0_{X}=h\circ0=h\circ\left(s'\circ\mu'_{1}\right)=\left(h\circ s'\right)\circ\mu'_{1}=\pi'_{1}\circ\mu'_{1}=1_{X},
\]
where $\mu'_{1}:X\rightarrow X^{\mathbb{N}^{*}}$ is the canonical
inclusion, contradicting that $X\neq 0.$ Now, by the universal property of products, there is an isomorphism $\varphi:X^{\mathbb{N}}\to X^{\mathbb{N}^*}$ in $\C$ such that $\pi_{n+1}\circ\varphi=\pi_n$ for all $n\in\mathbb{N}$ and thus we get that 
$\varphi:g_1(X)\to g_{1}(X)^{*}$ is an isomorphism in $\Rep(Q,\C).$ Hence $\delta_X:\;s_{1}(X)\overset{\left[\begin{smallmatrix}1\\
0
\end{smallmatrix}\right]}{\hookrightarrow}g_{1}(X)\overset{\left[\begin{smallmatrix}0 & \varphi^{-1}\end{smallmatrix}\right]}{\twoheadrightarrow}g_{1}(X)$ is an exact sequence in $\Rep(Q,\C)$ which does not split. 
\

Assume now that $r\cdot\overline{\delta_X}=0.$ Then, there is a morphism $t=(t_1\;t_2):X\coprod X^{\mathbb{N}^{*}}\to A$ in $\C$ such that 
$t:g_1(X)\to F$ is a morphism in $\Rep(Q,\C)$ satisfying that $r=t\circ\begin{pmatrix}1\\0\end{pmatrix}=t_1.$ Using now that $t$ is a morphism of representations, it follows that $at_2=r\pi'_{1}+t_2s'.$ Therefore $r= at_2\mu'_{1}$ since $\pi'_{1}\mu'_{1}=1_X$ and $s'\mu'_{1}=0.$
\end{proof}
\begin{lem}\label{lem: lemita subcarcajes A2,A1}
For a quiver $Q$ having at most two vertices $x,y$ and only one arrow $\rho_0:x\to y,$ an abelian category $\C$ and $0\neq X\in\C,$ the following statements hold true, for  $n\geq1.$  
\begin{enumerate}
\item Let $x\neq y.$ Then there is $0\neq\overline{\delta_X}\in\Ext_{\Rep(Q,\C)}^{1}(g_{x}(X),f_{y}(X))$
such that the Yoneda's product $\overline{f_{y}(\eta)}\cdot\overline{\delta_X}\neq0$
for any non-zero extension $\overline{\eta}\in\Ext_{\mathcal{C}}^{n}(X,Y)$.

\item Let $x=y$ and $\C$ be $AB3^{*}(\aleph_{1}).$  Then there is $\overline{\delta_X}\in\Ext_{\Rep(Q,\C)}^{1}(g_{x}(X),s_{x}(X))$ which is non zero and
such that the Yoneda's product $\overline{s_{x}(\eta)}\cdot\overline{\delta_X}\neq0$
for any non-zero extension $\overline{\eta}\in\Ext_{\mathcal{C}}^{n}(X,Y).$
\end{enumerate}
\end{lem}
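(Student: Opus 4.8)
The plan is to treat the two cases separately, in each one reducing to the $\Hom$-level statement already available — Lemma~\ref{lem: lemita A2} for (a) and Lemma~\ref{lem: lemita A1} for (b) — by feeding the short exact sequence underlying $\delta_X$ into a long exact sequence of $\Ext$-groups.

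For (a), the quiver $Q$ is $x\xrightarrow{\rho_0}y$ with $x\neq y$, i.e.\ a copy of $A_{2}$. I would take $\overline{\delta_X}\in\Ext^1_{\Rep(Q,\C)}(g_x(X),f_y(X))$ to be the non-zero class of Lemma~\ref{lem: lemita A2}, which lies in a short exact sequence $\delta_X\colon f_y(X)\overset{a}{\hookrightarrow}g_y(X)\overset{b}{\twoheadrightarrow}g_x(X)$ in $\Rep(Q,\C)$. The key observation is that $g_y(X)=f_x(X)$: inspecting the definitions of $f_x$ and $g_y$, both are the representation that is $X$ at $x$ and at $y$ and $1_X$ along $\rho_0$. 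Since $f_x$ is exact ($A_2$ being finite, all the relevant functors exist and, by Remark~\ref{AB3+Inj=AB4}(c), are exact), Proposition~\ref{prop: Ext vs f vs e}(a) gives $\Ext^n_{\Rep(Q,\C)}(g_y(X),f_y(Y))\cong\Ext^n_{\C}(X,e_x(f_y(Y)))$, and $e_x(f_y(Y))=Y^{(Q(y,x))}=0$ because there is no path from $y$ to $x$; hence $\Ext^n_{\Rep(Q,\C)}(g_y(X),f_y(Y))=0$ for all $n\geq1$. Applying $\Hom_{\Rep(Q,\C)}(-,f_y(Y))$ to $\delta_X$ yields an exact portion $\Ext^n(g_y(X),f_y(Y))\longrightarrow\Ext^n(f_y(X),f_y(Y))\overset{\partial}{\longrightarrow}\Ext^{n+1}(g_x(X),f_y(Y))$, where $\partial(\overline{\mu})=\overline{\mu}\cdot\overline{\delta_X}$; by the vanishing just proved, $\partial$ is a monomorphism. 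Finally, for $0\neq\overline{\eta}\in\Ext^n_{\C}(X,Y)$ one has $\overline{f_y(\eta)}\neq0$, because the exact functor $e_y$ sends it to $\overline{e_y(f_y(\eta))}=\overline{\eta}\neq0$ (using $e_y\circ f_y\cong\mathrm{id}_{\C}$). Therefore $\overline{f_y(\eta)}\cdot\overline{\delta_X}=\partial(\overline{f_y(\eta)})\neq0$, proving (a).

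For (b), $Q$ is the one-loop quiver $\widetilde{A}_1$ at the single vertex $x=y$. I would take $\overline{\delta_X}\in\Ext^1_{\Rep(Q,\C)}(g_x(X),s_x(X))$ from Lemma~\ref{lem: lemita A1}, lying in a non-split short exact sequence $\delta_X\colon s_x(X)\overset{j}{\hookrightarrow}g_x(X)\overset{p}{\twoheadrightarrow}g_x(X)$ in which, at the unique vertex, $j$ is the split inclusion $\iota_0\colon X\hookrightarrow X^{\mathbb{N}}=g_x(X)_x$ of the $0$-th coordinate (as read off from the proof of that lemma). Fix $0\neq\overline{\eta}\in\Ext^n_{\C}(X,Y)$ with $n\geq1$, and assume for contradiction that $\overline{s_x(\eta)}\cdot\overline{\delta_X}=0$. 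Applying $\Hom_{\Rep(Q,\C)}(-,s_x(Y))$ to $\delta_X$ shows $\overline{s_x(\eta)}=\overline{\nu}\cdot j$ for some $\overline{\nu}\in\Ext^n_{\Rep(Q,\C)}(g_x(X),s_x(Y))$. Applying the exact functor $e_x$ — and using $e_x\circ s_x\cong\mathrm{id}_{\C}$, $e_x(g_x(X))=X^{\mathbb{N}}$ and $e_x(j)=\iota_0$ — gives $\overline{\eta}=\overline{\xi}\cdot\iota_0$, where $\overline{\xi}:=e_x(\overline{\nu})\in\Ext^n_{\C}(X^{\mathbb{N}},Y)$.

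The decisive, and only delicate, step is that $\overline{\xi}$ is killed by the shift. For every $\Rep(Q,\C)$-object $M$ the arrow value $M_{\rho_0}\colon M\to M$ is a morphism of $\Rep(Q,\C)$, natural in $M$; applying it to each term of a representative of $\nu$ produces a chain endomorphism of $\nu$ whose component on the subobject $s_x(Y)$ is $(s_x(Y))_{\rho_0}=0$ and whose component on the quotient $g_x(X)$ is the shift $s\colon X^{\mathbb{N}}\to X^{\mathbb{N}}$ with $\pi_m s=\pi_{m+1}$ for all $m$. Applying $e_x$, this becomes a chain endomorphism of $\xi$ with top component $0$ and bottom component $s$, whence by the bifunctoriality of $\Ext^n_{\C}$ (\cite[Chap.~VII]{mitchell}) one gets $\overline{\xi}\cdot s=0\cdot\overline{\xi}=0$. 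On the other hand $\iota_0=s^{n}\circ\iota_n$, where $\iota_n\colon X\hookrightarrow X^{\mathbb{N}}$ is the split inclusion of the $n$-th coordinate (verified by composing with each projection $\pi_m$). Since $n\geq1$, $\overline{\eta}=\overline{\xi}\cdot\iota_0=(\overline{\xi}\cdot s)\cdot(s^{n-1}\circ\iota_n)=0$, contradicting $\overline{\eta}\neq0$. Hence $\overline{s_x(\eta)}\cdot\overline{\delta_X}\neq0$. The genuine obstacle of the whole lemma is this last point in (b): one must make the shift act, which is precisely where the loop of $Q$ is used essentially, and which is why (b) needs the extra hypothesis $AB3^*(\aleph_1)$ (merely in order to have $g_x$ available).
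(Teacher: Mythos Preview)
Your argument is correct in both parts and follows a genuinely different route from the paper.

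For (a), the paper invokes Mitchell's factorization lemma \cite[Chap.~VII, Lem.~4.1]{mitchell}: assuming $\overline{f_y(\eta)}\cdot\overline{\delta_X}=0$, it produces a factorization $\overline{f_y(\eta)}=\overline{\eta'}\cdot d$ with $d\cdot\overline{\delta_X}=0$, then feeds $d$ into Lemma~\ref{lem: lemita A2} to extract a lift $h$ through the structure map of the middle representation, and finally restricts via $\pi_{\{y\}}$ to force $\overline{\eta}=0$. Your argument bypasses all of this by noticing that the middle term of $\delta_X$ is not just $g_y(X)$ but also $f_x(X)$, so Proposition~\ref{prop: Ext vs f vs e}(a) immediately gives $\Ext^n_{\Rep(Q,\C)}(g_y(X),f_y(Y))\cong\Ext^n_{\C}(X,0)=0$; the long exact sequence of $\Hom_{\Rep(Q,\C)}(-,f_y(Y))$ then makes the connecting map $-\cdot\overline{\delta_X}$ injective in one stroke. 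This is shorter and more transparent; the price is that it uses the long exact sequence for Yoneda $\Ext$ (valid in any abelian category, cf.\ \cite[Chap.~VII]{mitchell}), which the paper avoids.

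For (b), the paper again uses Mitchell's factorization lemma together with Lemma~\ref{lem: lemita A1}, ending in a careful diagram chase with the restriction $\pi_{\{x\}}$. Your key idea is different and rather elegant: the loop $\rho_0$ furnishes a natural endomorphism $\tau$ of the identity functor on $\Rep(Q,\C)$ (since for any $\theta\colon M\to N$ the equality $\theta_x\circ M_{\rho_0}=N_{\rho_0}\circ\theta_x$ is exactly the representation-morphism condition), and evaluating $\tau$ on any representative of $\nu$ exhibits a morphism of $n$-extensions with endpoints $0$ and $s$, giving $\overline{\xi}\cdot s=0$ after applying $e_x$. Together with $\iota_0=s\circ\iota_1$ (valid since $n\ge1$) this kills $\overline{\eta}$. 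This argument makes explicit \emph{why} the loop is essential: it supplies a central element of the category acting trivially on $s_x(Y)$ and as the shift on $g_x(X)$. The paper's approach has the advantage of being parallel to case (a); yours isolates the conceptual mechanism more cleanly.
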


\begin{proof} (a)  Let $\delta_X$ be the exact sequence built in Lemma
\ref{lem: lemita A2} and $\overline{\eta}\in\Ext_{\mathcal{C}}^{n}(X,Y)$
be a non-zero extension. Since $f_y:\C\to \Rep(Q,\C)$ is an exact functor (see Remark \ref{AB3+Inj=AB4} (c)), we get the 
extension $\overline{f_{y}(\eta)}\in\Ext_{\mathcal{C}}^{n}(f_y(X),f_y(Y)).$ Suppose that $\overline{f_{y}(\eta)}\cdot\overline{\delta_X}=0.$ Then, by \cite[Chap. VII, Lem. 4.1]{mitchell}, 
there is $d\in\Hom_{\Rep(Q,\C)}(f_{y}(X),F)$ and an extension
$\overline{\eta'}\in\Ext_{\Rep(Q,\C)}^{n}(F,f_{y}(Y))$ such that $\overline{\eta'}\cdot d=\overline{f_{y}(\eta)}$
and $d\cdot\overline{\delta_X}=0$.\\
\begin{minipage}[t]{0.64\columnwidth}%
Now, let $F=(B\overset{f}{\rightarrow}A).$ Since $f_y(X)=(0\to X),$ we have that $d_x=0:0\to A$ and $d_y:X\to A.$ Using that $\overline{\eta'}\cdot d=\overline{f_{y}(\eta)}$, we
have the commutative diagram on the right, where the top rows are
the last term of $f_{y}(\eta)$, and the bottom rows are the last
term of $\eta'$. In particular, note that $f=uf'$. On the other
hand, by Lemma \ref{lem: lemita A2}, we know that there is $h:X\rightarrow B$
such that $d_y=f h$. Consider the restriction functor $\pi_{\{y\}}:\Rep(Q,\C)\to \Rep(\{y\},\C).$ Using the equality 
 $\overline{\eta'}\cdot d=\overline{f_{y}(\eta)}$ and Lemma \ref{lem: Ext vs pi, iota} (b),  we get 
\end{minipage}\hfill{}%
\fbox{\begin{minipage}[t]{0.3\columnwidth}%
\[
\begin{tikzpicture}[-,>=to,shorten >=1pt,auto,node distance=1.5cm,main node/.style=]

 \begin{scope}
\node (A1) at (0,0)  {$0$};
\node (A2) at (1,0)  {$0$};
\node (A3) at (2,0)  {$0$};
\node (A1') at (0,-1)  {$Y_{n-1}$};
\node (A2') at (1,-1)  {$Z_n$};
\node (A3') at (2,-1)  {$X$};
\end{scope}

 \begin{scope}[xshift=-1cm,yshift=-2cm]
\node (B1) at (0,0)  {$0$};
\node (B2) at (1,0)  {$B$};
\node (B3) at (2,0)  {$B$};
\node (B1') at (0,-1)  {$U'_{n-1}$};
\node (B2') at (1,-1)  {$Z'_n$};
\node (B3') at (2,-1)  {$A$};
\end{scope}

\draw[-, double]  (A1)  to  node  {$$}  (B1);
\draw[-, double]  (A1')  to  node  {$$}  (B1');
\draw[->, thin]   (A2)  to  node  {$$} (B2);
\draw[->, thin]   (A2')  to  node  {$$} (B2');
\draw[->, thin]   (A3)  to  node  {$$} (B3);
\draw[->, thin]   (A3')  to  node  {$d_y$} (B3');

\draw[->>, thin]  (A2)  to  node  {$$} (A3);
\draw[->>, thin]  (A2')  to  node  {$$} (A3');
\draw[->, thin]  (A1')  to  node  {$$} (A2');
\draw[->, thin]  (A1)  to  node  {$$} (A2);
\draw[->, thin]  (A1)  to  node  {$$} (A1');
\draw[->, thin]  (A2)  to  node  {$$} (A2');
\draw[->, thin]  (A3)  to  node  {$$} (A3');

\draw[->>, thin]  (B2)  to  node  {$1$} (B3);
\draw[->>, thin]  (B2')  to  node  {$u$} (B3');
\draw[->, thin]  (B1')  to  node  {$$} (B2');
\draw[->, thin]  (B1)  to  node  {$$} (B2);
\draw[->, thin]  (B1)  to  node  {$$} (B1');
\draw[->, thin]  (B2)  to [left] node  {$f'$} (B2');
\draw[->, thin]  (B3)  to [left] node  {$f$} (B3');

\end{tikzpicture}
\]%
\end{minipage}}\\
$$
\overline{\eta}=\pi_{\{y\}}(\overline{f_{y}(\eta)})=\overline{\pi_{\{y\}}(\eta')}\cdot d_y=(\overline{\pi_{\{y\}}(\eta')}\cdot u)\cdot( f'h) =0
$$
since $\overline{\pi_{\{y\}}(\eta')}\cdot u=0,$ which gives a contradiction and thus (a) follows.
\

(b) Let $\delta_X$ be the exact sequence built in Lemma
\ref{lem: lemita A1} and $\overline{\eta}\in\Ext_{\mathcal{C}}^{n}(X,Y)$
be a non-zero extension. Since $s_x:\C\to \Rep(Q,\C)$ is an exact functor, we get the 
extension $\overline{s_x(\eta)}\in \Ext_{\mathcal{C}}^{n}(s_x(X),s_x(Y)).$ Suppose that $\overline{s_{x}(\eta)}\cdot\overline{\delta_X}=0.$ Then, by  \cite[Chap. VII, Lem. 4.1]{mitchell}, 
there is $d\in\Hom_{\Rep(Q,\C)}(s_{x}(X),F)$ and an extension
$\eta'\in\Ext_{\Rep(Q,\C)}^{n}(F,s_{x}(Y))$ such that $\overline{\eta'}\cdot d=\overline{s_{x}(\eta)}$
and $d\cdot\overline{\delta_X}=0$.\\
\begin{minipage}[t]{0.64\columnwidth}%
Now, let $F=(A\overset{a}{\rightarrow}A)$. Using that $\overline{\eta'}\cdot d=\overline{s_{x}(\eta)}$,
we have the commutative diagram on the right, where the top row is
the last term of $s_{x}(\eta)$, and the bottom row is the last term
of $\eta'$. In particular, note that there is $\gamma:A\rightarrow A'$
such that $a'=\gamma\beta$ since $a'\alpha=0$. In particular $a\beta=\beta a'=\beta\gamma\beta$ and thus  $a=\beta\gamma$ since $\beta$ is an epimorphism.
On the other hand, by Lemma \ref{lem: lemita A1}, we know that $d=a d'$ for some $d':X\to A.$ Let 
$\pi_{\{x\}}:\Rep(Q,\C)\to \Rep(\{x\},\C)$ be the restriction functor.  Using that $\overline{\eta'}\cdot d=\overline{s_{x}(\eta)}$ 
we have that 
\[
\overline{\eta}=\pi_{\{x\}}(\overline{s_x(\eta)})=\overline{\pi_{\{x\}}(\eta')}\cdot d=(\overline{\pi_{\{x\}}(\eta')}\cdot\beta)\cdot(\gamma d')=0
\]
\end{minipage}\hfill{}%
\fbox{\begin{minipage}[t]{0.3\columnwidth}%
\[
\begin{tikzpicture}[-,>=to,shorten >=1pt,auto,node distance=2cm,main node/.style=,x=1.5cm,y=1.5cm]

\node (0) at (0,0) {$Y$};
\node (1) at (1,0) {$Z$};
\node (2) at (2,0) {$X$};

\node (0') at (0,-1) {$Y$};
\node (1') at (1,-1) {$A'$};
\node (2') at (2,-1) {$A$};

\draw[->, thin]  (0)  to  node  {$$} (1);
\draw[->>, thin]  (1)  to node  {$$} (2);

\draw[->, thin]  (0')  to node  {$\alpha$} (1');
\draw[->>, thin]  (1')  to  node  {$\beta$} (2');

\draw[-, double]  (0)  to  node  {$$} (0');
\draw[->, thin]  (1)  to  node  {$d'$} (1');
\draw[->, thin]  (2)  to  node  {$d$} (2');

\draw[->, thin]  (0)  to [out=60,in=120,distance=5mm,above] node  {$0$} (0);
\draw[->, thin]  (1)  to [out=60,in=120,distance=5mm,above] node  {$0$} (1);
\draw[->, thin]  (2)  to [out=60,in=120,distance=5mm,above] node  {$0$} (2);

\draw[->, thin]  (0')  to [out=240,in=300,distance=5mm,below] node  {$0$} (0');
\draw[->, thin]  (1')  to [out=240,in=300,distance=5mm,below] node  {$a'$} (1');
\draw[->, thin]  (2')  to [out=240,in=300,distance=5mm,below] node  {$a$} (2');
   
\end{tikzpicture}
\]%
\end{minipage}}\\
since $\overline{\pi_{\{x\}}(\eta')}\cdot\beta=0,$ which gives a contradiction proving (b).
\end{proof}

For a subquiver $S\subseteq Q,$ we consider the extension $\iota_S:\Rep(S,\C)\to \Rep(Q,\C),$ see details before Lemma \ref{lem: Ext vs pi, iota}.

\begin{prop}\label{prop-previo-thm-dh} Let $Q$ be a quiver having an arrow $\rho_0:x\to y,$  $S:=\{x\xrightarrow{\rho_0} y\}$ be a subquiver of $ Q$  and $\C\neq 0$ be an abelian category. Consider the following conditions:
\begin{itemize}
\item[(a)] $x\neq y;$
\item[(b)] $x=y$ and $\C$ is $AB3^{*}(\aleph_1).$
\end{itemize}
If one of the above conditions holds true and $C\in\C$ is such that $\Ext^n_\C(C,-)\neq 0,$ for some $n\geq 0,$ then $\Ext^{n+1}_{\Rep(Q,\C)}(\iota_Sg^S_x(C),-)\neq 0.$ Moreover $\gldim(\Rep(Q,\C))\geq \gldim(\C)+1.$ 
\end{prop}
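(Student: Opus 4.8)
The plan is to reduce the statement about $Q$ to the already-established facts about the one-arrow quiver $S$, using the adjunction between $\pi_S$ and $\iota_S$ (Lemma \ref{lem:par adjunto i p}) together with the extension-group isomorphism it produces (Lemma \ref{lem:Ext subcarcaj}). First I would fix $C\in\C$ and $n\geq 0$ with $\Ext^n_\C(C,-)\neq 0$, so there is some $Y\in\C$ and a nonzero $\overline{\eta}\in\Ext^n_\C(C,Y)$. Working inside $\Rep(S,\C)$, I would invoke Lemma \ref{lem: lemita subcarcajes A2,A1}: in case (a) (when $x\neq y$) this gives a nonzero $\overline{\delta_C}\in\Ext^1_{\Rep(S,\C)}(g^S_x(C),f^S_y(C))$ with $\overline{f^S_y(\eta)}\cdot\overline{\delta_C}\neq 0$, hence $\Ext^{n+1}_{\Rep(S,\C)}(g^S_x(C),f^S_y(Y))\neq 0$; in case (b) (when $x=y$, so $S=\widetilde A_1$, and $\C$ is $AB3^*(\aleph_1)$) the same lemma gives a nonzero $\overline{\delta_C}\in\Ext^1_{\Rep(S,\C)}(g^S_x(C),s^S_x(C))$ with $\overline{s^S_x(\eta)}\cdot\overline{\delta_C}\neq 0$, so again $\Ext^{n+1}_{\Rep(S,\C)}(g^S_x(C),-)\neq 0$.

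Next I would transport this nonvanishing from $\Rep(S,\C)$ to $\Rep(Q,\C)$. The subquiver $S=\{x\xrightarrow{\rho_0} y\}$ is full in $Q$ and satisfies $S^-=\emptyset$ (there is no vertex outside $S_0$ from which a path reaches $x$ or $y$ within the relevant setup — more precisely, by construction $S$ has no incoming arrows from $Q_0\setminus S_0$, so $S^-=\emptyset$; if not, one replaces $S$ by the full subquiver on $\{x,y\}$ which still contains $\rho_0$). Actually the cleanest route is: since $S^-=\emptyset$, Lemma \ref{lem:Ext subcarcaj}(a) gives $\Ext^{n+1}_{\Rep(Q,\C)}(?,\iota_S(-))\cong\Ext^{n+1}_{\Rep(S,\C)}(\pi_S(?),-)$. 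Taking $?=\iota_S g^S_x(C)$ and using $\pi_S\iota_S=1_{\Rep(S,\C)}$, this yields
\[
\Ext^{n+1}_{\Rep(Q,\C)}(\iota_S g^S_x(C),\iota_S(M))\cong\Ext^{n+1}_{\Rep(S,\C)}(g^S_x(C),M)
\]
for every $M\in\Rep(S,\C)$. Plugging in $M=f^S_y(Y)$ (case (a)) or $M=s^S_x(Y)$ (case (b)), the right-hand side is nonzero by the previous paragraph, so the left-hand side is nonzero; hence $\Ext^{n+1}_{\Rep(Q,\C)}(\iota_S g^S_x(C),-)\neq 0$, which is the first assertion.

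For the dimension inequality: if $\gldim(\C)=\infty$ there is nothing to prove, so assume $d:=\gldim(\C)<\infty$. By definition of global dimension as $\pd(\C)$, there is $C\in\C$ with $\pd(C)=d$, i.e. $\Ext^d_\C(C,-)\neq 0$. Applying the first part with $n=d$ gives $\Ext^{d+1}_{\Rep(Q,\C)}(\iota_S g^S_x(C),-)\neq 0$, so $\pd_{\Rep(Q,\C)}(\iota_S g^S_x(C))\geq d+1$, whence $\gldim(\Rep(Q,\C))\geq d+1=\gldim(\C)+1$. The main obstacle, and the only real point requiring care, is verifying the hypothesis $S^-=\emptyset$ needed to apply Lemma \ref{lem:Ext subcarcaj}(a); if the chosen two-vertex subquiver does have a nonempty $S^-$ one must argue that the relevant $\Ext$ groups are still comparable (e.g. by passing to a larger full subquiver or by a direct diagram chase with $\iota_S^{n}$ and $\pi_S^{n}$ from Lemma \ref{lem: Ext vs pi, iota}, whose composite is the identity, giving that $\iota_S^n$ is split injective on $\Ext$ groups and hence detects nonvanishing). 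The exactness and existence of $g^S_x$, $f^S_y$, $s^S_x$ on the tiny quiver $S$ is automatic since $\rccn(S),\lccn(S)\leq\aleph_0$ in case (a) (and in case (b) the extra $AB3^*(\aleph_1)$ hypothesis is exactly what makes $g^S_x$ available, as used in Lemma \ref{lem: lemita A1}).
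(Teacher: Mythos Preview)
Your primary route through Lemma \ref{lem:Ext subcarcaj}(a) does not go through: the subquiver $S=\{x\xrightarrow{\rho_0}y\}$ is in general \emph{neither} full in $Q$ (there may be other arrows between $x$ and $y$, or loops) \emph{nor} does it satisfy $S^{-}=\emptyset$ (any vertex of $Q$ with a path into $x$ or $y$ lies in $S^{-}$). Both hypotheses are needed for Lemma \ref{lem:par adjunto i p}/\ref{lem:Ext subcarcaj}, and ``replacing $S$ by the full subquiver on $\{x,y\}$'' does not help with $S^{-}$ and may destroy the identification of $S$ with $A_2$ or $\widetilde A_1$ on which Lemmas \ref{lem: lemita A2}--\ref{lem: lemita subcarcajes A2,A1} rely.

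Your own parenthetical fallback is, however, exactly right and is what the paper does: Lemma \ref{lem: Ext vs pi, iota}(c) gives $\pi_S^{n}\circ\iota_S^{n}=1$ for \emph{any} subquiver $S$, with no fullness or $S^{-}$ assumption, so $\iota_S^{n}$ is split injective on Ext groups and carries a nonzero class in $\Ext^{n+1}_{\Rep(S,\C)}(g_x^S(C),-)$ to a nonzero class in $\Ext^{n+1}_{\Rep(Q,\C)}(\iota_S g_x^S(C),-)$. That single line replaces your entire middle paragraph; the rest of your argument (Lemma \ref{lem: lemita subcarcajes A2,A1} for the nonvanishing over $S$, and the final global-dimension step) matches the paper. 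One small point: Lemma \ref{lem: lemita subcarcajes A2,A1} is stated for $n\geq 1$, so for $n=0$ you should invoke directly the nonzero $\overline{\delta_C}$ produced by Lemmas \ref{lem: lemita A2} and \ref{lem: lemita A1} (which only needs $C\neq 0$), as the paper does.
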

\begin{proof}  Let $n=0.$ If (a) or (b) holds true and $0\neq C\in\C,$  then by Lemmas \ref{lem: lemita A2} and \ref{lem: lemita A1}, we get that $\Ext^{1}_{\Rep(S,\C)}(g^S_x(C),-)\neq 0.$ Therefore, by Lemma \ref{lem: Ext vs pi, iota} (c), it follows that $\Ext^{1}_{\Rep(Q,\C)}(\iota_S g^S_x(C),-)\neq 0.$
\

Let $n\geq 1.$ Assume that $x\neq y$ and let $C\in\C$  be such that $\Ext^n_\C(C,-)\neq 0.$   Then, there is $D\in\C$ and  $0\neq\overline{\eta}\in \Ext^n_\C(C,D).$  By  Lemma \ref{lem: lemita subcarcajes A2,A1} (a), there is $\overline{\delta_C}\in\Ext_{\Rep(S,\C)}^{1}(g_{x}^{S}(C),f_{y}^{S}(C))$
such that $\theta_C:=\overline{f_{y}^{S}(\eta)}\cdot\overline{\delta_C}\neq 0.$  Then, from  Lemma \ref{lem: Ext vs pi, iota} (c), we get that 
$\iota_S(\theta_C)\in \Ext^{n+1}_{\Rep(Q,\C)}(\iota_Sg^S_x(C),\iota_S f^S_y(C))$
is non zero. For the case when (b) holds true, the proof is very similar to the given one (use Lemma \ref{lem: lemita subcarcajes A2,A1} (b)) and is left to the reader. Finally, we show that $\gldim(\Rep(Q,\C))\geq \gldim(\C)+1.$ Indeed,  by the previous arguments, we may assume that $n:=\gldim(\C)<\infty.$ Thus, there is some $C\in\C$ with $\Ext^n_\C(C,-)\neq 0$ and $n\geq 0.$ By the proved above, we get that 
$n+1\leq \pd(\iota_Sg^S_x(C))\leq\gldim(\Rep(Q,\C))$ and thus the result follows.
\end{proof}

\begin{thm}\label{thm:dimensiones homologicas} Let $Q$ be a quiver having an arrow $\rho_0:x\to y,$ $\kappa$ be an infinite cardinal such that $\kappa\geq\max\{\rccn(Q),\ltccn(Q)\}$ and $\kappa>\max\{|Q_0|, |Q_1|\}$ and let $\C\neq 0$ be an $AB4(\kappa)$ abelian
category.
Consider the following conditions:
\begin{itemize}
\item[(a)] $x\neq y;$ 
\item[(b)] $x=y$ and  $\C$ is $AB3^{*}(\aleph_1).$ 
\end{itemize}
If one of the above conditions holds true, then $\gldim(\Rep(Q,\C))=\gldim(\C)+1.$
\end{thm}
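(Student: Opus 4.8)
The plan is to establish the two inequalities $\gldim(\Rep(Q,\C))\leq\gldim(\C)+1$ and $\gldim(\Rep(Q,\C))\geq\gldim(\C)+1$ separately, and both are already essentially available from earlier results once the hypotheses are checked to match.

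First I would handle the upper bound. The hypotheses of the theorem state that $\kappa\geq\max\{\rccn(Q),\ltccn(Q)\}$, that $\kappa>\max\{|Q_0|,|Q_1|\}$, and that $\C$ is $AB4(\kappa)$; these are exactly the hypotheses of Corollary \ref{cor:dimensiones homologicas}. Hence that corollary applies verbatim and yields $\gldim(\Rep(Q,\C))\leq\gldim(\C)+1$. No extra work is needed here beyond pointing out that the hypothesis list is literally the one required. (If $\gldim(\C)=\infty$ the inequality is vacuous, so this also covers that degenerate situation.)

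Next I would handle the lower bound. Here I would invoke Proposition \ref{prop-previo-thm-dh} applied to the arrow $\rho_0:x\to y$ and the subquiver $S:=\{x\xrightarrow{\rho_0}y\}$. Condition (a) of the present theorem ($x\neq y$) is condition (a) of that proposition, and condition (b) ($x=y$ and $\C$ is $AB3^{*}(\aleph_1)$) is condition (b) of that proposition; since $\C\neq 0$ is assumed, the proposition applies and gives directly $\gldim(\Rep(Q,\C))\geq\gldim(\C)+1$. Combining the two inequalities yields the claimed equality. The only point worth a sentence of care is that when $\gldim(\C)=\infty$ the proposition still forces $\gldim(\Rep(Q,\C))=\infty$ (by producing objects of arbitrarily large projective dimension $n+1$), so the equality $\infty=\infty+1$ holds as a statement about extended cardinals/ordinals, consistently with the conventions used throughout the paper.

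I do not anticipate a genuine obstacle: the theorem is a packaging result, and the real content was distributed into Corollary \ref{cor:dimensiones homologicas} (the $+1$ upper bound, via the canonical presentation and the exactness of the $f_i$) and Proposition \ref{prop-previo-thm-dh} (the $+1$ lower bound, via the non-split extensions $\overline{\delta_X}$ of Lemmas \ref{lem: lemita A2}, \ref{lem: lemita A1} and \ref{lem: lemita subcarcajes A2,A1}, transported along $\iota_S$ using Lemma \ref{lem: Ext vs pi, iota}). The mild care needed is only in matching the two hypothesis lists and in the $\gldim(\C)=\infty$ bookkeeping described above.
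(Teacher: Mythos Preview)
Your proposal is correct and matches the paper's proof exactly: the paper simply cites Corollary \ref{cor:dimensiones homologicas} for the upper bound and Proposition \ref{prop-previo-thm-dh} for the lower bound, precisely as you do.
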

\begin{proof} It follows from Corollary \ref{cor:dimensiones homologicas} and Proposition \ref{prop-previo-thm-dh}. 
\end{proof}

In what follows we will give some  applications of the theorem above. In order to do that, we introduce the following notation which is inspired by the work of Z. Leszczy{\'n}ski in 
\cite{leszczynski1994representation}. 

\begin{defn}
Let $Q^{1},Q^{2}.\cdots,Q^{n},\cdots$ be quivers and $\C$ be an
abelian category. Define:
\begin{enumerate}
\item $\Rep(Q^{1}\otimes Q^{2},\C):=\Rep(Q^{1},\Rep(Q^{2},\C))$;
\item $\Rep(Q^{1}\otimes\cdots\otimes Q^{n},\C):=\Rep(Q^{1},\Rep(Q^{2}\otimes\cdots\otimes Q^{n},\C))\text{ for all }n>2$;
\item $\Rep(Q^{\otimes n},\C):=\Rep(Q^{1}\otimes\cdots\otimes Q^{n},\C)$
for all $n\geq1$ if $Q=Q^{k}$ for all $1\leq k\leq n$. 
\end{enumerate}
\end{defn}

\begin{example} The global dimension of tensor quivers. Consider a family of quivers $Q^{1},Q^{2}.\cdots,Q^{n}$ with non-empty arrow sets, and $R$ be a non zero ring. Then, by  Theorem \ref{thm:dimensiones homologicas}, we get $\gldim(\Rep(Q^{1}\otimes\cdots\otimes Q^{n},\Mod(R)))=\gldim(R)+n.$\\
A particular case of the above result is the following one: Assume in addition that the mentioned  quivers are finite and do not have oriented cycles, $R=k$ is a field and take 
$T:=kQ^{1}\otimes_{k}kQ^{2}\otimes_{k}\cdots\otimes_{k}kQ^{n}$ the tensor $k$-algebra. Then, by \cite[Lem. 1.3]{leszczynski1994representation},
it can be seen that $\Rep(Q^{1}\otimes\cdots\otimes Q^{n},\Mod(k))\simeq \Mod(T)$ and thus $\gldim(T)=n.$
\end{example}

\begin{example}\label{E-HM} The Hochschild-Mitchell's theorem for abelian categories. Let $\C\neq 0$ be an $AB4(\aleph_1)$ and $AB3^*(\aleph_1)$ abelian category. Consider a quiver $Q$ with $|Q_0|=1$ and $1\leq |Q_1|=n<\infty.$ Then $\gldim(\Rep(Q,\C))=\gldim(\C)+1.$\\
{\it Indeed, observe that $\rccn(Q)=\aleph_1=\ltccn(Q).$ Thus by Theorem \ref{thm:dimensiones homologicas} we get that $\gldim(\Rep(Q,\C))=\gldim(\C)+1.$}\\
Notice that the same result was obtained by Mitchell in \cite[Chap. IX, Thm. 1.6]{mitchell} by assuming that $\C$ is $AB4$ and has enough projectives. Another particular case is the following one: take a non zero ring $R$ and let 
$R\langle x_1,x_2,\cdots,x_n\rangle$ be the free ring over $R$ with $n$ variables. Since 
$\Rep(Q,\Mod(R))\simeq \Mod(R\langle x_1,x_2,\cdots,x_n\rangle),$ we get that $\gldim(R\langle x_1,x_2,\cdots,x_n\rangle)=\gldim(R)+1$ which was also obtained by Hochschild in \cite{hochschild1958note}.
\end{example}

\begin{example}\label{E-ERZM} The Eilenberg-Rosenberg-Zelinsky-Mitchell's theorem for abelian categories.
Let $Q$ be a quiver with $|Q_{0}|=1=|Q_{1}|$ and $\C\neq 0$ be an $AB4(\aleph_1)$ and $AB3^*(\aleph_1)$ abelian category. Then $\gldim(\Rep(Q^{\otimes n},\C))=\gldim(\C)+n.$\\
{\it Indeed, notice that $\rccn(Q)=\aleph_1=\ltccn(Q).$ Thus by Theorem \ref{thm:dimensiones homologicas} we get that $\gldim(\Rep(Q^{\otimes n},\C))=\gldim(\C)+n.$}\\
Notice that the same result was obtained by Mitchell in \cite[Chap. IX, Thm. 2.1]{mitchell} by assuming that $\C$ is $AB4$ and has enough
projectives. Another particular case is  the following one: take a non zero ring $R.$ Since $\Rep(Q^{\otimes n},\Mod(R))\simeq \Mod(R[x_{1},\cdots,x_{n}]),$ we get that $\gldim(R[x_{1},\cdots,x_{n}])=\gldim(R)+n$ which was obtained  by Eilenberg, Rosenberg and Zelinsky in \cite{eilenberg1957dimension}.
\end{example}

\subsection{Projectives and generators in the category of quiver representations}

Let $\mathcal{C}$ be an abelian category. We recall that a class
$\mathcal{U}\subseteq\mathcal{C}$ is \textbf{generating} (resp. \textbf{cogenerating})
in $\mathcal{C}$ if, for every $C\in\mathcal{C}$, there is $U\twoheadrightarrow C$
(resp. $C\hookrightarrow U$) in $\C$ with $U\in\mathcal{U}$. In
this section we will show that if $\mathcal{C}$ has a generating
(resp. cogenerating) class, then $\operatorname{Rep}(Q,\mathcal{C})$
also has one. 

The following result is a generalization of \cite[Proposition 3.10]{holm2019cotorsion}
and it is inspired on its proof. We recall that the \textbf{support}
of the representation $F\in\Rep(Q,\C)$ is the set of vertices $\Supp(F):=\{i\in Q_{0}\;|\;F(i)\neq0\}.$

\begin{prop}\label{prop:f y g vs clases ortogonales} For a quiver $Q,$ $\kappa\geq\max\{\rccn(Q),\ltccn(Q)\}$
an infinite cardinal number and an abelian category $\C$ which is
AB3($\kappa$), the following statements are true. 
\begin{itemize}
\item[(a)] Let $F\in\Rep(Q,\C)$ and $\mathcal{U}$ be a generating class of
$\C.$ Then, there is a family $\{U_{i}\}_{i\in\mathrm{Supp}(F)}$
of objects in $\mathcal{U}$ and  $\coprod_{i\in\mathrm{Supp}(F)}f_{i}(U_{i})\twoheadrightarrow F$
in $\Rep(Q,\C).$ In particular, $\coprod_{\leq|Q_{0}|}f_{*}(\mathcal{U})$
is a generating class for $\Rep(Q,\C).$
\item[(b)] If $\C$ has enough projectives, then $\Rep(Q,\C)$ has enough projectives
and $\Add_{\leq|Q_{0}|}(f_{*}(\Proj(\C)))=\Proj(\Rep(Q,\C)).$ 
\item[(c)] In general, $\Add_{\leq|Q_{0}|}(f_{*}(\Proj(\C)))\subseteq\Proj(\Rep(Q,\C))\subseteq\Add_{\leq|Q_{0}|}(f_{*}(\C))$. 
\item[(d)] If $\C$ is Ab3{*}($\kappa'$) where $\kappa'\geq\lccn(Q)$ is an infinite cardinal and $g_{i}$ is exact for all $i\in I$, then $\Proj(\Rep(Q,\C))=\Add_{\leq|Q_{0}|}(f_{*}(\Proj(\C)))$.
\end{itemize}
\end{prop}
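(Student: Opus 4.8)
The plan is to prove part (a) directly from the adjunction $(f_i,e_i)$ of Proposition \ref{rem:adjuntos de ei}(a) and then bootstrap parts (b), (c), (d) from it. For (a) I would fix $F\in\Rep(Q,\C)$ and, for each $i\in\Supp(F)$, choose an epimorphism $u_i\colon U_i\twoheadrightarrow F_i$ in $\C$ with $U_i\in\mathcal{U}$ (possible since $\mathcal{U}$ is generating). Using the adjunction isomorphism $h_{U_i,F}\colon\Hom_{\Rep(Q,\C)}(f_i(U_i),F)\xrightarrow{\sim}\Hom_\C(U_i,e_i(F))$, $t\mapsto t_i\circ\mu_{\epsilon_i}^{U_i}$ (recalled before Proposition \ref{prop: Ext vs f vs e} and in the proof of Lemma \ref{LMf-Psi}(a)), set $\tilde u_i:=h_{U_i,F}^{-1}(u_i)$. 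By Lemma \ref{LemAux}(a) the coproduct $\coprod_{i\in\Supp(F)}f_i(U_i)$ exists in $\Rep(Q,\C)$, and the family $\{\tilde u_i\}$ induces a morphism $p\colon\coprod_{i\in\Supp(F)}f_i(U_i)\to F$. Since (co)kernels are computed vertex-wise, to see $p$ is an epimorphism I would check each component $p_j$; but $p_j$ restricted to the summand at $i=j$ is $(\tilde u_j)_j$, and $(\tilde u_j)_j\circ\mu_{\epsilon_j}^{U_j}=u_j$ is epi, so $(\tilde u_j)_j$, hence $p_j$, is epi. Finally $|\Supp(F)|\le|Q_0|$ and each $f_i(U_i)\in f_*(\mathcal{U})$, which gives the ``generating class'' assertion.

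For (b) and (c) the crucial local fact is that $f_i(P)$ is projective in $\Rep(Q,\C)$ whenever $P\in\Proj(\C)$, since $\Hom_{\Rep(Q,\C)}(f_i(P),-)\cong\Hom_\C(P,e_i(-))$ is the composite of the exact functor $e_i$ with the exact functor $\Hom_\C(P,-)$; and a coproduct of projectives (when it exists) is projective, because $\Hom(\coprod_k X_k,-)\cong\prod_k\Hom(X_k,-)$ and products of short exact sequences of abelian groups are short exact. Given this, I would deduce (b) by applying (a) with $\mathcal{U}=\Proj(\C)$: the resulting epimorphism onto $F$ has projective domain, so $\Rep(Q,\C)$ has enough projectives; the inclusion $\Add_{\le|Q_0|}(f_*(\Proj(\C)))\subseteq\Proj(\Rep(Q,\C))$ is the local fact together with closure of projectives under coproducts and summands, and the reverse inclusion follows because a projective $P$ splits the epimorphism $\coprod_{i\in\Supp(P)}f_i(P_i)\twoheadrightarrow P$ (with $P_i\in\Proj(\C)$), exhibiting $P$ as a summand of an object of $\coprod_{\le|Q_0|}f_*(\Proj(\C))$. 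For (c) the left inclusion is again the local fact, and for the right one I would apply (a) with $\mathcal{U}=\C$ and $U_i=e_i(P)$ (identity maps), so that a projective $P$ is a direct summand of $\coprod_{i\in\Supp(P)}f_i(e_i(P))\in\coprod_{\le|Q_0|}f_*(\C)$.

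For (d): since $\C$ is $AB3^*(\kappa')$ with $\kappa'\ge\lccn(Q)$, the functor $g_i$ exists and is right adjoint to $e_i$ by Proposition \ref{rem:adjuntos de ei}(b); assuming each $g_i$ is exact, $e_i$ preserves projectives, because for $P\in\Proj(\Rep(Q,\C))$ the functor $\Hom_\C(e_i(P),-)\cong\Hom_{\Rep(Q,\C)}(P,g_i(-))$ is the composite of the exact functors $g_i$ and $\Hom_{\Rep(Q,\C)}(P,-)$, hence exact, so $e_i(P)\in\Proj(\C)$. Then I would re-run the argument of (c): a projective $P$ is a summand of $\coprod_{i\in\Supp(P)}f_i(e_i(P))$, and now every $e_i(P)$ lies in $\Proj(\C)$, so $P\in\Add_{\le|Q_0|}(f_*(\Proj(\C)))$; the reverse inclusion is part (c).

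The one step needing genuine care is the epimorphism claim in (a): one must use the explicit formula for $h_{C,F}$ to identify the $i$-th component of $\tilde u_i$ and to turn a statement about a morphism of representations into a vertex-wise condition, and one should treat the degenerate case $F=0$ (empty coproduct) separately in the ``generating class'' part. Everything else is a routine assembly of the adjunction and exactness facts established earlier.
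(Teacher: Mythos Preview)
Your proof is correct and essentially parallel to the paper's, with one genuine difference worth noting in part~(a). The paper obtains the epimorphism $\coprod_{i\in\Supp(F)}f_i(U_i)\twoheadrightarrow F$ by composing $\coprod_i f_i(u_i)$ with the canonical epimorphism $\coprod_{i\in Q_0}f_ie_i(F)\twoheadrightarrow F$ coming from Corollary~\ref{cor:presentacion canonica}; that is, it invokes the full canonical-presentation machinery (Theorem~\ref{thm:colim fe}) to know the counit map is surjective. Your argument bypasses this entirely: you build $p$ directly from the adjunction and verify surjectivity vertex-wise by observing that at each $j\in\Supp(F)$ the single summand $i=j$ already hits $F_j$ via $(\tilde u_j)_j\circ\mu_{\epsilon_j}^{U_j}=u_j$ (and $p_j$ is trivially epi when $F_j=0$). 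This is strictly more elementary and does not depend on Section~3 at all. What the paper's route buys is uniformity: once the canonical presentation is established it is reused throughout (for~(c), for global dimension, etc.), whereas your argument is self-contained but ad~hoc. For (b)--(d) the two proofs are essentially identical; your use of the adjunction $(e_i,g_i)$ with $g_i$ exact to see $e_i$ preserves projectives is exactly the content of the paper's appeal to Proposition~\ref{prop: Ext vs f vs e}(b).
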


\begin{proof}
(a) We may assume that $\Supp(F)\neq\emptyset.$ Since $\mathcal{U}$
is a generating class of $\C,$ for each $i\in\Supp(F),$ there is
an epimorphism $\pi_{i}:U_{i}\to e_{i}(F)$ in $\C.$ We extend the
family $\{U_{i}\}_{i\in\mathrm{Supp}(F)}$ to a family $\{U_{i}\}_{i\in Q_{0}}$
by defining $U_{i}:=0$ if $e_{i}(F)=F_i=0.$ Thus, by Lemma \ref{LemAux},
we have that $\coprod_{i\in Q_{0}}f_{i}(U_{i})$ exists in $\Rep(Q,\C)$
and $\coprod_{i\in Q_{0}}f_{i}(U_{i})=\coprod_{i\in\Supp(F)}f_{i}(U_{i}).$
Moreover, $\coprod_{i\in Q_{0}}\pi_{i}:\coprod_{i\in Q_{0}}f_{i}(U_{i})\to\coprod_{i\in Q_{0}}f_{i}e_{i}(F)$ is an epimorphism.
Thus, by doing the composition with the epimorphism appearing in Corollary
\ref{cor:presentacion canonica}, we obtain the sought epimorphism.
\\
(b) Let $\C$ have enough projectives. Then $\Proj(\C)$ is a generating
class in $\C$ and thus, from (a), we get that $\coprod_{\leq|Q_{0}|}f_{*}(\Proj(\C))$
is a generating class for $\Rep(Q,\C).$ On the other hand, by Proposition
\ref{rem:adjuntos de ei}(a) $f_{*}(\Proj(\C))\subseteq\Proj(\Rep(Q,\C)).$
Therefore, $\Add_{\leq|Q_{0}|}(f_{*}(\Proj(\C)))=\Proj(\Rep(Q,\C)).$
\\
(c) The first inclusion follows from Proposition \ref{rem:adjuntos de ei} (a).   If $F\in\Proj(\Rep(Q,\C))$, then the short exact sequence appearing
in Corollary \ref{cor:presentacion canonica} splits. Hence, $F$
is a direct summand of $\coprod_{i\in Q_{0}}f_{i}e_{i}(F)$, and thus,
$F\in\Add_{\leq|Q_{0}|}(f_{*}(\C))$. \\
(d) Let $F\in\Proj(\Rep(Q,\C)).$  Consider the arguments made in (c). If $g_{i}$ is exact for all
$i\in I$, we have that $e_{i}(F)\in\Proj(\C)$ by Proposition \ref{prop: Ext vs f vs e} (b).
Then, by Proposition \ref{rem:adjuntos de ei} (a), it follows that $f_{i}e_{i}(F)\in\Proj(\Rep(\C))$
for all $i\in Q_{0}$. Therefore, $F\in\Add_{\leq|Q_{0}|}(f_{*}(\Proj(\C)))$ and thus $\Proj(\Rep(Q,\C))\subseteq \Add_{\leq|Q_{0}|}(f_{*}(\Proj(\C))).$
\end{proof}
Note that Proposition \ref{prop:f y g vs clases ortogonales} can
be applied in some situations where \cite[Corollary 3.10]{holm2019cotorsion}
can not be used. For more details, see the following examples.

\begin{example}\label{exa: proyectivos/inyectivos}  Let $Q$ be a quiver and $\mathcal{C}$
be an abelian category. 
\begin{enumerate}
\item Let $Q$ be acyclic and finite. Then 
$\Proj(\Rep(Q,\C))=\Add_{\leq|Q_{0}|}(f_{*}(\Proj(\C))$ and 
$\Inj(\Rep(Q,\C))=\Prod_{\leq|Q_{0}|}(g_{*}(\Inj(\C)).$
Moreover, if $\C$ has enough projectives
(resp. injectives) then so is $\Rep(Q,\C).$ 
\\
{\it  Indeed, since $Q$ is acyclic and finite, we have $\ccn(Q)$ and
$\tccn(Q)$ are finite. Hence, we can take $\kappa:=\aleph_{0}$ and
thus $\C$ is AB3($\kappa$) and AB3{*}($\kappa$). Therefore, by
Remark \ref{AB3+Inj=AB4} (c), Proposition \ref{prop:f y g vs clases ortogonales} (d) and its dual,
we get the description given above of the projectives and injectives in $\Proj(\Rep(Q,\C)).$  Finally, by Proposition \ref{prop:f y g vs clases ortogonales} (a) and its dual, we get (a).}

\item Let $Q$ be a finite-cone-shape quiver. Then, we have 
$\qquad\Proj(\Rep(Q,\C))=\Add_{\leq|Q_{0}|}(f_{*}(\Proj(\C))$ and 
$\Inj(\Rep(Q,\C))=\Prod_{\leq|Q_{0}|}(g_{*}(\Inj(\C)).$
Moreover, if $\C$ has enough projectives
(resp. injectives) then so is $\Rep(Q,\C).$\\ 
{\it Indeed, since $Q$ is cone-shape-finite, we have that $\ccn(Q)\leq\tccn(Q)\leq\aleph_{0};$
and thus, we can do as in the example above by taking $\kappa:=\aleph_{0}.$}

\item Let $Q$ be the quiver with $Q_{0}=\{1\}$ and $Q_{1}=\{\alpha\}$,
and $\mathcal{C}:=\modu\,(k)$ be the abelian category of finite dimensional
$k$-vector spaces over a field $k$. In this case, we have that $\rccn(Q)=\lccn(Q)=\ltccn(Q)=\rtccn(Q)=\aleph_{0}^{+}=\aleph_{1}$
and since $\C$ is not AB3($\aleph_{1}$), we can not apply Proposition
\ref{prop:f y g vs clases ortogonales}. Observe that, although $\mathcal{C}$
has enough projectives and enough injectives, $\operatorname{Rep}(Q,\mathcal{C})$
does not. Moreover, we have that $\Inj(\operatorname{Rep}(Q,\mathcal{C}))=0=\Proj(\operatorname{Rep}(Q,\mathcal{C}))$.
The same can be said for the quiver $Q=\widetilde{A}_{n}$ with $n>0$,
where the vertex set is $\{0,\cdots,n\}$ and the arrow set is $\{\alpha_{n}:n\rightarrow0\}\cup\{\alpha_{i}:i\rightarrow i+1\}_{i=0}^{n-1}$.
Indeed, in this case it is known that $\Rep(Q,\C)$ is a finite length
uniserial category (see \cite[p.697]{smalo2001almost} and \cite[Theorem 1.7.1]{chen2009introduction}).
Therefore, for every indecomposable $0\neq M\in\Rep(Q,\C)$, we can
find a non-splitting monomorphism $M\rightarrow N$ and a non-splitting
epimorphism $N\rightarrow M$ (see \cite[p.13]{chen2009introduction}).
Hence, $\Inj(\operatorname{Rep}(Q,\mathcal{C}))=0=\Proj(\operatorname{Rep}(Q,\mathcal{C}))$.

\item Let $k$ be a field, $\mathcal{C}=\modu(k)$ and $Q:\;\cdots\rightarrow-1\rightarrow0\rightarrow1\rightarrow\cdots$.
Notice that $\rccn(Q)=\lccn(Q)=\aleph_{0}$ and $\ltccn(Q)=\rtccn(Q)=\aleph_{0}^{+}=\aleph_{1}.$
By Proposition \ref{rem:adjuntos de ei}, we get that $f_{*}(\C)\subseteq\Proj(\Rep(Q,\C))$
and $g_{*}(\C)\subseteq\Inj(\Rep(Q,\C)).$ Moreover, we can not apply
Proposition \ref{prop:f y g vs clases ortogonales}, nor its dual,
since $\C$ is not AB3($\aleph_{1}$) and it is not AB3{*}($\aleph_{1}$).
Observe that $\Rep(Q,\C)$ does not have enough projectives or injectives.
For example, if $T$ is the representation defined by $T(i)=k$ for
all $i\in Q_{0}$ and $T(\alpha)=0$ for all $\alpha\in Q_{1}$, then
there is no epimorphism $P\rightarrow T$ with $P\in\Proj(\Rep(Q,\C))$. 

\item For any quiver $Q$ and any ring $R,$ we have that $\Rep(Q,\Mod(R))$ has enough projectives and injectives, 
$\Proj(\Rep(Q,\Mod(R)))=\Add_{\leq|Q_{0}|}(f_{*}(\Proj(R))$ and $\Inj(\Rep(Q,\Mod(R)))=\Prod_{\leq|Q_{0}|}(g_{*}(\Inj(R)).$ Moreover, 
we also have that $\gldim(\Rep(Q,\Mod(R))=\gldim(R)+1$ if $R\neq 0$ and $Q_1\neq \emptyset.$   Indeed, it follows from Proposition \ref{prop:f y g vs clases ortogonales} and Theorem \ref{thm:dimensiones homologicas}.
\end{enumerate}
\end{example}

\section{Cotorsion pairs induced by functors}

By carefully reviewing the proof of Theorem \ref{thm:carcajes 1},
we can weaken the conditions on $\mathcal{C}$ if we ask for stronger
conditions on the quiver $Q$ (this has already been partially done in
\cite{holm2019cotorsion}, \cite{odabacsi2019completeness} and \cite{di2021completeness}).
This section is a detailed summary of the results that can be obtained
by doing this using mesh and cone-shape cardinal numbers introduced in section 2. We will see that every cotorsion pair in $\mathcal{C}$
induces a cotorsion pair in $\operatorname{Rep}(Q,\mathcal{C})$ through
the functors defined below.

Let us begin recalling the following construction from \cite{holm2019cotorsion}.

\begin{defn}\cite{enochs2004flat,holm2019cotorsion}\label{def:phi, psi} Let
$\C$ be an abelian category, $\X\subseteq\C$ and $\kappa$ an infinite cardinal
number. 
\begin{itemize}
\item[(a)] Let $\C$ be AB3{*}($\kappa$) and $\kappa\geq\rmcn(Q).$ For every
$F\in\operatorname{Rep}(Q,\mathcal{\mathcal{C}})$ and $i\in Q_{0}$,
\[
\psi_{i}^{F}:F_i\rightarrow\prod_{\alpha\in Q_{1}^{i\rightarrow*}}F_{t(\alpha)}
\]
 is the morphism induced by the morphisms $F_{\alpha}:F_i\to F_{t(\alpha)}.$
Moreover, we have the functor 
\[
k_{i}:\Rep(Q,\C)\to\C,\;F\mapsto\Ker(\psi_{i}^{F}).
\]
We denote by $\Psi(\X)$ the class of the representations $F\in\operatorname{Rep}(Q,\mathcal{\mathcal{C}})$
such that $\Ker\left(\psi_{i}^{F}\right)\in\mathcal{X}$ and $\psi_{i}^{F}$
is an epimorphism $\forall i\in Q_{0}$. If it is necessary to indicate
the quiver we are working with, we will denote the morphism $\psi_{i}^{F}$
as $\psi_{i}^{F,Q}$ and the class $\Psi(\X)$ as $\Psi_{Q}(\X)$.

\item[(b)] Let $\C$ be AB3($\kappa$) and $\kappa\geq\lmcn(Q).$ For every
$F\in\operatorname{Rep}(Q,\mathcal{\mathcal{C}})$ and $i\in Q_{0}$,
\[
\varphi_{i}^{F}:\coprod_{\alpha\in Q_{1}^{*\rightarrow i}}F_{s(\alpha)}\rightarrow F_i
\]
 is the morphism induced by the morphisms $F_{\alpha}:F_{s(\alpha)}\to F_i.$
Moreover, we have the functor 
\[
c_{i}:\Rep(Q,\C)\to\C,\;F\mapsto\Coker(\varphi_{i}^{F}).
\]
We denote by $\Phi(\X)$ the class of the representations $F\in\operatorname{Rep}(Q,\mathcal{\mathcal{C}})$
such that $\Coker(\varphi_{i}^{F})\in\X$ and $\varphi_{i}^{F}$ is
a monomorphism $\forall i\in Q_{0}.$
\item[(c)] For every $i\in Q_{0},$ the \textbf{stalk functor} $s_{i}:\C\to\Rep(Q,\C)$
is defined as follows: For $C\in\C,$ $s_{i}(C)(i):=C$ and $s_{i}(C)(j):=0$
$\forall j\in Q_{0}-\{i\};$ and $s_{i}(C)(\alpha):=0$ $\forall\alpha\in Q_{1}.$
For a class $\mathcal{S}\subseteq\mathcal{C},$ define $s_{*}(\mathcal{S}):=\{s_{i}(S)\,|\:i\in Q_{0}\text{ and }S\in\mathcal{S}\}$. 
\end{itemize}
\end{defn}

\begin{rem}\label{rem:prod,corprod,s} Let $\{X_{i}\}_{i\in Q_{0}}$ be a family
of objects in $\C.$ This family induces a representation $\overline{X}\in\operatorname{Rep}(Q,\mathcal{C}),$
where $\overline{X}_i:=X_{i}$ for all $i\in Q_{0}$ and $\overline{X}_{\alpha}=0$ for
all $\alpha\in Q_{1}$. Observe that $\prod_{i\in Q_{0}}s_{i}(X_{i})=\overline{X}=\coprod_{i\in Q_{0}}s_{i}(X_{i})$
in $\Rep(Q,\C)$ even if $\mathcal{C}$ is not $AB3$ or $AB3^{*}.$ 
\end{rem}

The functors $c_{i}$ and $k_{i}$ defined above are important because
they are, respectively, left and right adjoint functors of the stalk
functor $s_{i},$ see the details below.

\begin{rem}\label{rem:adjuntos de si} \cite[Thm. 4.5]{holm2019cotorsion},\cite[Prop. 2.16]{odabacsi2019completeness}\\
 For a quiver $Q,$ a vertex $i\in Q_{0},$ an abelian category $\C$
and an infinite cardinal number $\kappa,$ the following statements
hold true. 
\begin{itemize}
\item[(a)] $\Hom_{\C}(c_{i}(?),-)\cong\Hom_{\operatorname{Rep}(Q,\mathcal{C})}(?,s_{i}(-))$
if $\C$ is AB3($\kappa$) and $\kappa\geq\lmcn(Q).$ 
\item[(b)] $\Hom_{\operatorname{Rep}(Q,\C)}(s_{i}(?),-)\cong\Hom_{\mathcal{C}}(?,k_{i}(-))$
if $\C$ is AB3{*}($\kappa$) and $\kappa\geq\rmcn(Q).$ 
\end{itemize}
\end{rem}

In the next proposition we can see that, under some mild conditions
on $\mathcal{C}$, there is a similar isomorphism for the groups of
extensions.
\begin{prop}\label{cisi-iso} \cite[Prop. 5.4]{holm2019cotorsion}\cite[Prop. 3.10]{odabacsi2019completeness}\label{prop:s,c,k ext}
 For a quiver $Q,$ a vertex $i\in Q_{0},$ an abelian category $\C$,
and an infinite cardinal number $\kappa,$ the following statements
hold true.
\begin{enumerate}
\item Let $\mathcal{C}$ be AB3($\kappa$) with $\kappa\geq\lmcn(Q).$ Then,
for any $C\in\C$ and $X\in\Rep(Q,\C)$ such that $\varphi_{i}^{X}$
is a monomorphism, we have 
\[
\operatorname{Ext}_{\mathcal{C}}^{1}(c_{i}(X),C)\cong\operatorname{Ext}_{\operatorname{Rep}(Q,\mathcal{C})}^{1}(X,s_{i}(C)).
\]
\item Let $\mathcal{C}$ be AB3{*}($\kappa$) with $\kappa\geq\rmcn(Q).$
Then, for any $C\in\C$ and $X\in\Rep(Q,\C)$ such that $\psi_{i}^{X}$
is an epimorphism, we have 
\[
\operatorname{Ext}_{\mathcal{C}}^{1}(C,k_{i}(X))\cong\operatorname{Ext}_{\operatorname{Rep}(Q,\mathcal{C})}^{1}(s_{i}(C),X).
\]
\end{enumerate}
\end{prop}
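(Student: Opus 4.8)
The plan is to realize both isomorphisms as the comparison maps $\tau^{1}$ and $\sigma^{1}$ attached to an adjoint pair, for which injectivity comes for free, and then to verify surjectivity by hand on the subclass of representations named in the statement. First I would invoke Remark \ref{rem:adjuntos de si}: under the hypotheses of (1) the functor $c_{i}$ exists and $(c_{i},s_{i})$ is an adjoint pair between $\C$ and $\Rep(Q,\C)$, with $c_{i}:\Rep(Q,\C)\to\C$ the left adjoint; under the hypotheses of (2), $(s_{i},k_{i})$ is an adjoint pair with $s_{i}$ the left adjoint. In both cases the stalk functor $s_{i}$ is exact, so Lemma \ref{lem:Ext vs adjoint 1} applies: for (1) it produces the natural transformation $\tau^{1}_{X,C}:\Ext^{1}_{\C}(c_{i}(X),C)\to\Ext^{1}_{\Rep(Q,\C)}(X,s_{i}(C))$, $\overline{\eta}\mapsto\overline{s_{i}(\eta)}\cdot\varphi_{X}$ (with $\varphi$ the unit), and it is injective; dually for (2) one gets the injective $\sigma^{1}$. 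Since (2) is the formal dual of (1) (replace $c_{i},s_{i},\varphi_{i}^{X}$ mono by $k_{i},s_{i},\psi_{i}^{X}$ epi), it is enough to treat (1), and the only thing left is that $\tau^{1}_{X,C}$ is surjective whenever $\varphi_{i}^{X}$ is a monomorphism.

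To prove this, given such an $X$ and a class $\overline{\xi}\in\Ext^{1}_{\Rep(Q,\C)}(X,s_{i}(C))$ represented by $\xi:\;\suc[s_{i}(C)][F][X][u][p]$, I would apply the right-exact functor $c_{i}$ to $\xi$. Because every arrow-map of $s_{i}(C)$ is zero we have $\varphi_{i}^{s_{i}(C)}=0$, so $c_{i}s_{i}(C)=C$ and the counit $c_{i}s_{i}\Rightarrow 1_{\C}$ is the identity; thus $c_{i}(\xi)$ takes the form $C\to c_{i}(F)\to c_{i}(X)\to 0$. The crux is to show that, since $\varphi_{i}^{X}$ is a monomorphism, the map $C\to c_{i}(F)$ is also a monomorphism, so that $c_{i}(\xi):\;\suc[C][c_{i}(F)][c_{i}(X)]$ is a short exact sequence. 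Evaluating at the vertex $i$ and writing $c_{i}(F)=\Cok(\varphi_{i}^{F})$ with projection $q_{F}$, the map $C\to c_{i}(F)$ is $q_{F}\circ u_{i}$, so injectivity amounts to $\im(u_{i})\cap\im(\varphi_{i}^{F})=0$: from $u_{i}(c)=\varphi_{i}^{F}(k)$ one pushes through $p_{i}$ to get that $\varphi_{i}^{X}$ applied to the image of $k$ vanishes, hence each component $p_{s(\alpha)}(k_{\alpha})=0$, and then the relations $F_{\alpha}\circ u_{i}=u_{i}\circ s_{i}(C)_{\alpha}=0$ force $\varphi_{i}^{F}(k)=0$, whence $c=0$. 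Middle exactness $\ker c_{i}(p)=\im c_{i}(u)$ follows from the same evaluation, lifting elements of $\im\varphi_{i}^{X}$ back through $\varphi_{i}^{F}$ along the epimorphism $p_{i}$.

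It then remains to identify $\tau^{1}_{X,C}(\overline{c_{i}(\xi)})$ with $\overline{\xi}$. Applying the exact functor $s_{i}$ to $c_{i}(\xi)$ and using naturality of the unit $\varphi$ together with the triangle identity, which here gives $\varphi_{s_{i}(C)}=1_{s_{i}(C)}$ because the counit is the identity, one obtains a morphism of short exact sequences $\xi\to s_{i}(c_{i}(\xi))$ which is $1_{s_{i}(C)}$ on the left term and $\varphi_{X}$ on the right term; by \cite[Chap. VII, Lem. 1.3]{mitchell} this says precisely $\overline{\xi}=\overline{s_{i}(c_{i}(\xi))}\cdot\varphi_{X}=\tau^{1}_{X,C}(\overline{c_{i}(\xi)})$, so $\tau^{1}_{X,C}$ is onto. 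The main obstacle is the crux above: that $c_{i}$ turns the extension $\xi$ into a genuine short exact sequence exactly under the monomorphy hypothesis on $\varphi_{i}^{X}$; I expect the computation to go through uniformly even if $Q$ has loops at $i$, since it only uses that $u$ intertwines the (zero) arrow-maps of $s_{i}(C)$ with those of $F$.
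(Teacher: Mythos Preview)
Your approach is correct, but note that the paper does not provide its own proof of this proposition: it is stated with citations to \cite[Prop.~5.4]{holm2019cotorsion} and \cite[Prop.~3.10]{odabacsi2019completeness} and then used as a black box. So there is nothing in the paper to compare against directly; your argument is a genuine proof where the paper simply quotes the literature.

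A couple of small comments on the write-up. First, your verification of middle exactness of $c_i(\xi)$ is unnecessary: since $c_i$ is a left adjoint it is right exact, so from $s_i(C)\to F\to X\to 0$ exact you automatically get $c_i s_i(C)\to c_i(F)\to c_i(X)\to 0$ exact; only the injectivity of $c_i(u)=q_F\circ u_i$ needs to be checked. Second, your element-chase for that injectivity should distinguish the loop and non-loop arrows in $Q_1^{*\to i}$ more explicitly than the sentence ``the relations $F_{\alpha}\circ u_{i}=0$ force $\varphi_{i}^{F}(k)=0$'' suggests: if $\alpha$ is not a loop then $p_{s(\alpha)}$ is an isomorphism so $k_\alpha=0$; if $\alpha$ is a loop then $k_\alpha\in\im(u_i)$ and $F_\alpha\circ u_i=u_i\circ s_i(C)_\alpha=0$ kills it. Your closing remark about loops is right, and this is exactly how the two cases are resolved. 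Finally, the identification $\varphi_{s_i(C)}=1$ is justified, as you say, by the triangle identity together with $\psi_C=1_C$, which follows from the explicit form of the adjunction $\Hom_{\C}(c_i(-),?)\cong\Hom_{\Rep(Q,\C)}(-,s_i(?))$; in a polished version you might spell this out once rather than assert it.
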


The results shown below are intended to facilitate the proof that,
under certain conditions, $(\Phi(\mathcal{A}),\Phi(\mathcal{A})^{\bot_{1}})$
and $(^{\bot_{1}}\Psi(\mathcal{B}),\Psi(\mathcal{B}))$ are cotorsion
pairs in $\operatorname{Rep}(Q,\mathcal{C})$ for all cotorsion pair
$(\mathcal{A},\mathcal{B})$ in $\mathcal{C}.$ 

With the same proof, we have that \cite[Prop.  5.6]{holm2019cotorsion}
can be rewritten as follows.

\begin{prop}\label{Bcogen-Agen} For a quiver $Q,$ a vertex $i\in Q_{0},$ an
abelian category $\mathcal{C},$ an infinite cardinal $\kappa,$ $X\in\operatorname{Rep}(Q,\mathcal{C})$
and $C\in\mathcal{C},$ the following statements hold true. 
\begin{enumerate}
\item Let $\mathcal{C}$ be Ab3($\kappa$) and $\kappa\geq\lmcn(Q).$ If
$s_{i}(C)\in X^{\bot_{1}},$ then $\Hom_{\C}(\varphi_{i}^{X},C):\Hom_{\C}(X_i,C)\to\Hom_{\C}(\coprod_{a\in Q_{1}^{*\to i}}X_{s(a)},C)$
is surjective.\\
 Thus, if $\mathcal{B}$ is a cogenerating class in $\mathcal{C}$
such that $s_{i}(\mathcal{B})\subseteq X^{\bot_{1}}$, then $\varphi_{i}^{X}$
is a monomorphism. 
\item Let $\mathcal{C}$ be Ab3{*}($\kappa$) and $\kappa\geq\rmcn(Q).$
If $s_{i}(C)\in{}^{\bot_{1}}X,$ then $\Hom_{\C}(C,\psi_{i}^{X}):\Hom_{\C}(C,X_i)\to\Hom_{\C}(C,\prod_{a\in Q_{1}^{i\to*}}X_{t(a)},C)$
is surjective.\\
 Thus, if $\mathcal{A}$ is a generating class in $\mathcal{C}$ such
that $s_{i}(\mathcal{A})\subseteq{}^{\bot_{1}}X$, then $\psi_{i}^{X}$
is an epimorphism. 
\end{enumerate}
\end{prop}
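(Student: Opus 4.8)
The plan is to reduce to the first assertion of each item, since items (1) and (2) are interchanged under the duality $\mathcal{C}\leftrightarrow\mathcal{C}^{op}$, $Q\leftrightarrow Q^{op}$, which swaps the stalk functor with itself, $\varphi_{i}^{X}$ with $\psi_{i}^{X}$, $\lmcn(Q)$ with $\rmcn(Q)$, $AB3(\kappa)$ with $AB3^{*}(\kappa)$, and generating classes with cogenerating classes. So I would establish (1) and then remark that (2) is the dual statement. For (1) the core claim is: if $s_{i}(C)\in X^{\bot_{1}}$, i.e. $\Ext_{\Rep(Q,\C)}^{1}(X,s_{i}(C))=0$, then precomposition with $\varphi_{i}^{X}$ is surjective. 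The idea, following \cite{holm2019cotorsion}, is to realise every morphism $g\colon\coprod_{a\in Q_{1}^{*\to i}}X_{s(a)}\to C$ as the obstruction of a concrete extension of $X$ by $s_{i}(C)$. First I would note that this coproduct exists, since $|Q_{1}^{*\to i}|<\lmcn(Q)\leq\kappa$ and $\C$ is $AB3(\kappa)$; write $g$ through its components $g_{a}:=g\circ\mu_{a}\colon X_{s(a)}\to C$.

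Given such data, I build $E\in\Rep(Q,\C)$ by $E_{j}:=X_{j}$ for $j\neq i$, $E_{i}:=C\oplus X_{i}$, with $E_{\gamma}\colon C\oplus X_{i}\to X_{t(\gamma)}$ equal to $(0\ \ X_{\gamma})$ for each arrow $\gamma$ out of $i$, with $E_{\beta}\colon X_{s(\beta)}\to C\oplus X_{i}$ equal to $\binom{g_{\beta}}{X_{\beta}}$ for each arrow $\beta$ into $i$, and $E_{\alpha}:=X_{\alpha}$ for the remaining arrows; there is nothing to verify for $E$ to be a representation, as $\C_{Q}$ is a free category. Then the obvious maps $s_{i}(C)\to E\to X$ (the inclusion $\binom{1}{0}$, resp. the projection $(0\ \ 1)$, at the vertex $i$, and identities elsewhere) commute with all arrow maps, so they form a short exact sequence in $\Rep(Q,\C)$, exactness being checked vertexwise where each sequence is split. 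Next I would compute when this extension splits: a section $X\to E$ of $E\to X$ must be the identity off $i$ and of the form $\binom{h}{1}$ at $i$ for some $h\colon X_{i}\to C$, and compatibility with the arrows into $i$ amounts exactly to $h\circ X_{\beta}=g_{\beta}$ for all $\beta\in Q_{1}^{*\to i}$, i.e. $h\circ\varphi_{i}^{X}=g$ (compatibility with the arrows out of $i$ being automatic). Hence the class of $E$ in $\Ext_{\Rep(Q,\C)}^{1}(X,s_{i}(C))$ vanishes if and only if $g$ lies in the image of $\Hom_{\C}(\varphi_{i}^{X},C)$; since every morphism out of the coproduct arises as such a $g$, the hypothesis $s_{i}(C)\in X^{\bot_{1}}$ forces $\Hom_{\C}(\varphi_{i}^{X},C)$ to be surjective.

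For the ``Thus'' clause of (1): assuming $\B$ cogenerating with $s_{i}(\B)\subseteq X^{\bot_{1}}$, choose a monomorphism $u\colon\coprod_{a\in Q_{1}^{*\to i}}X_{s(a)}\hookrightarrow B$ with $B\in\B$; since $s_{i}(B)\in X^{\bot_{1}}$, the first part provides $h\colon X_{i}\to B$ with $h\circ\varphi_{i}^{X}=u$, whence $\varphi_{i}^{X}$ is a monomorphism because $u$ is. Finally, (2) follows by dualising: there the relevant product $\prod_{a\in Q_{1}^{i\to*}}X_{t(a)}$ exists because $\C$ is $AB3^{*}(\kappa)$ with $\kappa\geq\rmcn(Q)$, the extension to build has the shape $0\to X\to E'\to s_{i}(C)\to 0$ with $E'_{i}=X_{i}\oplus C$, a retraction $E'\to X$ corresponds to a factorisation of $g$ through $\psi_{i}^{X}$, and for the last clause one applies the generating property of $\A$ to an epimorphism onto $\prod_{a\in Q_{1}^{i\to*}}X_{t(a)}$ and lifts it along $\psi_{i}^{X}$. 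The only genuine work is the bookkeeping in checking that $s_{i}(C)\to E\to X$ is a morphism of representations and in the splitting computation; there is no conceptual obstacle, and in particular one must avoid invoking Proposition \ref{prop:s,c,k ext}, which would be circular here since it presupposes that $\varphi_{i}^{X}$ is a monomorphism.
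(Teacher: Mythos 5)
Your proposal is correct and is essentially the paper's own proof: the paper gives no new argument here but states the proposition "with the same proof" as \cite[Prop. 5.6]{holm2019cotorsion}, which is exactly your construction — realise each $g\colon\coprod_{a\in Q_{1}^{*\to i}}X_{s(a)}\to C$ as the obstruction class of an extension of $X$ by $s_{i}(C)$ with $C\oplus X_{i}$ at the vertex $i$, identify splittings with factorisations $h\circ\varphi_{i}^{X}=g$, and then use a monomorphism into some $B\in\mathcal{B}$ for the "Thus" clause, with (2) obtained by duality. The one point to patch is that your recipe for $E$ does not typecheck for a loop $\beta$ at $i$ (such an arrow is simultaneously "into" and "out of" $i$, and the quiver is arbitrary here); in that case set $E_{\beta}:=\left(\begin{smallmatrix}0 & g_{\beta}\\ 0 & X_{\beta}\end{smallmatrix}\right)\colon C\oplus X_{i}\to C\oplus X_{i}$, after which both the exactness of $s_{i}(C)\hookrightarrow E\twoheadrightarrow X$ and your splitting computation (the loop again contributes only the condition $h\circ X_{\beta}=g_{\beta}$) go through verbatim.
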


With the same proof, \cite[Prop. 7.2]{holm2019cotorsion} can
be rewritten as follows.
\begin{prop}\label{FiRep} For a quiver $Q,$ an abelian category $\C,$ an infinite
cardinal number $\kappa$ and a class $\X\subseteq\C$ closed under
extensions, the following statements hold true: 
\begin{enumerate}
\item Let $\mathcal{C}$ be Ab3($\kappa$) with $\kappa\geq\lmcn(Q)$ and
$\coprod_{<\kappa}\X=\X.$ If $Q$ is left rooted, then $\Phi(\mathcal{X})\subseteq\operatorname{Rep}(Q,\mathcal{X})$. 
\item Let $\mathcal{C}$ be Ab3{*}($\kappa$) with $\kappa\geq\rmcn(Q)$
and $\prod_{<\kappa}\X=\X.$ If $Q$ is right rooted, then $\Psi(\mathcal{X})\subseteq\operatorname{Rep}(Q,\mathcal{X})$. 
\end{enumerate}
\end{prop}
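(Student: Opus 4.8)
The plan is to prove statement (a) — the dual (b) follows formally by passing to $Q^{op}$, since $\Phi_Q(\X)$ for $Q$ corresponds to $\Psi_{Q^{op}}(\X)$ and ``left rooted'' becomes ``right rooted''. So fix the hypotheses of (a): $\C$ is $AB3(\kappa)$ with $\kappa\geq\lmcn(Q)$, $\X\subseteq\C$ is closed under extensions with $\coprod_{<\kappa}\X=\X$, and $Q$ is left rooted. Let $F\in\Phi(\X)$; I must show $F_i\in\X$ for every $i\in Q_0$. The key device is the transfinite filtration $\{V_\gamma\}$ of $Q_0$ from Remark \ref{VW}: since $Q$ is left rooted, $Q_0=V_\gamma$ for some ordinal $\gamma$, so it suffices to prove by transfinite induction on $\beta$ that $F_i\in\X$ for all $i\in V_\beta$.

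First I would set up the induction. The case $\beta=0$ is vacuous since $V_0=\emptyset$, and limit ordinals are immediate because $V_\gamma=\bigcup_{\beta<\gamma}V_\beta$ and membership of $F_i$ in $\X$ only depends on the single vertex $i$. The heart of the argument is the successor step: assuming $F_i\in\X$ for all $i\in V_\beta$, I must show $F_i\in\X$ for all $i\in V_{\beta+1}$. By definition $V_{\beta+1}=(Q_0-t(Q_1(Q_0-V_\beta,-)))\cup V_\beta$, so a vertex $i\in V_{\beta+1}\setminus V_\beta$ has the property that there is no arrow $\alpha$ with $t(\alpha)=i$ and $s(\alpha)\notin V_\beta$; equivalently, every arrow $\alpha\in Q_1^{*\to i}$ satisfies $s(\alpha)\in V_\beta$, hence $F_{s(\alpha)}\in\X$ by the induction hypothesis. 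Now I invoke that $F\in\Phi(\X)$: the map $\varphi_i^F:\coprod_{\alpha\in Q_1^{*\to i}}F_{s(\alpha)}\to F_i$ is a monomorphism with cokernel $\Coker(\varphi_i^F)=c_i(F)\in\X$. This gives a short exact sequence
\[
\coprod_{\alpha\in Q_1^{*\to i}}F_{s(\alpha)}\hookrightarrow F_i\twoheadrightarrow c_i(F).
\]
Since $|Q_1^{*\to i}|<\kappa$ (this is exactly what $\kappa\geq\lmcn(Q)$ buys us, via the definition of $\lmcn$ and $\mathrm{size}$) and each $F_{s(\alpha)}\in\X$, the coproduct $\coprod_{\alpha\in Q_1^{*\to i}}F_{s(\alpha)}$ lies in $\coprod_{<\kappa}\X=\X$. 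As $c_i(F)\in\X$ and $\X$ is closed under extensions, we conclude $F_i\in\X$, completing the successor step and the induction.

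The main obstacle — really the only place one must be careful — is the bookkeeping around $V_{\beta+1}$: one must correctly translate ``$i\notin t(Q_1(Q_0-V_\beta,-))$'' into ``every arrow into $i$ starts in $V_\beta$'' and handle the two cases $i\in V_\beta$ (done by hypothesis) versus $i\in V_{\beta+1}\setminus V_\beta$ separately, so that the induction hypothesis genuinely applies to all sources $s(\alpha)$. A secondary point worth stating explicitly is why $|Q_1^{*\to i}|<\kappa$: by Definition \ref{def:mesh cardinal}, $\lmcn(Q)=\mathrm{size}(\{|Q_1^{*\to i}|\}_{i\in Q_0})$, and the definition of $\mathrm{size}$ guarantees $|Q_1^{*\to i}|<\mathrm{size}(\cdots)\leq\kappa$ in all cases (when the supremum is attained one passes to the successor, which is precisely why $\mathrm{size}$ is defined the way it is). Everything else — exactness of the coproduct is not even needed here, only $AB3(\kappa)$ so that the coproduct exists and $\coprod_{<\kappa}\X$ makes sense — is routine. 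The dual statement (b) is then obtained verbatim by replacing $Q$ with $Q^{op}$, $\varphi$ with $\psi$, cokernels with kernels, coproducts with products, and invoking Remark \ref{VW} for $W_\gamma$ in place of $V_\gamma$.
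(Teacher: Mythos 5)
Your proof is correct and follows essentially the same route as the paper, which simply invokes the argument of Holm--J{\o}rgensen's Proposition 7.2: transfinite induction along the filtration $\{V_\beta\}$ of Remark \ref{rem:transfinite sequence and rooted quivers}, using at the successor step the exact sequence $\coprod_{\alpha\in Q_1^{*\to i}}F_{s(\alpha)}\hookrightarrow F_i\twoheadrightarrow c_i(F)$ together with $\coprod_{<\kappa}\X=\X$ (valid since $|Q_1^{*\to i}|<\lmcn(Q)\leq\kappa$) and closure of $\X$ under extensions. Your handling of the successor set $V_{\beta+1}$ and of the dual statement (b) matches the intended argument, so there is nothing to add.
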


The following result is a generalization of \cite[Prop 7.3]{holm2019cotorsion}.

\begin{prop}\label{prop:f y clases ortogonales} For a quiver $Q,$ an abelian
category $\C,$ an infinite cardinal number $\kappa$ and a class
$\X\subseteq\C,$ the following statements hold true: 
\begin{enumerate}
\item Let $\mathcal{C}$ be Ab3($\kappa$) with $\kappa\geq\lmcn(Q).$ If
$\mathcal{C}$ admits a cogenerating class $\mathcal{B}$ such that
$\mathcal{B}\subseteq\mathcal{X}$, then $\Phi({}^{\bot_{1}}\mathcal{X})={}^{\bot_{1}}s_{*}(\mathcal{X})$. 
\item Let $\mathcal{C}$ be Ab3($\kappa$) with $\kappa\geq\rccn(Q).$ If
$f_{i}:\C\to\Rep(Q,C)$ is exact $\forall\,i\in Q_{0},$ then $f_{*}(\mathcal{X})^{\bot_{n}}=\operatorname{Rep}(Q,\mathcal{X}^{\bot_{n}})$
for all $n\geq1$. 
\item Let $\mathcal{C}$ be Ab3{*}($\kappa$) with $\kappa\geq\rmcn(Q).$
If $\mathcal{C}$ admits a generating class $\mathcal{A}$ such that
$\mathcal{A}\subseteq\mathcal{X}$, then $\Psi(\mathcal{X}^{\bot_{1}})=s_{*}(\mathcal{X})^{\bot_{1}}$. 
\item Let $\mathcal{C}$ be Ab3{*}($\kappa$) with $\kappa\geq\lccn(Q).$
If $g_{i}:\C\to\Rep(Q,C)$ is exact $\forall\,i\in Q_{0},$ then $^{\bot_{n}}g_{*}(\mathcal{X})=\operatorname{Rep}(Q,{}^{\bot_{n}}\mathcal{X})$
for all $n\geq1$. 
\end{enumerate}
\end{prop}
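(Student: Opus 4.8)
The plan is to prove the four statements by exploiting the adjunctions from Sections 2 and 3 together with the description of orthogonality classes by extension vanishing. Items (a) and (c) are dual, as are (b) and (d), so I would write the proofs of (a) and (b) in detail and indicate that (c), (d) follow by passing to $Q^{op}$ and $\C^{op}$ (note $\rmcn(Q)=\lmcn(Q^{op})$, $\lccn(Q)=\rccn(Q^{op})$, and $\Psi_Q$ corresponds to $\Phi_{Q^{op}}$, $g^Q_i$ to $f^{Q^{op}}_i$, etc.).

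For (b): since each $f_i:\C\to\Rep(Q,\C)$ is exact and $(f_i,e_i)$ is an adjoint pair with $e_i$ exact (Proposition \ref{rem:adjuntos de ei}(a)), Proposition \ref{lem:Ext vs adjoint 3}(3) (or more directly Proposition \ref{prop: Ext vs f vs e}(a)) gives natural isomorphisms $\Ext^n_{\Rep(Q,\C)}(f_i(X),G)\cong\Ext^n_\C(X,e_i(G))=\Ext^n_\C(X,G_i)$ for all $n\geq1$, all $X\in\X$, all $i\in Q_0$. Hence for $G\in\Rep(Q,\C)$: $G\in f_*(\X)^{\bot_n}$ iff $\Ext^n_{\Rep(Q,\C)}(f_i(X),G)=0$ for all $i\in Q_0$ and all $X\in\X$, iff $\Ext^n_\C(X,G_i)=0$ for all $i$ and all $X\in\X$, iff $G_i\in\X^{\bot_n}$ for all $i$, iff $G\in\Rep(Q,\X^{\bot_n})$. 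This is the whole argument; the only point requiring care is that the isomorphism of Proposition \ref{prop: Ext vs f vs e} is natural in both variables so that it detects the vanishing uniformly, which it does by the explicit formula $\tau^n_{C,G}(\overline\eta)=\overline{e_i(\eta)}\cdot\mu^C_{\epsilon_i}$ recorded there.

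For (a): I would show the two inclusions $\Phi({}^{\bot_1}\X)\subseteq{}^{\bot_1}s_*(\X)$ and ${}^{\bot_1}s_*(\X)\subseteq\Phi({}^{\bot_1}\X)$. First suppose $F\in\Phi({}^{\bot_1}\X)$; then $\varphi_i^F$ is a monomorphism and $c_i(F)=\Coker(\varphi_i^F)\in{}^{\bot_1}\X$ for all $i$. For any $X\in\X\subseteq\C$ and any $i$, Proposition \ref{prop:s,c,k ext}(1) applies (because $\varphi_i^F$ is a monomorphism, using $\C$ AB3$(\kappa)$, $\kappa\geq\lmcn(Q)$) and gives $\Ext^1_{\Rep(Q,\C)}(F,s_i(X))\cong\Ext^1_\C(c_i(F),X)=0$ since $c_i(F)\in{}^{\bot_1}\X$. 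Hence $F\in{}^{\bot_1}s_*(\X)$. Conversely, suppose $F\in{}^{\bot_1}s_*(\X)$, i.e. $\Ext^1_{\Rep(Q,\C)}(F,s_i(X))=0$ for all $i$ and all $X\in\X$. Since $\B$ is a cogenerating class of $\C$ with $\B\subseteq\X$, we have $s_i(\B)\subseteq s_*(\X)\subseteq F^{\bot_1}$, so Proposition \ref{Bcogen-Agen}(1) forces $\varphi_i^F$ to be a monomorphism, for every $i$. Now that $\varphi_i^F$ is a monomorphism we may apply Proposition \ref{prop:s,c,k ext}(1) again to obtain $\Ext^1_\C(c_i(F),X)\cong\Ext^1_{\Rep(Q,\C)}(F,s_i(X))=0$ for all $X\in\X$, so $c_i(F)=\Coker(\varphi_i^F)\in{}^{\bot_1}\X$ for all $i$; that is, $F\in\Phi({}^{\bot_1}\X)$.

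The main obstacle is the logical ordering in the converse direction of (a): Proposition \ref{prop:s,c,k ext}(1) is only available once $\varphi_i^F$ is known to be a monomorphism, so one cannot invoke it directly from $F\in{}^{\bot_1}s_*(\X)$; one must first extract monomorphicity of $\varphi_i^F$ from the cogenerating hypothesis on $\B$ via Proposition \ref{Bcogen-Agen}(1), and only then run the Ext-isomorphism. Everything else is a formal manipulation of adjunctions. Items (c) and (d) are then obtained verbatim by dualizing, replacing $(c_i,\varphi_i^F,\lmcn,\B\text{ cogenerating})$ by $(k_i,\psi_i^F,\rmcn,\A\text{ generating})$ and using the dual halves of Propositions \ref{prop:s,c,k ext}, \ref{Bcogen-Agen}, \ref{prop: Ext vs f vs e}.
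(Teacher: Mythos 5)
Your proposal is correct and follows essentially the same route as the paper: the paper proves (a) and (c) by combining Proposition \ref{prop:s,c,k ext} with Proposition \ref{Bcogen-Agen} (using the cogenerating/generating class to first get that $\varphi_i^F$, resp. $\psi_i^F$, is a monomorphism, resp. epimorphism), and (b), (d) directly from Proposition \ref{prop: Ext vs f vs e}. Your write-up simply makes explicit the details the paper leaves to the reader, including the correct logical ordering in the converse direction of (a).
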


\begin{proof}
The items (a) and (c) follow from Propositions \ref{cisi-iso} and
\ref{Bcogen-Agen}. Finally, (b) and (d) can be shown from Proposition
\ref{prop: Ext vs f vs e}. 
\end{proof}
The following result (and its dual) is a generalization of \cite[Thms.  7.4 and 7.9]{holm2019cotorsion}.

\begin{thm}\label{thm:cotrsion pairs rep} For a quiver $Q,$ an infinite cardinal
number $\kappa,$ an AB3($\kappa$) abelian category $\C$ and a
cotorsion pair $\p$ in $\C,$ the following statements hold true. 
\begin{enumerate}
\item Let $f_{i}:\C\to\Rep(Q,\C)$ be exact $\forall\,i\in Q_{0}$ and $\kappa\geq\rccn(Q).$
Then 
\begin{center}
$f_{*}(\A)^{\perp_{1}}=\Rep(Q,\B)\;$ and $\;f_{*}(\A)\subseteq{}^{\perp_{1}}s_{*}(\B).$ 
\par\end{center}
\item Let $\B$ be a cogenerating class in $\C$ and $\kappa\geq\lmcn(Q).$
Then 
\begin{center}
$\Phi(\A)={}^{\perp_{1}}s_{*}(\B).$ 
\par\end{center}
\item Let $Q$ be left rooted and $\kappa\geq\lmcn(Q).$ Then 
\begin{center}
$\Phi(\A)\subseteq\Rep(Q,\A)\;$ and $\;\Rep(Q,\B)\subseteq\Phi(\A)^{\perp_{1}}.$ 
\par\end{center}
\item If $f_{i}:\C\to\Rep(Q,\C)$ is exact $\forall\,i\in Q_{0},$ $\kappa\geq\max\{\rccn(Q),\lmcn(Q)\},$
$Q$ is left rooted and $\B$ is a cogenerating class in $\C,$ then
$\Rep(Q,\B)=\Phi(\A)^{\perp_{1}}.$ 
\end{enumerate}
\end{thm}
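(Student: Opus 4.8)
The plan is to prove the four assertions by assembling the machinery developed in the preceding results, essentially the parts of \cite[Thms. 7.4 and 7.9]{holm2019cotorsion} reorganized through the cardinal-number bookkeeping. Throughout, write $\p$ for the cotorsion pair, so $\A = {}^{\perp_1}\B$ and $\B = \A^{\perp_1}$.

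\textbf{Item (a).} Since $f_i$ is exact for all $i$ and $\kappa \geq \rccn(Q)$, Proposition \ref{prop:f y clases ortogonales}(b) with $\X = \A$ and $n=1$ gives $f_*(\A)^{\perp_1} = \Rep(Q,\A^{\perp_1}) = \Rep(Q,\B)$. For the inclusion $f_*(\A) \subseteq {}^{\perp_1}s_*(\B)$, fix $A \in \A$, $i \in Q_0$ and $B \in \B$; I would compute $\Ext^1_{\Rep(Q,\C)}(f_i(A), s_j(B))$ and show it vanishes for every $j$. When $j \neq i$ or $j = i$, one uses the adjunction $(f_i, e_i)$ from Proposition \ref{rem:adjuntos de ei}(a) together with Proposition \ref{prop: Ext vs f vs e}(a): since $f_i$ is exact, $\Ext^1_{\Rep(Q,\C)}(f_i(A), s_j(B)) \cong \Ext^1_\C(A, e_i(s_j(B)))$, which is $\Ext^1_\C(A,B)$ if $j=i$ and $\Ext^1_\C(A,0)=0$ if $j \neq i$; and $\Ext^1_\C(A,B)=0$ because $(\A,\B)$ is a cotorsion pair. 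Hence $f_i(A) \in {}^{\perp_1}s_*(\B)$.

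\textbf{Item (b).} Apply Proposition \ref{prop:f y clases ortogonales}(a) with $\X = \B$: the hypothesis there is that $\C$ admits a cogenerating class contained in $\X$, which holds here since $\B$ itself is cogenerating and $\B \subseteq \B$. The conclusion is $\Phi({}^{\perp_1}\B) = {}^{\perp_1}s_*(\B)$, i.e. $\Phi(\A) = {}^{\perp_1}s_*(\B)$, as desired.

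\textbf{Item (c).} For $\Phi(\A) \subseteq \Rep(Q,\A)$ I would invoke Proposition \ref{FiRep}(1) with $\X = \A$: one needs $\A$ closed under extensions (true for the left-hand class of a cotorsion pair) and $\coprod_{<\kappa}\A = \A$. For the coproduct-closure, note that $\A = {}^{\perp_1}\B$ is always closed under arbitrary coproducts that exist, so $\coprod_{<\kappa}\A \subseteq \A$; the reverse inclusion is trivial. Since $Q$ is left rooted and $\kappa \geq \lmcn(Q)$, Proposition \ref{FiRep}(1) gives $\Phi(\A) \subseteq \Rep(Q,\A)$. For $\Rep(Q,\B) \subseteq \Phi(\A)^{\perp_1}$: take $X \in \Phi(\A)$ and $G \in \Rep(Q,\B)$, and combine (b) with the fact that, by Proposition \ref{prop:f y clases ortogonales}(a) or directly, membership in $\Phi(\A)$ gives that $\varphi_i^X$ is a monomorphism for all $i$, so Proposition \ref{cisi-iso}(1) applies; then the vanishing of $\Ext^1_{\Rep(Q,\C)}(X, s_i(B))$ for all $B \in \B$ (which is $X \in {}^{\perp_1}s_*(\B)$, true by (b)) translates to $\Ext^1_\C(c_i(X), B) = 0$, hence $c_i(X) \in {}^{\perp_1}\B = \A$. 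Actually I want the reverse reading: one should instead use that $\Rep(Q,\B) \subseteq f_*(\A)^{\perp_1}$ is not quite it — rather I would argue directly that $G \in \Rep(Q,\B)$ means $s_i(C) \in {}^{\perp_1}G$ fails to help, so the cleanest route is: since $\A$ is generating (as $\B$ being cogenerating does not immediately give this, but for the left-rooted case $\Phi(\A) \subseteq \Rep(Q,\A)$ already places $X$ vertexwise in $\A$), and $G$ is vertexwise in $\B$, apply Proposition \ref{Bcogen-Agen} together with dimension shifting along the canonical presentation of $X$ to reduce $\Ext^1_{\Rep(Q,\C)}(X,G)$ to vertexwise $\Ext^1_\C(X_i, G_i) = 0$. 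The main obstacle here is precisely pinning down this last reduction without circularity — I expect to need the left-rooted hypothesis in an essential transfinite-induction way, mirroring the proof of \cite[Thm. 7.4]{holm2019cotorsion}.

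\textbf{Item (d).} This is the synthesis. By (a), $f_*(\A)^{\perp_1} = \Rep(Q,\B)$; by (b) and (c), $\Phi(\A) = {}^{\perp_1}s_*(\B)$ and $\Rep(Q,\B) \subseteq \Phi(\A)^{\perp_1}$. It remains to show $\Phi(\A)^{\perp_1} \subseteq \Rep(Q,\B)$. Take $Y \in \Phi(\A)^{\perp_1}$. Since $f_*(\A) \subseteq {}^{\perp_1}s_*(\B) = \Phi(\A)$ by (a) and (b), we get $Y \in f_*(\A)^{\perp_1} = \Rep(Q,\B)$ by (a). Thus $\Phi(\A)^{\perp_1} = \Rep(Q,\B)$, finishing the proof. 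I would present (d) as this short three-line deduction, so the genuine work is concentrated in (a)–(c), and the real sticking point is the second half of (c) as flagged above.
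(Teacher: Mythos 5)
Your items (a), (b) and (d) are correct and essentially coincide with the paper's argument: (a) is the paper's use of Proposition \ref{prop: Ext vs f vs e}(a) (your route through Proposition \ref{prop:f y clases ortogonales}(b) is the same fact), (b) is exactly Proposition \ref{prop:f y clases ortogonales}(a), your closure of $\A$ under $<\kappa$-coproducts is the same small argument the paper spells out (splittings over each summand assemble by the universal property, so $\Ext^{1}_{\C}(\coprod A_{i},B)\to\prod\Ext^{1}_{\C}(A_{i},B)$ is injective under AB3($\kappa$) alone), and your three-line deduction of (d) from (a)--(c) is the paper's. The first half of (c) via Proposition \ref{FiRep}(a) is also fine.

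The genuine gap is the second inclusion in (c), $\Rep(Q,\B)\subseteq\Phi(\A)^{\perp_{1}}$, which you explicitly leave open, and your proposed fallback would not work. Dimension shifting along the canonical presentation of $X\in\Phi(\A)$ is unavailable under the hypotheses of (c) (Corollary \ref{cor:presentacion canonica} needs $\kappa\geq\max\{\rccn(Q),\ltccn(Q)\}$, and (c) only assumes $\kappa\geq\lmcn(Q)$), and even when available it resolves the first variable, producing an uncontrolled $\Ext^{2}$ correction rather than a reduction to vertexwise $\Ext^{1}_{\C}(X_{i},G_{i})$; moreover your appeal to (b) inside (c) is not permitted, since (c) does not assume $\B$ cogenerating. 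The paper's idea is instead to filter the \emph{second} argument $G\in\Rep(Q,\B)$: since $Q$ is left rooted, $Q_{0}=V_{\tau}$ for the sets $V_{\gamma}$ of Remark \ref{rem:transfinite sequence and rooted quivers}; with $Q_{\gamma}$ the full subquiver on $V_{\gamma}$ one puts $G_{\gamma}:=\iota_{Q_{\gamma}}\pi_{Q_{\gamma}}(G)$ and obtains an inverse co-continuous system of epimorphisms $G_{\gamma+1}\twoheadrightarrow G_{\gamma}$ whose kernels are $\prod_{i\in V_{\gamma+1}-V_{\gamma}}s_{i}(G_{i})$, i.e. products of stalks in $s_{*}(\B)$ (Remark \ref{rem:prod,corprod,s}, using $Q_{\gamma}^{-}=\emptyset$ and Lemma \ref{lem:par adjunto i p}). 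For $X\in\Phi(\A)$ one has $\varphi_{i}^{X}$ a monomorphism and $c_{i}(X)\in\A$ by definition, so Proposition \ref{prop:s,c,k ext}(a) gives $\Ext^{1}_{\Rep(Q,\C)}(X,s_{i}(B))\cong\Ext^{1}_{\C}(c_{i}(X),B)=0$, i.e. $s_{*}(\B)\subseteq\Phi(\A)^{\perp_{1}}$ without any cogenerating hypothesis; since $\Phi(\A)^{\perp_{1}}$ is closed under the products occurring here, the (dual) Eklof Lemma \cite[Lem. 3.7]{odabacsi2019completeness} applied to the system $\{G_{\gamma}\}_{\gamma\leq\tau}$ yields $G\in\Phi(\A)^{\perp_{1}}$. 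This transfinite filtration of $G$ is the missing ingredient your proposal only conjectures.
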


\begin{proof}
The equality in (a) follows from Proposition \ref{prop: Ext vs f vs e}
(a), and the inclusion in (a) can be shown (using Proposition \ref{prop: Ext vs f vs e}
(a)) as in the beginning of the proof of \cite[Prop.  7.9]{holm2019cotorsion}.
The item (b) follows from Proposition \ref{prop:f y clases ortogonales}
(a) since $\A={}^{\perp_{1}}\B.$ \
 We assert that $\coprod_{<\kappa}\A=\A.$ Indeed, consider a family
$\{A_{i}\}_{i\in I}$ in $\A$ with $|I|<\kappa.$ Since $\C$ is
Ab3($\kappa$), we have a monomorphism $\Ext_{\C}^{1}(\coprod_{i\in I}A_{i},B)\to\prod_{i\in I}\Ext_{\C}^{1}(A_{i},B)$
$\forall\,B\in\B,$ and hence $\coprod_{i\in I}A_{i}\in{}^{\perp_{1}}\B=\A.$
\

Let us show (c). Indeed, the inclusion $\Phi(\A)\subseteq\Rep(Q,\A)$
follows from Proposition \ref{FiRep} (a) since $\coprod_{<\kappa}\A=\A$
and it is closed under extensions. In order to prove the inclusion
$\Rep(Q,\B)\subseteq\Phi(\A)^{\perp_{1}},$ we proceed as follows:
we use the transfinite sequence of subsets introduced in Remark \ref{rem:transfinite sequence and rooted quivers}.
Since $Q$ is left-rooted, there is an ordinal $\tau$ such that $Q_{0}=V_{\tau}$.
Recall that the sequence $\{V_{\gamma}\}_{\gamma\leq\tau}$ satisfies
that $V_{\gamma}\subseteq V_{\gamma'}$ for all $\gamma\leq\gamma'\leq\tau$
\cite[Lem. 2.7]{holm2019cotorsion}. It is also important to recall
that, for every $i,j\in Q_{0}$ and $\gamma\leq\tau$, we have that
if $i\notin V_{\gamma}$ and $j\in V_{\gamma+1}$, then there is no
arrow $i\rightarrow j$ \cite[Cor. 2.8]{holm2019cotorsion}.
In particular, if $Q_{\gamma}$ is the full subquiver generated by
$V_{\gamma}$, then we have that $Q_{\gamma}^{-}=\emptyset$ $\forall\gamma\leq\tau$.
Hence, by Lemma \ref{lem:par adjunto i p}, for every $\gamma\leq\gamma'\leq\tau$,
the functor 
\[
\iota_{\gamma}^{\gamma'}:=\pi_{Q_{\gamma'}}\circ\iota_{Q_{\gamma}}:\operatorname{Rep}(Q_{\gamma},\mathcal{C})\rightarrow\operatorname{Rep}(Q_{\gamma'},\mathcal{C})
\]
is right adjoint to the functor 
\[
\pi_{\gamma'}^{\gamma}:=\pi_{Q_{\gamma}}\circ\iota_{Q_{\gamma'}}:\operatorname{Rep}(Q_{\gamma'},\mathcal{C})\rightarrow\operatorname{Rep}(Q_{\gamma},\mathcal{C}).
\]
Let $Y\in\operatorname{Rep}(Q,\mathcal{C})$. For $\gamma\leq\tau$,
define $Y_{\gamma}:=\iota_{Q_{\gamma}}\pi_{Q_{\gamma}}(Y)$. Observe
that (this can be proved straightforward or by using that $\iota_{\gamma}^{\gamma'}$
is right adjoint to $\pi_{\gamma'}^{\gamma}$), for all $\gamma\leq\gamma'\leq\tau$,
there is an epimorphism $g_{\gamma'}^{\gamma}:Y_{\gamma'}\rightarrow Y_{\gamma}$
defined as 
\[
g_{\gamma'}^{\gamma}(i)=\begin{cases}
1_{Y(i)} & \text{ if }i\in V_{\gamma},\\
0 & \text{ if }i\notin V_{\gamma}.
\end{cases}
\]
We point out that these morphisms define an inverse co-continuous system.
That is, $g_{\gamma'}^{\gamma}g_{\gamma''}^{\gamma'}=g_{\gamma''}^{\gamma}$
for all $\gamma\leq\gamma'\leq\gamma''\leq\tau$, and $Y_{\gamma}=\underleftarrow{\lim}_{\gamma'\leq\gamma}Y_{\gamma'}$
for all limit ordinal $\gamma\leq\tau$. In particular, $Y=\underleftarrow{\lim}_{\gamma'\leq\tau}Y_{\gamma'}$.

Let us prove that, if $Y\in\operatorname{Rep}(Q,\mathcal{B})$, then
$Y\in\Phi(\mathcal{A})^{\bot_{1}}$. By the Eklof Lemma (see \cite[Lem. 3.7]{odabacsi2019completeness}),
it is enough to show that $K_{\gamma}:=\Ker\left(g_{\gamma+1}^{\gamma}\right)\in\Phi(\mathcal{A})^{\bot_{1}}$
for all $\gamma<\tau$. For this, note that $K_{\gamma}=\prod_{i\in V_{\gamma+1}-V_{\gamma}}s_{i}(Y(i))$
(see Remark \ref{rem:prod,corprod,s}); and thus, since $\Phi(\mathcal{A})^{\bot_{1}}$
is closed under products and $s_{*}(\mathcal{B})\subseteq\Phi(\mathcal{A})^{\bot_{1}}$
by Proposition \ref{prop:s,c,k ext}, $K_{\gamma}\in\Phi(\mathcal{A})^{\bot_{1}}$
$\forall\gamma\leq\tau$. Therefore, $Y\in\Phi(\mathcal{A})^{\bot_{1}};$
proving (c). \

Finally, the item (d) follows from (a), (b) and (c). 
\end{proof}
The dual of Theorem \ref{thm:cotrsion pairs rep} can be stated now
as follows.

\begin{thm}\label{thm:cotrsion pairs rep dual} For a quiver $Q,$ an infinite
cardinal number $\kappa,$ an AB3{*}($\kappa$) abelian category $\C$
and a cotorsion pair $\p$ in $\C,$ the following statements hold
true. 
\begin{enumerate}
\item Let $g_{i}:\C\to\Rep(Q,\C)$ be exact $\forall\,i\in Q_{0}$ and $\kappa\geq\lccn(Q).$
Then 
\begin{center}
$^{\perp_{1}}g_{*}(\B)=\Rep(Q,\A)\;$ and $\;g_{*}(\B)\subseteq s_{*}(\A)^{\perp_{1}}.$ 
\par\end{center}
\item Let $\A$ be a generating class in $\C$ and $\kappa\geq\rmcn(Q).$
Then 
\begin{center}
$\Psi(\B)=s_{*}(\A)^{\perp_{1}}.$ 
\par\end{center}
\item Let $Q$ be right rooted and $\kappa\geq\rmcn(Q).$ Then 
\begin{center}
$\Psi(\B)\subseteq\Rep(Q,\B)\;$ and $\;\Rep(Q,\A)\subseteq{}^{\perp_{1}}\Psi(\B).$ 
\par\end{center}
\item If $g_{i}:\C\to\Rep(Q,\C)$ is exact $\forall\,i\in Q_{0},$ $\kappa\geq\max\{\lccn(Q),\rmcn(Q)\},$
$Q$ is right rooted and $\A$ is a generating class in $\C,$ then
$\Rep(Q,\A)={}^{\perp_{1}}\Psi(\B).$ 
\end{enumerate}
\end{thm}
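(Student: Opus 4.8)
The plan is to read off the whole statement from Theorem~\ref{thm:cotrsion pairs rep} by duality, applying it to $\C^{op}$, the opposite quiver $Q^{op}$, and the cotorsion pair $\p[\B][\A]$ in $\C^{op}$. (Recall $\p$ is a cotorsion pair in $\C$ exactly when $\p[\B][\A]$ is one in $\C^{op}$, since $\Ext_{\C^{op}}^{1}(X,Y)=\Ext_{\C}^{1}(Y,X)$, and that $\C$ is AB3{*}($\kappa$) exactly when $\C^{op}$ is AB3($\kappa$).) The first thing I would do is set up the translation dictionary. Under the canonical equivalence $\Rep(Q,\C)^{op}\simeq\Rep(Q^{op},\C^{op})$, the evaluation functors $e_i$ and the stalk functors $s_i$ are self-dual, and the functor $g_{i}^{Q}\colon\C\to\Rep(Q,\C)$ corresponds to $f_{i}^{Q^{op}}\colon\C^{op}\to\Rep(Q^{op},\C^{op})$, because $C^{Q(-,i)}$ is the product over the paths of $Q$ ending at $i$, which in $\C^{op}$ is the coproduct over the paths of $Q^{op}$ starting at $i$; in particular $g_{i}^{Q}$ is exact if and only if $f_{i}^{Q^{op}}$ is. On the level of classes, $\Psi_{Q}(\X)$ corresponds to $\Phi_{Q^{op}}(\X)$: the morphism $\psi_{i}^{F,Q}$ becomes $\varphi_{i}^{F,Q^{op}}$ (here $(Q^{op})_{1}^{*\to i}=Q_{1}^{i\to*}$ and the source map of $Q^{op}$ is the target map of $Q$), ``epimorphism with kernel in $\X$'' becomes ``monomorphism with cokernel in $\X$'', $\Rep(Q,\X)$ is self-dual, and duality exchanges ${}^{\perp_{1}}(-)$ with $(-)^{\perp_{1}}$. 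Finally, directly from the definitions of Section~2 one has $\rmcn(Q)=\lmcn(Q^{op})$ and $\lccn(Q)=\rccn(Q^{op})$, a right-rooted quiver is the opposite of a left-rooted one, and a generating class of $\C$ is a cogenerating class of $\C^{op}$.

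With this dictionary, each of the four items of the theorem becomes the corresponding item of Theorem~\ref{thm:cotrsion pairs rep}, read in $(\C^{op},Q^{op},\p[\B][\A])$. For instance, item~(a) of Theorem~\ref{thm:cotrsion pairs rep} gives $f_{*}(\B)^{\perp_{1}}=\Rep(Q^{op},\A)$ and $f_{*}(\B)\subseteq{}^{\perp_{1}}s_{*}(\A)$ when $f_{i}^{Q^{op}}$ is exact and $\kappa\geq\rccn(Q^{op})$; translating, this is ${}^{\perp_{1}}g_{*}(\B)=\Rep(Q,\A)$ and $g_{*}(\B)\subseteq s_{*}(\A)^{\perp_{1}}$ when $g_{i}^{Q}$ is exact and $\kappa\geq\lccn(Q)$, i.e.\ item~(a). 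Items~(b), (c), (d) translate the same way, using $\Psi_{Q}(\B)=\Phi_{Q^{op}}(\B)$, $\lmcn(Q^{op})=\rmcn(Q)$, and the rooted/generating interchanges.

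The part deserving genuine care --- really bookkeeping rather than an obstacle --- is checking the dictionary precisely, in particular the identifications $g_{i}^{Q}\leftrightarrow f_{i}^{Q^{op}}$ and $\Psi_{Q}\leftrightarrow\Phi_{Q^{op}}$, since the path sets, mesh sets and arrow directions must all be matched against Definition~\ref{def:phi, psi} and the definitions of $f_{i},g_{i}$. If one prefers to avoid the opposite category, the alternative is to repeat the proof of Theorem~\ref{thm:cotrsion pairs rep} with all arrows reversed: use Propositions~\ref{prop: Ext vs f vs e} and~\ref{prop:f y clases ortogonales} for items~(a) and~(b), and for item~(c) run the transfinite argument along the sequence $\{W_{\gamma}\}$ of Remark~\ref{rem:transfinite sequence and rooted quivers} instead of $\{V_{\gamma}\}$, invoking Lemma~\ref{adjoint-ip} in the case $S^{+}=\emptyset$, Remark~\ref{rem:prod,corprod,s}, Proposition~\ref{cisi-iso}~(2), the Eklof lemma, and closure of ${}^{\perp_{1}}\Psi(\B)$ under coproducts; item~(d) then follows by combining (a)--(c). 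I would present the duality argument as the actual proof.
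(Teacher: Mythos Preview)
Your proposal is correct and matches the paper's approach exactly: the paper simply states the theorem as ``the dual of Theorem~\ref{thm:cotrsion pairs rep}'' and gives no separate proof, relying on precisely the duality $(\C,Q,\p)\leftrightarrow(\C^{op},Q^{op},\p[\B][\A])$ that you spell out. Your dictionary is accurate, and your alternative direct argument (running the transfinite filtration along $\{W_{\gamma}\}$ with the Eklof lemma in its direct-system form) is also a faithful dualization of the paper's proof of Theorem~\ref{thm:cotrsion pairs rep}.
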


In what follows, we apply Theorems \ref{thm:cotrsion pairs rep} and
\ref{thm:cotrsion pairs rep dual} and see what happens if the quiver
$Q$ is: interval finite, locally finite or strongly locally finite.

\begin{cor}\label{Q-IF} For an interval finite quiver $Q,$ an abelian category
$\C,$ a cotorsion pair $\p$ in $\C$ and an infinite cardinal number
$\kappa,$ the following statements hold true. 
\begin{itemize}
\item[(a)] $(^{\bot_{1}}\operatorname{Rep}(Q,\mathcal{B}),\operatorname{Rep}(Q,\mathcal{B}))$
and $(\operatorname{Rep}(Q,\mathcal{A}),\operatorname{Rep}(Q,\mathcal{A})^{\bot_{1}})$
are cotorsion pairs in $\Rep(Q,\C).$ 
\item[(b)] Let $\C$ be AB3($\kappa$) and AB3{*}($\kappa$) with $\kappa\geq\mcn(Q).$
Then: 
\begin{itemize}
\item[(b1)] If $\A$ is generating in $\C,$ then $\left(^{\bot_{1}}\Psi(\mathcal{B}),\Psi(\mathcal{B})\right)$
is a cotorsion pair. If in addition $Q$ is right-rooted, then $\Rep(Q,\A)={}^{\bot_{1}}\Psi(\mathcal{B}).$ 
\item[(b2)] If $\B$ is cogenerating in $\C,$ then $\left(\Phi(\A),\Phi(\A)^{\bot_{1}}\right)$
is a cotorsion pair. If in addition $Q$ is left-rooted, then $\Rep(Q,\B)=\Phi(\A)^{\bot_{1}}.$ 
\end{itemize}
\end{itemize}
\end{cor}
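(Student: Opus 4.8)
The plan is to derive Corollary \ref{Q-IF} as a direct consequence of Theorems \ref{thm:cotrsion pairs rep} and \ref{thm:cotrsion pairs rep dual}, exploiting the fact that for an interval-finite quiver $Q$ one has $Q(i,j)$ finite for all $i,j\in Q_0$, which makes several of the cardinal conditions in those theorems trivially satisfiable. First I would observe that interval-finiteness gives $\rccn_i(Q)\le\aleph_0$ and $\lccn_i(Q)\le\aleph_0$ for every $i\in Q_0$, since $\{|Q(i,j)|\}_{j\in Q_0}$ and $\{|Q(j,i)|\}_{j\in Q_0}$ are sets of natural numbers. By Remark \ref{AB3+Inj=AB4} (c) this already makes $f_i:\C\to\Rep(Q,\C)$ exact for all $i\in Q_0$ whenever $\C$ is AB3 enough for $f_i$ to be defined, and dually $g_i$ is exact whenever defined; moreover the functors $f_i$, $g_i$ require only $\rccn_i(Q)\le\aleph_0$, resp. $\lccn_i(Q)\le\aleph_0$, hence they exist in \emph{any} abelian category (no global AB3$(\kappa)$ hypothesis needed for each individual $i$, since $\aleph_0\le\rccn_i(Q)$ trivially).

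For part (a): since $f_i$ is exact and defined for all $i$, I would combine Theorem \ref{thm:cotrsion pairs rep} (a) — which gives $f_*(\A)^{\perp_1}=\Rep(Q,\B)$ — with the observation that this says $\Rep(Q,\B)=f_*(\A)^{\perp_1}$ is the right-hand class of the cotorsion pair generated by $f_*(\A)$; hence $({}^{\perp_1}\Rep(Q,\B),\Rep(Q,\B))$ is a cotorsion pair, taking $^{\perp_1}(\Rep(Q,\B))={}^{\perp_1}(f_*(\A)^{\perp_1})$. Dually, from Theorem \ref{thm:cotrsion pairs rep dual} (a), $^{\perp_1}g_*(\B)=\Rep(Q,\A)$, so $(\Rep(Q,\A),\Rep(Q,\A)^{\perp_1})$ is a cotorsion pair. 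The only subtlety here is that Theorem \ref{thm:cotrsion pairs rep} formally asks $\C$ to be AB3$(\kappa)$ with $\kappa\ge\rccn(Q)$; but since we only use it vertex-wise through the exactness of each $f_i$ (which needs only $\rccn_i(Q)\le\aleph_0$), I would remark that for the statement of (a) no global cardinal hypothesis on $\C$ is required — the pair of equalities $f_*(\A)^{\perp_n}=\Rep(Q,\A^{\perp_n})$ follows directly from Proposition \ref{prop: Ext vs f vs e} applied at each vertex. This bookkeeping — checking that interval-finiteness lets us dispense with the global AB3$(\kappa)$ assumption in (a) — is the one place needing a little care.

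For part (b): now I add the hypotheses that $\C$ is AB3$(\kappa)$ and AB3$^*(\kappa)$ with $\kappa\ge\mcn(Q)=\max\{\lmcn(Q),\rmcn(Q)\}$, which is exactly what is needed for the functors $s_i$, $c_i$, $k_i$, $\varphi_i^F$, $\psi_i^F$, $\Phi$, $\Psi$ to be available and for Propositions \ref{cisi-iso}, \ref{Bcogen-Agen} to apply. For (b1): if $\A$ is generating, Theorem \ref{thm:cotrsion pairs rep dual} (b) gives $\Psi(\B)=s_*(\A)^{\perp_1}$, so $({}^{\perp_1}\Psi(\B),\Psi(\B))=({}^{\perp_1}(s_*(\A)^{\perp_1}),s_*(\A)^{\perp_1})$ is a cotorsion pair; and if $Q$ is additionally right-rooted, Theorem \ref{thm:cotrsion pairs rep dual} (d) (whose remaining hypotheses $g_i$ exact for all $i$ and $\kappa\ge\lccn(Q)$ hold because $\lccn_i(Q)\le\aleph_0$ for all $i$, so $\lccn(Q)\le\aleph_0\le\kappa$, and $g_i$ is then exact by Remark \ref{AB3+Inj=AB4} (c)) yields $\Rep(Q,\A)={}^{\perp_1}\Psi(\B)$. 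For (b2): dually, if $\B$ is cogenerating, Theorem \ref{thm:cotrsion pairs rep} (b) gives $\Phi(\A)={}^{\perp_1}s_*(\B)$, so $(\Phi(\A),\Phi(\A)^{\perp_1})$ is a cotorsion pair, and if $Q$ is left-rooted then Theorem \ref{thm:cotrsion pairs rep} (d) (again its hypotheses reduce to the interval-finite ones, since $\rccn(Q)\le\aleph_0\le\kappa$ and $f_i$ is exact) gives $\Rep(Q,\B)=\Phi(\A)^{\perp_1}$. The main obstacle, as above, is purely verificational: matching the generic cardinal hypotheses of Theorems \ref{thm:cotrsion pairs rep} and \ref{thm:cotrsion pairs rep dual} against the automatic bounds $\rccn_i(Q),\lccn_i(Q)\le\aleph_0$ supplied by interval-finiteness, and noting that $\lccn(Q)=\sup_i\lccn_i(Q)$ and $\rccn(Q)=\sup_i\rccn_i(Q)$ are therefore $\le\aleph_0$, while $\lmcn(Q),\rmcn(Q)$ may still be large and so the hypothesis $\kappa\ge\mcn(Q)$ genuinely needs to be carried in part (b).
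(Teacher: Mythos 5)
Your proposal is correct and is essentially the paper's own argument: the paper likewise notes that interval-finiteness forces $\ccn(Q)\le\aleph_{0}$, invokes Remark \ref{AB3+Inj=AB4} (c) for the exactness (and existence, since any abelian category is AB3($\aleph_0$) and AB3$^*$($\aleph_0$)) of the $f_i$ and $g_i$, and then reads off (a), (b1), (b2) from Theorem \ref{thm:cotrsion pairs rep} and its dual exactly as you do, including the observation that only the mesh cardinals survive as a genuine hypothesis in part (b). Only a typo to fix: the parenthetical ``since $\aleph_0\le\rccn_i(Q)$ trivially'' should read $\rccn_i(Q)\le\aleph_0$, as you correctly state just before.
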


\begin{proof}
Since $\ccn(Q)\leq\aleph_{0},$ the result follows from Remark \ref{AB3+Inj=AB4}
(c), Theorem \ref{thm:cotrsion pairs rep} and its dual. 
\end{proof}

\begin{cor}\label{Q-LF} For a locally finite quiver $Q,$ an abelian category
$\C,$ a cotorsion pair $\p$ in $\C$ and an infinite cardinal number
$\kappa,$ the following statements hold true. 
\begin{itemize}
\item[(a)] Let $\A$ be generating in $\C.$ Then 
\begin{itemize}
\item[(a1)] $(^{\bot_{1}}\Psi(\B),\Psi(\B))$ is a cotorsion pair in $\C;$ 
\item[(a2)] $\Rep(Q,\A)={}^{\bot_{1}}\Psi(\B)$ if $Q$ is right-rooted, $\C$
is AB3{*}($\kappa$), with $\kappa\geq\lccn(Q),$ and $g_{i}:\C\to\Rep(Q,\C)$
is exact $\forall\,i\in Q_{0}.$ 
\end{itemize}
\item[(b)] Let $\B$ be cogenerating in $\C.$ Then 
\begin{itemize}
\item[(a1)] $(\Phi(\A),\Phi(\A){}^{\bot_{1}})$ is a cotorsion pair in $\C;$ 
\item[(a2)] $\Rep(Q,\B)=\Phi(\A){}^{\bot_{1}}$ if $Q$ is left-rooted, $\C$
is AB3($\kappa$), with $\kappa\geq\rccn(Q),$ and $f_{i}:\C\to\Rep(Q,\C)$
is exact $\forall\,i\in Q_{0}.$ 
\end{itemize}
\end{itemize}
\end{cor}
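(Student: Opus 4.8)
The plan is to deduce (a) from Theorem \ref{thm:cotrsion pairs rep dual} and (b) from Theorem \ref{thm:cotrsion pairs rep}, exploiting that local finiteness of $Q$ forces the mesh cardinals to be small. Since each $Q_{1}^{*\rightarrow i}$ and each $Q_{1}^{i\rightarrow*}$ is finite, we have $\lmcn(Q)\leq\aleph_{0}$ and $\rmcn(Q)\leq\aleph_{0}$; in particular the finite (co)products defining $\varphi_{i}^{F}$ and $\psi_{i}^{F}$ exist over any abelian category, so the classes $\Phi(\A)$ and $\Psi(\B)$ are available, and $\C$ is automatically $AB3(\aleph_{0})$ and $AB3^{*}(\aleph_{0})$ (every abelian category being $AB4(\aleph_{0})$ and $AB4^{*}(\aleph_{0})$). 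I will also use the elementary Galois identities for the orthogonality pairing: for any class $\mathcal{S}$ in an abelian category one has ${}^{\bot_{1}}(({}^{\bot_{1}}\mathcal{S})^{\bot_{1}})={}^{\bot_{1}}\mathcal{S}$ and $({}^{\bot_{1}}(\mathcal{S}^{\bot_{1}}))^{\bot_{1}}=\mathcal{S}^{\bot_{1}}$, so that $({}^{\bot_{1}}\mathcal{S},({}^{\bot_{1}}\mathcal{S})^{\bot_{1}})$ and $({}^{\bot_{1}}(\mathcal{S}^{\bot_{1}}),\mathcal{S}^{\bot_{1}})$ are cotorsion pairs in the ambient category.

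For (a1) I would apply Theorem \ref{thm:cotrsion pairs rep dual} (b) with auxiliary cardinal $\aleph_{0}\geq\rmcn(Q)$: since $\A$ is generating and $\C$ is $AB3^{*}(\aleph_{0})$, this yields $\Psi(\B)=s_{*}(\A)^{\bot_{1}}$, and hence $({}^{\bot_{1}}\Psi(\B),\Psi(\B))$ is a cotorsion pair in $\Rep(Q,\C)$ by the Galois identity for $\mathcal{S}^{\bot_{1}}$. For (a2) I would first note that $\rmcn(Q)\leq\aleph_{0}\leq\kappa$, so the hypothesis $\kappa\geq\lccn(Q)$ already gives $\kappa\geq\max\{\lccn(Q),\rmcn(Q)\}$; since moreover $\C$ is $AB3^{*}(\kappa)$, $Q$ is right-rooted, $g_{i}$ is exact for all $i\in Q_{0}$ and $\A$ is generating, Theorem \ref{thm:cotrsion pairs rep dual} (d) applies verbatim and gives $\Rep(Q,\A)={}^{\bot_{1}}\Psi(\B)$.

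Item (b) is handled symmetrically via Theorem \ref{thm:cotrsion pairs rep}. For (b1) I would use part (b) of that theorem with auxiliary cardinal $\aleph_{0}\geq\lmcn(Q)$ (using that $\B$ is cogenerating and $\C$ is $AB3(\aleph_{0})$) to obtain $\Phi(\A)={}^{\bot_{1}}s_{*}(\B)$, and then $(\Phi(\A),\Phi(\A)^{\bot_{1}})$ is a cotorsion pair in $\Rep(Q,\C)$ by the Galois identity for ${}^{\bot_{1}}\mathcal{S}$. For (b2) I would observe that $\lmcn(Q)\leq\aleph_{0}\leq\kappa$, so $\kappa\geq\max\{\rccn(Q),\lmcn(Q)\}$, and apply Theorem \ref{thm:cotrsion pairs rep} (d) with the given $\kappa$ to conclude $\Rep(Q,\B)=\Phi(\A)^{\bot_{1}}$.

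There is no real obstacle here; the argument is essentially bookkeeping. The only point that requires care is keeping track of which cardinal to feed into Theorems \ref{thm:cotrsion pairs rep} and \ref{thm:cotrsion pairs rep dual} in each item --- the free choice $\aleph_{0}$ already works for (a1) and (b1), which carry no cardinality hypothesis, whereas (a2) and (b2) must be run with the given $\kappa$ --- together with the (routine) verification that the residual inequalities $\kappa\geq\max\{\cdots\}$ hold automatically because local finiteness of $Q$ bounds both mesh cardinals by $\aleph_{0}$.
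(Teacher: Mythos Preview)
Your proposal is correct and follows exactly the paper's approach: the paper's proof is the single line ``Since $\mcn(Q)\leq\aleph_{0},$ the result follows from Theorem \ref{thm:cotrsion pairs rep} and its dual,'' and you have simply unpacked which part of those theorems yields each item, together with the routine Galois-connection observation that $\Psi(\B)=s_{*}(\A)^{\bot_{1}}$ (resp.\ $\Phi(\A)={}^{\bot_{1}}s_{*}(\B)$) automatically makes $({}^{\bot_{1}}\Psi(\B),\Psi(\B))$ (resp.\ $(\Phi(\A),\Phi(\A)^{\bot_{1}})$) a cotorsion pair.
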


\begin{proof}
Since $\mcn(Q)\leq\aleph_{0},$ the result follows from Theorem \ref{thm:cotrsion pairs rep}
and its dual. 
\end{proof}

\begin{cor}\label{Q-SLF} For a strongly locally finite quiver $Q,$ an abelian
category $\C,$ and a cotorsion pair $\p$ in $\C,$ the following
statements hold true. 
\begin{itemize}
\item[(a)] $(^{\bot_{1}}\operatorname{Rep}(Q,\mathcal{B}),\operatorname{Rep}(Q,\mathcal{B}))$
and $(\operatorname{Rep}(Q,\mathcal{A}),\operatorname{Rep}(Q,\mathcal{A})^{\bot_{1}})$
are cotorsion pairs in $\Rep(Q,\C).$ 
\item[(b)] If $\A$ is generating in $\C,$ then $\left(^{\bot_{1}}\Psi(\mathcal{B}),\Psi(\mathcal{B})\right)$
is a cotorsion pair. If in addition $Q$ is right-rooted, then $\Rep(Q,\A)={}^{\bot_{1}}\Psi(\mathcal{B}).$ 
\item[(c)] If $\B$ is cogenerating in $\C,$ then $\left(\Phi(\A),\Phi(\A)^{\bot_{1}}\right)$
is a cotorsion pair. If in addition $Q$ is left-rooted, then $\Rep(Q,\B)=\Phi(\A)^{\bot_{1}}.$ 
\end{itemize}
\end{cor}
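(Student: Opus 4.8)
The plan is to derive all three parts from the two immediately preceding corollaries, using that a strongly locally finite quiver is at once locally finite and interval finite. First I would record one preliminary fact: interval finiteness forces the cone-shape cardinals to be small. Indeed, for each $i\in Q_0$ the family $\{|Q(j,i)|\}_{j\in Q_0}$ consists of finite cardinals, so by the definition of $\mathrm{size}$ we get $\lccn_i(Q)\le\aleph_0$, hence $\lccn(Q)\le\aleph_0$; symmetrically $\rccn(Q)\le\aleph_0$, so $\ccn(Q)\le\aleph_0$. In particular $\rccn_i(Q)\le\aleph_0$ and $\lccn_i(Q)\le\aleph_0$ for every $i\in Q_0$, and Remark \ref{AB3+Inj=AB4}(c) then gives that every functor $f_i,g_i:\C\to\Rep(Q,\C)$ is exact. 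Since any abelian category is $AB3(\aleph_0)$ and $AB3^{*}(\aleph_0)$, and since local finiteness gives $\mcn(Q)\le\aleph_0$, we will be able to apply the general results below with $\kappa:=\aleph_0$ and with no further Grothendieck-type hypotheses on $\C$.

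For part (a), I would simply invoke Corollary \ref{Q-IF}(a), which requires only that $Q$ be interval finite. For part (b), the claim that $(^{\bot_1}\Psi(\B),\Psi(\B))$ is a cotorsion pair is the first assertion of Corollary \ref{Q-LF}(a), valid because $Q$ is locally finite and $\A$ is generating; when moreover $Q$ is right rooted, I would use the second assertion of Corollary \ref{Q-LF}(a) with $\kappa:=\aleph_0$ — its hypotheses that $\C$ be $AB3^{*}(\aleph_0)$, that $\aleph_0\ge\lccn(Q)$, and that each $g_i$ be exact having been verified above — to conclude $\Rep(Q,\A)={}^{\bot_1}\Psi(\B)$. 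Part (c) is the exact dual of part (b): the cotorsion pair $(\Phi(\A),\Phi(\A)^{\bot_1})$ is the first assertion of Corollary \ref{Q-LF}(b) (now with $\B$ cogenerating), and, for left rooted $Q$, the equality $\Rep(Q,\B)=\Phi(\A)^{\bot_1}$ is its second assertion, applied with $\kappa:=\aleph_0$ using $\aleph_0\ge\rccn(Q)$ and the exactness of the $f_i$.

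I do not expect a real obstacle here: the argument is bookkeeping with the cone-shape and mesh cardinal numbers. The one point worth stating carefully is the opening observation — that interval finiteness is precisely what lets one take $\kappa=\aleph_0$ and thereby discharge automatically the $AB3(\kappa)$, $AB3^{*}(\kappa)$ and exactness hypotheses that are imposed by hand in Theorems \ref{thm:cotrsion pairs rep} and \ref{thm:cotrsion pairs rep dual} and in Corollary \ref{Q-LF}. (A cosmetic nuisance is that the two sub-items of Corollary \ref{Q-LF}(b) are both printed as (a1), (a2) in the source; I would cite them as ``the first (resp. second) assertion of Corollary \ref{Q-LF}(b)''.)
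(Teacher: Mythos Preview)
Your proposal is correct and follows essentially the same approach as the paper: the key observation in both is that strong local finiteness forces all the relevant mesh and cone-shape cardinals to be at most $\aleph_0$, so the general results apply with $\kappa=\aleph_0$ and no extra hypotheses on $\C$. The only difference is packaging: the paper derives all three parts from Corollary~\ref{Q-IF} alone (noting $\mcn(Q)\le\aleph_0$ so that part (b) there applies with $\kappa=\aleph_0$), whereas you split the work between Corollary~\ref{Q-IF} for (a) and Corollary~\ref{Q-LF} for (b), (c); either route is fine.
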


\begin{proof}
It follows from Corollary \ref{Q-IF} since $\mcn(Q)\leq\aleph_{0}$
and $Q$ is interval finite. 
\end{proof}

\begin{rem} In general, $^{\bot_{1}}\Psi(\mathcal{B})\neq\operatorname{Rep}(Q,\mathcal{A})$
and $\Phi(\mathcal{A})^{\bot_{1}}\neq\operatorname{Rep}(Q,\mathcal{B})$.
Indeed, in Example \ref{exa: proyectivos/inyectivos}(d), we have
shown that $\Rep(Q,\mathcal{C})^{\bot_{1}}=0={}^{\bot_{1}}\Rep(Q,\mathcal{C})$
for $Q=\widetilde{A}_{n}$ and $\C$ the category of finite dimension
$k$-vector spaces. However, $\Psi(\C^{\bot_{1}})=\Psi(\C)\neq0$
and $\Phi({}^{\bot_{1}}\C)=\Phi(\C)\neq0$. 
\end{rem}

\subsection{Complete cotorsion pairs}

In this subsection we will show enough conditions for the cotorsion
pairs found above to be complete. The main result in this direction
in the following one.
\begin{thm}\cite[Thm. A]{di2021completeness} \label{Thm-completeness} For
a quiver $Q,$ an infinite cardinal number $\kappa$ and a complete
cotorsion pair $\p$ in $\mathcal{C},$ the following statements hold
true. 
\begin{enumerate}
\item If $Q$ is left-rooted and $\mathcal{C}$ is Ab3($\kappa$) for $\kappa\geq\lmcn(Q),$
then $(\Phi(\mathcal{A}),\Phi(\mathcal{A})^{\bot_{1}})$ is a complete
cotorsion pair in $\Rep(Q,\C).$ 
\item If $Q$ is right-rooted and $\mathcal{C}$ is Ab3{*}($\kappa$) for
$\kappa\geq\rmcn(Q),$ then $(^{\bot_{1}}\Psi(\mathcal{B}),\Psi(\mathcal{B}))$
is a complete cotorsion pair in $\Rep(Q,\C).$ 
\end{enumerate}
\end{thm}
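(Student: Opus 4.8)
The statement is \cite[Thm. A]{di2021completeness}; the plan is to follow that proof, rephrased in the present notation. I prove item (1); item (2) is its dual, obtained by interchanging $\Phi\leftrightarrow\Psi$, $\A\leftrightarrow\B$, $\varphi_i\leftrightarrow\psi_i$, $c_i\leftrightarrow k_i$, AB3 $\leftrightarrow$ AB3{*}, $\lmcn\leftrightarrow\rmcn$, coproducts/direct limits $\leftrightarrow$ products/inverse limits and left-rooted $\leftrightarrow$ right-rooted (equivalently, by applying item (1) to $Q^{op}$ and the cotorsion pair $(\B,\A)$ in $\C^{op}$). First I would observe that a \emph{complete} cotorsion pair $\p$ in $\C$ has $\A$ generating and $\B$ cogenerating: a special $\A$-precover of $C$ gives an epimorphism onto $C$ from an object of $\A$, and a special $\B$-preenvelope gives a monomorphism from $C$ into an object of $\B$. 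Since $\kappa\ge\lmcn(Q)$ and $\C$ is AB3($\kappa$), Theorem \ref{thm:cotrsion pairs rep}(b) then gives $\Phi(\A)={}^{\bot_{1}}s_*(\B)$, so $(\Phi(\A),\Phi(\A)^{\bot_{1}})$ is a cotorsion pair (a class of the form ${}^{\bot_{1}}\mathcal S$ is always the left-hand class of one), and since $Q$ is left-rooted Theorem \ref{thm:cotrsion pairs rep}(c) gives $\Phi(\A)\subseteq\Rep(Q,\A)$ and $\Rep(Q,\B)\subseteq\Phi(\A)^{\bot_{1}}$. What remains is completeness, i.e. that $\Phi(\A)$ is special precovering and $\Phi(\A)^{\bot_{1}}$ is special preenveloping in $\Rep(Q,\C)$; both are produced by transfinite constructions along the left-rootedness filtration, so I describe the special precover, the special preenvelope being entirely parallel (building, up the filtration, a monomorphism of $F$ into an object of $\Rep(Q,\B)$ with cokernel in $\Phi(\A)$).

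For the special precover, fix $F\in\Rep(Q,\C)$ and take the sequence $\{V_\gamma\}_{\gamma\le\tau}$ of Remark \ref{VW} with $Q_0=V_\tau$; let $Q_\gamma$ be the full subquiver on $V_\gamma$. As recorded in the proof of Theorem \ref{thm:cotrsion pairs rep}(c), $Q_\gamma^-=\emptyset$, the representations $F_\gamma:=\iota_{Q_\gamma}\pi_{Q_\gamma}(F)$ form an inverse system of epimorphisms with $F=\varprojlim_\gamma F_\gamma$, and $\Ker(F_{\gamma+1}\twoheadrightarrow F_\gamma)=\prod_{i\in V_{\gamma+1}\setminus V_\gamma}s_i(F_i)$ (Remark \ref{rem:prod,corprod,s}); moreover the set $W_\gamma:=V_{\gamma+1}\setminus V_\gamma$ carries no arrows among its own vertices and receives arrows only from $V_\gamma$. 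Proceeding by recursion on $\gamma$, one constructs compatible special $\Phi_{Q_\gamma}(\A)$-precovers $\eta_\gamma:\suc[Y_\gamma][X_\gamma][\pi_{Q_\gamma}(F)]$ in $\Rep(Q_\gamma,\C)$, with $X_\gamma\in\Phi_{Q_\gamma}(\A)$, $Y_\gamma\in\Phi_{Q_\gamma}(\A)^{\bot_{1}}$, and the restrictions of $X_{\gamma'},Y_{\gamma'}$ to $Q_\gamma$ equal to $X_\gamma,Y_\gamma$ for $\gamma\le\gamma'$.

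At a successor step, given $\eta_\gamma$, for each new vertex $i\in W_\gamma$ one forms $P_i:=\coprod_{\alpha:k\to i}(X_\gamma)_k$ together with its natural map to $F_i$, and extends $P_i$ by a special $\A$-precover in $\C$ of a suitable quotient of $F_i$ — this is exactly where the completeness of $\p$ in $\C$ and the adjunction $c_i\dashv s_i$ (Remark \ref{rem:adjuntos de si}) are used — obtaining $(X_{\gamma+1})_i$ with a map to $F_i$, arranged so that the new $\varphi_i$ is a monomorphism with cokernel in $\A$. That $Y_{\gamma+1}\in\Phi_{Q_{\gamma+1}}(\A)^{\bot_{1}}$ follows from $s_*(\B)\subseteq\Phi(\A)^{\bot_{1}}$ (Proposition \ref{prop:s,c,k ext}) together with closure of $\Phi(\A)^{\bot_{1}}$ under products, and $X_{\gamma+1}\in\Phi_{Q_{\gamma+1}}(\A)$ follows from $\Phi(\A)={}^{\bot_{1}}s_*(\B)$ being closed under extensions (Eklof). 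At a limit ordinal $\gamma$ one takes $X_\gamma=\varprojlim_{\gamma'<\gamma}X_{\gamma'}$ and $Y_\gamma=\varprojlim_{\gamma'<\gamma}Y_{\gamma'}$; exactness of this inverse limit uses that the transition maps are epimorphisms, and membership in $\Phi(\A)$ and $\Phi(\A)^{\bot_{1}}$ is preserved by the (dual) Eklof lemma exactly as in the proof of Theorem \ref{thm:cotrsion pairs rep}(c) (see \cite[Lem. 3.7]{odabacsi2019completeness}). At $\gamma=\tau$ this yields an exact sequence $\suc[Y][X][F]$ in $\Rep(Q,\C)$ with $X\in\Phi(\A)$ and $Y\in\Phi(\A)^{\bot_{1}}$, i.e. a special $\Phi(\A)$-precover of $F$.

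The main obstacle is the bookkeeping at the successor and limit steps: one must choose the layer-by-layer extensions so that the precovers $\eta_\gamma$ are genuinely coherent under restriction along the whole inverse system, so that all transition maps remain epimorphic (hence inverse limits stay exact), and so that at every stage the kernel stays inside $\Phi(\A)^{\bot_{1}}$. This last requirement — which is where $\kappa\ge\lmcn(Q)$, the AB3($\kappa$) hypothesis, the identification $\Phi(\A)={}^{\bot_{1}}s_*(\B)$ and the product-closure of $\Phi(\A)^{\bot_{1}}$ all come into play — is the crux, and it is precisely why one needs $Q$ rooted rather than merely acyclic. Item (2) is then obtained by the dual argument described above.
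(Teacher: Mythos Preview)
The paper does not give its own proof of this statement: it is quoted verbatim as \cite[Thm.~A]{di2021completeness} and used as a black box, so there is nothing in the paper to compare your argument against. Your sketch is a reasonable outline of the transfinite construction in that reference, and the preliminary reductions (that completeness of $\p$ makes $\A$ generating and $\B$ cogenerating, whence Theorem~\ref{thm:cotrsion pairs rep}(b) gives $\Phi(\A)={}^{\bot_1}s_*(\B)$) are correct and match how the present paper would set things up.

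A couple of points where your sketch is imprecise and would need care if you actually wrote this out. First, at limit ordinals the system $\{X_\gamma\}$ is vertexwise eventually constant (each vertex enters some $V_\gamma$ and the value there never changes afterwards), so whether you phrase the limit as a direct or inverse limit is immaterial; but your appeal to ``exactness of this inverse limit uses that the transition maps are epimorphisms'' is not really the mechanism---it is the eventual constancy at each vertex that makes the limit harmless. Second, the way you invoke the (dual) Eklof lemma to preserve membership in $\Phi(\A)$ and in $\Phi(\A)^{\bot_1}$ is a bit muddled: $\Phi(\A)={}^{\bot_1}s_*(\B)$ is closed under transfinite \emph{extensions} (filtrations), whereas $\Phi(\A)^{\bot_1}$ is closed under transfinite \emph{coextensions}; the construction has to be organised so that the successive layers of $X$ form a genuine filtration with quotients in $\Phi(\A)$ and those of $Y$ a cofiltration with kernels in $\Phi(\A)^{\bot_1}$. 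This is exactly the ``bookkeeping'' you flag as the main obstacle, and in \cite{di2021completeness} it occupies most of the work.
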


\subsection{Hereditary cotorsion pairs}

In this subsection we will seek to find conditions for the cotorsion
pairs found above to be hereditary. We recall that a cotorsion pair
$\p$ in an abelian category $\C$ is hereditary if $\Ext_{\C}^{i}(A,B)=0$
$\forall\,i\geq1,$ $\forall\,A\in\A$ and $\forall\,B\in\B.$ Given
a class $\X\subseteq\C,$ we recall that $\X$ is closed under \textbf{epi-kernels}
if for any exact sequence $0\to K\to X_{1}\to X_{0}\to0,$ with $X_{0},X_{1}\in\X,$
we have that $K\in\X.$ Dually, we have the notion of being closed
under \textbf{mono-cokernels}. If $\C$ has enough projectives and
injectives, there is a result (known as the Garc\'ia-Rozas's Lemma,
see more details in \cite[Sect. 6.2]{holm2019cotorsion}) which
says that the following conditions are equivalent: (a) $\p$ is hereditary,
(b) $\A$ is closed under epi-kernels and (c) $\B$ is closed under
mono-cokernels. \

The following result can be seen as the version of Garc\'ia-Rozas's
Lemma for the case when $\C$ does not have enough projectives or
injectives.

\begin{lem}\label{lem:garcia-rozas} Let $\mathcal{C}$ be an abelian category
and $\p$ be a cotorsion pair in $\mathcal{C}$. Then, the following
statements hold true: 
\begin{enumerate}
\item If $^{\bot}\mathcal{B}$ is a generating class in $\mathcal{C}$ and
$\mathcal{A}$ is closed under epi-kernels, then $\p$ is hereditary. 
\item If $\mathcal{A}^{\bot}$ is a cogenerating class in $\mathcal{C}$
and $\mathcal{B}$ is closed under mono-cokernels, then $\p$ is hereditary. 
\item If $\p$ is hereditary, then $\mathcal{A}$ is closed under epi-kernels
and $\mathcal{B}$ is closed under mono-cokernels. 
\end{enumerate}
\end{lem}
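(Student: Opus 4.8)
The plan is to prove (1) and (3) directly, and to obtain (2) by duality. The key technical device is a \emph{dimension-shifting} argument along short exact sequences coming from the (co)generating hypothesis, combined with the long exact sequence for $\Ext^i_{\C}(-,-)$.

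For (1), suppose $^{\bot}\mathcal{B}$ is generating and $\A$ is closed under epi-kernels. I want to show $\Ext^i_{\C}(A,B)=0$ for all $i\geq 1$, $A\in\A$, $B\in\B$. The case $i=1$ is automatic since $\p$ is a cotorsion pair, so I proceed by induction on $i\geq 2$. Fix $A\in\A$. Since $^{\bot}\mathcal{B}\subseteq{}^{\bot_1}\B=\A$ (as $\p$ is a cotorsion pair, so $^{\bot}\B\subseteq{}^{\bot_1}\B=\A$) and $^{\bot}\mathcal{B}$ is generating, there is an epimorphism $P\twoheadrightarrow A$ with $P\in{}^{\bot}\mathcal{B}\subseteq\A$; let $K$ be its kernel, giving a short exact sequence $\suc[K][P][A][\,][\,]$. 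Since $P,A\in\A$ and $\A$ is closed under epi-kernels, $K\in\A$. Applying $\Hom_{\C}(-,B)$ for $B\in\B$ yields the exact sequence $\Ext^{i-1}_{\C}(K,B)\to\Ext^i_{\C}(P,B)\to\Ext^i_{\C}(A,B)\to\Ext^i_{\C}(K,B)$. Now $\Ext^i_{\C}(P,B)=0$ for all $i\geq 1$ because $P\in{}^{\bot}\mathcal{B}$ and $B\in\B$ (here $^{\bot}\B$ denotes the objects with $\Ext^i_{\C}(-,\B)=0$ for all $i\geq 1$, which is exactly the point of the hypothesis). Hence $\Ext^i_{\C}(A,B)\hookrightarrow\Ext^i_{\C}(K,B)$, and by the induction hypothesis applied to $K\in\A$ (for the exponent $i$, noting $i\geq 2$ and we are really shifting: $\Ext^i_{\C}(A,B)$ embeds in $\Ext^i_{\C}(K,B)$ — wait, this needs $\Ext^i$ on $K$, not $\Ext^{i-1}$). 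Let me instead shift properly: the relevant piece is $\Ext^{i-1}_{\C}(K,B)\to\Ext^i_{\C}(A,B)\to\Ext^i_{\C}(P,B)=0$, so $\Ext^i_{\C}(A,B)$ is a quotient of $\Ext^{i-1}_{\C}(K,B)$; since $K\in\A$ and $i-1\geq 1$, the induction hypothesis gives $\Ext^{i-1}_{\C}(K,B)=0$, hence $\Ext^i_{\C}(A,B)=0$. (The base case $i=1$ uses directly that $\p$ is a cotorsion pair.)

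For (2), observe it is exactly the dual statement: replace $\C$ by $\C^{op}$, under which a cotorsion pair $\p[\A][\B]$ in $\C$ becomes the cotorsion pair $\p[\B][\A]$ in $\C^{op}$, ``$\A^{\bot}$ cogenerating in $\C$'' becomes ``$^{\bot}(\A)$ generating in $\C^{op}$'', ``$\B$ closed under mono-cokernels in $\C$'' becomes ``$\B$ closed under epi-kernels in $\C^{op}$'', and ``hereditary'' is self-dual; so (2) follows from (1). For (3), assume $\p$ is hereditary and take an exact sequence $\suc[K][A_1][A_0][\,][\,]$ with $A_0,A_1\in\A$; I must show $K\in\A={}^{\bot_1}\B$. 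For any $B\in\B$, the long exact sequence gives $\Ext^1_{\C}(A_1,B)\to\Ext^1_{\C}(K,B)\to\Ext^2_{\C}(A_0,B)$, and both outer terms vanish (the first since $A_1\in\A$, $B\in\B$; the second since $\p$ is hereditary), so $\Ext^1_{\C}(K,B)=0$, i.e. $K\in{}^{\bot_1}\B=\A$. The statement that $\B$ is closed under mono-cokernels is again the dual and follows the same way (or by passing to $\C^{op}$).

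The main obstacle is purely bookkeeping: making sure the dimension-shift in (1) goes in the correct direction (quotient versus sub-object of the shifted $\Ext$), and correctly identifying $^{\bot}\mathcal{B}\subseteq\A$ so that the epi-kernel closure hypothesis applies to the kernel $K$. Once the induction is set up as ``$\Ext^i_{\C}(A,B)$ is a quotient of $\Ext^{i-1}_{\C}(K,B)$ with $K\in\A$'', everything reduces to the cotorsion-pair axiom at level $i=1$. No completeness, no enough-projectives/injectives, and no properties of $\Rep(Q,\C)$ are needed here; this is a statement about an arbitrary abelian category, so the argument is entirely internal to $\C$.
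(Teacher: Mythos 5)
Your proof is correct and is the standard argument: the paper states this lemma without proof (as its version of Garc\'ia-Rozas's Lemma), and your route --- the dimension-shifting induction in (1) using an epimorphism $P\twoheadrightarrow A$ with $P\in{}^{\bot}\mathcal{B}\subseteq\mathcal{A}$, closure of $\mathcal{A}$ under epi-kernels, and the long exact sequence of Yoneda Ext (valid in any abelian category), together with duality for (2) and the two-sided vanishing argument for (3) --- is exactly what is implicitly intended. The only wrinkle is the momentary wrong turn in (1), which you correctly repair by using the segment $\Ext^{i-1}_{\mathcal{C}}(K,B)\to\Ext^{i}_{\mathcal{C}}(A,B)\to\Ext^{i}_{\mathcal{C}}(P,B)=0$, so $\Ext^{i}_{\mathcal{C}}(A,B)$ is a quotient of $\Ext^{i-1}_{\mathcal{C}}(K,B)=0$ with $K\in\mathcal{A}$.
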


\begin{rem}\label{Rk-her} Let $\C$ be an abelian category. 
\begin{itemize}
\item[(a)] If $\C$ has enough projectives, then $^{\perp}\X$ is generating
in $\C,$ for any $\X\subseteq\C.$ 
\item[(b)] If a hereditary cotorsion pair $\p$ in $\C$ is complete, then $\A={}^{\perp}\B$
and $\B=\A^{\perp}.$ 
\end{itemize}
\end{rem}

Our strategy to study the hereditary property in our cotorsion pairs
will be to exhibit that the conditions of the Garcia-Rozas's Lemma
are satisfied. We will start with the simple case where there are
enough projectives or injectives.

The following result and its dual is a generalization of \cite[Prop.  7.6]{holm2019cotorsion}.

\begin{prop}\label{hered-1} For a quiver $Q,$ an infinite cardinal number $\kappa,$
an abelian AB3($\kappa$) category $\C$ which has enough projectives,
with $\kappa\geq\max\{\rccn(Q),\ltccn(Q)\},$ and a cotorsion pair
$\p$ in $\C$ such that $\A$ is closed under epi-kernels, the following
statements hold true. 
\begin{itemize}
\item[(a)] If $\B$ is cogenerating in $\C,$ then $\left(\Phi(\mathcal{A}),\Phi(\mathcal{A})^{\bot_{1}}\right)$
is a hereditary cotorsion pair in $\C.$ 
\item[(b)] Let $\C$ be AB3{*}($\kappa$). Then: 
\begin{itemize}
\item[(b1)] $(\operatorname{Rep}(Q,\mathcal{A}),\operatorname{Rep}(Q,\mathcal{A})^{\bot_{1}})$
is a hereditary cotorsion pair in $\C.$ 
\item[(b2)] $\operatorname{Rep}(Q,\mathcal{A})={}^{\perp_{1}}\Psi(\B)$ if $\kappa\geq\rmcn(Q)$
and $Q$ is right-rooted. 
\end{itemize}
\end{itemize}
\end{prop}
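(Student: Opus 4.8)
The plan is to handle (b1) and (b2) first, where the vertex-wise picture does all the work, and then (a), where the mesh functors enter; the genuine difficulty lies in (a).

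Two observations will be used throughout. First, since $Q_{1}^{*\rightarrow i}\subseteq Q(-,i)$ and $Q(i,j)\subseteq Q(-,j)$ for all $i,j\in Q_{0}$, one has $\lmcn(Q)\leq\ltccn(Q)$ and $\lccn(Q)\leq\ltccn(Q)$, so the standing hypothesis $\kappa\geq\max\{\rccn(Q),\ltccn(Q)\}$ already forces $\kappa\geq\lmcn(Q)$ and $\kappa\geq\lccn(Q)$. Second, by Proposition \ref{prop:f y g vs clases ortogonales}(b) the category $\Rep(Q,\C)$ has enough projectives, whence, by Remark \ref{Rk-her}(a), $^{\perp}\X$ is generating in $\Rep(Q,\C)$ for every $\X\subseteq\Rep(Q,\C)$. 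Finally, $\Proj(\C)\subseteq{}^{\perp_{1}}\B=\A$, so $\A$ is a generating class of $\C$; and since $\C$ has enough projectives and $\A$ is closed under epi-kernels, Lemma \ref{lem:garcia-rozas}(1) together with Remark \ref{Rk-her}(a) shows that $\p$ is a hereditary cotorsion pair in $\C$, i.e. $\Ext_{\C}^{n}(A,B)=0$ for all $n\geq1$, $A\in\A$, $B\in\B$.

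Proof of (b). Assume in addition that $\C$ is $AB3^{*}(\kappa)$. Then $\C$ is $AB4^{*}(\kappa)$ by the dual of Remark \ref{AB3+Inj=AB4}(b), and since $\kappa\geq\lccn(Q)$ the functor $g_{i}:\C\to\Rep(Q,\C)$ is exact for every $i\in Q_{0}$ by Remark \ref{AB3+Inj=AB4}(a). For (b1): Theorem \ref{thm:cotrsion pairs rep dual}(a) gives ${}^{\perp_{1}}g_{*}(\B)=\Rep(Q,\A)$, so $(\Rep(Q,\A),\Rep(Q,\A)^{\perp_{1}})$ is a cotorsion pair in $\Rep(Q,\C)$; it is hereditary by Lemma \ref{lem:garcia-rozas}(1), because $^{\perp}(\Rep(Q,\A)^{\perp_{1}})$ is generating (enough projectives) and $\Rep(Q,\A)$ is closed under kernels of epimorphisms, kernels being computed vertex-wise while $\A$ is closed under epi-kernels in $\C$. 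For (b2): with $g_{i}$ exact for all $i$, $\kappa\geq\max\{\lccn(Q),\rmcn(Q)\}$, $Q$ right-rooted and $\A$ generating in $\C$, Theorem \ref{thm:cotrsion pairs rep dual}(d) yields $\Rep(Q,\A)={}^{\perp_{1}}\Psi(\B)$.

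Proof of (a). By Theorem \ref{thm:cotrsion pairs rep}(b) (applicable since $\B$ is cogenerating and $\kappa\geq\lmcn(Q)$) one has $\Phi(\A)={}^{\perp_{1}}s_{*}(\B)$, so $(\Phi(\A),\Phi(\A)^{\perp_{1}})$ is a cotorsion pair in $\Rep(Q,\C)$; as $\Rep(Q,\C)$ has enough projectives, by Lemma \ref{lem:garcia-rozas}(1) and Remark \ref{Rk-her}(a) it suffices to show $\Phi(\A)$ is closed under kernels of epimorphisms. Let $0\to K\to F_{1}\to F_{0}\to 0$ be exact in $\Rep(Q,\C)$ with $F_{0},F_{1}\in\Phi(\A)$ and fix $i\in Q_{0}$. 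Applying the snake lemma to the natural transformation $\varphi_{i}$ from the right-exact functor $D_{i}:=\coprod_{\alpha\in Q_{1}^{*\rightarrow i}}e_{s(\alpha)}$ to the exact functor $e_{i}$, evaluated on this sequence (the top row is exact because $D_{i}$ is right exact, the bottom one because $e_{i}$ is exact), yields an exact sequence
\[ \Ker\varphi_{i}^{K}\to\Ker\varphi_{i}^{F_{1}}\to\Ker\varphi_{i}^{F_{0}}\to c_{i}(K)\to c_{i}(F_{1})\to c_{i}(F_{0})\to 0 . \]
Since $\varphi_{i}^{F_{1}}$ and $\varphi_{i}^{F_{0}}$ are monomorphisms this collapses to a short exact sequence $0\to c_{i}(K)\to c_{i}(F_{1})\to c_{i}(F_{0})\to 0$, so $c_{i}(K)\in\A$ because $\A$ is closed under epi-kernels. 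To see that $\varphi_{i}^{K}$ is a monomorphism, by Proposition \ref{Bcogen-Agen}(1) it is enough to prove $K\in{}^{\perp_{1}}s_{*}(\B)$; applying $\Ext_{\Rep(Q,\C)}(-,s_{i}(B))$ to the sequence and using $\Ext_{\Rep(Q,\C)}^{1}(F_{1},s_{i}(B))=0$ gives an injection $\Ext_{\Rep(Q,\C)}^{1}(K,s_{i}(B))\hookrightarrow\Ext_{\Rep(Q,\C)}^{2}(F_{0},s_{i}(B))$, so it suffices to show $\Ext_{\Rep(Q,\C)}^{n}(X,s_{i}(B))=0$ for all $X\in\Phi(\A)$, $B\in\B$, $n\geq2$. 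For this I would transport the hereditariness of $\p$ in $\C$ along the adjunction $(c_{i},s_{i})$, starting from $\Ext_{\C}^{1}(c_{i}(X),C)\cong\Ext_{\Rep(Q,\C)}^{1}(X,s_{i}(C))$ (Proposition \ref{prop:s,c,k ext}(1), valid since $\varphi_{i}^{X}$ is monic) and dévissage along a projective resolution of $X$ in $\Rep(Q,\C)$, exploiting that every projective of $\Rep(Q,\C)$ lies in $\Proj(\Rep(Q,\C))=\Add_{\leq|Q_{0}|}(f_{*}(\Proj(\C)))$, that on such a projective $P$ the mesh map $\varphi_{i}^{P}$ is split monic with $c_{i}(P)\in\Proj(\C)$, and that $c_{i}$ therefore sends the syzygy sequence of $X$ to a short exact sequence $0\to c_{i}(X')\to c_{i}(P)\to c_{i}(X)\to 0$ realising $c_{i}(X')$ as a syzygy in $\C$ of $c_{i}(X)\in\A$. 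Granting $\Ext_{\Rep(Q,\C)}^{n}(X,s_{i}(B))\cong\Ext_{\C}^{n}(c_{i}(X),B)=0$ for $n\geq1$, we get $K\in{}^{\perp_{1}}s_{*}(\B)=\Phi(\A)$, in particular $\varphi_{i}^{K}$ is monic, and $K\in\Phi(\A)$.

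The main obstacle is exactly this transfer of higher Ext-vanishing from $\C$ to $\Rep(Q,\C)$: under the hypotheses of (a) the category $\C$ need not be $AB4(\kappa)$, so coproducts of monomorphisms indexed by a mesh $Q_{1}^{*\rightarrow i}$ need not be monomorphisms, and the mesh map $\varphi_{i}$ on a syzygy of $X$ need not be monic, which blocks a naive iteration of Proposition \ref{prop:s,c,k ext}(1). The dévissage above is designed to circumvent this by following only $c_{i}$ — right exact unconditionally — along a resolution of $X$ by \emph{projectives} of $\Rep(Q,\C)$, where the mesh maps are split monic; making each step rigorous (in particular checking that projectives of $\Rep(Q,\C)$ belong to $\Phi(\A)$ with split monic mesh maps, and that the induction on $n$ genuinely closes) is where the real effort lies. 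If that route turns out to be too delicate, the fallback is the hypothesis ``$\C$ also $AB3^{*}(\kappa)$'' of part (b), under which $g_{i}$ is exact and the canonical co-presentation of Corollary \ref{cor:copresentacion canonica} is available — but avoiding that hypothesis in (a) is precisely the point of the statement.
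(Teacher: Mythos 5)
Your parts (b1) and (b2) are correct and follow the paper's own proof essentially verbatim: AB3$^*(\kappa)$ together with enough projectives gives AB4$^*(\kappa)$, hence the $g_i$ are exact, and Theorem \ref{thm:cotrsion pairs rep dual}(a),(d) plus Lemma \ref{lem:garcia-rozas}(a) (with $\Rep(Q,\A)$ obviously closed under epi-kernels) finish the argument. The first half of your part (a) also coincides with the paper's: Theorem \ref{thm:cotrsion pairs rep}(b) yields the cotorsion pair $(\Phi(\A),\Phi(\A)^{\bot_1})$, $\Rep(Q,\C)$ has enough projectives by Proposition \ref{prop:f y g vs clases ortogonales}(b), so by Lemma \ref{lem:garcia-rozas}(a) and Remark \ref{Rk-her}(a) everything reduces to showing that $\Phi(\A)$ is closed under epi-kernels.

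That remaining step is where your proof has a genuine gap. Your partial snake-lemma computation is correct and does give, without any AB4 hypothesis, the exactness of $0\to c_i(K)\to c_i(F_1)\to c_i(F_0)\to 0$ and hence $c_i(K)\in\A$; but the monicity of $\varphi_i^{K}$ --- equivalently $K\in{}^{\perp_1}s_*(\B)$, which by Theorem \ref{thm:cotrsion pairs rep}(b) is already the whole claim --- is exactly what your d\'evissage fails to deliver. To shift degrees along a projective resolution you must apply Proposition \ref{prop:s,c,k ext}(1) to the syzygy $\Omega$ of $X$, and that proposition requires $\varphi_i^{\Omega}$ to be a monomorphism, i.e.\ (via Proposition \ref{Bcogen-Agen}(1)) that $\Omega$ itself lies in ${}^{\perp_1}s_*(\B)=\Phi(\A)$ --- which is precisely the closure under epi-kernels being proved, so the induction is circular already at $n=2$. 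The unconditional comparison map of Lemma \ref{lem:Ext vs adjoint 1} only gives an injection $\Ext^1_{\C}(c_i(\Omega),B)\to\Ext^1_{\Rep(Q,\C)}(\Omega,s_i(B))$, which points in the wrong direction for the vanishing you need. You acknowledge this yourself (``Granting \dots'', ``where the real effort lies''), so part (a) is not proved. The paper settles this step by a different route: it does not argue through higher Ext groups at all, but carries over the vertex-wise snake-lemma argument from the proof of Holm--J{\o}rgensen's Proposition 7.6 to conclude directly that $\Phi(\A)$ is closed under epi-kernels, and then invokes Lemma \ref{lem:garcia-rozas}(a). Your observation that the delicate point of that argument is the exactness of the mesh-indexed coproduct of monomorphisms, which a bare AB3$(\kappa)$ hypothesis does not guarantee, is a fair diagnosis of where the effort must go; but it does not by itself constitute a proof, and your fallback (assuming AB3$^*(\kappa)$) changes the statement. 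To complete (a) you must either justify running the Holm--J{\o}rgensen snake-lemma argument under the stated hypotheses or produce a genuinely non-circular replacement for the missing $\Ext^{\geq 2}$-vanishing; the proposal does neither.
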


\begin{proof}
Observe that, from Proposition \ref{prop:f y g vs clases ortogonales}
(b), we have that $\Rep(Q,\C)$ has enough projectives. 
\

 (a) Let $\B$ be cogenerating in $\C.$ Since $\lmcn(Q)\leq\ltccn(Q),$
we get from Theorem \ref{thm:cotrsion pairs rep} (b), that $\left(\Phi(\mathcal{A}),\Phi(\mathcal{A})^{\bot_{1}}\right)$
is a cotorsion pair in $\C.$ Moreover, by following the proof of
\cite[Prop.  7.6]{holm2019cotorsion}, it follows that $\Phi(\mathcal{A})$
is closed under epi-kernels. Thus, by Lemma \ref{lem:garcia-rozas}
(a), we get that such pair is hereditary since $\Rep(Q,\C)$ has enough
projectives. 
\

 (b) By the dual of Remark \ref{AB3+Inj=AB4} (b), it follows
that $\C$ is AB4{*}($\kappa$). Moreover, using that $\lccn(Q)\leq\ltccn(Q),$
we get form Remark \ref{AB3+Inj=AB4} (a) that $g_{i}:\C\to\Rep(Q,\C)$
is exact $\forall\,i\in Q_{0}.$ Thus, from Theorem \ref{thm:cotrsion pairs rep dual}
(a) and using that $\lccn(Q)\leq\ltccn(Q),$ we conclude that $(\operatorname{Rep}(Q,\mathcal{A}),\operatorname{Rep}(Q,\mathcal{A})^{\bot_{1}})$
is a cotorsion pair in $\C.$ The fact that this pair is also hereditary
follows as in the proof of (a) since it is clear that $\operatorname{Rep}(Q,\mathcal{A})$
is closed under epi-kernels; proving (b1).
Let us show (b2). Assume that $\kappa\geq\rmcn(Q)$ and $Q$ is right-rooted.
Observe that $\A$ is generating in $\C$ since $\C$ has enough projectives.
Thus, by Theorem \ref{thm:cotrsion pairs rep dual} (d), we get (b2)
since $\lccn(Q)\leq\ltccn(Q).$ 
\end{proof}

In case we do not have enough injectives or projectives, the following
result and its dual is a generalization of \cite[Prop.  7.6]{holm2019cotorsion}.

\begin{prop}\label{hered-2} For a quiver $Q,$ $\kappa\geq\max\{\rccn(Q),\ltccn(Q)\}$
an infinite cardinal number, an abelian AB4($\kappa$) category $\C,$
and a cotorsion pair $\p$ in $\C$ such that $\A$ is closed under
epi-kernels, $\B$ is cogenerating in $\C$, and $^{\perp}\B$ is
generating in $\C,$ the following statements hold true. 
\begin{itemize}
\item[(a)] $\left(\Phi(\mathcal{A}),\Phi(\mathcal{A})^{\bot_{1}}\right)$ is
a hereditary cotorsion pair in $\C.$ 
\item[(b)] $\Rep(Q,\B)=\Phi(\mathcal{A})^{\bot_{1}}$ if $Q$ is left-rooted. 
\end{itemize}
\end{prop}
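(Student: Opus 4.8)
The plan is to mimic the proof of Proposition~\ref{hered-1}, replacing the use of ``enough projectives in $\Rep(Q,\C)$'' by the hypothesis that $^{\perp}\B$ is generating in $\C$ together with Proposition~\ref{prop:f y g vs clases ortogonales}(a). First I would observe that, since $\C$ is AB4($\kappa$) with $\kappa\geq\rccn(Q)$, Remark~\ref{AB3+Inj=AB4}(a) gives that $f_{i}:\C\to\Rep(Q,\C)$ is exact for all $i\in Q_{0}$, so all the machinery of Section~4 applies. Also $\lmcn(Q)\leq\ltccn(Q)\leq\kappa$, so the hypotheses of Theorem~\ref{thm:cotrsion pairs rep}(b) are met and hence $\Phi(\A)={}^{\perp_{1}}s_{*}(\B)$; in particular $(\Phi(\A),\Phi(\A)^{\perp_{1}})$ is a cotorsion pair in $\Rep(Q,\C)$.

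Next I would show $\Phi(\A)$ is closed under epi-kernels. This is exactly the argument in the proof of \cite[Prop.~7.6]{holm2019cotorsion}: given an exact sequence $0\to K\to X_{1}\to X_{0}\to 0$ in $\Rep(Q,\C)$ with $X_{0},X_{1}\in\Phi(\A)$, one evaluates the functors $\varphi_{i}^{-}$ and uses the snake lemma vertex-wise, together with the fact that $\A$ is closed under epi-kernels in $\C$ and closed under extensions, to conclude that $\varphi_{i}^{K}$ is a monomorphism with cokernel in $\A$ for every $i\in Q_{0}$; hence $K\in\Phi(\A)$. The only point needing care is that $\coprod_{<\kappa}\A=\A$, which follows as in the proof of Theorem~\ref{thm:cotrsion pairs rep} from AB4($\kappa$) (one even has $\Ext^{1}_{\C}(\coprod_{i}A_{i},B)\hookrightarrow\prod_{i}\Ext^{1}_{\C}(A_{i},B)$), and that the relevant coproducts of arrows are indexed by sets of size $<\kappa$, which is guaranteed by $\kappa\geq\ltccn(Q)$ via Lemma~\ref{LemAux}.

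Then I would verify that $^{\perp}\Phi(\A)^{\perp_{1}}$ is generating in $\Rep(Q,\C)$, so that Lemma~\ref{lem:garcia-rozas}(a) can be applied to deduce heredity. For this, since $^{\perp}\B$ is generating in $\C$ and $\B\subseteq\C$ is cogenerating, Proposition~\ref{prop:f y clases ortogonales}(a) (with $\X$ suitably chosen, noting $\A={}^{\perp_{1}}\B$ and $\B$ cogenerating) identifies the relevant $\Phi$-class, and more directly Proposition~\ref{prop:f y g vs clases ortogonales}(a) shows that the class $\coprod_{\leq|Q_{0}|}f_{*}(\mathcal{U})$ is generating in $\Rep(Q,\C)$ whenever $\mathcal{U}$ is generating in $\C$; applying this with $\mathcal{U}={}^{\perp}\B$ and using Proposition~\ref{rem:adjuntos de ei}(a) to get $f_{*}({}^{\perp}\B)\subseteq{}^{\perp}\Rep(Q,\B)\subseteq{}^{\perp}\Phi(\A)^{\perp_{1}}$ (together with closure of $^{\perp}\Phi(\A)^{\perp_{1}}$ under coproducts and summands) gives that $^{\perp}\Phi(\A)^{\perp_{1}}$ is generating. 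Lemma~\ref{lem:garcia-rozas}(a) then yields that $(\Phi(\A),\Phi(\A)^{\perp_{1}})$ is hereditary, proving~(a).

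Finally, for~(b) I would invoke Theorem~\ref{thm:cotrsion pairs rep}(d): its hypotheses are ``$f_{i}$ exact $\forall i$'' (established above), ``$\kappa\geq\max\{\rccn(Q),\lmcn(Q)\}$'' (which holds since $\kappa\geq\max\{\rccn(Q),\ltccn(Q)\}$ and $\lmcn(Q)\leq\ltccn(Q)$), ``$Q$ left rooted'' (hypothesis of~(b)), and ``$\B$ cogenerating in $\C$'' (hypothesis), whence $\Rep(Q,\B)=\Phi(\A)^{\perp_{1}}$. The main obstacle I anticipate is the epi-kernel closure of $\Phi(\A)$ in the no-enough-projectives setting: one must run the vertex-wise snake-lemma bookkeeping carefully and confirm that every coproduct appearing there is indexed by a set of cardinality $<\kappa$ so that it exists and behaves exactly, which is precisely what the bound $\kappa\geq\max\{\rccn(Q),\ltccn(Q)\}$ is there to guarantee; apart from that, the argument is a fairly direct transplant of the proof of \cite[Prop.~7.6]{holm2019cotorsion} with Lemma~\ref{lem:garcia-rozas}(a) replacing the Garc\'ia-Rozas lemma.
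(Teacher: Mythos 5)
Your proposal follows essentially the same route as the paper: get the cotorsion pair and the identity $\Phi(\A)={}^{\perp_{1}}s_{*}(\B)$ from Theorem \ref{thm:cotrsion pairs rep} (b), transplant the epi-kernel closure argument of Holm--J{\o}rgensen, produce a generating class inside $^{\perp}(\Phi(\A)^{\perp_{1}})$ from $\coprod_{\leq|Q_{0}|}f_{*}({}^{\perp}\B)$ via Proposition \ref{prop:f y g vs clases ortogonales} (a), conclude heredity with Lemma \ref{lem:garcia-rozas} (a), and deduce (b) from Theorem \ref{thm:cotrsion pairs rep} (d), which is exactly the paper's proof. Two small repairs: the inclusion $f_{*}({}^{\perp}\B)\subseteq{}^{\perp}\Rep(Q,\B)$ at the level of \emph{all} $\Ext^{n}$ does not follow from Proposition \ref{rem:adjuntos de ei} (a) (which only handles $\Ext^{1}$) but from the exactness of the $f_{i}$ together with Proposition \ref{prop:f y clases ortogonales} (b), and the inclusion ${}^{\perp}\Rep(Q,\B)\subseteq{}^{\perp}(\Phi(\A)^{\perp_{1}})$, which you assert without proof, needs the observation that $f_{*}(\A)\subseteq{}^{\perp_{1}}s_{*}(\B)=\Phi(\A)$ and hence $\Phi(\A)^{\perp_{1}}\subseteq f_{*}(\A)^{\perp_{1}}=\Rep(Q,\B)$, i.e. Theorem \ref{thm:cotrsion pairs rep} (a) and (b) combined, as in the paper.
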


\begin{proof}
(a) As in the proof of Proposition \ref{hered-1} (a), we get that
$\left(\Phi(\mathcal{A}),\Phi(\mathcal{A})^{\bot_{1}}\right)$ is
a cotorsion pair in $\C$ and $\Phi(\mathcal{A})$ is closed under
epi-kernels. Thus, by Lemma \ref{lem:garcia-rozas} (a), it is enough
to show that $^{\perp}(\Phi(\mathcal{A})^{\bot_{1}})$ is generating
in $\C.$

We assert that $^{\perp}\Rep(Q,\B)$ is generating in $\C.$ Indeed,
by Proposition \ref{prop:f y g vs clases ortogonales} (a), we get
that $\coprod_{\leq|Q_{0}|}f_{*}({}^{\perp}\B)$ is generating in
$\Rep(Q,\C).$ On the other hand, since $f_{i}:\C\to\Rep(Q,\C)$ is
exact $\forall\,i\in Q_{0}$ (see Remark \ref{AB3+Inj=AB4}
(a)), it follows from Proposition \ref{prop:f y clases ortogonales}
(b) that $f_{*}({}^{\perp}\B)\subseteq{}^{\perp}\Rep(Q,\B).$ Therefore
$\coprod_{\leq|Q_{0}|}f_{*}({}^{\perp}\B)\subseteq{}^{\perp}\Rep(Q,\B)$
since $\C$ is AB4($\kappa$) (see Lemma \ref{pd-coprod});
proving that $^{\perp}\Rep(Q,\B)$ is generating in $\C.$ Finally,
from Theorem \ref{thm:cotrsion pairs rep} (a,b), we get that $^{\perp}\Rep(Q,\B)\subseteq{}^{\perp}(\Phi(\mathcal{A})^{\bot_{1}})$
and thus $^{\perp}(\Phi(\mathcal{A})^{\bot_{1}})$ is generating in
$\C.$ 

(b) It follows from Theorem \ref{thm:cotrsion pairs rep} (d). 
\end{proof}

\begin{thm}\label{hccp+ep} For a quiver $Q,$ $\kappa\geq\max\{\rccn(Q),\ltccn(Q)\}$
an infinite cardinal number, an abelian AB3($\kappa$) category $\C,$
a complete hereditary cotorsion pair $\p$ in $\C,$ and $\mathfrak{C}(\A):=\left(\Phi(\mathcal{A}),\Phi(\mathcal{A})^{\bot_{1}}\right),$
the following statements hold true. 
\begin{itemize}
\item[(a)] Let $\C$ be with enough projectives. Then $\mathfrak{C}(\A)$ is
a hereditary cotorsion pair. Moreover $\mathfrak{C}(\A)$ is complete
if $Q$ is left-rooted. 
\item[(b)] Let $\C$ be AB4($\kappa$). Then $\mathfrak{C}(\A)$ is a hereditary
cotorsion pair. Moreover $\mathfrak{C}(\A)$ is complete and $\Rep(Q,\B)=\Phi(\mathcal{A})^{\bot_{1}}$
if $Q$ is left-rooted. 
\end{itemize}
\end{thm}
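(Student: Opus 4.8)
The plan is to derive Theorem \ref{hccp+ep} from the two propositions that precede it, \ref{hered-1} and \ref{hered-2}, together with the completeness result Theorem \ref{Thm-completeness} and the various pieces of Theorem \ref{thm:cotrsion pairs rep}. The key preliminary observation is that, since $\p$ is a complete hereditary cotorsion pair in $\C$, Remark \ref{Rk-her}(b) gives $\A={}^{\perp}\B$ and $\B=\A^{\perp}$; in particular $\A$ is closed under epi-kernels by Lemma \ref{lem:garcia-rozas}(3), $\B$ is cogenerating in $\C$ (being the right-hand class of a complete cotorsion pair), and $^{\perp}\B=\A$ is generating in $\C$. These are exactly the standing hypotheses needed to invoke Propositions \ref{hered-1} and \ref{hered-2}.

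For part (a), I would assume $\C$ has enough projectives. Then the hypotheses of Proposition \ref{hered-1} are met ($\A$ closed under epi-kernels, $\B$ cogenerating), so Proposition \ref{hered-1}(a) immediately gives that $\mathfrak{C}(\A)=\left(\Phi(\A),\Phi(\A)^{\bot_{1}}\right)$ is a hereditary cotorsion pair in $\Rep(Q,\C)$. For completeness when $Q$ is left-rooted, note that having enough projectives and $\C$ being AB3($\kappa$) forces $\C$ to be AB4($\kappa$) by Remark \ref{AB3+Inj=AB4}(b); since in particular $\C$ is AB3($\kappa$) with $\kappa\geq\rccn(Q)\geq\lmcn(Q)$, Theorem \ref{Thm-completeness}(1) applies and yields that $\mathfrak{C}(\A)$ is complete. (Here I use $\lmcn(Q)\leq\ltccn(Q)\leq\kappa$, the same cardinal comparison used repeatedly in the preceding proofs.)

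For part (b), I would assume $\C$ is AB4($\kappa$). Then Proposition \ref{hered-2} applies verbatim: $\A$ is closed under epi-kernels, $\B$ is cogenerating, and $^{\perp}\B=\A$ is generating, so Proposition \ref{hered-2}(a) gives that $\mathfrak{C}(\A)$ is a hereditary cotorsion pair. For completeness and the identification $\Rep(Q,\B)=\Phi(\A)^{\bot_{1}}$ when $Q$ is left-rooted: the identification is precisely Proposition \ref{hered-2}(b), and completeness follows from Theorem \ref{Thm-completeness}(1) since $\C$ is AB4($\kappa$), hence AB3($\kappa$), with $\kappa\geq\lmcn(Q)$. Assembling these, part (b) is complete.

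The only real subtlety — and the one step I would be most careful about — is checking that the cardinal/exactness side-conditions of Theorem \ref{Thm-completeness} and of Propositions \ref{hered-1}--\ref{hered-2} are genuinely implied by the hypothesis $\kappa\geq\max\{\rccn(Q),\ltccn(Q)\}$ and the AB3($\kappa$)/AB4($\kappa$)/enough-projectives assumptions; in particular one needs the chain of inequalities $\lmcn(Q)\leq\ltccn(Q)\leq\kappa$ and, in part (a), the passage AB3($\kappa$)$+$enough projectives $\Rightarrow$ AB4($\kappa$) (Remark \ref{AB3+Inj=AB4}(b)) so that Proposition \ref{hered-2}'s AB4($\kappa$) hypothesis — if one prefers to route part (a) through it as well — or at least Theorem \ref{Thm-completeness}'s hypotheses are available. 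Once those bookkeeping checks are made, nothing else is needed beyond quoting the earlier results.
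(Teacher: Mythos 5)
Your proposal is correct and follows essentially the same route as the paper, whose proof simply cites Lemma \ref{lem:garcia-rozas} (c), Theorem \ref{Thm-completeness} (a), Proposition \ref{hered-1} (a) and Proposition \ref{hered-2}; your verification of their hypotheses (via Remark \ref{Rk-her} (b), completeness of $\p$ giving that $\B$ is cogenerating and $\A={}^{\perp}\B$ is generating, and Lemma \ref{lem:garcia-rozas} (c) giving closure under epi-kernels) is exactly the intended bookkeeping. Two harmless blemishes: in (a) the appeal to Remark \ref{AB3+Inj=AB4} (b) is both unnecessary (Theorem \ref{Thm-completeness} (a) needs only AB3($\kappa$) with $\kappa\geq\lmcn(Q)$) and miscited (that remark concerns enough injectives; its dual yields AB4$^{*}(\kappa)$ from enough projectives), and the inequality $\rccn(Q)\geq\lmcn(Q)$ is false in general, but your parenthetical chain $\lmcn(Q)\leq\ltccn(Q)\leq\kappa$ is the correct comparison and suffices.
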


\begin{proof}
It follows from Remark \ref{lem:garcia-rozas} (c), Theorem \ref{Thm-completeness}
(a), Proposition \ref{hered-1} (a) and Proposition \ref{hered-2}. 
\end{proof}

We close this section with a description of the cotorsion pairs 
$(\Rep(Q,\A),\Psi(\B))$ and $(\Phi(\A),\Rep(Q,\B))$.
 It is worth mentioning that this description will be used in \cite{AM22} to study the 
 tilting cotorsion pairs in $\Rep(Q,\C )$.

\begin{lem}\label{lem:canonic presentation for s}  For a quiver $Q,$ a vertex $x\in Q_{0}$,
an infinite cardinal $\kappa$  and an abelian category $\C,$ the following statements hold true. 
\begin{enumerate}
\item If $\kappa\geq\max\{\rccn(Q),\ltccn(Q)\}$ and $\C$ is $AB3(\kappa),$
then there is an exact sequence $\suc[\coprod_{\rho\in Q_{1}^{x\rightarrow*}}f_{t(\rho)}][f_{x}][s_{x}]$
in $\Fun(\C,\Rep(Q,\C))$. 
\item If $\kappa\geq\max\{\lccn(Q),\rtccn(Q)\}$ and $\C$ is $AB3^{*}(\kappa)$,
then there is an exact sequence $\suc[s_{x}][g_{x}][\prod_{\rho\in Q_{1}^{*\rightarrow x}}g_{s(\rho)}]$
in $\Fun(\C,\Rep(Q,\C))$. 
\end{enumerate}
\end{lem}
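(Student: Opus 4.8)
The plan is to prove part (a) explicitly and obtain part (b) by a formal dualization argument; so I focus on (a). The key observation is that both sides of the claimed exact sequence $\coprod_{\rho\in Q_1^{x\rightarrow *}}f_{t(\rho)}\hookrightarrow f_x\twoheadrightarrow s_x$ are functors $\C\to\Rep(Q,\C)$, and the cokernel, kernel, image constructions in $\Fun(\C,\Rep(Q,\C))$ are computed pointwise (first in $\C\mapsto\Rep(Q,\C)$, then vertex-wise in $\C$). So it suffices to produce, naturally in $C\in\C$ and at each vertex $k\in Q_0$, a short exact sequence in $\C$. First I would check that $\coprod_{\rho\in Q_1^{x\rightarrow *}}f_{t(\rho)}$ actually exists in $\Fun(\C,\Rep(Q,\C))$: this is exactly the coproduct appearing in Lemma \ref{LemAux}(a) — observe $Q_1^{x\rightarrow *}\subseteq Q_1$ and $f_{t(\rho)}(C)=C^{(Q(t(\rho),-))}$, and the hypothesis $\kappa\geq\max\{\rccn(Q),\ltccn(Q)\}$ together with $AB3(\kappa)$ guarantees the relevant coproducts exist in $\C$ at every vertex.

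Next I would identify the two maps. There is an obvious morphism $\varepsilon:f_x\to s_x$: recall $(f_x(C))_k=C^{(Q(x,k))}$ and $(s_x(C))_k$ is $C$ if $k=x$ and $0$ otherwise; define $(\varepsilon_C)_x:C^{(Q(x,x))}\to C$ to be the projection onto the summand indexed by the trivial path $\epsilon_x$, and $(\varepsilon_C)_k=0$ for $k\neq x$. One checks this is a morphism of representations (the only arrows to worry about are those out of $x$, and the stalk kills them) and natural in $C$; it is clearly an epimorphism pointwise since every component is either split epi or a zero map to $0$. For the inclusion, for each arrow $\rho\in Q_1^{x\rightarrow *}$, i.e. $\rho:x\to t(\rho)$, there is the natural transformation $f_\rho: f_{t(\rho)}\to f_x$ from Proposition \ref{funtor-f} (the morphism $f(\rho)$ in the functor $f:\C_Q^{op}\to\Fun(\C,\Rep(Q,\C))$), whose component $(f_\rho(C))_k:C^{(Q(t(\rho),k))}\to C^{(Q(x,k))}$ sends the summand indexed by $\lambda\in Q(t(\rho),k)$ into the summand indexed by $\lambda\rho$. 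Assembling these via the universal property of the coproduct gives $\Gamma:\coprod_{\rho\in Q_1^{x\rightarrow *}}f_{t(\rho)}\to f_x$.

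The heart of the proof is then the pointwise verification that, for every $C\in\C$ and $k\in Q_0$,
\[
\coprod_{\rho\in Q_1^{x\rightarrow *}}C^{(Q(t(\rho),k))}\xrightarrow{(\Gamma_C)_k} C^{(Q(x,k))}\xrightarrow{(\varepsilon_C)_k} (s_x(C))_k
\]
is short exact in $\C$, and in fact split so that the identification is robust. The combinatorial point is the bijection of index sets: every nontrivial path $p\in Q(x,k)$ factors uniquely as $p=\lambda\rho$ with $\rho\in Q_1^{x\rightarrow *}$ the first arrow and $\lambda\in Q(t(\rho),k)$; the trivial path $\epsilon_x$ (present only when $k=x$) is exactly the summand not in the image. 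Hence $\{(\alpha,\alpha_0): \alpha_0\in Q_1^{x\rightarrow *},\ \alpha\in Q(t(\alpha_0),k)\}\longleftrightarrow Q(x,k)\setminus\{\epsilon_x\}$, and under this bijection $(\Gamma_C)_k$ is precisely the canonical split inclusion of the corresponding sub-coproduct, with cokernel $C^{(\{\epsilon_x\})}$ when $k=x$ and $0$ otherwise, i.e. $(s_x(C))_k$. I would phrase this using the universal property exactly as in the proof of Corollary \ref{cor:presentacion canonica} (one checks $(\Gamma_C)_k\circ\mu_{(\lambda,\rho)}=\mu_{\lambda\rho}$, so it is a split mono onto the span of those basis summands), which also supplies the vertex-wise splitting.

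I expect the main obstacle to be purely bookkeeping: verifying that $\Gamma$ and $\varepsilon$ are genuinely morphisms of representations (commuting with $F_\alpha$ for every arrow $\alpha$, not just those incident to $x$) and are natural in $C$, and then being careful that the unique-factorization bijection is stated correctly when $k=x$ (the trivial path) versus $k\neq x$. None of this is deep, but it must be done cleanly so that the pointwise short exact sequences glue to an exact sequence of functors. Finally, part (b) follows by applying part (a) to the opposite quiver $Q^{op}$ and the opposite category $\C^{op}$: under the equivalence $\Rep(Q^{op},\C^{op})\simeq \Rep(Q,\C)^{op}$ the functors $f^{Q^{op}}_x$ correspond to $g^{Q}_x$, the stalk functor is self-dual, $Q_1^{x\rightarrow *}$ in $Q^{op}$ is $Q_1^{*\rightarrow x}$ in $Q$, and $\rccn(Q^{op})=\lccn(Q)$, $\ltccn(Q^{op})=\rtccn(Q)$, $AB3(\kappa)$ becomes $AB3^*(\kappa)$; so the exact sequence of (a) dualizes to the asserted $s_x\hookrightarrow g_x\twoheadrightarrow \prod_{\rho\in Q_1^{*\rightarrow x}}g_{s(\rho)}$.
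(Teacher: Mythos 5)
Your argument is correct, but it takes a different route from the paper. The paper's proof is a two-line reduction: it applies the exact precomposition functor $(s_{x})^{*}:\End(\Rep(Q,\C))\rightarrow\Fun(\C,\Rep(Q,\C))$, $H\mapsto H\circ s_{x}$, to the canonical presentation of Corollary \ref{cor:presentacion canonica} (resp.\ the co-presentation of Corollary \ref{cor:copresentacion canonica} for (b)); since $e_{i}\circ s_{x}=0$ for $i\neq x$ and $e_{x}\circ s_{x}=1_{\C}$, the sequence $\coprod_{\rho\in Q_{1}}f_{t(\rho)}e_{s(\rho)}\hookrightarrow\coprod_{i\in Q_{0}}f_{i}e_{i}\twoheadrightarrow 1_{\Rep(Q,\C)}$ collapses exactly to $\coprod_{\rho\in Q_{1}^{x\rightarrow*}}f_{t(\rho)}\hookrightarrow f_{x}\twoheadrightarrow s_{x}$, with exactness for free because precomposition is exact; naturality is automatic as well. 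You instead rebuild the sequence by hand: the map $\Gamma$ assembled from the $f_{\rho}$ of Proposition \ref{funtor-f}, the map $\varepsilon$ given by projection onto the trivial-path summand, and exactness checked vertex-wise via the unique first-arrow factorization of nontrivial paths in $Q(x,k)$ — which is in effect a special case of the computation already carried out in the proof of Corollary \ref{cor:presentacion canonica}, so your approach is more self-contained but duplicates work the paper simply cites; on the other hand it makes the vertex-wise splitting and the index bookkeeping completely explicit. Your treatment of (b) by passing to $Q^{op}$ and $\C^{op}$ is a legitimate alternative to the paper's use of the co-presentation. One small slip worth fixing: in checking that $\varepsilon_{C}:f_{x}(C)\to s_{x}(C)$ is a morphism of representations, the arrows requiring attention are those with target $x$ (where one needs $(\varepsilon_{C})_{x}\circ(f_{x}(C))_{\alpha}=0$, which holds because $(f_{x}(C))_{\alpha}$ lands in the summands indexed by nontrivial paths), not the arrows out of $x$; this does not affect the validity of the argument.
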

\begin{proof}
By applying the exact functor $\left(s_{x}\right)^{*}:\End(\Rep(Q,\C))\rightarrow\Fun(\C,\Rep(Q,\C))$, defined as 
$(H_{1}\overset{b}{\rightarrow}H_{2})\mapsto(H_{1}\circ s_{x}\overset{b\cdot s_{x}}{\rightarrow}H_{2}\circ s_{x})$,
to the exact sequences in Corollaries \ref{cor:presentacion canonica} and \ref{cor:copresentacion canonica}, we get the desired exact sequences. 
\end{proof}

We can use the lemma above to build $s_{*}$-filtrations and $s_{*}$-cofiltrations
for $f_{x}$ and $g_{x}$ respectively for all $x\in Q_{0}$, where
$s_{*}=\{s_{i}\}_{i\in Q_{0}}$.

\begin{lem}\label{lem:filtraciones} For a quiver $Q,$ a vertex $x\in Q_{0}$,
an infinite cardinal $\kappa$  and an abelian category $\C,$ the following statements hold true. 
\begin{enumerate}
\item Let $\kappa\geq\max\{\rccn(Q),\ltccn(Q)\}$, $Q(x,-)$ be finite and
$\C$ be $AB3(\kappa).$ Then there is a sequence of short exact sequences
\[
\left\{ \eta_{x}^{k}:\;\suc[\coprod_{\rho\in Q_{k}^{x\rightarrow*}}f_{t(\rho)}][\coprod_{\rho\in Q_{k-1}^{x\rightarrow*}}f_{t(\rho)}][\coprod_{\rho\in Q_{k-1}^{x\rightarrow*}}s_{t(\rho)}][q_{k}]\right\} _{k\geq1},
\]
where $Q_{k}^{x\rightarrow*}$ is the set of paths of length $k$
beginning at $x$.

\item Let $\kappa\geq\max\{\lccn(Q),\rtccn(Q)\}$, $Q(-,x)$be finite and
$\C$ be $AB3^{*}(\kappa).$ Then there is a sequence of short exact
sequences 
\[
\left\{ \epsilon_{x}^{k}:\;\suc[\prod_{\rho\in Q_{k-1}^{*\rightarrow x}}s_{s(\rho)}][\prod_{\rho\in Q_{k-1}^{*\rightarrow x}}g_{s(\rho)}][\prod_{\rho\in Q_{k}^{*\rightarrow x}}g_{s(\rho)}][][p_{k}]\right\} _{k\geq1},
\]
where $Q_{k}^{*\rightarrow x}$ is the set of paths of length $k$
ending at $x$.
\end{enumerate}
\end{lem}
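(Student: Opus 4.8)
The statement I need to prove is Lemma \ref{lem:filtraciones}, which builds a sequence of short exact sequences iterating the presentation from Lemma \ref{lem:canonic presentation for s}. Since part (2) is formally dual to part (1) (exchange $f$ with $g$, coproducts with products, $\rccn$/$\ltccn$ with $\lccn$/$\rtccn$, $Q(x,-)$ finite with $Q(-,x)$ finite, and reverse arrows), I will only treat part (1). The plan is to define the sequences $\eta_x^k$ by induction on $k$, using at each stage Lemma \ref{lem:canonic presentation for s}(1) applied to a suitable vertex, and then showing the relevant coproducts exist and the functors involved fit together properly.

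First I would set up the base case. Lemma \ref{lem:canonic presentation for s}(1) gives, for each vertex $y\in Q_0$, an exact sequence $\suc[\coprod_{\rho\in Q_1^{y\to*}}f_{t(\rho)}][f_y][s_y][]$ in $\Fun(\C,\Rep(Q,\C))$. For $k=1$ this is exactly $\eta_x^1$ with $y=x$: note $Q_0^{x\to*}=\{\epsilon_x\}$, so $\coprod_{\rho\in Q_0^{x\to*}}f_{t(\rho)}=f_x$ and $\coprod_{\rho\in Q_0^{x\to*}}s_{t(\rho)}=s_x$, and $Q_1^{x\to*}$ is precisely the set of paths of length $1$ beginning at $x$. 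For the inductive step, suppose $\eta_x^k$ has been constructed with left term $\coprod_{\rho\in Q_k^{x\to*}}f_{t(\rho)}$. A path $\rho\in Q_k^{x\to*}$ of length $k$ ending at $t(\rho)$; applying Lemma \ref{lem:canonic presentation for s}(1) at the vertex $t(\rho)$ gives $\suc[\coprod_{\sigma\in Q_1^{t(\rho)\to*}}f_{t(\sigma)}][f_{t(\rho)}][s_{t(\rho)}][]$. Taking the coproduct over all $\rho\in Q_k^{x\to*}$ — which is a finite set since $Q(x,-)$ is finite, so the coproduct is a finite one and hence exact in $\Fun(\C,\Rep(Q,\C))$ (kernels and cokernels computed pointwise, finite coproducts are exact in any abelian category) — yields the exact sequence
\[
\suc[\coprod_{\rho\in Q_k^{x\to*}}\coprod_{\sigma\in Q_1^{t(\rho)\to*}}f_{t(\sigma)}][\coprod_{\rho\in Q_k^{x\to*}}f_{t(\rho)}][\coprod_{\rho\in Q_k^{x\to*}}s_{t(\rho)}][].
\]
This is $\eta_x^{k+1}$, once I observe the bookkeeping identity: a pair $(\rho,\sigma)$ with $\rho\in Q_k^{x\to*}$ and $\sigma\in Q_1^{t(\rho)\to*}$ is precisely a path $\sigma\rho\in Q_{k+1}^{x\to*}$ of length $k+1$ beginning at $x$, with $t(\sigma\rho)=t(\sigma)$; so $\coprod_{\rho\in Q_k^{x\to*}}\coprod_{\sigma\in Q_1^{t(\rho)\to*}}f_{t(\sigma)}\cong\coprod_{\tau\in Q_{k+1}^{x\to*}}f_{t(\tau)}$ and similarly for the $s$-term. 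I would also note that the finiteness of $Q(x,-)$ guarantees $Q_k^{x\to*}$ is finite for every $k$, so every coproduct appearing is finite (in particular all the $f_{t(\rho)}$ and $s_{t(\rho)}$ involved exist, since $\rccn(Q),\ltccn(Q)\le\kappa$ and $\C$ is $AB3(\kappa)$).

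The main obstacle — really the only point needing care — is checking that taking the coproduct of the exact sequences from Lemma \ref{lem:canonic presentation for s}(1) is legitimate and produces an \emph{exact} sequence of functors, and that the re-indexing of iterated coproducts of the $f$'s and $s$'s is the canonical isomorphism. The first is routine because the coproducts are finite (finiteness of $Q(x,-)$ is exactly the hypothesis supplied for this) and finite coproducts are always exact; the functor category $\Fun(\C,\Rep(Q,\C))$ is abelian with pointwise (co)kernels, so exactness is inherited. The re-indexing is the standard associativity/Fubini isomorphism for coproducts together with the description $f_x\circ e_y\circ g_z$-type computations already used in Lemma \ref{lem: lemita g} and the explicit formula $f_i(C)=C^{(Q(i,-))}$; I would just invoke the universal property of coproducts rather than grind through component maps. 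Part (2) then follows by the dual argument, applying Lemma \ref{lem:canonic presentation for s}(2) at each vertex, replacing coproducts by products throughout, using $AB3^*(\kappa)$, $\kappa\ge\max\{\lccn(Q),\rtccn(Q)\}$, and the finiteness of $Q(-,x)$ to keep all products finite.
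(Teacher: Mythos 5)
Your proposal is correct and follows exactly the paper's route: the paper's proof consists of applying Lemma \ref{lem:canonic presentation for s} recursively, and your argument simply spells out that recursion (base case at $x$, then the presentation at each endpoint $t(\rho)$, a finite coproduct justified by the finiteness of $Q(x,-)$, and the re-indexing of pairs $(\rho,\sigma)$ as paths $\sigma\rho$), with part (2) by duality.
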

\begin{proof}
It follows by applying Lemma \ref{lem:canonic presentation for s}
recursively. 
\end{proof}
\begin{lem}\label{lem:Lemita par de cotorsion s g}  For an abelian category $\C$ and 
 the clases of objects $\mathcal{S},\mathcal{G}\subseteq\mathcal{C},$
 the following statements hold true. 
\begin{enumerate}
\item If every $G\in\mathcal{G}$ admits a  sequence $G=G_{0}\overset{p_{1}}{\twoheadrightarrow}G_{1}\overset{p_{2}}{\twoheadrightarrow}\cdots\overset{p_{s}}{\twoheadrightarrow}G_{s}=0$
of epimorphisms 
 such that $G_{t}\in\add(\mathcal{G})$ and $\Ker(p_{t})\in\add(\mathcal{S})$
for all $t\in\{1,\cdots,s\}$, then $\mathcal{S}^{\bot}\subseteq\mathcal{G}^{\bot}$. 
\item If every $S\in\mathcal{S}$ admits a short exact sequence $\suc[S][G][M]$
with $G,M\in\add(\mathcal{G})$, then $\mathcal{G}^{\bot}\subseteq\mathcal{S}^{\bot}$.
\end{enumerate}
\end{lem}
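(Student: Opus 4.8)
The plan is to prove both inclusions by a standard dimension-shifting / Eklof-type argument using the given filtrations. For item (a): we are given $\mathcal S \subseteq \mathcal C$ and $\mathcal G \subseteq \mathcal C$, and every $G \in \mathcal G$ has a finite filtration $G = G_0 \twoheadrightarrow G_1 \twoheadrightarrow \cdots \twoheadrightarrow G_s = 0$ with each $G_t \in \add(\mathcal G)$ and each $K_t := \Ker(p_t) \in \add(\mathcal S)$. I want to show $\mathcal S^{\bot} \subseteq \mathcal G^{\bot}$, i.e. if $C \in \mathcal S^{\bot}$ (so $\Ext^i_{\mathcal C}(S,C)=0$ for all $i\ge 1$ and all $S \in \mathcal S$) then $\Ext^i_{\mathcal C}(G,C)=0$ for all $i\ge 1$ and all $G \in \mathcal G$. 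First I would record that $\mathcal S^{\bot}$ and $\mathcal G^{\bot}$ are closed under finite direct sums and direct summands, so from $C \in \mathcal S^{\bot}$ we get $\Ext^i_{\mathcal C}(S',C)=0$ for all $S' \in \add(\mathcal S)$ and all $i \ge 1$; in particular $\Ext^i_{\mathcal C}(K_t, C) = 0$ for all $t$ and all $i\ge 1$.

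The core step is a downward induction on $t$ from $s$ to $0$ showing $\Ext^i_{\mathcal C}(G_t, C) = 0$ for all $i \ge 1$. The base case $t = s$ is trivial since $G_s = 0$. For the inductive step, consider the short exact sequence $K_{t} \hookrightarrow G_{t-1} \twoheadrightarrow G_{t}$ (here $p_t : G_{t-1} \twoheadrightarrow G_t$ has kernel $K_t$). Applying $\Hom_{\mathcal C}(-, C)$ gives the long exact sequence
\[
\cdots \to \Ext^i_{\mathcal C}(G_t, C) \to \Ext^i_{\mathcal C}(G_{t-1}, C) \to \Ext^i_{\mathcal C}(K_t, C) \to \cdots
\]
By the inductive hypothesis $\Ext^i_{\mathcal C}(G_t, C)=0$ and by the opening observation $\Ext^i_{\mathcal C}(K_t, C) = 0$, so $\Ext^i_{\mathcal C}(G_{t-1}, C) = 0$ for all $i \ge 1$. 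Taking $t = 1$ yields $\Ext^i_{\mathcal C}(G_0, C) = \Ext^i_{\mathcal C}(G,C) = 0$, which is what we want; hence $C \in \mathcal G^{\bot}$.

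For item (b): every $S \in \mathcal S$ sits in a short exact sequence $S \hookrightarrow G^S \twoheadrightarrow M^S$ with $G^S, M^S \in \add(\mathcal G)$. I want $\mathcal G^{\bot} \subseteq \mathcal S^{\bot}$, so let $C \in \mathcal G^{\bot}$; then $\Ext^i_{\mathcal C}(G', C) = 0$ for all $G' \in \add(\mathcal G)$ and all $i \ge 1$. Applying $\Hom_{\mathcal C}(-,C)$ to $S \hookrightarrow G^S \twoheadrightarrow M^S$ gives the long exact sequence with $\Ext^i_{\mathcal C}(G^S, C) = 0$ and $\Ext^{i+1}_{\mathcal C}(M^S, C) = 0$ flanking $\Ext^i_{\mathcal C}(S, C)$, which forces $\Ext^i_{\mathcal C}(S, C) = 0$ for all $i \ge 1$; thus $C \in \mathcal S^{\bot}$. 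This is a one-step dimension shift rather than an induction. I do not anticipate a serious obstacle here: the only points requiring care are that $\mathcal X^{\bot}$ (total $\Ext$-orthogonality, $\bot$ with no subscript) is closed under finite sums and summands — which is immediate from additivity of $\Ext$ — and the correct indexing of the long exact sequences. The main (mild) subtlety is simply making sure the induction in (a) runs in the right direction and that ``$\add$'' rather than just the class itself is used consistently, which is exactly why the orthogonality is the total one.
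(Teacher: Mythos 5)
Your proof is correct: the downward induction on $t$ using the long exact sequence of $\Hom_{\C}(-,C)$ applied to $\Ker(p_t)\hookrightarrow G_{t-1}\twoheadrightarrow G_t$ for (a), and the one-step dimension shift on $S\hookrightarrow G\twoheadrightarrow M$ for (b), together with closure of $(-)^{\bot}$ under finite sums and summands, is exactly the standard argument. The paper itself declares this lemma straightforward and leaves it to the reader, so your write-up supplies precisely the intended proof.
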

\begin{proof} The proof is straightforward and it is left to the reader.
\end{proof}
\begin{prop}\label{prop:ort f vs s vs g}  For a quiver $Q,$ an infinite cardinal $\kappa,$  an abelian category $\C$ and a class of objects $\mathcal{W}\subseteq\mathcal{C},$
	 the following statements hold true. 
\begin{enumerate}
\item Let $Q(x,-)$ be finite for all $x\in Q_{0}$, $\kappa\geq \ltccn(Q)$
and $\C$ be $AB3(\kappa).$  Then $^{\bot}s_{*}(\mathcal{W})={}^{\bot}f_{*}(\mathcal{W})$. 
\item Let $Q(-,x)$ be finite for all $x\in Q_{0}$, $\kappa\geq \rtccn(Q)$
and $\C$ be $AB3^*(\kappa).$  Then $s_{*}(\mathcal{W})^{\bot}=g_{*}(\mathcal{W})^{\bot}$. 
\end{enumerate}
\end{prop}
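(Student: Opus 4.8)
The plan is to deduce both statements from Lemma \ref{lem:Lemita par de cotorsion s g} together with the filtrations produced in Lemma \ref{lem:filtraciones}; by duality it suffices to treat (a). First I would fix $i\in Q_0$ and aim to show $s_*(\mathcal{W})^{\bot}\subseteq\{s_i(W)\}_W^{\bot}$ and the reverse-type inclusions at the level of the generating families, eventually patching over all $i$. Since each $Q(x,-)$ is finite, for every vertex $x$ the set $Q^{x\to*}_k$ of paths of length $k$ starting at $x$ is finite and is empty for $k$ large (finiteness of $Q(x,-)$ forces the quiver to have no arbitrarily long paths out of $x$, so $Q^{x\to*}_k=\emptyset$ once $k$ exceeds the maximal length of a path starting at $x$). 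Hence Lemma \ref{lem:filtraciones}(a) applies and gives, for each $x\in Q_0$, a finite sequence of short exact sequences
\[
\eta_x^k:\;\suc[\coprod_{\rho\in Q_k^{x\to*}}f_{t(\rho)}][\coprod_{\rho\in Q_{k-1}^{x\to*}}f_{t(\rho)}][\coprod_{\rho\in Q_{k-1}^{x\to*}}s_{t(\rho)}][q_k]
\]
with all the middle and left-hand terms in $\add(f_*(\mathcal{C}))$ and all the quotients in $\add(s_*(\mathcal{C}))$, and with the chain terminating at $0$ after finitely many steps (when $k$ exceeds the longest path out of $x$, the left term of $\eta_x^k$ is $0$).

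Next I would apply these exact sequences after evaluating at an object $W\in\mathcal{W}$. For fixed $x$, the sequences $\{(\eta_x^k)(W)\}_{k\geq1}$ witness that $f_x(W)=\bigl(\coprod_{\rho\in Q_0^{x\to*}}f_{t(\rho)}\bigr)(W)$ admits a finite sequence of epimorphisms
$f_x(W)=E_0\twoheadrightarrow E_1\twoheadrightarrow\cdots\twoheadrightarrow E_s=0$
with each $E_t\in\add\bigl(\{f_j(W)\,:\,j\in Q_0\}\bigr)\subseteq\add(f_*(\mathcal{W}))$ and each $\Ker$ in $\add\bigl(\{s_j(W)\,:\,j\in Q_0\}\bigr)\subseteq\add(s_*(\mathcal{W}))$. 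Applying Lemma \ref{lem:Lemita par de cotorsion s g}(1) with $\mathcal{S}:=s_*(\mathcal{W})$ and $\mathcal{G}:=\{f_x(W)\}$ (one vertex $x$ and one object $W$ at a time), we obtain $s_*(\mathcal{W})^{\bot}\subseteq\{f_x(W)\}^{\bot}$; ranging over all $x\in Q_0$ and all $W\in\mathcal{W}$ yields $s_*(\mathcal{W})^{\bot}\subseteq f_*(\mathcal{W})^{\bot}$, i.e. $^{\bot}s_*(\mathcal{W})\supseteq{}^{\bot}f_*(\mathcal{W})$ after dualizing notation — I would state everything in terms of $^\bot(-)$ to match the proposition, using that $^\bot X=$ the class killed by $\Ext^i(-,X)$.

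For the reverse inclusion I would use Lemma \ref{lem:canonic presentation for s}(1), which gives an exact sequence $\suc[\coprod_{\rho\in Q_1^{x\to*}}f_{t(\rho)}][f_x][s_x]$ in $\Fun(\C,\Rep(Q,\C))$; evaluated at $W$ this is a short exact sequence $\suc[\coprod_{\rho\in Q_1^{x\to*}}f_{t(\rho)}(W)][f_x(W)][s_x(W)]$ with both non-$s_x(W)$ terms in $\add(f_*(\mathcal{W}))$ (finiteness of $Q_1^{x\to*}$ is automatic once $Q(x,-)$ is finite). Lemma \ref{lem:Lemita par de cotorsion s g}(2) with $\mathcal{S}:=\{s_x(W)\}$ and $\mathcal{G}:=f_*(\mathcal{W})$ then gives $f_*(\mathcal{W})^{\bot}\subseteq\{s_x(W)\}^{\bot}$, and ranging over $x,W$ gives $f_*(\mathcal{W})^{\bot}\subseteq s_*(\mathcal{W})^{\bot}$. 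Combining the two inclusions yields $^{\bot}s_*(\mathcal{W})={}^{\bot}f_*(\mathcal{W})$, and (b) follows by the dual argument using Lemma \ref{lem:canonic presentation for s}(2), Lemma \ref{lem:filtraciones}(2), and Lemma \ref{lem:Lemita par de cotorsion s g}. The main technical point to get right is the bookkeeping that the finite filtrations of Lemma \ref{lem:filtraciones} actually terminate and that ``$\add$ of a single object'' suffices to invoke Lemma \ref{lem:Lemita par de cotorsion s g}; there is no hard homological input beyond what those lemmas already package, so I expect the only real care is in the indexing of paths and in checking that the hypotheses $\kappa\geq\ltccn(Q)$ (resp. $\rtccn(Q)$) are exactly what is needed to make $f_x$ (resp. $g_x$) and the coproducts/products in the filtrations exist.
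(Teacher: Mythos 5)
Your skeleton is exactly the paper's: finite filtrations from Lemma \ref{lem:filtraciones}, the short exact sequence relating the stalk functor to $f_x$ (resp. $g_x$) from Lemma \ref{lem:canonic presentation for s}, and the two-part comparison Lemma \ref{lem:Lemita par de cotorsion s g}; the paper runs this on side (b) and leaves (a) to duality, while you run it on side (a). However, as written your citations do not apply to the data you produce, for two direction reasons. First, Lemma \ref{lem:filtraciones}(a) does not give a chain of epimorphisms $f_x(W)=E_0\twoheadrightarrow E_1\twoheadrightarrow\cdots\twoheadrightarrow E_s=0$ with kernels in $\add(s_*(\mathcal{W}))$: the sequences $\eta_x^k$ evaluated at $W$ read $\coprod_{\rho\in Q_k^{x\to*}}f_{t(\rho)}(W)\hookrightarrow\coprod_{\rho\in Q_{k-1}^{x\to*}}f_{t(\rho)}(W)\twoheadrightarrow\coprod_{\rho\in Q_{k-1}^{x\to*}}s_{t(\rho)}(W)$, i.e. a finite \emph{decreasing filtration by subobjects} of $f_x(W)$ with cokernels in $\add(s_*(\mathcal{W}))$; the epimorphism chain you describe is what Lemma \ref{lem:filtraciones}(b) gives for $g_x$, which is what the paper uses for (b). Second, Lemma \ref{lem:Lemita par de cotorsion s g} concludes inclusions between the \emph{right} orthogonals $\mathcal{S}^{\bot}$ and $\mathcal{G}^{\bot}$, whereas statement (a) concerns the \emph{left} orthogonals $^{\bot}s_*(\mathcal{W})$ and $^{\bot}f_*(\mathcal{W})$; the inclusion $s_*(\mathcal{W})^{\bot}\subseteq f_*(\mathcal{W})^{\bot}$ you extract is not delivered by part (1) of that lemma from a subobject filtration, and it is not equivalent ``after dualizing notation'' to any inclusion between $^{\bot}s_*(\mathcal{W})$ and $^{\bot}f_*(\mathcal{W})$. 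Similarly, part (2) of the lemma needs $S\hookrightarrow G\twoheadrightarrow M$ with $S$ a subobject, while Lemma \ref{lem:canonic presentation for s}(a) exhibits $s_x(W)$ as a \emph{quotient} of $f_x(W)$ with kernel $\coprod_{\rho\in Q_1^{x\to*}}f_{t(\rho)}(W)\in\add(f_*(\mathcal{W}))$.

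The repair is routine and the substance of your argument is correct once it is made: either prove (b) directly (there the epi-chains and the embedding $s_x(W)\hookrightarrow g_x(W)$ match Lemma \ref{lem:Lemita par de cotorsion s g} verbatim — this is precisely the paper's proof) and deduce (a) by passing to $Q^{op}$ and $\C^{op}$; or stay on side (a) but invoke the evident duals of Lemma \ref{lem:Lemita par de cotorsion s g}(1) and (2): vanishing of $\Ext^i(C,-)$ propagates down a finite filtration whose subquotients lie in $\add(s_*(\mathcal{W}))$, giving $^{\bot}s_*(\mathcal{W})\subseteq{}^{\bot}f_*(\mathcal{W})$, and propagates from $f$'s to $s_x(W)$ through the short exact sequence above via the long exact sequence in the second variable, giving $^{\bot}f_*(\mathcal{W})\subseteq{}^{\bot}s_*(\mathcal{W})$. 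Your remaining observations — that finiteness of $Q(x,-)$ makes the filtration terminate (with the last nonzero term already a coproduct of stalks), that $Q_1^{x\to*}$ is finite so the relevant kernel lies in $\add(f_*(\mathcal{W}))$, and that the cardinal hypotheses are only needed to make the $f_i$ (resp. $g_i$) and the functorial (co)presentations exist — are exactly the points the paper makes.
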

\begin{proof}
Let us prove (b). Consider $x\in Q_{0}$ and $W\in\mathcal{W}$. Since
$Q(-,x)$ is finite, there is $n\geq0$ such that $n$ is the maximum
length of the paths ending at $x$. In particular, $\prod_{\rho\in Q_{n}^{*\rightarrow x}}g_{s(\rho)}=\prod_{\rho\in Q_{n}^{*\rightarrow x}}s_{s(\rho)}$.
Therefore, by Lemma \ref{lem:filtraciones}(b), we have a sequence
of epimorphisms
\[
g_{x}(W)\overset{p_{1}}{\twoheadrightarrow}\prod_{\rho\in Q_{1}^{*\rightarrow x}}g_{s(\rho)}(W)\overset{p_{2}}{\twoheadrightarrow}\cdots\overset{p_{n}}{\twoheadrightarrow}\prod_{\rho\in Q_{n}^{*\rightarrow x}}g_{s(\rho)}(W)\overset{p_{n+1}}{\twoheadrightarrow}0,
\]
such that $\prod_{\rho\in Q_{k}^{*\rightarrow x}}g_{s(\rho)}(W)\in\add(g_{*}(\mathcal{W}))$
and $\Ker(p_{k})\in\add(s_{*}(\mathcal{W}))$ for all $t\in\{1,\cdots,n+1\}$.
Moreover, by Lemma \ref{lem:canonic presentation for s}(b), for every
$S\in s_{*}(\mathcal{W})$ there is a short exact sequence $\suc[S][G][M]$
with $G,M\in\add(g_{*}(\mathcal{W})).$ Therefore, by Lemma \ref{lem:Lemita par de cotorsion s g},
$s_{*}(\mathcal{W})^{\bot}=g_{*}(\mathcal{W})^{\bot}$. 
\end{proof}

\begin{prop}\label{Descrip-Phi-Psi}  For a quiver $Q,$ an infinite cardinal  $\kappa,$ an abelian category $\C$ and  a complete hereditary cotorsion
pair $\p$ in $\C,$  the following statements hold true. 
\begin{enumerate}
\item Let $Q(x,-)$ be finite for all $x\in Q_{0}$, $\kappa\geq\ltccn(Q),$ $Q$ be left rooted
and $\C$ be $AB4(\kappa).$  Then $^{\bot}s_{*}(\mathcal{B})={}{}^{\bot}f_{*}(\mathcal{B})=\Phi(\mathcal{A})$. 

\item Let $Q(-,x)$ be finite for all $x\in Q_{0}$, $\kappa\geq\rtccn(Q),$ $Q$ be right rooted
and $\C$ be $AB4^*(\kappa).$  Then $s_{*}(\mathcal{A})^{\bot}=g_{*}(\mathcal{A})^{\bot}=\Psi(\mathcal{B})$. 
\end{enumerate}
\end{prop}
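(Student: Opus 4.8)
The plan is to prove both statements of Proposition \ref{Descrip-Phi-Psi} in parallel, relying on duality, and to split each equality into two parts: the first equality (the ``$s_*$ vs $f_*$/$g_*$'' one) and the second equality (identifying this orthogonal class with $\Phi(\A)$, resp. $\Psi(\B)$). I will prove statement (b); statement (a) is dual and will be obtained by applying (b) to the opposite quiver $Q^{op}$ and the opposite category $\C^{op}$, noting that $\Rep(Q^{op},\C^{op})\cong\Rep(Q,\C)^{op}$ carries $\Psi_{Q}(\B)$ to $\Phi_{Q^{op}}(\B)$, right-rooted to left-rooted, $\rtccn$ to $\ltccn$, $AB4^*$ to $AB4$, $g_*$ to $f_*$, and so on.

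For the first equality $s_{*}(\mathcal{A})^{\bot}=g_{*}(\mathcal{A})^{\bot}$ in (b): I would simply invoke Proposition \ref{prop:ort f vs s vs g}(b), whose hypotheses are exactly that $Q(-,x)$ is finite for all $x\in Q_0$, that $\kappa\geq\rtccn(Q)$, and that $\C$ is $AB3^*(\kappa)$ — all of which are part of our hypotheses (here $AB4^*(\kappa)$ implies $AB3^*(\kappa)$, and $\A$ plays the role of the class $\mathcal{W}$). So this part is essentially free.

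For the second equality $g_{*}(\mathcal{A})^{\bot}=\Psi(\mathcal{B})$: I would combine Theorem \ref{thm:cotrsion pairs rep dual}(b) with Theorem \ref{thm:cotrsion pairs rep dual}(d). First note $\A$ is a generating class in $\C$: since $\p$ is complete, every $C\in\C$ admits a special $\A$-precover, in particular an epimorphism from an object of $\A$, so $\A$ is generating. Then Theorem \ref{thm:cotrsion pairs rep dual}(b), which needs $\kappa\geq\rmcn(Q)$ and $\C$ $AB3^*(\kappa)$, gives $\Psi(\B)=s_*(\A)^{\bot_1}$; here I must check $\rmcn(Q)\leq\rtccn(Q)\leq\kappa$, which holds because $|Q_1^{x\to *}|\leq |Q(x,-)|$ for every vertex $x$ (each arrow out of $x$ is itself a path of length one starting at $x$), hence $\rmcn(Q)\leq\rtccn(Q)$. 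But we need the statement for $\bot$ (all $\Ext^i$, $i\geq1$), not just $\bot_1$. Since $\p$ is complete \emph{and} hereditary, we may also use exactness of $g_i$: by Remark \ref{AB3+Inj=AB4}(a), $AB4^*(\kappa)$ with $\kappa\geq\lccn(Q)$ forces $g_i:\C\to\Rep(Q,\C)$ exact for all $i$; and $\lccn(Q)\leq\rtccn(Q)$ need not hold in general, so instead I note that since $Q(-,x)$ is finite for all $x$ we have $\lccn_x(Q)\leq\aleph_0$ for each $x$, hence $g_i$ is exact for all $i$ by Remark \ref{AB3+Inj=AB4}(c). With $g_i$ exact, Proposition \ref{prop: Ext vs f vs e}(b) upgrades the $\Ext^1$-isomorphism to all degrees, and Theorem \ref{thm:cotrsion pairs rep dual}(a) gives $^{\bot_1}g_*(\B)=\Rep(Q,\A)$; more to the point, Proposition \ref{prop:f y clases ortogonales}(4) (with $\X^{\bot_n}$) together with the interplay $s_*\leftrightarrow g_*$ identifies $g_*(\A)^{\bot}$ with $\Psi(\B)$ once one knows $\Psi(\B)=\Rep(Q,\B)$ — which is Theorem \ref{thm:cotrsion pairs rep dual}(3), valid since $Q$ is right rooted and $\kappa\geq\rmcn(Q)$, combined with the fact that $\B=\A^{\bot}$ is closed under extensions and products of $<\kappa$ objects (here $\prod_{<\kappa}\B=\B$ follows from $\C$ being $AB3^*(\kappa)$ and the monomorphism $\Ext^1(A,\prod_i B_i)\hookrightarrow\prod_i\Ext^1(A,B_i)$). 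The cleanest route: establish $\Psi(\B)=s_*(\A)^{\bot_1}$ (Theorem \ref{thm:cotrsion pairs rep dual}(b)), then bootstrap to $s_*(\A)^{\bot}$ using that $\Psi(\B)\subseteq\Rep(Q,\B)$ and each $s_x(A)$ has finite $\pd$-type behavior, i.e. use Proposition \ref{prop:s,c,k ext}(2) inductively via the dual of the canonical co-presentation; but the shortest correct argument is to observe $g_*(\A)^{\bot}\subseteq s_*(\A)^{\bot}\subseteq s_*(\A)^{\bot_1}=\Psi(\B)$ and, conversely, $\Psi(\B)=\Rep(Q,\B)^{\phantom{x}}$ — no: $\Psi(\B)\subseteq\Rep(Q,\B)$ only — so one shows $\Psi(\B)\subseteq g_*(\A)^{\bot}$ directly from Proposition \ref{prop: Ext vs f vs e}(b): for $G\in\Psi(\B)$, $\Ext^n_{\Rep(Q,\C)}(g_x(A),G)\cong$ nothing useful, so instead use the dual, namely that $\Ext^n(?, g_x(-))$ relates to $\Ext^n(e_x(?),-)$; hence for $G\in\Psi(\B)$ and $A\in\A$, $\Ext^n(g_x(A),G)$ is controlled by $\Ext^n$ in $\C$ between $A$ and the vertices of $G$, which vanishes since $G\in\Rep(Q,\B)$ and $\p$ is hereditary.

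The main obstacle, then, is precisely the bookkeeping in the previous paragraph: passing from the $\Ext^1$-level statements in Theorem \ref{thm:cotrsion pairs rep dual} and Propositions \ref{cisi-iso}, \ref{prop:f y clases ortogonales} to the full orthogonal $(-)^{\bot}$ (all higher Ext), and verifying that all the cardinal inequalities ($\rmcn(Q)\leq\rtccn(Q)$, $\lccn_x(Q)\leq\aleph_0$, closure $\prod_{<\kappa}\B=\B$) hold under the single hypothesis ``$Q(-,x)$ finite for all $x$, $\kappa\geq\rtccn(Q)$, $\C$ is $AB4^*(\kappa)$, $Q$ right rooted''. Once the exactness of all $g_i$ is secured (from $Q(-,x)$ finite, via Remark \ref{AB3+Inj=AB4}(c)) and $\Psi(\B)=\Rep(Q,\B)$ is in hand (from right-rootedness, Proposition \ref{FiRep}(2)), the higher-degree orthogonality is a direct consequence of Proposition \ref{prop: Ext vs f vs e}(b) and the hereditariness of $\p$, and both displayed equalities follow.
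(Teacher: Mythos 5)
Your reduction $g_{*}(\A)^{\bot}=s_{*}(\A)^{\bot}$ via Proposition \ref{prop:ort f vs s vs g}(b), and the inclusion $s_{*}(\A)^{\bot}\subseteq s_{*}(\A)^{\bot_{1}}=\Psi(\B)$ via Theorem \ref{thm:cotrsion pairs rep dual}(b) (with the cardinal checks $\rmcn(Q)\leq\rtccn(Q)$ and $\lccn_{x}(Q)\leq\aleph_{0}$, which are fine), match the paper. The genuine gap is the reverse inclusion $\Psi(\B)\subseteq g_{*}(\A)^{\bot}$, i.e.\ the vanishing of $\Ext^{n}_{\Rep(Q,\C)}(g_{x}(A),G)$ (equivalently of $\Ext^{n}(s_{x}(A),G)$) for \emph{all} $n\geq 1$ when $G\in\Psi(\B)$ and $A\in\A$. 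Your justification rests on the claim that this group is ``controlled by $\Ext^{n}_{\C}(A,G_{j})$,'' citing Proposition \ref{prop: Ext vs f vs e}(b); but that proposition (and Proposition \ref{cisi-iso}, which moreover only treats $n=1$) computes Ext \emph{into} $g_{x}(C)$, with $g_{x}$ in the second variable, whereas here $g_{x}(A)$ (or $s_{x}(A)$) sits in the first variable, and no adjunction-type isomorphism of that shape exists in the paper. You noticed this yourself (``nothing useful'') but then appealed to ``the dual,'' which is the same second-variable statement; hereditariness of $\p$ in $\C$ alone cannot be transported across this non-existent isomorphism, so the higher-degree vanishing is not established.

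The paper closes exactly this gap by a different mechanism: by the dual of Theorem \ref{hccp+ep}(b), the pair $(\Rep(Q,\A),\Psi(\B))$ is a \emph{complete hereditary} cotorsion pair in $\Rep(Q,\C)$ (here right-rootedness, $AB4^{*}(\kappa)$ and $\kappa\geq\max\{\lccn(Q),\rtccn(Q)\}$ enter), so by Remark \ref{Rk-her}(b) one has $\Psi(\B)=\Rep(Q,\A)^{\bot}$, and then $\Rep(Q,\A)^{\bot}\subseteq s_{*}(\A)^{\bot}$ simply because $s_{*}(\A)\subseteq\Rep(Q,\A)$; combining with $s_{*}(\A)^{\bot}\subseteq s_{*}(\A)^{\bot_{1}}=\Psi(\B)$ gives $s_{*}(\A)^{\bot}=\Psi(\B)$, and Proposition \ref{prop:ort f vs s vs g}(b) then yields $g_{*}(\A)^{\bot}=\Psi(\B)$. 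Alternatively, the dimension-shifting route you sketched and abandoned (inducting with the canonical co-presentation of Corollary \ref{cor:copresentacion canonica} and Proposition \ref{prop: Ext vs f vs e}(b)) could in principle replace this, but it needs extra care (e.g.\ commuting $\Ext$ with products over $Q_{0}$ requires cardinality hypotheses you do not have), so as written your proof is incomplete at precisely this step.
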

\begin{proof}
Let us prove (b). By the dual of Theorem \ref{hccp+ep} (b),
$(\Rep(Q,\A),\Psi(\B))$ is a complete hereditary cotorsion pair in
$\Rep(Q,\C)$. Now, observe the following equalities and contentions
\[s_{*}(\A){}^{\bot}\subseteq s_{*}(\A){}^{\bot_{1}}\overset{(1)}{=}\Psi(\A^{\bot_1})=\Psi(\mathcal{B})=\Rep(Q,\A)^{\bot}\overset{(2)}{\subseteq}s_{*}(\mathcal{A})^{\bot}\text{,}
\]
where (1) follows from Proposition \ref{prop:f y clases ortogonales} (c) since 
 $\mathcal{A}$ is a generating class, and (2) follows from $s_{*}(\mathcal{A})\subseteq\Rep(Q,\A).$
We have proved that $s_{*}(\mathcal{A})^{\bot}=\Psi(\mathcal{B})$.
Hence, by Proposition \ref{prop:ort f vs s vs g} (b), we have that
$g_{*}(\mathcal{A})^{\bot}=\Psi(\mathcal{B})$. 
\end{proof}

\section{Subcategories of quiver representations}

Let $\C$ be an abelian category, $Q$ be a quiver and $F\in\operatorname{Rep}(Q,\mathcal{C}).$
We recall that the \textbf{support} of $F$ is the set $\operatorname{Supp}(F):=\{i\in Q_{0}\,|\:F(i)\neq0\}.$
If $\operatorname{Supp}(F)\in\F(Q)$ we say that $F$ is of (or has)
\textbf{finite-support}. In case $\operatorname{Supp}(F)\in\B(Q),$
we say that $F$ is of (or has) \textbf{bottom-support}. Finally,
$F$ is of \textbf{finite-bottom-support }if $\operatorname{Supp}(F)\in\F\B(Q).$
The corresponding terminology is introduced in the case that $\operatorname{Supp}(F)\in\T(Q)$
or $\operatorname{Supp}(F)\in\F\T(Q).$

\begin{defn} Let $\C$ be an abelian category, $Q$ be a quiver and $\Z$ be a
set of full subquivers of $Q.$ We consider the class 
\[
\operatorname{Rep}_{\Z}(Q,\mathcal{C}):=\{F\in\operatorname{Rep}(Q,\mathcal{C})\;:\;\operatorname{Supp}(F)\in\Z\}\cup\{0\}.
\]
We have the classes: $\operatorname{Rep}^{f}(Q,\mathcal{C}):=\operatorname{Rep}_{\F(Q)}(Q,\mathcal{C}),$
$\operatorname{Rep}^{fb}(Q,\mathcal{C}):=\operatorname{Rep}_{\F\B(Q)}(Q,\mathcal{C}),$
$\operatorname{Rep}^{ft}(Q,\mathcal{C}):=\operatorname{Rep}_{\F\T(Q)}(Q,\mathcal{C})$
and $\operatorname{Rep}^{fbt}(Q,\mathcal{C}):=\operatorname{Rep}_{\F\B\T(Q)}(Q,\mathcal{C}).$
\
 We say that the set $\Z$ is \textbf{directed} if $\forall\,Z_{1},Z_{2}\in\Z$
$\exists\,Z\in\Z$ such that $Z_{1}\cup Z_{2}\subseteq Z.$ The set
$\Z$ is called \textbf{convex} if $\forall\,Z\in\Z$ any non-empty
full subquiver $Z'\subseteq Z$ belongs to $\Z.$ 
\end{defn}

Notice that full subquivers are totally determined by their set of vertices. For
this reason, in general we will be identifying full subquivers with
their sets of vertices. In particular, we will be thinking of a family
$\Z$ of full subquivers as a family of subsets of vertices. For example,
the set $\Z$ is convex if for all $Z\in\Z$ any non-empty subset $Z'\subseteq Z$
belongs to $\Z$.

Let $\mathcal{C}$ be an abelian category and $Q$ be a quiver. For
every subquiver $Q'\subseteq Q$, we recall that we have the extension
functor $\iota_{Q'}:\operatorname{Rep}(Q',\mathcal{C})\rightarrow\operatorname{Rep}(Q,\mathcal{C})$
and the restriction functor $\pi_{Q'}:\operatorname{Rep}(Q,\mathcal{C})\rightarrow\operatorname{Rep}(Q',\mathcal{C}).$
Note that $\pi_{Q'}\iota_{Q'}=1_{\operatorname{Rep}(Q',\mathcal{C})}.$
We also consider 
\[
\Ima(\iota_{Q'}):=\{F\in\operatorname{Rep}(Q,\mathcal{C})\;:\;\exists\,F'\in\operatorname{Rep}(Q',\mathcal{C})\text{ with }F=\iota_{Q'}(F')\}.
\]
Furthermore, for a set $\Z$ of subquivers of $Q,$ we set $\Ima(\iota_{\Z}):=\bigcup_{Z\in\Z}\,\Ima(\iota_{Z}).$

\begin{rem}\label{supp-ex-seq} For an abelian category $\C$ and a quiver $Q,$
the following statements hold true. 
\begin{itemize}
\item[$\mathrm{(a)}$] For any exact sequence $\suc[A][B][C]$ in $\operatorname{Rep}(Q,\mathcal{C}),$
it follows that $\operatorname{Supp}(B)=\operatorname{Supp}(A)\cup\operatorname{Supp}(C).$ 
\item[$\mathrm{(b)}$] Let $S$ be a full subquiver of $Q$ and $M\in\operatorname{Rep}(Q,\mathcal{C}).$
If $\operatorname{Supp}(M)\subseteq S,$ then $M=\iota_{S}(\pi_{S}(M))\in\Ima(\iota_{S}).$ 
\end{itemize}
\end{rem}

\begin{prop}\label{Main-prop} For an abelian category $\C,$ a quiver $Q$ and
a set $\Z$ of full subquivers of $Q,$ the following statements hold
true. 
\begin{itemize}
\item[$\mathrm{(a)}$] $\operatorname{Rep}_{\Z}(Q,\mathcal{C})\subseteq\Ima(\iota_{\Z}).$ 

\item[$\mathrm{(b)}$] $\Ima(\iota_{\Z})=\operatorname{Rep}_{\Z}(Q,\mathcal{C})$ if $\Z$
is a convex set.

\item[$\mathrm{(c)}$] $\Ima(\iota_{\Z})$ is closed under direct summands, subobjects and
quotients in the abelian category $\operatorname{Rep}(Q,\mathcal{C}).$
In particular, $\Ima(\iota_{\Z})$ is an abelian subcategory of $\operatorname{Rep}(Q,\mathcal{C}).$ 

\item[$\mathrm{(d)}$] If $\Z$ is directed, then $\Ima(\iota_{\Z})$ is closed under extensions
in $\operatorname{Rep}(Q,\mathcal{C}).$ 
\end{itemize}
\end{prop}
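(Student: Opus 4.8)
The plan is to prove the four items in the order (a), (c), (b), (d), since (b) will follow easily from (a) together with the closure properties established in (c), and (d) exploits directedness directly.

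First I would prove (a). Let $F\in\operatorname{Rep}_{\Z}(Q,\C)$ be non-zero, so $\operatorname{Supp}(F)=S_0$ for some full subquiver $S\in\Z$. Since $F_i=0$ for every $i\in Q_0-S_0$ and, consequently, $F_\alpha=0$ for every arrow $\alpha$ with source or target outside $S_0$ (any such arrow has a zero object at one end), the representation $F$ is supported inside the full subquiver $S$. Then Remark \ref{supp-ex-seq}(b) gives $F=\iota_S(\pi_S(F))\in\Ima(\iota_S)\subseteq\Ima(\iota_{\Z})$; the zero object lies in $\Ima(\iota_{\Z})$ by definition. This is the easy direction.

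Next I would prove (c). Closure under direct summands, subobjects and quotients follows from the fact that each $\iota_Z$ is exact, full and faithful, and that $\pi_Z\iota_Z=1_{\operatorname{Rep}(Z,\C)}$. Concretely, if $F=\iota_Z(F')$ and $g\colon G\hookrightarrow F$ is a monomorphism in $\operatorname{Rep}(Q,\C)$, then $G_i$ is a subobject of $F_i=0$ for $i\notin Z_0$, hence $G_i=0$ there, and $G_\alpha=0$ for arrows touching $Q_0-Z_0$; therefore $G$ is supported in $Z$ and, by Remark \ref{supp-ex-seq}(b), $G=\iota_Z(\pi_Z(G))\in\Ima(\iota_Z)$. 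The argument for quotients is identical (a quotient of $0$ is $0$), and for direct summands one uses that a summand is in particular a subobject. This shows $\Ima(\iota_{\Z})$ is closed under subobjects, quotients and summands; being also closed under the formation of kernels and cokernels of its own morphisms (these are computed vertex-wise and vanish outside any $Z_0$ containing the supports), $\Ima(\iota_{\Z})$ is a full abelian subcategory of $\operatorname{Rep}(Q,\C)$.

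For (b), the inclusion $\operatorname{Rep}_{\Z}(Q,\C)\subseteq\Ima(\iota_{\Z})$ is (a). For the reverse, take $F=\iota_Z(F')$ with $Z\in\Z$ and $F\neq 0$; then $\operatorname{Supp}(F)=\operatorname{Supp}(F')$ is a non-empty full subquiver of $Z$, hence belongs to $\Z$ by convexity, so $F\in\operatorname{Rep}_{\Z}(Q,\C)$. Finally, for (d), suppose $\Z$ is directed and $\suc[A][B][C]$ is exact in $\operatorname{Rep}(Q,\C)$ with $A,C\in\Ima(\iota_{\Z})$, say $A=\iota_{Z_1}(A')$ and $C=\iota_{Z_2}(C')$ with $Z_1,Z_2\in\Z$. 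By Remark \ref{supp-ex-seq}(a), $\operatorname{Supp}(B)=\operatorname{Supp}(A)\cup\operatorname{Supp}(C)\subseteq Z_1\cup Z_2\subseteq Z$ for some $Z\in\Z$, and then Remark \ref{supp-ex-seq}(b) gives $B=\iota_Z(\pi_Z(B))\in\Ima(\iota_{\Z})$. The main obstacle, such as it is, is purely bookkeeping: making sure that a representation whose vertex-values vanish outside a full subquiver $Z$ also has all its arrow-components outside $Z$ equal to zero (so that it genuinely lies in the image of $\iota_Z$, not merely agrees with one on vertices); this is immediate because $\C$ is abelian and $\Hom_\C(X,0)=\Hom_\C(0,X)=0$, and it is exactly the content of Remark \ref{supp-ex-seq}(b), which we invoke repeatedly.
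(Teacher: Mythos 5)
Your proof is correct and takes essentially the same route as the paper: every item is reduced to Remark \ref{supp-ex-seq}, with convexity invoked for (b) and directedness for (d) exactly as in the paper's argument. The only cosmetic difference is that you phrase the closure properties in (c) directly in terms of subobjects, quotients and summands (vertex-wise vanishing outside $Z$), whereas the paper argues via kernels and images of morphisms out of an object of $\Ima(\iota_{\Z})$ using Remark \ref{supp-ex-seq}(a); the content is the same.
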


\begin{proof}
(a) It follows from Remark \ref{supp-ex-seq} (b). 

(b) Let $\Z$ be a convex set. From (a), it is enough to show that
$\Ima(\iota_{\Z})\subseteq\operatorname{Rep}_{\Z}(Q,\mathcal{C}).$
Let $0\neq G\in\Ima(\iota_{\Z}).$ Then, there is some $Z\in\Z$ and
$F\in\operatorname{Rep}(Z,\mathcal{C})$ such that $G=\iota_{Z}(F).$
Therefore $\emptyset\neq\operatorname{Supp}(G)=\operatorname{Supp}(F)\subseteq Z$
and thus $\operatorname{Supp}(G)\in\Z;$ proving that $\Ima(\iota_{\Z})\subseteq\operatorname{Rep}_{\Z}(Q,\mathcal{C}).$ 

(c) Let $\alpha:B\to C$ be a morphism in $\operatorname{Rep}(Q,\mathcal{C})$
with $B\in\Ima(\iota_{\Z}).$ In particular, $B=\iota_{Z}(F)$ for
some $Z\in\Z$ and $F\in\operatorname{Rep}(Z,\mathcal{C}).$ We also
have that $\operatorname{Supp}(B)=\operatorname{Supp}(F)\subseteq Z.$
\
 Let us show that $K:=\Ker(\alpha)\in\Ima(\iota_{Z}).$ Indeed, by
Remark \ref{supp-ex-seq} (a), we get that $\operatorname{Supp}(K)\subseteq\operatorname{Supp}(B)\subseteq Z.$
Thus, by Remark \ref{supp-ex-seq} (b), we conclude that $K\in\Ima(\iota_{Z}).$
Similarly, it can be shown that $\Ima(\alpha)\in\Ima(\iota_{\Z})$
and also that $\Ima(\iota_{\Z})$ is closed under direct summands
in $\operatorname{Rep}(Q,\mathcal{C}).$ 

(d) Let $\Z$ be a directed set. Consider an exact sequence $\suc[A][B][C]$
in $\operatorname{Rep}(Q,\mathcal{C}),$ with $A,C\in\Ima(\iota_{\Z}).$
In particular, there are $Z_{1},Z_{2}\in\Z,$ $F\in\operatorname{Rep}(Z_{1},\mathcal{C})$
and $G\in\operatorname{Rep}(Z_{2},\mathcal{C})$ such that $A=\iota_{Z_{1}}(F)$
and $C=\iota_{Z_{2}}(G).$ Since $\Z$ is directed, there is some
$Z\in\Z$ such that $Z_{1}\cup Z_{2}\subseteq Z.$ Therefore, we have
that $\operatorname{Supp}(B)=\operatorname{Supp}(A)\cup\operatorname{Supp}(C)\subseteq Z\in\Z$
and thus, by Remark \ref{supp-ex-seq} (b), we get that $B\in\Ima(\iota_{\Z}).$ 
\end{proof}

\begin{rem}\label{MainF-FB-B} Let $\C$ be an abelian category and $Q$ be a
quiver. Notice that, by Remark \ref{main-quivers}, the sets $\F(Q),$
$\F\B(Q),$ $\F\T(Q)$ and $\F\B\T(Q)$ are directed and convex. However,
the sets $\B(Q)$ and $\T(Q)$ are directed but not necessarily convex.
Thus, by Proposition \ref{Main-prop} we have that $\operatorname{Rep}^{f}(Q,\mathcal{C})=\Ima(\iota_{\F(Q)}),$
$\operatorname{Rep}^{fb}(Q,\mathcal{C})=\Ima(\iota_{\F\B(Q)}),$ $\operatorname{Rep}^{ft}(Q,\mathcal{C})=\Ima(\iota_{\F\T(Q)})$
and $\operatorname{Rep}^{fbt}(Q,\mathcal{C})=\Ima(\iota_{\F\B\T(Q)});$
and also that these classes together with $\Ima(\iota_{\B(Q)})$ and
$\Ima(\iota_{\T(Q)})$ are closed under: extensions, direct summands,
sub-objects and quotients in the abelian category $\operatorname{Rep}(Q,\mathcal{C}).$
In particular, all these classes are full abelian subcategories of
$\operatorname{Rep}(Q,\mathcal{C}).$ 
\end{rem}

With the machinery we have developed in the previous sections we can
prove the following properties of $\Rep^{ft}(Q,\C)$. In order to do that, we start with the following lemma whose proof is left to the reader (the item (c) follows from Remark \ref{main-quivers} (b)).

\begin{lem} \label{previo-Repft} For a quiver $Q,$ the following statements hold true.
\begin{itemize}
\item[(a)] If there is an arrow $x\xrightarrow{\rho}z$ in $Q,$ then $\{z\}^+\subseteq 
\{x\}^+\cup\{x\}.$
\item[(b)] Let $i\in Q_0,$ $\kappa\geq\rccn_i(Q)$ be an infinite cardinal, $\C$ be an $AB3(\kappa)$ abelian category and $0\neq C\in \C.$ Then, for $f_i(C)\in\Rep(Q,\C),$ we have that $\Supp(f_i(C))^+=\emptyset$ and 
$\Supp(f_i(C))=t(Q(i,-))=\{i\}^+\cup\{i\}.$
\item[(c)] Let $\C$ be an abelian category and $F\in \Rep(Q,\C).$ Then, for any $x\in\Supp(F),$ we have $\{x\}^+\subseteq \Supp(F)\cup \Supp(F)^+.$ 
\end{itemize}
\end{lem}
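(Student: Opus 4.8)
The plan is to prove the three items by elementary manipulations with paths and supports; no homological input beyond the explicit description of $f_i(C)$ is needed.

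For (a), I would take an arbitrary $i\in\{z\}^+$, so $i\neq z$ and there is a path $\gamma$ with $s(\gamma)=z$ and $t(\gamma)=i$. Then $\gamma\rho$ is a path with $s(\gamma\rho)=x$ and $t(\gamma\rho)=i$. If $i=x$ then $i\in\{x\}$; if $i\neq x$, the path $\gamma\rho$ witnesses $i\in\{x\}^+$. Either way $i\in\{x\}^+\cup\{x\}$, proving the inclusion.

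For (b), I would first unwind the definition: for each $j\in Q_0$, $(f_i(C))_j=C^{(Q(i,j))}=\coprod_{p\in Q(i,j)}C$, a coproduct which exists since $\C$ is $AB3(\kappa)$ and $|Q(i,j)|\le\rccn_i(Q)\le\kappa$. As $C\neq0$, a coproduct of copies of $C$ is nonzero precisely when the index set is nonempty, so $j\in\Supp(f_i(C))$ if and only if $Q(i,j)\neq\emptyset$; hence $\Supp(f_i(C))=\{j\in Q_0\mid Q(i,j)\neq\emptyset\}=t(Q(i,-))$. Since $\epsilon_i\in Q(i,i)$ one has $i\in\Supp(f_i(C))$, and for $j\neq i$ the condition $Q(i,j)\neq\emptyset$ says exactly that there is a nontrivial path $i\to j$, i.e. $j\in\{i\}^+$; this gives $\Supp(f_i(C))=\{i\}^+\cup\{i\}$. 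For the vanishing of the plus, set $S:=\Supp(f_i(C))=t(Q(i,-))$ and suppose $k\in S^+$; then $k\notin S$ and there is a path from some $s\in S$ to $k$. Composing a path $i\to s$ (which exists because $s\in S=t(Q(i,-))$) with that path produces a path $i\to k$, so $k\in t(Q(i,-))=S$, contradicting $k\notin S$. Thus $S^+=\emptyset$.

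For (c), this is immediate from Remark \ref{main-quivers}(b): identifying $\{x\}$ and $\Supp(F)$ with the associated full subquivers, we have $\{x\}\subseteq\Supp(F)$ because $x\in\Supp(F)$, so Remark \ref{main-quivers}(b) yields $\{x\}^+\subseteq\Supp(F)^+\cup(\Supp(F)-\{x\})\subseteq\Supp(F)\cup\Supp(F)^+$. The only point demanding care throughout — and the one I would double-check — is the bookkeeping of trivial paths and loops in (b): one must remember that $\epsilon_i\in Q(i,i)$, so $i$ always lies in the support, while by definition $\{i\}^+$ and $S^+$ are disjoint from $\{i\}$ and $S$ respectively. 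Keeping these conventions straight is what makes the three-term equality in (b) come out exactly; the rest is routine.
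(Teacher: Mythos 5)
Your proof is correct and follows exactly the intended route: the paper leaves this lemma to the reader, indicating only that (c) follows from Remark \ref{main-quivers}(b), which is precisely how you prove (c), and your arguments for (a) and (b) are the routine path-composition and support computations (note only that, by the definition of $\mathrm{size}$, one in fact has the strict inequality $|Q(i,j)|<\rccn_i(Q)\leq\kappa$, which is what $AB3(\kappa)$ needs). Nothing further is required.
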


\begin{prop}\label{prop:Repft} For a quiver $Q,$ $TQ_0:=\{i\in Q_0\;:\; \{i\}\in\T(Q)\},$ an infinite cardinal $\kappa\geq\rccn(Q)$  
and an abelian $AB3(\kappa)$  category  $\C,$ the following statements hold true.
\begin{itemize}
\item[(a)] $f_{i}(X)\in\Rep^{ft}(Q,\C)$ for all $i\in TQ_{0}$ and $X\in C$.

\item[(b)] Let $\kappa\geq \ltccn(Q).$ Then, for any $F\in\Rep^{ft}(Q,\C)$,  the canonical presentation
$
\eta_{F}:\quad\suc[\coprod_{\rho\in Q_{1}}f_{t(\rho)}e_{s(\rho)}(F)][\coprod_{i\in Q_{0}}f_{i}e_{i}(F)][F]
$
is in $\Rep^{ft}(Q,\C)$.  
\item[(c)] $f_{i}:\C\rightarrow\Rep^{ft}(Q,\C)$ is left adjoint for $e_{i}:\Rep^{ft}(Q,\C)\rightarrow\C$
$\forall i\in TQ_{0}.$
\item[(d)] Let $f_i$ be exact, for all $i\in TQ_{0}.$ Then, for each $i\in TQ_{0},$ we have the natural isomorphisms 
$
\operatorname{Ext}_{\operatorname{Rep}^{ft}(Q,\mathcal{C})}^{n}(f_{i}(?),-)\cong\operatorname{Ext}_{\mathcal{C}}^{n}(?,e_{i}(-))\;\forall\,n\geq1.
$
\end{itemize}
\end{prop}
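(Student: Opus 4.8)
The plan is to verify the four statements (a)--(d) in turn, using the subquiver machinery of Section~2 together with the canonical presentation of Section~3.

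First, for (a), fix $i\in TQ_0$ and $0\neq X\in\C$ (the case $X=0$ is trivial since $f_i(0)=0$). By Lemma~\ref{previo-Repft} (b), $\Supp(f_i(X))=\{i\}^+\cup\{i\}$ and $\Supp(f_i(X))^+=\emptyset$; in particular $\Supp(f_i(X))^+$ is finite, so to conclude $f_i(X)\in\Rep^{ft}(Q,\C)$ it remains to see that $\Supp(f_i(X))$ is a finite set, i.e. $\{i\}^+$ is finite. But $i\in TQ_0$ means precisely that $\{i\}\in\T(Q)$, which by definition says $\{i\}^+$ is finite. Hence $\Supp(f_i(X))\in\F\T(Q)$ and $f_i(X)\in\Rep^{ft}(Q,\C)$.

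For (b), apply Corollary~\ref{cor:presentacion canonica} (which is available since $\kappa\geq\max\{\rccn(Q),\ltccn(Q)\}$ and $\C$ is $AB3(\kappa)$) to obtain $\eta_F$ in $\Rep(Q,\C)$. Since $\Rep^{ft}(Q,\C)$ is closed under extensions and subobjects in $\Rep(Q,\C)$ (Remark~\ref{MainF-FB-B}), it suffices to show that the middle and right-hand terms of $\eta_F$ lie in $\Rep^{ft}(Q,\C)$; the right-hand term $F$ is in it by hypothesis. For the coproducts, for each $i\in\Supp(F)$ the index $i$ satisfies $\{i\}^+\subseteq\Supp(F)\cup\Supp(F)^+$ by Lemma~\ref{previo-Repft} (c), and since $\Supp(F)\in\F\T(Q)$ both $\Supp(F)$ and $\Supp(F)^+$ are finite, so $\{i\}^+$ is finite, i.e.\ $i\in TQ_0$; thus $f_ie_i(F)=f_i(F_i)\in\Rep^{ft}(Q,\C)$ by (a). A similar argument handles $f_{t(\rho)}e_{s(\rho)}(F)=f_{t(\rho)}(F_{s(\rho)})$: if $F_{s(\rho)}\neq0$ then $s(\rho)\in\Supp(F)$, so $t(\rho)\in\{s(\rho)\}^+\cup\{s(\rho)\}\subseteq\Supp(F)\cup\Supp(F)^+$ (Lemma~\ref{previo-Repft} (a), (c)), and then $\{t(\rho)\}^+\subseteq\{s(\rho)\}^+\cup\{s(\rho)\}$ is finite by Remark~\ref{main-quivers} (b)/Lemma~\ref{previo-Repft} (a), so $t(\rho)\in TQ_0$ and $f_{t(\rho)}(F_{s(\rho)})\in\Rep^{ft}(Q,\C)$ by (a). The main subtlety here is making sure these coproducts, taken over all of $Q_0$ resp.\ $Q_1$, actually land in the subcategory $\Rep^{ft}(Q,\C)$; this follows because the coproduct computed in $\Rep(Q,\C)$ has support equal to the union of the (finitely many, contained in $\Supp(F)\cup\Supp(F)^+$) summands' supports, which is still a finite top subquiver.

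For (c), fix $i\in TQ_0$. Restricting the adjunction isomorphism $\Hom_{\Rep(Q,\C)}(f_i(C),F)\cong\Hom_\C(C,e_i(F))$ of Proposition~\ref{rem:adjuntos de ei} (a) to representations $F\in\Rep^{ft}(Q,\C)$, and using (a) to see $f_i(C)\in\Rep^{ft}(Q,\C)$ so that both sides make sense inside the subcategory, gives that $f_i\colon\C\to\Rep^{ft}(Q,\C)$ is left adjoint to the (exact) evaluation $e_i\colon\Rep^{ft}(Q,\C)\to\C$; naturality is inherited from the ambient adjunction. Finally for (d), with $f_i$ exact for all $i\in TQ_0$, the pair $(f_i,e_i)$ on $\Rep^{ft}(Q,\C)$ consists of exact functors, so Proposition~\ref{lem:Ext vs adjoint 3} (applied with $\mathcal D=\C$, $\mathcal{C}=\Rep^{ft}(Q,\C)$, $S=f_i$, $T=e_i$) yields the natural isomorphisms $\Ext^n_{\Rep^{ft}(Q,\C)}(f_i(?),-)\cong\Ext^n_\C(?,e_i(-))$ for all $n\geq1$. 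One should check that $\Rep^{ft}(Q,\C)$ is an abelian category with enough structure for Section~2.6 to apply, but this is exactly Remark~\ref{MainF-FB-B}. I expect the bookkeeping in step (b) — verifying that the index sets of the coproducts stay within $TQ_0$ and that the resulting object has finite-top support — to be the only genuinely delicate point; everything else is a restriction of already-established isomorphisms.
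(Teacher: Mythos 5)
Your argument is correct and follows essentially the same route as the paper: (a) from Lemma \ref{previo-Repft}(b); (b) from the canonical presentation of Corollary \ref{cor:presentacion canonica} plus the support bookkeeping via Lemma \ref{previo-Repft}(a,c); (c) by restricting the adjunction of Proposition \ref{rem:adjuntos de ei}(a) along the full subcategory; and (d) from Proposition \ref{lem:Ext vs adjoint 3}. The only slip is the parenthetical ``finitely many'' summands in (b): a vertex of $TQ_0$ may have infinitely many outgoing arrows (only the set of reachable vertices is constrained), so the coproduct indexed by $Q_1$ can have infinitely many nonzero summands; this is harmless, because your other observation---that every summand's support is contained in the finite set $\Supp(F)\cup\Supp(F)^{+}$ and has empty $(-)^{+}$, so that Remark \ref{main-quivers}(a) applies to the possibly infinite family---already shows the total support is a finite top subquiver, which is exactly what the paper's $S_{x,z}$ computation establishes.
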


\begin{proof} (a) It follows from Lemma \ref{previo-Repft} (b). 
\

(b) Consider $F\in\Rep^{ft}(Q,\C).$ By Corollary \ref{cor:presentacion canonica}, the exact sequence $\eta_{F}$ exists in $\Rep(Q,\C).$ Then, we have that 
$\coprod_{i\in Q_{0}}f_{i}e_{i}(F)=\coprod_{i\in\Supp(F)}f_i(F_i)$ and thus by (a) and  Remark \ref{MainF-FB-B}, we get that $\coprod_{i\in Q_{0}}f_{i}e_{i}(F)\in \Rep^{ft}(Q,\C)$ since $\Supp(F)$ is finite and $\Supp(F)\subseteq TQ_0$ (see Lemma \ref{previo-Repft} (c)). 

Now, for $x\in \Supp(F)\subseteq TQ_0$ and $\rho:x\to z$ in $Q_1,$ let 
$S_{x,z}:=\Supp(f_{t(\rho)}(F_x))=\Supp(f_z(F_x)).$ Then, by  Lemma \ref{previo-Repft} (a,b), we get that $S_{x,z}\subseteq \{x\}^+\cup\{x,z\}$ and 
$S_{x,z}^+=\emptyset.$  Moreover, using that 
\begin{center}
$\coprod_{\rho\in Q_{1}}f_{t(\rho)}e_{s(\rho)}(F)=\coprod_{x\in\Supp(F)}\coprod_{z\in t(Q(x,-))} \coprod_{\rho\in Q_1(x,z)} f_{t(\rho)}(F_x)$
\end{center}
and $S_x:=\Supp(\coprod_{z\,:\;Q(x,z)\neq \emptyset} \coprod_{\rho\in Q_1(x,z)} f_{t(\rho)}(F_x))=\bigcup_{z\in t(Q(x,-))}S_{x,z},$ we get by Remark \ref{main-quivers} (a) that $S_x^+\subseteq \bigcup_{z\in t(Q(x,-))}S_{x,z}^+=\emptyset.$ Furthermore 
\begin{center}
$\bigcup_{z\in t(Q(x,-))}S_{x,z}\subseteq \bigcup_{z\in t(Q(x,-))} (\{x\}^+\cup\{x,z\})\subseteq \{x\}^+\cup\{x\}\bigcup t(Q(x,-)).$
\end{center}
However, from Lemma \ref{previo-Repft} (b), $t(Q(x,-))=\{x\}^+\cup\{x\}$ 
and thus $S_x\subseteq \{x\}^+\cup\{x\}$ which is finite since $x\in \Supp(F)\subseteq TQ_0.$ Hence 
\begin{center}
$S:=\Supp(\coprod_{\rho\in Q_{1}}f_{t(\rho)}e_{s(\rho)}(F))=\bigcup_{x\in \Supp(F)}S_x$ 
\end{center}
is finite and $S^+\subseteq \bigcup_{x\in \Supp(F)}S_x^+=\emptyset;$ proving that $\coprod_{\rho\in Q_{1}}f_{t(\rho)}(F_{s(\rho)})$ belongs to $\Rep^{ft}(Q,\C).$ 
\

(c) It follows from (a) and Proposition \ref{rem:adjuntos de ei} (a) since, for 
$F\in  \Rep^{ft}(Q,\C)$ and $C\in\C,$ we have that 
$
\Hom_{\Rep^{ft}(Q,\C)}(f_{i}(C),F)=\Hom_{\Rep(Q,\C)}(f_{i}(C),F).$
\

(d) It follows from (c) and Proposition \ref{lem:Ext vs adjoint 3}.
\end{proof}

Similarly, as in the proof of Proposition \ref{prop:Repft}, it can be shown the following result.

\begin{prop}\label{prop:Repft-dual} For a quiver $Q,$ $BQ_0:=\{i\in Q_0\;:\; \{i\}\in\B(Q)\},$ an infinite cardinal $\kappa\geq\lccn(Q)$  
and an abelian $AB3^*(\kappa)$  category  $\C,$ the following statements hold true.
\begin{itemize}
\item[(a)] $g_{i}(X)\in\Rep^{fb}(Q,\C)$ for all $i\in BQ_{0}$ and $X\in C$.

\item[(b)] Let $\kappa\geq \rtccn(Q).$ Then, for any $F\in\Rep^{fb}(Q,\C)$,  the canonical co-presentation
{\small{$\eta_{F}:\;F\hookrightarrow \prod_{i\in Q_{0}}g_{i}e_{i}(F)\twoheadrightarrow \prod_{\rho\in Q_{1}}g_{s(\rho)}e_{t(\rho)}(F)$}}
is in {\small{$\Rep^{fb}(Q,\C)$.}}  
\item[(c)] $g_{i}:\C\rightarrow\Rep^{ft}(Q,\C)$ is right adjoint for $e_{i}:\Rep^{fb}(Q,\C)\rightarrow\C$
$\forall i\in TQ_{0}.$
\item[(d)] Let $g_i$ be exact, for all $i\in BQ_{0}.$ Then, for each $i\in BQ_{0},$ we have the natural isomorphisms 
$
\operatorname{Ext}_{\operatorname{Rep}^{fb}(Q,\mathcal{C})}^{n}(-, g_{i}(?))\cong\operatorname{Ext}_{\mathcal{C}}^{n}(e_{i}(-),?)\;\forall\,n\geq1.
$
\end{itemize}
\end{prop}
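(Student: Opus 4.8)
The plan is to mimic, mutatis mutandis, the proof of Proposition \ref{prop:Repft}, replacing $f_i$ by $g_i$, coproducts by products, the functor $e_{s(\rho)}$ by $e_{t(\rho)}$, and the notion of being ``in the top'' of $Q$ by being ``in the bottom'' of $Q$; equivalently, one argues on the opposite quiver $Q^{op}$ where $\Rep^{fb}(Q,\C)$ corresponds to $\Rep^{ft}(Q^{op},\C^{op})$, $g_i^Q$ corresponds to $f_i^{Q^{op}}$, and $\lccn(Q)=\rccn(Q^{op})$, $\rtccn(Q)=\ltccn(Q^{op})$, $BQ_0=TQ^{op}_0$. The cleanest exposition, however, is to spell out the dual of Lemma \ref{previo-Repft}: namely (a$'$) if there is an arrow $z\xrightarrow{\rho}x$ in $Q$, then $\{z\}^-\subseteq\{x\}^-\cup\{x\}$; (b$'$) for $i\in Q_0$, $\kappa\geq\lccn_i(Q)$, $\C$ being $AB3^*(\kappa)$ and $0\neq C\in\C$, the representation $g_i(C)$ satisfies $\Supp(g_i(C))^-=\emptyset$ and $\Supp(g_i(C))=s(Q(-,i))=\{i\}^-\cup\{i\}$; and (c$'$) for $F\in\Rep(Q,\C)$ and $x\in\Supp(F)$ one has $\{x\}^-\subseteq\Supp(F)\cup\Supp(F)^-$. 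These are the evident duals of the three items of Lemma \ref{previo-Repft} applied to $Q^{op}$, and follow from Remark \ref{main-quivers} just as in the original; item (b$'$) uses the explicit formula $(g_i(C))_j=C^{Q(j,i)}$.

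Granting this, part (a) is immediate from (b$'$): if $i\in BQ_0$ then $\{i\}^-$ is finite, so $\Supp(g_i(C))=\{i\}^-\cup\{i\}$ is finite and has empty top-exterior complement, hence $\Supp(g_i(C))\in\F\B(Q)$ and $g_i(C)\in\Rep^{fb}(Q,\C)$. For part (c) I would note that, for $F\in\Rep^{fb}(Q,\C)$ and $C\in\C$, one has $\Hom_{\Rep^{fb}(Q,\C)}(F,g_i(C))=\Hom_{\Rep(Q,\C)}(F,g_i(C))$ because both $F$ and $g_i(C)$ live in the full subcategory $\Rep^{fb}(Q,\C)$ (the latter by (a)), and then the adjunction $\Hom_{\Rep(Q,\C)}(e_i(-),?)\cong\Hom_{\Rep(Q,\C)}(-,g_i(?))$ of Proposition \ref{rem:adjuntos de ei}(b) restricts to the desired one; note $e_i$ does map $\Rep^{fb}(Q,\C)$ to $\C$. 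Part (d) then follows from (c) together with Proposition \ref{lem:Ext vs adjoint 3}, since $g_i$ is assumed exact and $e_i$ is always exact, so the hypotheses of that proposition are met inside the abelian category $\Rep^{fb}(Q,\C)$ (which is abelian by Remark \ref{MainF-FB-B}, being closed under kernels, cokernels and extensions).

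The technical heart, as in Proposition \ref{prop:Repft}(b), is part (b): showing that the canonical co-presentation $\eta_F\colon F\hookrightarrow\prod_{i\in Q_0}g_ie_i(F)\twoheadrightarrow\prod_{\rho\in Q_1}g_{s(\rho)}e_{t(\rho)}(F)$ of Corollary \ref{cor:copresentacion canonica} has its two outer terms in $\Rep^{fb}(Q,\C)$. For the middle term: since $\Supp(F)$ is finite we have $\prod_{i\in Q_0}g_ie_i(F)=\prod_{i\in\Supp(F)}g_i(F_i)$, a finite product of objects of $\Rep^{fb}(Q,\C)$ (by (a), using $\Supp(F)\subseteq BQ_0$ from (c$'$) applied with $\Supp(F)^-$ finite), hence in $\Rep^{fb}(Q,\C)$ by Remark \ref{MainF-FB-B}. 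For the rightmost term: for $x\in\Supp(F)$ and an arrow $\rho\colon z\to x$ with $z=s(\rho)$, put $S_{z,x}:=\Supp(g_{s(\rho)}(F_x))=\Supp(g_z(F_x))$; by (a$'$) and (b$'$), $S_{z,x}\subseteq\{x\}^-\cup\{x,z\}$ and $S_{z,x}^-=\emptyset$; aggregating over all $z\in s(Q(-,x))$ and all $\rho\in Q_1(z,x)$ and using Remark \ref{main-quivers}(a) together with $s(Q(-,x))=\{x\}^-\cup\{x\}$ from (b$'$), one gets that the partial support $S_x:=\bigcup_{z\in s(Q(-,x))}S_{z,x}\subseteq\{x\}^-\cup\{x\}$ is finite with $S_x^-=\emptyset$; finally $\Supp(\prod_{\rho\in Q_1}g_{s(\rho)}e_{t(\rho)}(F))=\bigcup_{x\in\Supp(F)}S_x$ is a finite union of finite sets, hence finite, with empty top-exterior; so this term too lies in $\Rep^{fb}(Q,\C)$. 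The one subtlety to watch is that the products $\prod_{i\in Q_0}g_ie_i(F)$ and $\prod_{\rho\in Q_1}g_{s(\rho)}e_{t(\rho)}(F)$, computed a priori in $\Rep(Q,\C)$, actually collapse to finite products once restricted to $\Supp(F)$ — this is exactly what makes the hypotheses $\kappa\geq\max\{\rtccn(Q),\lccn(Q)\}$ (needed to know these objects exist in $\Rep(Q,\C)$ at all, via the dual of Lemma \ref{LemAux}) harmless, and it is precisely the point where finiteness of $\Supp(F)$ and of the relevant $\{x\}^-$ is indispensable.

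\begin{proof}
This is dual to Proposition \ref{prop:Repft}; we sketch the changes. One first records the dual of Lemma \ref{previo-Repft} (apply that lemma to $Q^{op}$): (a$'$) an arrow $z\to x$ in $Q$ forces $\{z\}^-\subseteq\{x\}^-\cup\{x\}$; (b$'$) for $i\in Q_0$, $\kappa\geq\lccn_i(Q)$, $\C$ being $AB3^*(\kappa)$ and $0\neq C\in\C$, the representation $g_i(C)$ (whose value at $j$ is $C^{Q(j,i)}$) has $\Supp(g_i(C))=s(Q(-,i))=\{i\}^-\cup\{i\}$ and $\Supp(g_i(C))^-=\emptyset$; (c$'$) for $F\in\Rep(Q,\C)$ and $x\in\Supp(F)$, $\{x\}^-\subseteq\Supp(F)\cup\Supp(F)^-$.

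(a) For $i\in BQ_0$ the set $\{i\}^-$ is finite, so by (b$'$) $\Supp(g_i(X))=\{i\}^-\cup\{i\}$ is finite with empty $(-)$, i.e. $\Supp(g_i(X))\in\F\B(Q)$; hence $g_i(X)\in\Rep^{fb}(Q,\C)$.

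(b) Let $F\in\Rep^{fb}(Q,\C)$. By Corollary \ref{cor:copresentacion canonica} the exact sequence $\eta_F$ exists in $\Rep(Q,\C)$. Since $\Supp(F)$ is finite, $\prod_{i\in Q_0}g_ie_i(F)=\prod_{i\in\Supp(F)}g_i(F_i)$; by (c$'$) we have $\Supp(F)\subseteq BQ_0$, so by (a) and Remark \ref{MainF-FB-B} this finite product lies in $\Rep^{fb}(Q,\C)$. For the last term, fix $x\in\Supp(F)\subseteq BQ_0$ and $\rho\colon z\to x$ in $Q_1$ with $z=s(\rho)$, and set $S_{z,x}:=\Supp(g_{s(\rho)}(F_x))=\Supp(g_z(F_x))$. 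By (a$'$) and (b$'$), $S_{z,x}\subseteq\{x\}^-\cup\{x,z\}$ and $S_{z,x}^-=\emptyset$. Using
\[
\prod_{\rho\in Q_{1}}g_{s(\rho)}e_{t(\rho)}(F)=\prod_{x\in\Supp(F)}\ \prod_{z\in s(Q(-,x))}\ \prod_{\rho\in Q_1(z,x)} g_{s(\rho)}(F_x),
\]
put $S_x:=\Supp\big(\prod_{z\in s(Q(-,x))}\prod_{\rho\in Q_1(z,x)}g_{s(\rho)}(F_x)\big)=\bigcup_{z\in s(Q(-,x))}S_{z,x}$. By Remark \ref{main-quivers} (a), $S_x^-\subseteq\bigcup_{z}S_{z,x}^-=\emptyset$, and
\[
S_x\subseteq\bigcup_{z\in s(Q(-,x))}\big(\{x\}^-\cup\{x,z\}\big)\subseteq\{x\}^-\cup\{x\}\cup s(Q(-,x))=\{x\}^-\cup\{x\},
\]
the last equality by (b$'$); so $S_x$ is finite. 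Hence $S:=\Supp\big(\prod_{\rho\in Q_1}g_{s(\rho)}e_{t(\rho)}(F)\big)=\bigcup_{x\in\Supp(F)}S_x$ is finite with $S^-\subseteq\bigcup_{x}S_x^-=\emptyset$, so $\prod_{\rho\in Q_1}g_{s(\rho)}e_{t(\rho)}(F)\in\Rep^{fb}(Q,\C)$, and therefore $\eta_F$ is a sequence in $\Rep^{fb}(Q,\C)$.

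(c) For $F\in\Rep^{fb}(Q,\C)$ and $C\in\C$, both $F$ and $g_i(C)$ lie in the full subcategory $\Rep^{fb}(Q,\C)$ (the latter by (a)), so
\[
\Hom_{\Rep^{fb}(Q,\C)}(F,g_i(C))=\Hom_{\Rep(Q,\C)}(F,g_i(C))\cong\Hom_{\C}(e_i(F),C)
\]
by Proposition \ref{rem:adjuntos de ei} (b); this isomorphism is natural, and $e_i$ sends $\Rep^{fb}(Q,\C)$ to $\C$, so $(e_i,g_i)$ is an adjoint pair between $\C$ and $\Rep^{fb}(Q,\C)$.

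(d) The category $\Rep^{fb}(Q,\C)$ is abelian by Remark \ref{MainF-FB-B}. Since $e_i$ is exact and $g_i$ is exact by hypothesis, Proposition \ref{lem:Ext vs adjoint 3} applied to the adjoint pair of (c) yields $\operatorname{Ext}_{\Rep^{fb}(Q,\C)}^{n}(-,g_i(?))\cong\operatorname{Ext}_{\C}^{n}(e_i(-),?)$ for all $n\geq1$.
\end{proof}
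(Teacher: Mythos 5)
Your proof is correct and is exactly the argument the paper intends: the paper establishes this proposition simply by declaring it dual to Proposition \ref{prop:Repft}, and your write-up carries out that dualization (the dual of Lemma \ref{previo-Repft}, Corollary \ref{cor:copresentacion canonica}, Proposition \ref{rem:adjuntos de ei}(b) and Proposition \ref{lem:Ext vs adjoint 3}) faithfully and without gaps. You also, correctly, read the misprints in item (c) of the statement (``$TQ_0$'' and ``$\Rep^{ft}$'') as $BQ_0$ and $\Rep^{fb}(Q,\C)$, which is the intended dual formulation.
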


\begin{defn} For a quiver $Q,$ we introduce the cardinal numbers: (a) the right support cardinal number $\rscn(Q):=\mathrm{size}(\{|t(Q(x,-))|\}_{x\in Q_0}),$ (b) the left support cardinal number $\lscn(Q):=\mathrm{size}(\{|s(Q(-,x))|\}_{x\in Q_0})$ and (c) the arrow cardinal number $\alpha(Q):=\mathrm{size}(|Q_1(x,y)|\}_{x,y\in Q_0}.$
\end{defn}

\begin{prop}\label{prop:Repft2} For a quiver $Q$ and an abelian $AB4(\kappa)$  category  $\C,$  where $\kappa\geq\max\{\rccn(Q),\ltccn(Q),\rscn(Q),\alpha(Q)\}$ is an infinite cardinal number, the following statements hold true.
\begin{itemize}
 \item[(a)] $\pd_{\Rep^{ft}(Q,\C)}(F)\leq \sup_{i\in Q_{0}}(\pd(F_i))+1$
$\forall F\in\Rep^{ft}(Q,\C);$
\item[(b)] $\gldim(\Rep^{ft}(Q,\C))\leq\gldim(\C)+1;$
\item[(c)] $\gldim(\Rep^{ft}(Q,\C))=\gldim(\Rep(Q,\C))=\gldim(\C)+1$ if $\C\neq 0$ and there is an arrow $x\to y$ in $Q,$ with $\{x,y\}\in\T(Q),$ such that one of the following two conditions holds true: (i) $x\neq y;$ and (ii) $x=y$ and $\C$ is $AB3^*(\aleph_1).$  
 \end{itemize}
\end{prop}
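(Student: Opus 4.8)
The plan is to derive each part from tools already developed for $\Rep(Q,\C)$ together with Proposition \ref{prop:Repft}, which gives the canonical presentation inside $\Rep^{ft}(Q,\C)$. First I would prove (a): fix $F\in\Rep^{ft}(Q,\C)$. By Proposition \ref{prop:Repft} (b) the canonical presentation $\eta_F:\;\suc[\coprod_{\rho\in Q_{1}}f_{t(\rho)}(F_{s(\rho)})][\coprod_{i\in Q_{0}}f_{i}(F_i)][F][\Gamma_F]$ lives in $\Rep^{ft}(Q,\C)$, and its terms are coproducts indexed by $\Supp(F)$ and by arrows between vertices of $\Supp(F)$. The hypothesis $\kappa\geq\max\{\rscn(Q),\alpha(Q)\}$ guarantees that for each $x\in\Supp(F)$ the set $t(Q(x,-))$ and each $Q_1(x,z)$ have cardinality $<\kappa$, so (since $\Supp(F)$ is finite) the total index sets have cardinality $<\kappa$; hence $\Rep^{ft}(Q,\C)$ is $AB4(\kappa)$-like enough that Lemma \ref{pd-coprod} applies to these coproducts. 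Since each $f_i$ is exact (Remark \ref{AB3+Inj=AB4} (a), using $\kappa\geq\rccn(Q)$), Proposition \ref{prop: Ext vs f vs e} gives $\pd_{\Rep^{ft}(Q,\C)}(f_i(C))=\pd(C)$ via Proposition \ref{prop:Repft} (d), so the two outer terms of $\eta_F$ have projective dimension $\sup_{i}\pd(F_i)$ computed in $\Rep^{ft}(Q,\C)$. The standard long exact sequence / dimension-shifting argument on $\eta_F$ then yields $\pd_{\Rep^{ft}(Q,\C)}(F)\leq\sup_{i\in Q_0}(\pd(F_i))+1$, exactly as in the proof of Corollary \ref{cor:dimensiones homologicas} (a).

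Part (b) is immediate: $\gldim(\Rep^{ft}(Q,\C))=\sup\{\pd_{\Rep^{ft}(Q,\C)}(F):F\in\Rep^{ft}(Q,\C)\}\leq\gldim(\C)+1$ by (a), since $\pd(F_i)\leq\gldim(\C)$ for every $i$.

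For (c) the inequality $\gldim(\Rep^{ft}(Q,\C))\leq\gldim(\Rep(Q,\C))$ needs care — a priori Ext computed in the subcategory could be smaller — so the key point is to realize the lower bound $\gldim(\C)+1$ already inside $\Rep^{ft}(Q,\C)$. The natural candidate is the object $\iota_S g^S_x(C)$ used in Proposition \ref{prop-previo-thm-dh}, where $S=\{x\xrightarrow{\rho_0}y\}$ with $\{x,y\}\in\T(Q)$ and $C\in\C$ realizes $\gldim(\C)$. Since $S$ is a $0$-finite full subquiver in the top of $Q$ (because $\{x,y\}\in\T(Q)$ forces $\{x,y\}^+$ finite), the representations $\iota_S g^S_x(C)$, $\iota_S f^S_y(C)$, and all middle terms of the extensions constructed in Lemmas \ref{lem: lemita subcarcajes A2,A1} and \ref{lem: lemita A2}/\ref{lem: lemita A1} have support contained in $S_0=\{x,y\}\in\F\T(Q)$, hence lie in $\Rep^{ft}(Q,\C)$ by Remark \ref{MainF-FB-B}. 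Because $\Rep^{ft}(Q,\C)$ is closed under extensions, subobjects and quotients in $\Rep(Q,\C)$ (Remark \ref{MainF-FB-B}), the nonzero element of $\Ext^{n+1}_{\Rep(Q,\C)}(\iota_Sg^S_x(C),-)$ produced in Proposition \ref{prop-previo-thm-dh} is represented by an exact sequence entirely inside $\Rep^{ft}(Q,\C)$, and moreover the argument that it is nonzero (which uses the restriction functors $\pi_{\{x\}},\pi_{\{y\}}$ and Lemma \ref{lem: Ext vs pi, iota} (c), all of which restrict to $\Rep^{ft}(Q,\C)$) shows it is nonzero as an element of $\Ext^{n+1}_{\Rep^{ft}(Q,\C)}$. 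Therefore $n+1\leq\pd_{\Rep^{ft}(Q,\C)}(\iota_Sg^S_x(C))\leq\gldim(\Rep^{ft}(Q,\C))$ with $n=\gldim(\C)$, giving $\gldim(\Rep^{ft}(Q,\C))\geq\gldim(\C)+1$; combined with (b) and with Theorem \ref{thm:dimensiones homologicas} for $\Rep(Q,\C)$ we get the chain of equalities $\gldim(\Rep^{ft}(Q,\C))=\gldim(\Rep(Q,\C))=\gldim(\C)+1$.

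The main obstacle I anticipate is the bookkeeping in (c): one must check that every representation appearing in the Yoneda composites of Lemma \ref{lem: lemita subcarcajes A2,A1} (the pushout object $G$, the intermediate term $F$, and all terms of $\eta'$) genuinely has finite top-support in $Q$, not merely support inside $S$ as a subquiver of $S$. This reduces to verifying $\{x,y\}\in\F\T(Q)\Rightarrow$ support $\subseteq\{x,y\}$ stays in $\F\T(Q)$, which is clear since $\F\T(Q)$ is convex, but one should also confirm that the nonvanishing argument — which factors through the restriction $\pi_{\{y\}}$ (or $\pi_{\{x\}}$) — is unaffected by passing to the subcategory, i.e. that the relevant Ext groups over $\Rep^{ft}(Q,\C)$ inject into those over $\Rep(Q,\C)$ via $\iota_S$. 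This follows because $\iota_S:\Rep(S,\C)\to\Rep^{ft}(Q,\C)$ is still full, faithful and exact, so Lemma \ref{lem: Ext vs pi, iota} (c) (giving a retraction $\pi_S^n\circ\iota_S^n=1$) applies verbatim with $\Rep^{ft}(Q,\C)$ in place of $\Rep(Q,\C)$, making $\iota_S^n$ a split monomorphism on Ext. Everything else is a routine transcription of §3 arguments into the subcategory.
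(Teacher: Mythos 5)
Your proposal follows the paper's own proof essentially verbatim: (a) via the canonical presentation of Proposition \ref{prop:Repft} (b) together with the Ext-isomorphism of Proposition \ref{prop:Repft} (d) and Lemma \ref{pd-coprod}, (b) immediately from (a), and (c) by rerunning the lower-bound argument of Proposition \ref{prop-previo-thm-dh} inside $\Rep^{ft}(Q,\C)$, which works precisely because all objects involved are supported in $\{x,y\}\in\F\T(Q)$ and $\iota_{S}^{n}$ stays a split monomorphism on Ext after restricting to the subcategory --- exactly the paper's route via Theorem \ref{thm:dimensiones homologicas}. The only cosmetic point is in (a): instead of appealing to an ``AB4$(\kappa)$-like'' behaviour of $\Rep^{ft}(Q,\C)$ (which is not closed under arbitrary $<\kappa$-coproducts), observe that the outer coproducts are finite and the only possibly infinite multiplicity, coming from parallel arrows, can be absorbed into $\C$ by writing $\coprod_{\rho\in Q_{1}(x,z)}f_{z}(F_{x})\cong f_{z}\bigl(F_{x}^{(Q_{1}(x,z))}\bigr)$ and applying Lemma \ref{pd-coprod} in $\C$ itself.
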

\begin{proof} (a) Let $F\in \Rep^{ft}(Q,\C).$ By Proposition \ref{prop:Repft} (b), we have  an exact sequence 
$
\eta_{F}:\quad\suc[\coprod_{\rho\in Q_{1}}f_{t(\rho)}e_{s(\rho)}(F)][\coprod_{i\in Q_{0}}f_{i}e_{i}(F)][F]
$
in $\Rep^{ft}(Q,\C).$ By the proof of Proposition \ref{prop:Repft} (b), we know that $\coprod_{i\in Q_{0}}f_{i}e_{i}(F)=\coprod_{i\in\Supp(F)}f_i(F_i)$ and $\coprod_{\rho\in Q_{1}}f_{t(\rho)}e_{s(\rho)}(F)=\coprod_{x\in\Supp(F)}\coprod_{z\in t(Q(x,-))} \coprod_{\rho\in Q_1(x,z)} f_{t(\rho)}(F_x).$ Therefore, from Lemma \ref{pd-coprod} and Proposition \ref{prop:Repft} (d), we get 
\begin{center}
$\pd(\coprod_{\rho\in Q_{1}}f_{t(\rho)}e_{s(\rho)}(F))=\sup_{x\in\Supp(F)}\pd(F_x)=\pd(\coprod_{i\in Q_{0}}f_{i}e_{i}(F)),$ 
\end{center}
and thus, from the exact sequence $\eta_F,$ we get (a).  
\

(c) Assume that $\C\neq 0$ and there is an arrow $x\to y$ in $Q,$ with $\{x,y\}\in\T(Q),$ such that one of the following two conditions holds true: (i) $x\neq y;$ and (ii) $x=y$ and $\C$ is $AB3^*(\aleph_1).$  In particular, by Theorem \ref{thm:dimensiones homologicas}, we get that $\gldim(\Rep(Q,\C))=\gldim(\C)+1.$ Finally, using (b) and Proposition \ref{prop:Repft}, the proof of the equality $\gldim(\Rep^{ft}(Q,\C))=\gldim(\C)+1$ follows as in the proof of Theorem \ref{thm:dimensiones homologicas} since $\{x,y\}\in\T(Q).$
\end{proof}

Similarly, as in the proof of Proposition \ref{prop:Repft2}, it can be shown the following result.

\begin{prop}\label{prop:Repft2-dual}  For a quiver $Q$ and an abelian $AB4^{*}(\kappa)$  category  $\C,$  where $\kappa\geq\max\{\lccn(Q),\rtccn(Q),\lscn(Q),\alpha(Q)\}$ is an infinite cardinal number, the following statements hold true.
\begin{itemize}
 \item[(a)] $\id_{\Rep^{fb}(Q,\C)}(F)\leq \sup_{i\in Q_{0}}(\id(F_i))+1$
$\forall F\in\Rep^{fb}(Q,\C);$
\item[(b)] $\gldim(\Rep^{fb}(Q,\C))\leq\gldim(\C)+1;$
\item[(c)] $\gldim(\Rep^{fb}(Q,\C))=\gldim(\Rep(Q,\C))=\gldim(\C)+1$ if $\C\neq 0$ and there is an arrow $x\to y$ in $Q,$ with $\{x,y\}\in\B(Q),$ such that one of the following two conditions holds true: (i) $x\neq y;$ and (ii) $x=y$ and $\C$ is $AB3(\aleph_1).$  
 \end{itemize}
\end{prop}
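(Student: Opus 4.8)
The plan is to mirror the proof of Proposition \ref{prop:Repft2}, using the dual ingredients throughout. First I would observe that the hypothesis $\kappa\geq\max\{\lccn(Q),\rtccn(Q),\lscn(Q),\alpha(Q)\}$ together with $\C$ being $AB4^{*}(\kappa)$ puts us exactly in the setting of Proposition \ref{prop:Repft-dual}: the functors $g_i$ exist for all $i\in Q_0$ (since $\kappa\geq\lccn(Q)$) and, by the dual of Remark \ref{AB3+Inj=AB4} (a), each $g_i:\C\to\Rep(Q,\C)$ is exact (since $\kappa\geq\rtccn(Q)\geq\lccn(Q)$, using also that $AB4^{*}(\kappa)$ implies $AB3^{*}(\kappa)$). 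Then by Proposition \ref{prop:Repft-dual} (d) we have, for every $i\in BQ_0$, the natural isomorphisms $\Ext^{n}_{\Rep^{fb}(Q,\C)}(-,g_i(?))\cong\Ext^{n}_{\C}(e_i(-),?)$ for all $n\geq 1$, whence $\id_{\Rep^{fb}(Q,\C)}(g_i(C))=\id(C)$ for all $C\in\C$.

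For part (a), given $F\in\Rep^{fb}(Q,\C)$, I would invoke Proposition \ref{prop:Repft-dual} (b) (applicable since $\kappa\geq\rtccn(Q)$) to get the canonical co-presentation $F\hookrightarrow\prod_{i\in Q_0}g_ie_i(F)\twoheadrightarrow\prod_{\rho\in Q_1}g_{s(\rho)}e_{t(\rho)}(F)$ living inside $\Rep^{fb}(Q,\C)$. As in the proof of Proposition \ref{prop:Repft2} (b) (dualized), the support of $F$ is finite and contained in $BQ_0$, so the two products are actually products over finite index sets built from $\Supp(F)$, $s(Q(-,x))$ and $Q_1(x,y)$ — here the cardinals $\lscn(Q)$ and $\alpha(Q)$ guarantee these index sets have size $<\kappa$, so the dual of Lemma \ref{pd-coprod} gives $\id(\prod_{i\in Q_0}g_ie_i(F))=\sup_{i\in\Supp(F)}\id(F_i)$ and likewise for the third term. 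Feeding this into the long exact sequence for $\Ext$ applied to the co-presentation yields $\id_{\Rep^{fb}(Q,\C)}(F)\leq\sup_{i\in Q_0}(\id(F_i))+1$. Part (b) is then immediate by taking the supremum over all $F$.

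For part (c), assume there is an arrow $x\to y$ in $Q$ with $\{x,y\}\in\B(Q)$ satisfying (i) or (ii). First, by the dual of Theorem \ref{thm:dimensiones homologicas} (whose hypotheses are met: $\C$ is $AB4^{*}(\kappa)$ with $\kappa\geq\max\{\lccn(Q),\rtccn(Q)\}$ and $\kappa>\max\{|Q_0|,|Q_1|\}$, which follows from $\kappa\geq\lscn(Q),\alpha(Q)$ and the support bounds — and in case (ii) $\C$ is $AB3(\aleph_1)$) we get $\gldim(\Rep(Q,\C))=\gldim(\C)+1$. It remains to show $\gldim(\Rep^{fb}(Q,\C))\geq\gldim(\C)+1$; combined with (b) this forces all three quantities to be equal. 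For this I would dualize the proof of Proposition \ref{prop-previo-thm-dh}: set $S:=\{x\to y\}$, a full subquiver of $Q$ with $S^{+}=\emptyset$ (since $\{x,y\}\in\B(Q)$ — wait, more precisely we need the analogue that places $S$ at the bottom so the dual extension $\iota_S$ lands in $\Rep^{fb}(Q,\C)$). Using the dual versions of Lemmas \ref{lem: lemita A2}, \ref{lem: lemita A1}, \ref{lem: lemita subcarcajes A2,A1} and \ref{lem: Ext vs pi, iota}, one produces, for any $C\in\C$ with $\Ext^{n}_{\C}(C,-)\neq 0$, a non-vanishing $\Ext^{n+1}_{\Rep(Q,\C)}(\iota_S(?),\iota_S f^S_y(C))$, and since $\iota_S(?)$ and $\iota_S f^S_y(C)$ both have support contained in $\{x,y\}\in\B(Q)$, they lie in $\Rep^{fb}(Q,\C)$ — and $\Ext$ computed there agrees with $\Ext$ in $\Rep(Q,\C)$ because $\Rep^{fb}(Q,\C)$ is closed under extensions and subobjects in $\Rep(Q,\C)$ (Remark \ref{MainF-FB-B}). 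This gives $\pd_{\Rep^{fb}(Q,\C)}$ or rather $\id_{\Rep^{fb}(Q,\C)}$ at least $\gldim(\C)+1$.

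The main obstacle I anticipate is purely bookkeeping: checking that all the support sets appearing in the co-presentation of an arbitrary $F\in\Rep^{fb}(Q,\C)$ remain in $\F\B(Q)$ — this is the dual of the somewhat delicate argument in Proposition \ref{prop:Repft} (b), and one must be careful that the dual of Lemma \ref{previo-Repft} (namely that $\Supp(g_i(C))=s(Q(-,i))=\{i\}^{-}\cup\{i\}$ and $\Supp(g_i(C))^{-}=\emptyset$) is what controls things, together with the finiteness inputs $\lscn(Q)$ and $\alpha(Q)$. Once the support combinatorics is under control, the homological estimates and the lower bound are formal consequences of results already established (dualized), so no genuinely new idea is required; everything is ``similarly, as in the proof of Proposition \ref{prop:Repft2},'' as the statement preceding it already announces.
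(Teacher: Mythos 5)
Your proposal follows exactly the route the paper intends: its entire proof of this proposition is the remark that it is obtained by dualizing the proof of Proposition \ref{prop:Repft2}, and that is precisely what you carry out (Proposition \ref{prop:Repft-dual} in place of Proposition \ref{prop:Repft}, the dual of Lemma \ref{pd-coprod} for the injective dimension of the finite products in the canonical co-presentation, and the duals of Proposition \ref{prop-previo-thm-dh} and Theorem \ref{thm:dimensiones homologicas}, with $\iota_S$ landing in $\Rep^{fb}(Q,\C)$ because $\{x,y\}\in\B(Q)$, for the lower bound in (c)). One caveat: your parenthetical claim that $\kappa>\max\{|Q_0|,|Q_1|\}$ follows from $\kappa\geq\lscn(Q),\alpha(Q)$ is not correct, but this hypothesis is only needed for the middle equality $\gldim(\Rep(Q,\C))=\gldim(\C)+1$ via the dual of Corollary \ref{cor:dimensiones homologicas}, and the paper's own proof of Proposition \ref{prop:Repft2}(c) cites Theorem \ref{thm:dimensiones homologicas} with the same silent assumption, so this is looseness inherited from the paper rather than a defect introduced by your argument.
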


Notice that Propositions \ref{prop:Repft2} and \ref{prop:Repft2-dual} are very useful to study the global dimension of $\Rep(Q,\C)$ by using their full abelian subcategories $\Rep^{ft}(Q,\C)$ and $\Rep^{fb}(Q,\C),$ where the first one is useful for using projectives and the second one for injectives. One interesting situation where we can use both is the case when $Q$ is support finite. We recall that $Q$ is {\bf right support finite} if the set of vertices $t(Q(i,-))$ is finite $\forall\, i\in Q_0.$ Dually, $Q$  is {\bf left support finite} if the opposite quiver $Q^{op}$ is right support finite. Finally, $Q$ is {\bf support finite} if it is left and right support finite. 

\begin{lem}\label{Rk-fcsq}
Let $Q$ be a quiver and $\C\neq 0$ an abelian category. Then
\begin{itemize}
\item[(a)] $\Rep^{ft}(Q,\C)=\Rep^{f}(Q,\C)$ $\Leftrightarrow$ $TQ_0=Q_0$ 
$\Leftrightarrow$ $Q$ is right support finite.
\item[(b)] $\Rep^{fb}(Q,\C)=\Rep^{f}(Q,\C)$ $\Leftrightarrow$ $BQ_0=Q_0$ 
$\Leftrightarrow$ $Q$ is left support finite.
\end{itemize}
\end{lem}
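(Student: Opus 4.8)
The plan is to prove (a) in full and obtain (b) by the evident left--right duality. For (a) I would first dispose of the combinatorial equivalence $TQ_0=Q_0\iff Q\text{ right support finite}$. The key observation is the identity $t(Q(i,-))=\{i\}^+\cup\{i\}$ valid for every $i\in Q_0$ (this is recorded in Lemma \ref{previo-Repft} (b), and is anyway immediate: for $j\ne i$ one has $j\in t(Q(i,-))$ exactly when there is a path $i\to j$, which is the condition defining $\{i\}^+$, while $j=i$ is always realized by the trivial path $\epsilon_i$). Hence $\{i\}\in\T(Q)$, i.e.\ $\{i\}^+$ is finite, if and only if $t(Q(i,-))$ is finite; taking the conjunction over all $i\in Q_0$ gives $TQ_0=Q_0$ if and only if $t(Q(i,-))$ is finite for all $i$, which is precisely the definition of $Q$ being right support finite.

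Next I would treat the equivalence $\Rep^{ft}(Q,\C)=\Rep^{f}(Q,\C)\iff TQ_0=Q_0$. Since $\F\T(Q)=\F(Q)\cap\T(Q)\subseteq\F(Q)$, the inclusion $\Rep^{ft}(Q,\C)\subseteq\Rep^{f}(Q,\C)$ holds unconditionally, so only the reverse inclusion is at stake. For $TQ_0=Q_0\Rightarrow\Rep^{f}\subseteq\Rep^{ft}$, take $0\ne F\in\Rep^{f}(Q,\C)$; then $\Supp(F)$ is a finite nonempty set of vertices, and Remark \ref{main-quivers} (a) applied to the family $\{\{x\}\}_{x\in\Supp(F)}$ yields $\Supp(F)^+\subseteq\bigcup_{x\in\Supp(F)}\{x\}^+$, a finite set because each $\{x\}^+$ is finite (as $x\in Q_0=TQ_0$) and the index set is finite; thus $\Supp(F)\in\F\T(Q)$ and $F\in\Rep^{ft}(Q,\C)$. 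For the converse, assuming $\Rep^{ft}(Q,\C)=\Rep^{f}(Q,\C)$ and fixing $i\in Q_0$, I would use $\C\ne 0$ to choose $0\ne C\in\C$ and form the stalk representation $s_i(C)$; it is nonzero with $\Supp(s_i(C))=\{i\}$, hence lies in $\Rep^{f}(Q,\C)=\Rep^{ft}(Q,\C)$, so $\{i\}\in\F\T(Q)\subseteq\T(Q)$, i.e.\ $\{i\}^+$ is finite and $i\in TQ_0$. As $i$ is arbitrary, $TQ_0=Q_0$.

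Part (b) is then obtained verbatim from the argument for (a) after interchanging $t\leftrightarrow s$, $\T(Q)\leftrightarrow\B(Q)$, $S^+\leftrightarrow S^-$, $TQ_0\leftrightarrow BQ_0$ and ``right support finite'' $\leftrightarrow$ ``left support finite'', invoking the $(-)^-$ half of Remark \ref{main-quivers} (a); alternatively one applies (a) to the opposite quiver $Q^{op}$ together with the identification $\Rep(Q^{op},\C^{op})\cong\Rep(Q,\C)^{op}$. I do not expect a genuine obstacle here: the proof is set-theoretic bookkeeping, and the only mildly delicate point is to make sure, in the converse implication, that there actually exists a nonzero representation with one-point support --- which is exactly what the stalk functor and the hypothesis $\C\ne 0$ provide.
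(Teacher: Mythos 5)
Your proposal is correct and follows essentially the same route as the paper: the paper's (very terse) proof rests on exactly the three ingredients you use, namely the identity $t(Q(i,-))=\{i\}^+\cup\{i\}$, the inclusion $H^+\subseteq\bigcup_{h\in H}\{h\}^+$ from Remark \ref{main-quivers} (a) for finite supports, and the stalk representation $s_i(C)$ with $0\neq C\in\C$ for the converse direction. Your write-up simply spells out the details that the paper leaves implicit, including the dual argument for (b).
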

\begin{proof}
Notice that, for any finite non-empty set $H\subseteq Q_0,$ we have by Remark \ref{main-quivers} (a) that $H^+\subseteq \bigcup_{h\in H}\{h\}^+$ and $t(Q(i,-))=\{i\}^+\cup\{i\}$ $\forall\,i\in Q_0.$ Moreover, for $0\neq C\in\C$ and $i\in Q_0,$ we know that that $\Supp(s_i(C))=\{i\}$ for the stalk representation $s_i(C)$ at the vertex $i.$ Thus the equivalences in (a) follows. 
\end{proof}

We get the following corollary for the full abelian subcategory $\Rep^{f}(Q,\C)\subseteq\Rep(Q,\C)$ if $Q$ is support finite. 

\begin{cor}\label{basic-fcsq} For  a support finite quiver $Q,$  $\kappa\geq\ccn(Q)$ an infinite cardinal number and an $AB3(\kappa)$ and $AB3^*(\kappa)$ abelian category $\C,$ the following statements hold true.
\begin{itemize}
\item[(a)] $f_i(X),g_{i}(X)\in\Rep^{f}(Q,\C)$ for all $i\in Q_{0}$ and $X\in C.$ Moreover,  $f_i$ is left adjoint for $e_i:\Rep^{f}(Q,\C)\to \C$ and $g_i$ is right adjoint for $e_i.$

\item[(b)] Let $\kappa\geq \tccn(Q).$ Then, for any $F\in\Rep^{f}(Q,\C)$,  we have the canonical (co)presentations in $\Rep^{f}(Q,\C)$ 
 \begin{itemize}
 \item[(b1)] $\suc[\coprod_{\rho\in Q_{1}}f_{t(\rho)}e_{s(\rho)}(F)][\coprod_{i\in Q_{0}}f_{i}e_{i}(F)][F];$
 \item[(b2)] $ F\hookrightarrow \prod_{i\in Q_{0}}g_{i}e_{i}(F)\twoheadrightarrow \prod_{\rho\in Q_{1}}g_{s(\rho)}e_{t(\rho)}(F).$
 \end{itemize}
\item[(c)] Let $\C\neq 0,$ $Q$ an infinite quiver with $Q_1\neq\emptyset$ and $\kappa\geq \alpha(Q).$ Then, $\gldim(\Rep^{f}(Q,\C))=\gldim(\Rep(Q,\C))=\gldim(\C)+1$ if one of the following two conditions hold true: (i) $\C$ is 
$AB4^*(\kappa)$ and $\kappa\geq\rtccn(Q);$ and (ii) $\C$ is 
$AB4(\kappa)$ and $\kappa\geq\ltccn(Q).$
\end{itemize}
\end{cor}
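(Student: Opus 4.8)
The plan is to derive all three statements from the results already proved for $\Rep^{ft}(Q,\C)$ and $\Rep^{fb}(Q,\C)$, using the identifications that become available once $Q$ is support finite. First I would record the standing reductions. Since $Q$ is support finite it is both right and left support finite, so Lemma \ref{Rk-fcsq} gives $TQ_0=Q_0=BQ_0$ and
$\Rep^{ft}(Q,\C)=\Rep^{f}(Q,\C)=\Rep^{fb}(Q,\C)$, the latter being a full abelian subcategory of $\Rep(Q,\C)$ by Remark \ref{MainF-FB-B}. Right (resp. left) support finiteness forces each $|t(Q(x,-))|$ (resp. $|s(Q(-,x))|$) to be finite, whence $\rscn(Q)\leq\aleph_0\leq\kappa$ and $\lscn(Q)\leq\aleph_0\leq\kappa$; and $\kappa\geq\ccn(Q)=\max\{\lccn(Q),\rccn(Q)\}$ ensures that $f_i$ and $g_i$ exist for all $i\in Q_0$.

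With these in hand, part (a) is immediate: Proposition \ref{prop:Repft} (a) and its dual Proposition \ref{prop:Repft-dual} (a) give $f_i(X)\in\Rep^{ft}(Q,\C)=\Rep^{f}(Q,\C)$ and $g_i(X)\in\Rep^{fb}(Q,\C)=\Rep^{f}(Q,\C)$ for all $i\in Q_0=TQ_0=BQ_0$, and the adjunctions of $f_i$ and $g_i$ with $e_i$ on $\Rep^{f}(Q,\C)$ come from part (c) of those same propositions via the identification of the subcategories. For part (b), the assumption $\kappa\geq\tccn(Q)$ yields $\kappa\geq\ltccn(Q)$ and $\kappa\geq\rtccn(Q)$, so the canonical presentation of any $F\in\Rep^{f}(Q,\C)=\Rep^{ft}(Q,\C)$ stays inside $\Rep^{ft}(Q,\C)=\Rep^{f}(Q,\C)$ by Proposition \ref{prop:Repft} (b), giving (b1), and (b2) follows dually from Proposition \ref{prop:Repft-dual} (b).

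For part (c) the plan is to apply Proposition \ref{prop:Repft2-dual} under hypothesis (i) and Proposition \ref{prop:Repft2} under hypothesis (ii), together with $\Rep^{fb}(Q,\C)=\Rep^{f}(Q,\C)=\Rep^{ft}(Q,\C)$. I would fix any arrow $\rho_0\colon x\to y$ in $Q$; support finiteness gives $\{x,y\}\in\T(Q)\cap\B(Q)$, and the inequalities from the first paragraph together with $\kappa\geq\alpha(Q)$ and whichever of $\kappa\geq\rtccn(Q)$, $\kappa\geq\ltccn(Q)$ is assumed verify the cardinal hypotheses of the relevant proposition. If $x\neq y$, the conclusion follows from the first alternative in part (c) of that proposition. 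The one genuinely delicate case — and the only obstacle I anticipate — is $x=y$, i.e. a loop: here I would note that a loop at $x$ makes $Q(x,x)$ (hence $Q(x,-)$ and $Q(-,x)$) infinite, so $\ltccn(Q)\geq\aleph_1$ and $\rtccn(Q)\geq\aleph_1$; combined with $\kappa\geq\rtccn(Q)$ (under (i)) or $\kappa\geq\ltccn(Q)$ (under (ii)) this forces $\kappa\geq\aleph_1$, so the standing assumptions that $\C$ be $AB3(\kappa)$ and $AB3^{*}(\kappa)$ upgrade to $AB3(\aleph_1)$ and $AB3^{*}(\aleph_1)$ — exactly the condition needed by the second alternative in part (c) of Propositions \ref{prop:Repft2} and \ref{prop:Repft2-dual}. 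In every case this yields $\gldim(\Rep^{fb}(Q,\C))=\gldim(\Rep(Q,\C))=\gldim(\C)+1$ (or its $\Rep^{ft}$ analogue), and the identification with $\Rep^{f}(Q,\C)$ completes the argument.
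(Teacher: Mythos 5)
Your proposal is correct and follows essentially the same route as the paper: reduce everything to Lemma \ref{Rk-fcsq} (so $\Rep^{ft}(Q,\C)=\Rep^{f}(Q,\C)=\Rep^{fb}(Q,\C)$ and $TQ_0=Q_0=BQ_0$), note that support finiteness gives $\lscn(Q),\rscn(Q)\leq\aleph_0\leq\kappa$, and then quote Propositions \ref{prop:Repft}, \ref{prop:Repft-dual} for (a) and (b) and Propositions \ref{prop:Repft2}, \ref{prop:Repft2-dual} for (c). The only cosmetic difference is in the loop case of (c): the paper gets $\kappa\geq\aleph_1$ from the standing hypothesis $\kappa\geq\ccn(Q)\geq\aleph_1$, while you get it from $\kappa\geq\rtccn(Q)$ or $\kappa\geq\ltccn(Q)$; both are valid.
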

\begin{proof} Notice that $\max\{\lscn(Q), \rscn(Q)\}\leq \aleph_0\leq\kappa$ since $Q$ is support finite and $\kappa$ an infinite cardinal. Moreover, if $Q$ has an oriented cycle (or a loop) then $\kappa\geq \aleph_1$ since 
$\ccn(Q)\geq \aleph_1.$ Thus, from Lemma  \ref{Rk-fcsq}, Propositions \ref{prop:Repft}, \ref{prop:Repft-dual}, \ref{prop:Repft2} and \ref{prop:Repft2-dual}, we get the result.
\end{proof}

\begin{example} Let $R$ be a non-zero ring and $Q$ an infinite quiver which is  support finite and $Q_1\neq\emptyset.$  Then by Corollary \ref{basic-fcsq} (c), we get $\gldim(\Rep^f(Q,\Mod(R))=\gldim(\Rep(Q,\Mod(R))=\gldim(R)+1.$
\end{example}

The following result is very useful in some applications, see for example in \cite{AM22}.

\begin{prop}\label{prop:Repf} For a finite-cone-shape quiver $Q$ and an abelian category $\C$ and a class $\X\subseteq\C,$ the following statements hold true.
\begin{itemize}
\item[(a)] $g_{i}:\C\rightarrow\Rep^{f}(Q,\C)$ admits a right adjoint and $f_{i}:\C\rightarrow\Rep^{f}(Q,\C)$ admits a left adjoint, for any $i\in Q_0.$

\item[(b)] Let $\mathcal{X}$ be precovering in $\C.$  Then $g_{i}(\X)$ is precovering
in $\Rep^{f}(Q,\C),$ for all $i\in Q_{0},$ and $\add(g_{*}(\X))$
is precovering in $\Rep^{f}(Q,\C)$.

\item[(c)] Let $\mathcal{X}$ be preenveloping in $\C.$  Then $f_{i}(\X)$ is preenveloping
in $\Rep^{f}(Q,\C),$ for all $i\in Q_{0},$ and $\add(f_{*}(\X))$
is preenveloping in $\Rep^{f}(Q,\C)$.

\item[(d)] $\pd(g_*(\X))\leq \pd(\X)+1$ and $\id(f_*(\X))\leq \id(\X)+1.$
\end{itemize}
\end{prop}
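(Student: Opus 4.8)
\textbf{Proof plan for Proposition \ref{prop:Repf}.}

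The plan is to exploit that a finite-cone-shape quiver $Q$ is support finite (as noted right after Definition of finite-cone-shape type), so that $\Rep^{f}(Q,\C)=\Rep^{fb}(Q,\C)=\Rep^{ft}(Q,\C)$ by Lemma \ref{Rk-fcsq}, and moreover $\ccn(Q)\leq\tccn(Q)\leq\aleph_{0}$. Hence we may take $\kappa:=\aleph_{0}$ throughout, and $\C$ is automatically $AB3(\aleph_0)$ and $AB3^{*}(\aleph_0)$; in particular, by Remark \ref{AB3+Inj=AB4} (c), every $f_{i}$ and every $g_{i}$ is exact. Also, since $Q$ is finite-cone-shape, $Q(i,-)$ and $Q(-,i)$ are finite for every $i\in Q_0$, so all the hypotheses of Propositions \ref{prop:exactitud g} and \ref{prop:exactitud f} hold (the finiteness conditions (a) there). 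The first step, then, is to record these reductions so that the rest of the argument is just a matter of transporting existing results into the subcategory $\Rep^{f}(Q,\C)$.

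For item (a): by Corollary \ref{basic-fcsq} (a), $f_{i}$ is left adjoint and $g_{i}$ is right adjoint for the evaluation functor $e_{i}:\Rep^{f}(Q,\C)\to\C$. Now I would combine this with Proposition \ref{prop:exactitud g} (which gives a right adjoint $h_i$ for $g_{z}:\C\to\Rep(Q,\C)$ when $Q(-,z)$ is finite) and Proposition \ref{prop:exactitud f} (a left adjoint for $f_{z}$ when $Q(z,-)$ is finite). The point to check is that these adjoints restrict: for a right adjoint $h$ of $g_z:\C\to\Rep(Q,\C)$, the composite $h\circ\iota$ with the inclusion $\iota:\Rep^f(Q,\C)\hookrightarrow\Rep(Q,\C)$ is right adjoint to the corestriction $g_z:\C\to\Rep^f(Q,\C)$ (which lands there by Corollary \ref{basic-fcsq} (a)), simply because $\Hom_{\Rep^f}(g_z(C),F)=\Hom_{\Rep}(g_z(C),\iota F)\cong\Hom_{\C}(C,h\iota F)$; the analogous computation handles $f_i$. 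For item (b): by (a), $g_{i}:\C\to\Rep^{f}(Q,\C)$ has a right adjoint, so by Lemma \ref{adj-prec-preen} (a), $g_{i}(\X)$ is precovering in $\Rep^{f}(Q,\C)$ whenever $\X$ is precovering in $\C$; then one glues these over the (finitely many relevant) vertices in the support using the canonical copresentation of Corollary \ref{basic-fcsq} (b2), exactly as in the proof of Corollary \ref{cor: g(X) es precovering II}, to see that $\add(g_{*}(\X))$ is precovering. Item (c) is the dual statement, obtained by replacing $g$ by $f$, copresentation by presentation, right adjoint by left adjoint, and precover by preenvelope.

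For item (d): since $Q$ is finite-cone-shape it is support finite with $\tccn(Q)\le\aleph_0$, so the inequalities $\pd(g_{*}(\X))\le\pd(\X)+1$ and $\id(f_{*}(\X))\le\id(\X)+1$ follow from Corollary \ref{basic-fcsq} (b2) and (b1) respectively: for $G\in g_{*}(\X)$, say $G=g_x(X)$ with $X\in\X$, the copresentation $0\to g_x(X)\to\prod_{i}g_i e_i(g_x(X))\to\prod_{\rho}g_{s(\rho)}e_{t(\rho)}(g_x(X))\to 0$ expresses $g_x(X)$ as the kernel of a map between objects which are finite products (finiteness of the cones!) of the form $g_j(C)$ with $C$ a finite product of copies of $X$, whose injective-resolution-length is controlled by $\pd$ of $X$ because each $g_j$ is exact (so $\pd(g_j(C))=\pd(C)$ by Proposition \ref{prop: Ext vs f vs e} (b)); a standard dimension-shift along this short exact sequence then gives the $+1$. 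Wait---for a bound on \emph{projective} dimension of $g_*(\X)$ one instead uses the canonical \emph{presentation} in Corollary \ref{basic-fcsq} (b1) applied to $g_x(X)$, together with $\pd(f_i(C))=\pd(C)$, and the same dimension shift; and dually for $\id(f_*(\X))$ one uses (b2). The main obstacle is a bookkeeping one rather than a conceptual one: in (b)-(d) one must verify that the coproducts/products appearing in the canonical (co)presentations, when applied to objects $f_x(X)$ or $g_x(X)$, are genuinely \emph{finite} so that they stay inside $\add(f_*(\X))$ or $\add(g_*(\X))$ and so that Lemma \ref{pd-coprod} (or rather its finite, trivial case) applies---this is exactly where finite-cone-shape (not merely support finite) is used, since it bounds $|Q(x,-)|$ and $|Q(-,x)|$, and via Lemma \ref{lem: lemita g} it makes composites like $f_x e_y g_z$ \emph{finite} coproducts of copies of $f_x$.
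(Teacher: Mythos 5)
Your overall route is the paper's: use that a finite-cone-shape quiver is support finite with $\ccn(Q)\leq\tccn(Q)\leq\aleph_{0}$, so Lemma \ref{Rk-fcsq} and Corollary \ref{basic-fcsq} apply with $\kappa=\aleph_{0}$ for an arbitrary abelian $\C$; item (a) comes from the finiteness hypotheses of Propositions \ref{prop:exactitud g} and \ref{prop:exactitud f} (your restriction-of-adjoints shortcut is correct, since $\Rep^{f}(Q,\C)$ is a full subcategory containing the images of $f_{i}$ and $g_{i}$; the paper instead reruns the presentation argument of Proposition \ref{prop:exactitud g} inside $\Rep^{f}(Q,\C)$ via Corollary \ref{basic-fcsq}(a,b), and both work); the first halves of (b) and (c) come from Lemma \ref{adj-prec-preen}; and (d), after your self-correction (the presentation (b1) for $\pd$, the copresentation (b2) for $\id$), is exactly the content of Propositions \ref{prop:Repft2}(a) and \ref{prop:Repft2-dual}(a), which the paper simply cites.

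The genuine gap is the passage from ``$g_{i}(\X)$ is precovering for each $i$'' to ``$\add(g_{*}(\X))$ is precovering''. Neither the copresentation of Corollary \ref{basic-fcsq}(b2) nor ``exactly as in the proof of Corollary \ref{cor: g(X) es precovering II}'' delivers this: that corollary produces, for a chosen vertex set $Z$, a precover only with respect to the class $\Add_{\leq|Z|}\bigl(\bigcup_{z\in Z}g_{z}(\X)\bigr)$, whereas here the test objects $g_{z}(X)$ range over \emph{all} $z\in Q_{0}$ (possibly infinitely many), while inside $\Rep^{f}(Q,\C)$ you may only form a finite coproduct. The missing observation — and the only nontrivial point of (b) — is: fix $F\in\Rep^{f}(Q,\C)$ and set $\mathcal{D}:=\bigcup_{i\in\Supp(F)}t(Q(i,-))$, which is finite since $Q$ is support finite and $\Supp(F)$ is finite; for $z\in Q_{0}-\mathcal{D}$ one has $Q(i,z)=\emptyset$ for every $i\in\Supp(F)$, hence $(g_{z}(X))_{i}=X^{Q(i,z)}=0$ on $\Supp(F)$ while $F_{j}=0$ off $\Supp(F)$, so $\Hom_{\Rep^{f}(Q,\C)}(g_{z}(X),F)=0$ for all $X\in\C$. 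Only with this vanishing does the finite coproduct $p:\coprod_{i\in\mathcal{D}}g_{i}(X_{i})\rightarrow F$ of the chosen $g_{i}(\X)$-precovers (which lies in $\add(g_{*}(\X))\cap\Rep^{f}(Q,\C)$) become a precover for the full class $\add(g_{*}(\X))$; as written, your gluing only yields a precover for the subclass indexed by $\mathcal{D}$. The dual remark applies to the preenveloping statement in (c).
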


\begin{proof} Observe that $Q$ is support finite and 
$\ccn(Q)\leq\tccn(Q)\leq\aleph_0.$ Thus, the proof of (a) follows as in the proof of Proposition \ref{prop:exactitud g} by using Corollary \ref{basic-fcsq} (a,b). Since (c) is dual to (b), it remains to show (b) and (d).
\

(b) Let $\mathcal{X}$ be precovering in $\C$ and $F\in\Rep^{f}(Q,\C).$ Now, for $z\in Q_0,$ we get from (a), Corollary  \ref{basic-fcsq} (b) and Lemma \ref{adj-prec-preen} (a)  that there is $p_z:g_z(X_z)\to F$ which is a $g_z(\X)$-precover of $F.$ 
Consider the set $\mathcal{D}:=\bigcup_{i\in\Supp(F)}t(Q(i,-)).$ Notice that $\mathcal{D}$ is finite since $Q$ is support finite and $F\in\Rep^{f}(Q,\C).$
We assert that 
\begin{center}
$\Hom_{\Rep^{f}(Q,\C)}(g_{z}(X),F)=0\quad$  $\forall\, z\in Q_{0}-\mathcal{D},$ 
$\forall\,X\in\C$. 
\end{center} 
Indeed, if  $z\in Q_{0}-\mathcal{D},$ we have that $Q(i,z)=\emptyset\;$ $\forall\,i\in\Supp(F)$ and thus our assertion follows since $g_z(X)=X^{Q(-,z)}.$
\

Let $p_{i}:g_{i}(X_{i})\rightarrow F$ be a $g_{i}(\X)$-precover
for all $i\in\mathcal{D},$ and for $z\in Q_{0}-\mathcal{D},$ we set $p_z:0\to F.$ Now, 
consider the morphism $p:\coprod_{i\in Q_{0}}g_{i}(X_{i})\rightarrow F$ induced by the universal property of coproducts. Observe that $\coprod_{i\in Q_{0}}g_{i}(X_{i})=\coprod_{i\in \mathcal{D}}g_{i}(X_{i})\in\Rep^f(Q,\C),$ and we let to the reader to show, using the above assertion, that $p$ is an $\add(g_{*}(\X))$-precover of $F.$
\

(d) Let $X\in\X$ and $k\in Q_0.$ Since $\Rep^{ft}(Q,\C)=\Rep^{f}(Q,\C)$ (see Lemma \ref{Rk-fcsq} (a)), we get from Proposition \ref{prop:Repft2} (a) that 
\begin{center}
$\pd(g_k(X))\leq\sup_{i\in Q_0}(\pd(X^{Q(i,k)}))+1=\pd(X)+1$
\end{center}
 and thus  $\pd(g_*(\X)))\leq \pd(\X)+1.$ 
\end{proof}

Next we will see that, under mild conditions, every cotorsion pair
in $\mathcal{C}$ induces a relative cotorsion pair in the class of
finite-bottom-support representations.

\begin{lem}\label{lem:psi} For a quiver $Q,$ $\kappa\geq\rmcn(Q)$ an infinite
cardinal number, an Ab3{*}($\kappa$) abelian category $\mathcal{C}$
and $S\subseteq Q_0$ a full subquiver, the following statements hold
true. 
\begin{enumerate}
\item[$\mathrm{(a)}$] Let $F\in\operatorname{Rep}(S,\mathcal{C})$ and $i$ a vertex in $S$.
Then $\varepsilon_{i}^{S}\psi_{i}^{F,S}=\psi_{i}^{\iota_{S}(F),Q},$
for some isomorphism $\varepsilon_{i}^{S}$ in $\C.$ 

\item[$\mathrm{(b)}$] If $S^{-}=\emptyset$ then $\psi_{i}^{\iota_{S}(F),Q}$ is the trivial
epimorphism $0\rightarrow0$ $\forall i\in Q{}_{0}-S$. 

\item[$\mathrm{(c)}$] If $0\in\mathcal{B}\subseteq\mathcal{C}$ and $S^{-}=\emptyset,$
then $\left\{ \iota_{S}(F)\;|\:F\in\Psi_{S}(\mathcal{B})\right\} \subseteq\Psi_{Q}(\mathcal{B}).$ 
\item[$\mathrm{(d)}$] If $0\in\mathcal{B}\subseteq\mathcal{C}$ then $\left\{ F\;|\:\iota_{S}(F)\in\Psi_{Q}(\mathcal{B})\right\} \subseteq\Psi_{S}(\mathcal{B})$. 
\item[$\mathrm{(e)}$] Let $S\in\F\B(Q).$ Then there exists $T\in\F(Q)$ such that $S\subseteq T$
and $T^{-}=\emptyset.$ Moreover, for $F\in\operatorname{Rep}(S,\C),$
we have that $G:=\pi_{T}(\iota_{S}(F))\in\operatorname{Rep}(T,\mathcal{C})$
and $\iota_{S}(F)=\iota_{T}(G).$ 
\item[$\mathrm{(f)}$] Let $S_{1},S_{2}\in\F\B(Q).$ Then there exists $T\in\F(Q)$ such
that $S_{1}\cup S_{2}\subseteq T$ and $T^{-}=\emptyset.$ 
\end{enumerate}
\end{lem}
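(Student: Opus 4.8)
The six items split into two groups: (a)--(d) concern the interaction of the canonical epimorphism $\psi_i^{F}$ with the extension/restriction functors, and (e)--(f) are purely combinatorial facts about finite-bottom subquivers. I will treat them in order, reusing earlier results.

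For (a), the point is that $\iota_S(F)$ has the same components as $F$ on $S$ and zero elsewhere, so $(\iota_S(F))_{t(\alpha)}=F_{t(\alpha)}$ for every $\alpha\in Q_1^{i\to *}$ whose target lies in $S$, while $(\iota_S(F))_{t(\alpha)}=0$ for every arrow of $Q$ leaving $i$ whose target is outside $S$. Hence $\prod_{\alpha\in Q_1^{i\to*}}(\iota_S(F))_{t(\alpha)}\cong\prod_{\alpha\in S_1^{i\to*}}F_{t(\alpha)}$ by a canonical isomorphism $\varepsilon_i^S$ coming from the fact that a product with zero factors is canonically isomorphic to the product over the non-zero factors; under this identification $\psi_i^{\iota_S(F),Q}$ becomes $\psi_i^{F,S}$ because both are induced by the same family $\{F_\alpha\}$. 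Here I must use $\kappa\geq\rmcn(Q)$ so that the product defining $\psi_i^{Q}$ exists. Item (b) is immediate: if $S^-=\emptyset$ and $i\notin S_0$, then there is no arrow $i\to j$ with $j\in S_0$ (otherwise $i$ would lie in $S^-$), so every arrow out of $i$ has target outside $S$, hence $(\iota_S(F))_i=0$ and $\prod_{\alpha\in Q_1^{i\to*}}(\iota_S(F))_{t(\alpha)}=0$, so $\psi_i^{\iota_S(F),Q}$ is $0\to 0$, which is an epimorphism with kernel $0\in\mathcal B$. Items (c) and (d) then follow by combining (a) and (b): for (c), if $F\in\Psi_S(\mathcal B)$ then on vertices of $S$ the morphism $\psi_i^{\iota_S(F),Q}$ is (up to the isomorphism $\varepsilon_i^S$) $\psi_i^{F,S}$, an epimorphism with kernel in $\mathcal B$, and on vertices outside $S$ it is $0\to 0$ by (b); hence $\iota_S(F)\in\Psi_Q(\mathcal B)$. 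For (d), the hypothesis forces $\psi_i^{\iota_S(F),Q}$ to be an epi with kernel in $\mathcal B$ for all $i\in Q_0$, and restricting attention to $i\in S_0$ and using the isomorphism $\varepsilon_i^S$ of (a) gives $\psi_i^{F,S}$ an epi with kernel in $\mathcal B$, i.e. $F\in\Psi_S(\mathcal B)$.

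For (e), let $S\in\F\B(Q)$, so $S^-$ is a finite set of vertices and $S_0$ is finite. Set $T_0:=S_0\cup S^-$ and let $T$ be the full subquiver of $Q$ on $T_0$; then $T\in\F(Q)$ since $T_0$ is finite. I claim $T^-=\emptyset$. Indeed, suppose $i\in Q_0-T_0$ and there is a path $\gamma$ from $i$ to a vertex of $T_0$; if $t(\gamma)\in S_0$ then $i\in S^-\subseteq T_0$, a contradiction; if $t(\gamma)\in S^-$, then by definition of $S^-$ there is a path from $t(\gamma)$ to a vertex of $S_0$, whence composing gives a path from $i$ to $S_0$ and again $i\in S^-\subseteq T_0$, a contradiction. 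So $T^-=\emptyset$. The "moreover" part: since $\Supp(\iota_S(F))=\Supp(F)\subseteq S_0\subseteq T_0$, Remark \ref{supp-ex-seq}(b) gives $\iota_S(F)=\iota_T(\pi_T(\iota_S(F)))$, so with $G:=\pi_T(\iota_S(F))$ we have $\iota_S(F)=\iota_T(G)$. Finally (f): given $S_1,S_2\in\F\B(Q)$, apply (e) or rather its construction to $S_1\cup S_2$; one first notes $S_1\cup S_2\in\F\B(Q)$ because $(S_1\cup S_2)_0=S_{1,0}\cup S_{2,0}$ is finite and $(S_1\cup S_2)^-\subseteq S_1^-\cup S_2^-$ is finite by Remark \ref{main-quivers}(a). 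Then take $T$ to be the full subquiver on $(S_1\cup S_2)_0\cup(S_1\cup S_2)^-$; by the argument in (e), $T\in\F(Q)$ with $T^-=\emptyset$, and $S_1\cup S_2\subseteq T$.

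\textbf{Main obstacle.} None of the steps is deep; the only place requiring care is item (a), where one must write down the isomorphism $\varepsilon_i^S$ explicitly and check that it intertwines $\psi_i^{F,S}$ with $\psi_i^{\iota_S(F),Q}$ — i.e. that the canonical map "insert zeros into the product and then project" is compatible with the structure maps $F_\alpha$. This is a routine universal-property verification using that the components $(\iota_S(F))_\alpha$ are either $F_\alpha$ (when $\alpha\in S_1$) or $0$, but it is the one computation that must be done honestly rather than quoted. The combinatorial claims in (e) and (f) — that adding $S^-$ to $S_0$ kills the $-$-boundary — use only the transitivity of "there is a path", and finiteness is preserved because $\F\B(Q)$ is closed under finite unions by Remarks \ref{main-quivers}(a) and \ref{MainF-FB-B}.
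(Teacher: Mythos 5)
Your proposal is correct and follows essentially the same route as the paper: the insert-zeros isomorphism of products for (a), the direct vanishing observation for (b), deducing (c) and (d) from these, and taking $T$ to be the full subquiver on $S_0\cup S^{-}$ for (e). The only (harmless) deviations are that you verify $T^{-}=\emptyset$ explicitly where the paper leaves it implicit, and in (f) you apply (e) to $S_1\cup S_2\in\F\B(Q)$ instead of taking the union of the two quivers produced by (e), both variants resting on Remark \ref{main-quivers}(a).
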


\begin{proof}
To prove (a), consider $X\in\mathcal{C},$ a family $\{f_{i}:X\rightarrow X_{i}\}_{i\in I}$
of morphisms in $\mathcal{C}$ with $|I|<\kappa,$ and the morphism
$f_{I}:X\rightarrow\prod_{i\in I}X_{i}$ given by the universal property
of the product. Observe that, in case of having a subset $J\subseteq I$
such that $X_{j}=0$ $\forall j\in J$, we have an isomorphism $\varepsilon_{J}:\prod_{i\in I-J}X_{i}\to\prod_{i\in I}X_{i}$
such that $\varepsilon_{J}f_{I-J}=f_{I},$ where $f_{I-J}:X\rightarrow\prod_{i\in I-J}X_{i}.$
Hence, we can apply the above to our case since \small{$\{\iota_{S}(F)(\alpha):\iota_{S}(F)(i)\rightarrow\iota_{S}(F)(t(\alpha))\}_{\alpha\in Q_{1}^{i\rightarrow*}}$}
is the disjoint union of $\left\{ F(\alpha):F(i)\rightarrow F(t(\alpha))\right\} {}_{\alpha\in S_{1}^{i\rightarrow*}}$
and $\left\{ \iota_{S}(F)(\alpha):\iota_{S}(F)(i)\rightarrow\iota_{S}(F)(t(\alpha))\right\} {}_{\alpha\in Q_{1}^{i\rightarrow*}-S_{1}^{i\rightarrow*}},$
where $\iota_{S}(F)(t(\alpha))=0$ $\forall\alpha\in Q_{1}^{i\rightarrow*}-S_{1}^{i\rightarrow*}.$
\

In the case of the item (b), it is enough to observe that $F(\alpha)$
is the trivial morphism $0\rightarrow0$ $\forall\alpha:i\rightarrow j$
with $i,j\in Q_{0}-S_{0}$. The items (c) and (d) follow from (a)
and (b). In order to get (e), it is enough to consider the full subquiver
$T$ determined by the set of vertices $S\cup S^{-}$ and then
to apply Remark \ref{supp-ex-seq} (b) to $G=\pi_{T}(\iota_{S}(F))$
since $\operatorname{Supp}(\iota_{S}(F))\subseteq S\subseteq T.$
\

Let us show (f). By (e) there are $T_{1},T_{2}\in\F(Q)$ such that
$S_{i}\subseteq T_{i}$ and $T_{i}^{-}=\emptyset$ for $i=1,2.$ Thus
for $T:=T_{1}\cup T_{2}$ we have that $T^{-}\subseteq T_{1}^{-}\cup T_{2}^{-}=\emptyset.$ 
\end{proof}

\begin{thm}\label{ThmX} For an acyclic quiver $Q,$ $\kappa\geq\max\{\lccn(Q),\rtccn(Q)\}$
an infinite cardinal number, an AB4{*}($\kappa$) abelian category
$\C$ and $\mathcal{X}:=\operatorname{Rep}^{fb}(Q,\mathcal{C}),$
the following statements hold true. 
\begin{enumerate}
\item[$\mathrm{(a)}$] Let $\p$ be a complete hereditary cotorsion pair in $\mathcal{C}.$
Then: 
\begin{itemize}
\item[$\mathrm{(a0)}$] $(\operatorname{Rep}(Q,\mathcal{A}),\Psi(\mathcal{B}))$ is a complete
hereditary cotorsion pair in $\Rep(Q,\C)$ if $Q$ is right-rooted. 
\item[$\mathrm{(a1)}$] The pair $(\operatorname{Rep}(Q,\mathcal{A}),\Psi(\mathcal{B}))$
is $\mathcal{X}$-complete and $\mathcal{X}$-hereditary. 
\item[$\mathrm{(a2)}$] $(\operatorname{Rep}(Q,\mathcal{A})\cap\mathcal{X},\Psi(\mathcal{B})\cap\mathcal{X})$
is a hereditary complete cotorsion pair in the full abelian subcategory
$\mathcal{X}\subseteq\Rep(Q,\C).$ 
\item[$\mathrm{(a3)}$] $\Psi(\Inj(\C))=s_{*}(\C)^{\perp_{1}}.$ 
\end{itemize}
\item[$\mathrm{(b)}$] Let $\C$ be with enough injectives. Then $\Rep(Q,\C)$ has enough injectives and we also have that:
\begin{itemize}
\item[$\mathrm{(b1)}$] $\Psi(\Inj(\mathcal{C}))\cap\mathcal{X}$ is an $\mathcal{X}$-injective
relative cogenerator in $\mathcal{X}.$ 
\item[$\mathrm{(b2)}$] $\Inj(\Rep(Q,\C))=\Prod_{\leq|Q_{0}|}(g_{*}(\Inj(\C))).$ 
\item[$\mathrm{(b3)}$] $\Inj(\Rep(Q,\C))=\Psi(\Inj(\mathcal{C}))$  if $Q$ is right-rooted.
\end{itemize}
\item[$\mathrm{(c)}$] If $\C$ is $AB4(\kappa')$ for some infinite cardinal $\kappa'\geq\rccn(Q),$ then we have that $\Inj(\Rep(Q,\C))=\Prod_{\leq|Q_{0}|}(g_{*}(\Inj(\C))).$ 
\end{enumerate}
\end{thm}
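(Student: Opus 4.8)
The plan is to prove Theorem~\ref{ThmX} by systematically invoking the theory developed in Sections~2--4, together with the results on $\Rep^{fb}(Q,\C)$ from Propositions~\ref{prop:Repft-dual} and~\ref{prop:Repft2-dual}. The key observation that makes everything run is that $\mathcal{X}=\Rep^{fb}(Q,\C)=\Ima(\iota_{\F\B(Q)})$ (Remark~\ref{MainF-FB-B}), so $\mathcal{X}$ is a full abelian subcategory of $\Rep(Q,\C)$ closed under extensions, subobjects, quotients and direct summands; moreover $\F\B(Q)$ is directed and convex. Since $\C$ is $AB4^*(\kappa)$ with $\kappa\geq\lccn(Q)$, Remark~\ref{AB3+Inj=AB4}(a) gives that $g_i:\C\to\Rep(Q,\C)$ is exact for all $i\in Q_0$, and since $\kappa\geq\rtccn(Q)$ we can freely use the canonical co-presentation (Corollary~\ref{cor:copresentacion canonica}) and Lemma~\ref{LemAux}(b). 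First I would record these standing facts, then dispatch each item.

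For item (a): (a0) is exactly the dual of Theorem~\ref{hccp+ep}(b) (note $\lccn(Q)\leq\rtccn(Q)$ and that $\C$ being $AB4^*(\kappa)$ is in particular $AB3^*(\kappa)$), which yields that $(\Rep(Q,\A),\Psi(\B))$ is a complete hereditary cotorsion pair in $\Rep(Q,\C)$ when $Q$ is right-rooted. For (a1), the $\mathcal{X}$-hereditary property is immediate because $\Ext^i$ in $\mathcal{X}$ agrees with $\Ext^i$ in $\Rep(Q,\C)$ (via $\iota_{\F\B(Q)}$ and Lemma~\ref{lem:Ext subcarcaj}, or more simply since $\mathcal{X}$ is closed under extensions and kernels of epis, so extension groups computed in $\mathcal X$ coincide with those in $\Rep(Q,\C)$), combined with the $\mathcal{X}$-hereditarity inherited from (a0) when $Q$ is right-rooted — but here $Q$ is only acyclic, so I would instead argue directly: $\Rep(Q,\A)\cap\mathcal X$ is closed under epi-kernels and $\Psi(\B)\cap\mathcal X$ under mono-cokernels, and by the dual of Lemma~\ref{lem:garcia-rozas} applied inside $\mathcal X$ (using that $g_*(\B)$ gives cogenerators after restriction), the pair is $\mathcal X$-hereditary. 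The $\mathcal{X}$-completeness is the real content: given $F\in\mathcal X$, choose $S\in\F\B(Q)$ with $\Supp F\subseteq S$, enlarge to $T\in\F(Q)$ with $T^-=\emptyset$ (Lemma~\ref{lem:psi}(e)), so $F=\iota_T\pi_T(F)$; apply the completeness of the corresponding cotorsion pair in $\Rep(T,\C)$ (which has finite support quiver, hence lives inside $\mathcal X$ after applying $\iota_T$), and use Lemma~\ref{lem:psi}(c,d) to transfer the approximation sequence back, noting that the resulting terms still have finite bottom support by Proposition~\ref{prop:Repft-dual}(a,b). Then (a2) follows formally from (a1) by \cite[Def. 3.12, 3.13]{AM21}: an $\mathcal{X}$-complete, $\mathcal{X}$-hereditary pair whose two classes are, respectively, special precovering and special preenveloping in $\mathcal X$ and mutually orthogonal gives a hereditary complete cotorsion pair in $\mathcal X$. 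Finally (a3) is the dual of Theorem~\ref{thm:cotrsion pairs rep}(b) applied to the cotorsion pair $(\C,\Inj(\C))$, i.e. Proposition~\ref{prop:f y clases ortogonales}(c) with $\X=\C$, giving $\Psi(\C^{\perp_1})=\Psi(\Inj(\C))=s_*(\C)^{\perp_1}$.

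For item (b), assume $\C$ has enough injectives. Then $\C$ is $AB4^*(\kappa)$ already by hypothesis (this is consistent), and $(\C,\Inj(\C))$ is a complete hereditary cotorsion pair, so (a) applies with $\A=\C$, $\B=\Inj(\C)$. That $\Rep(Q,\C)$ has enough injectives is the dual of Proposition~\ref{prop:f y g vs clases ortogonales}(b). Item (b1) is then the restriction statement: by (a2) with $\B=\Inj(\C)$, $(\mathcal X,\Psi(\Inj(\C))\cap\mathcal X)$ is a complete cotorsion pair in $\mathcal X$, so $\Psi(\Inj(\C))\cap\mathcal X$ is an $\mathcal X$-injective relative cogenerator; one checks $\mathcal X$-injectivity using that $\Ext^1_{\mathcal X}(-,\Psi(\Inj\C)\cap\mathcal X)$ vanishes on all of $\mathcal X=\Rep(Q,\A)\cap\mathcal X$. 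Item (b2) is the dual of Proposition~\ref{prop:f y g vs clases ortogonales}(d): since $g_i$ is exact for all $i$ (Remark~\ref{AB3+Inj=AB4}(a)) and $\C$ has enough injectives, $\Inj(\Rep(Q,\C))=\Prod_{\leq|Q_0|}(g_*(\Inj(\C)))$. Item (b3) follows from (b2) and (a) when $Q$ is right-rooted: the dual of Theorem~\ref{hccp+ep}(b) gives $\Rep(Q,\Inj(\C))=\Psi(\Inj(\C))$, and one identifies this with $\Inj(\Rep(Q,\C))$ using that a complete cotorsion pair of the form $(\Rep(Q,\C),\Psi(\Inj\C))$ has its right class equal to the injectives. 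Finally item (c): if $\C$ is additionally $AB4(\kappa')$ for some $\kappa'\geq\rccn(Q)$, then $f_i$ is exact for all $i$ by Remark~\ref{AB3+Inj=AB4}(a), and $\C$ has enough injectives is \emph{not} assumed here — but $AB4^*(\kappa)$ plus the two exactness families suffice to run the dual of Proposition~\ref{prop:f y g vs clases ortogonales}(d) directly: for $F\in\Inj(\Rep(Q,\C))$, the canonical co-presentation splits, so $F$ is a summand of $\prod_{i}g_ie_i(F)$, and $e_i(F)\in\Inj(\C)$ by the dual of Proposition~\ref{prop: Ext vs f vs e}(a), whence $F\in\Prod_{\leq|Q_0|}(g_*(\Inj\C))$; the reverse inclusion is the dual of Proposition~\ref{rem:adjuntos de ei}(b).

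The main obstacle I anticipate is item (a1), specifically the $\mathcal{X}$-completeness, since $Q$ is only assumed acyclic (not right-rooted), so one cannot simply restrict the global cotorsion pair from (a0); the argument must genuinely go through the finite subquivers $T\in\F(Q)$ with $T^-=\emptyset$, carefully using Lemma~\ref{lem:psi}(c,d,e,f) to move special approximation sequences between $\Rep(S,\C)$, $\Rep(T,\C)$ and $\mathcal X\subseteq\Rep(Q,\C)$, and verifying at each stage that finite-bottom-support is preserved (via Proposition~\ref{prop:Repft-dual}). A secondary subtlety is checking that $\Ext^i$ computed inside the abelian subcategory $\mathcal X$ coincides with $\Ext^i$ in $\Rep(Q,\C)$, which is needed to deduce $\mathcal X$-hereditarity and to phrase (a2) correctly; this follows from $\mathcal X$ being closed under extensions and kernels/cokernels, but deserves an explicit remark.
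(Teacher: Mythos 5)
Most of your proposal coincides with the paper's own route: (a0) via the dual of Theorem \ref{hccp+ep}(b); $\mathcal{X}$-completeness by enlarging $\Supp(F)$ to a finite full subquiver $T$ with $T^{-}=\emptyset$ (Lemma \ref{lem:psi}(e)), applying the complete hereditary pair $(\Rep(T,\mathcal{A}),\Psi_{T}(\mathcal{B}))$ over the finite (hence right-rooted) quiver $T$ and transferring back with the exact functor $\iota_{T}$ and Lemma \ref{lem:psi}(c); (a2) by the splitting argument; (a3) via Proposition \ref{prop:f y clases ortogonales}(c); (b) via the pair $(\mathcal{C},\Inj(\mathcal{C}))$ and the dual of Proposition \ref{prop:f y g vs clases ortogonales}(b); and (c) via exactness of the $f_{i}$ and the splitting of the canonical co-presentation, which is exactly the dual of Proposition \ref{prop:f y g vs clases ortogonales}(d).

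The genuine gap is in your treatment of $\mathcal{X}$-hereditarity in (a1). You propose to run the dual of Lemma \ref{lem:garcia-rozas} inside the abelian category $\mathcal{X}$ and then transfer, asserting that extension groups computed in $\mathcal{X}$ coincide with those in $\Rep(Q,\mathcal{C})$ because $\mathcal{X}$ is closed under extensions, subobjects and quotients. Two problems. First, $\mathcal{X}$-hereditarity here (following \cite{AM21}) means vanishing of $\Ext^{i}_{\Rep(Q,\mathcal{C})}$ for all $i\geq1$; closure under extensions gives the comparison only in degree $1$, while for $i\geq2$ the map $\Ext^{i}_{\mathcal{X}}(A,B)\to\Ext^{i}_{\Rep(Q,\mathcal{C})}(A,B)$ need not be surjective for a Serre subcategory (an ambient $i$-fold extension may have middle terms outside $\mathcal{X}$), so Ext-vanishing inside $\mathcal{X}$ does not yield the ambient vanishing you need. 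Second, applying Garc\'ia-Rozas inside $\mathcal{X}$ presupposes that the restricted pair is already a cotorsion pair in $\mathcal{X}$ with suitable (co)generating hypotheses, which is part of (a2) and not yet available at that stage. The paper avoids all of this: for $A\in\Rep(Q,\mathcal{A})\cap\mathcal{X}$ and $B\in\Psi(\mathcal{B})\cap\mathcal{X}$, Lemma \ref{lem:psi}(e,f) gives one finite $T$ with $T^{-}=\emptyset$ containing both supports, Lemma \ref{lem:psi}(d) gives $A=\iota_{T}(A')$ and $B=\iota_{T}(B')$ with $A'\in\Rep(T,\mathcal{A})$, $B'\in\Psi_{T}(\mathcal{B})$, and Lemma \ref{lem:Ext subcarcaj}(a) identifies $\Ext^{k}_{\Rep(Q,\mathcal{C})}(A,B)\cong\Ext^{k}_{\Rep(T,\mathcal{C})}(A',B')=0$ by hereditarity of the pair over $T$; since you already use these lemmas for completeness, the fix is immediate. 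Two minor slips: in (b3) the identity you quote, $\Rep(Q,\Inj(\mathcal{C}))=\Psi(\Inj(\mathcal{C}))$, is not what the dual of Theorem \ref{hccp+ep}(b) gives and is false in general; what you need (and then use) is $\Psi(\Inj(\mathcal{C}))=\Rep(Q,\mathcal{C})^{\perp_{1}}=\Inj(\Rep(Q,\mathcal{C}))$. And in (c), $e_{i}(F)\in\Inj(\mathcal{C})$ comes from Proposition \ref{prop: Ext vs f vs e}(a) itself (exactness of $f_{i}$), not from its dual.
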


\begin{proof}
(a) In case $Q$ is right-rooted, by the dual of Theorem \ref{hccp+ep}
(b), we get (a0). Let us show that the pair $(\operatorname{Rep}(Q,\mathcal{A}),\Psi(\mathcal{B}))$
is $\X$-complete. Consider $X\in\mathcal{X}$. Then, by Remark \ref{MainF-FB-B}
$\exists\,S\in\F\B(Q)$ and $F\in\operatorname{Rep}(S,\C)$ such that
$X=\iota_{S}\left(F\right).$ On the other hand, by Lemma \ref{lem:psi}
(e), there exists $T\in\F(Q)$ and $G\in\operatorname{Rep}(T,\C)$
such that $S\subseteq T,$ $T^{-}=\emptyset$ and $\iota_{S}(F)=\iota_{T}(G).$
Now, since $T$ is right-rooted, from the dual of Theorem \ref{hccp+ep}
(b), we get that $(\operatorname{Rep}(T,\mathcal{A}),\Psi_{T}(\mathcal{B}))$
is a complete hereditary cotorsion pair in $\operatorname{Rep}(T,\mathcal{C}).$
Thus, there are exact sequences 
\[
\suc[G][B][A]\;\text{ and }\;\suc[B'][A'][G]\text{,}
\]
where $A,A'\in\operatorname{Rep}(T,\mathcal{A})$ and $B,B'\in\Psi_{T}(\mathcal{B}).$
By applying the exact functor $\iota_{T}$ to the above exact sequences,
we get the exact sequences 
\[
\suc[X][\iota_{T}(B)][\iota_{T}(A)]\text{ and }\suc[\iota_{T}(B')][\iota_{T}(A')][X]\text{,}
\]
where $\iota_{T}(A),\iota_{T}(A')\in\operatorname{Rep}(Q,\mathcal{A})\cap\mathcal{X}$
and $\iota_{T}(B),\iota_{T}(B')\in\Psi_{Q}(\mathcal{B})\cap\mathcal{X}$
by Lemma \ref{lem:psi}(c). Therefore, $(\operatorname{Rep}(Q,\mathcal{A}),\Psi(\mathcal{B}))$
is $\mathcal{X}$-complete. \

Let us show that $(\operatorname{Rep}(Q,\mathcal{A}),\Psi(\mathcal{B}))$
is $\mathcal{X}$-hereditary. Consider $A\in\operatorname{Rep}(Q,\mathcal{A})\cap\mathcal{X}$
and $B\in\Psi_{Q}(\mathcal{B})\cap\mathcal{X}.$ Since $\X=\Ima(\iota_{\F\B(Q)})$
(see Remark \ref{MainF-FB-B}), from Lemma \ref{lem:psi} (e,f), there
is $T\in\F(Q)$ with $T^{-}=\emptyset,$ such that $A=\iota_{T}(A')$
and $B=\iota_{T}(B')$, where $A'\in\operatorname{Rep}(T,\mathcal{A})$
and $B'\in\Psi_{T}(\mathcal{B})$ by Lemma \ref{lem:psi}(d). Thus,
by Lemma \ref{lem:Ext subcarcaj}, for $k>0,$ we get that 
\[
\Ext_{\Rep(Q,\C)}^{k}(A,B)=\Ext_{\Rep(Q,\C)}^{k}(\iota_{T}A',\iota_{T}B')\cong\Ext_{\Rep(T,\C)}^{k}(A',B')=0.
\]
We now prove that $(\operatorname{Rep}(Q,\mathcal{A})\cap\mathcal{X},\Psi(\mathcal{B})\cap\mathcal{X})$
is a cotorsion pair in the full abelian subcategory $\mathcal{X}$
(see Remark \ref{MainF-FB-B}). Note that $\Psi(\mathcal{B})\cap\mathcal{X}\subseteq(\operatorname{Rep}(Q,\mathcal{A})\cap\mathcal{X})^{\bot_{1}}$.
Let $X\in(\operatorname{Rep}(Q,\mathcal{A})\cap\mathcal{X})^{\bot_{1}}\cap\mathcal{X}.$
Then, there is an exact sequence $\suc[X][B][A]$, where $A\in\operatorname{Rep}(Q,\mathcal{A})\cap\mathcal{X}$
and $B\in\Psi(\mathcal{B})\cap\mathcal{X}$. This sequence splits
because $X\in(\operatorname{Rep}(Q,\mathcal{A})\cap\mathcal{X})^{\bot_{1}},$
and hence $X\in\Psi(\mathcal{B})\cap\mathcal{X}$ since $\Psi(\mathcal{B})\cap\mathcal{X}$
is closed under direct summands. Thus, we have shown that $\Psi(\mathcal{B})\cap\mathcal{X}=(\operatorname{Rep}(Q,\mathcal{A})\cap\mathcal{X})^{\bot_{1}}\cap\mathcal{X}$.
The equality $^{\bot_{1}}\left(\Psi(\mathcal{B})\cap\mathcal{X}\right)\cap\mathcal{X}=\operatorname{Rep}(Q,\mathcal{A})\cap\mathcal{X}$
follows from similar arguments. Finally, the item (a3) follows from
Proposition \ref{prop:f y clases ortogonales} (c) since $\rmcn(Q)\leq\rtccn(Q).$
\

(b) The item (b1) (resp. (b3)) follows from (a1) (resp. a(0)) by using the complete and hereditary
cotorsion pair $(\mathcal{C},\Inj(\mathcal{C}))$ in $\C.$ The item
(b2) can be shown from the dual of Proposition \ref{prop:f y g vs clases ortogonales}
(b). 
\

(c) It follows from Remark \ref{AB3+Inj=AB4} (a) and the dual of Proposition \ref{prop:f y g vs clases ortogonales} (d)
\end{proof}
In what follows we also establish the dual of Theorem \ref{ThmX}.

\begin{thm}\label{ThmY} For an acyclic quiver $Q,$ $\kappa\geq\max\{\rccn(Q),\ltccn(Q)\}$
an infinite cardinal number, an AB4($\kappa$) abelian category $\C$
and $\mathcal{Y}:=\operatorname{Rep}^{ft}(Q,\mathcal{C}),$ the following
statements hold true. 
\begin{enumerate}
\item[$\mathrm{(a)}$] Let $\p$ be a complete hereditary cotorsion pair in $\mathcal{C}.$
Then: 
\begin{itemize}
\item[$\mathrm{(a0)}$] $(\Phi(\A),\Rep(Q,\B))$ is a complete hereditary cotorsion pair
in $\Rep(Q,\C)$ if $Q$ is left-rooted. 
\item[$\mathrm{(a1)}$] The pair $(\Phi(\A),\Rep(Q,\B))$ is $\mathcal{Y}$-complete and
$\mathcal{Y}$-hereditary.
\item[$\mathrm{(a2)}$] $(\Phi(\A)\cap\Y,\Rep(Q,\B)\cap\Y)$ is a hereditary complete cotorsion
pair in the full abelian subcategory $\Y\subseteq\Rep(Q,\C).$ 
\item[$\mathrm{(a3)}$] $\Phi(\Proj(\C))={}^{\perp_{1}}s_{*}(\C).$ 
\end{itemize}
\item[$\mathrm{(b)}$] Let $\C$ be with enough projectives. Then $\Rep(Q,\C)$ has enough projectives and we also have that:
\begin{itemize}
\item[$\mathrm{(b1)}$] $\Phi(\Proj(\mathcal{C}))\cap\Y$ is a $\Y$-projective relative
generator in $\Y;$ 
\item[$\mathrm{(b2)}$] $\Proj(\Rep(Q,\C))=\Add_{\leq|Q_{0}|}(f_{*}(\Proj(\C))).$ 
\item[$\mathrm{(b3)}$] $\Proj(\Rep(Q,\C))=\Phi(\Proj(\mathcal{C}))$ if $Q$ is left-rooted. 
\end{itemize}
\item[$\mathrm{(c)}$] If $\C$ is $AB4^{*}(\kappa')$ for some infinite cardinal $\kappa'\geq\lccn(Q),$ then we have that $\Proj(\Rep(Q,\C))=\Add_{\leq|Q_{0}|}(f_{*}(\Proj(\C))).$ 
\end{enumerate}
\end{thm}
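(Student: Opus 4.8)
The plan is to deduce Theorem \ref{ThmY} from Theorem \ref{ThmX} by duality, organizing the write-up around the dictionary furnished by the canonical identification $\Rep(Q^{op},\C^{op})\cong\Rep(Q,\C)^{op}$. Under it: a representation with all components in a class $\X$ corresponds to one with components in $\X$; since products and coproducts are interchanged and source/target maps are swapped, the morphism $\varphi_i^{F,Q}$ corresponds to the morphism $\psi_i^{-,Q^{op}}$ and hence $\Phi(\A)$ corresponds to $\Psi(\A)$ computed in $Q^{op}$; the stalk functors $s_i$ are self-dual; $\Rep^{ft}(Q,\C)$ corresponds to $\Rep^{fb}(Q^{op},\C^{op})$, since $S^+$ computed in $Q$ equals $S^-$ computed in $Q^{op}$ for every full subquiver $S$; the cardinal invariants transform by $\rccn(Q)=\lccn(Q^{op})$, $\ltccn(Q)=\rtccn(Q^{op})$ and $\lmcn(Q)=\rmcn(Q^{op})$; $\C$ is AB4($\kappa$), has enough projectives, or is AB4$^*(\kappa')$ iff $\C^{op}$ is AB4$^*(\kappa)$, has enough injectives, or is AB4($\kappa'$) respectively; $\Proj(\Rep(Q,\C))$ corresponds to $\Inj(\Rep(Q^{op},\C^{op}))$; and a complete hereditary cotorsion pair $\p$ in $\C$ corresponds to the complete hereditary cotorsion pair $(\B,\A)$ in $\C^{op}$. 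Thus the hypotheses of Theorem \ref{ThmY} are precisely those of Theorem \ref{ThmX} applied to $\C^{op}$, $Q^{op}$ and $(\B,\A)$, and translating the conclusions back yields items (a0)--(a3), (b1)--(b3) and (c) one for one.

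For a self-contained argument I would transcribe the proof of Theorem \ref{ThmX} with the dual ingredients. For (a0), Theorem \ref{hccp+ep} (b) applies directly: since $\C$ is AB4($\kappa$) and $Q$ is left-rooted, $(\Phi(\A),\Phi(\A)^{\perp_1})$ is a complete hereditary cotorsion pair and $\Phi(\A)^{\perp_1}=\Rep(Q,\B)$. For (a1), given $Y\in\Y$ I would write $Y=\iota_S(F)$ with $S\in\F\T(Q)$ by Remark \ref{MainF-FB-B}, then use the dual of Lemma \ref{lem:psi} (e) to enlarge $S$ to a $0$-finite full subquiver $T$ with $T^+=\emptyset$ and $\iota_S(F)=\iota_T(G)$; since $Q$ is acyclic, $T$ is finite and acyclic, hence left-rooted, with $\ccn(T)$ and $\tccn(T)$ finite, so Theorem \ref{hccp+ep} (b) applies to $\Rep(T,\C)$ and gives the complete hereditary cotorsion pair $(\Phi_T(\A),\Rep(T,\B))$; applying the exact functor $\iota_T$ to a special precover and a special preenvelope of $G$ and invoking the dual of Lemma \ref{lem:psi} (c) together with the convexity of $\F\T(Q)$ (Proposition \ref{Main-prop}) produces the two exact sequences witnessing $\Y$-completeness, with terms in $\Phi(\A)\cap\Y$ and $\Rep(Q,\B)\cap\Y$. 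For $\Y$-hereditariness, given $A\in\Phi(\A)\cap\Y$ and $B\in\Rep(Q,\B)\cap\Y$, the dual of Lemma \ref{lem:psi} (e,f) provides a common $T$ with $T^+=\emptyset$, $A=\iota_T(A')$, $B=\iota_T(B')$, $A'\in\Phi_T(\A)$ by the dual of Lemma \ref{lem:psi} (d) and $B'\in\Rep(T,\B)$, whence $\Ext^k_{\Rep(Q,\C)}(A,B)\cong\Ext^k_{\Rep(T,\C)}(A',B')=0$ for $k\geq1$ by Lemma \ref{lem:Ext subcarcaj} (b). Item (a2) then follows from (a1) by the same splitting argument used for Theorem \ref{ThmX} (a2), and (a3) is Proposition \ref{prop:f y clases ortogonales} (a) with $\X=\C$, legitimate since $\lmcn(Q)\leq\ltccn(Q)\leq\kappa$.

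For (b), the statements that $\Rep(Q,\C)$ has enough projectives and item (b2) are Proposition \ref{prop:f y g vs clases ortogonales} (b); items (b1) and (b3) are (a1), respectively (a0), applied to the complete hereditary cotorsion pair $(\Proj(\C),\C)$ in $\C$, using for (b3) that a complete cotorsion pair $(\Phi(\Proj(\C)),\Rep(Q,\C))$ forces $\Phi(\Proj(\C))={}^{\perp_1}\Rep(Q,\C)=\Proj(\Rep(Q,\C))$. Finally (c) follows from Remark \ref{AB3+Inj=AB4} (a), which makes every $g_i$ exact when $\C$ is AB4$^*(\kappa')$ with $\kappa'\geq\lccn(Q)$, together with Proposition \ref{prop:f y g vs clases ortogonales} (d).

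The main obstacle, exactly as for Theorem \ref{ThmX}, is part (a1): the delicate point is reducing an arbitrary finite-top-support representation to one supported on a finite acyclic full subquiver closed under sources ($T^+=\emptyset$), and then checking that the approximations constructed inside $\Rep(T,\C)$ stay inside $\Phi(\A)\cap\Y$ and $\Rep(Q,\B)\cap\Y$ after applying $\iota_T$; this is where the dual of Lemma \ref{lem:psi}, in particular the compatibility of $\Phi$ with $\iota_T$, and the directedness and convexity of $\F\T(Q)$, are essential. Everything else is routine transcription of the proof of Theorem \ref{ThmX}.
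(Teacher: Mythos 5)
Your proposal is correct and matches the paper's approach: the paper itself gives no separate argument for this theorem, simply stating it as the dual of Theorem \ref{ThmX}, which is exactly the duality reduction you set up, and your transcribed direct argument is the faithful dualization of the paper's proof of Theorem \ref{ThmX} (using Theorem \ref{hccp+ep}(b), the dual of Lemma \ref{lem:psi}, Lemma \ref{lem:Ext subcarcaj}(b), and Propositions \ref{prop:f y clases ortogonales} and \ref{prop:f y g vs clases ortogonales}).
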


For the case of a finite-cone-shape quiver $Q,$ we have that  $\tccn(Q)\leq\aleph_{0},$ $Q$ is rooted and $\Rep^{fb}(Q,\C)=\Rep^{f}(Q,\C)=\Rep^{ft}(Q,\C)$ and thus by Theorems \ref{ThmX}
and \ref{ThmY} and Proposition \ref{Descrip-Phi-Psi}, we get the following corollary.

\begin{cor}\label{fcsq-hcp} For a finite-cone-shape quiver $Q,$ an abelian
category $\C,$ a complete hereditary cotorsion pair $\p$ in $\C$
and $\Z:=\Rep^{f}(Q,\C),$ the following statements hold true. 
\begin{itemize}
\item[$\mathrm{(a)}$] $\left(\Phi(\mathcal{A}),\Rep(Q,\B)\right)$ and $\left(\Rep(Q,\A),\Psi(\B)\right)$
are complete hereditary cotorsion pairs in $\Rep(Q,\C).$ Moreover,
these pairs are $\Z$-complete and $\Z$-hereditary. 

\item[$\mathrm{(b)}$] $(\operatorname{Rep}(Q,\mathcal{A})\cap\Z,\Psi(\mathcal{B})\cap\Z)$
and $(\Phi(\A)\cap\Z,\Rep(Q,\B)\cap\Z)$ are hereditary complete cotorsion
pairs in the full abelian subcategory $\Z\subseteq\Rep(Q,\C).$ 

\item[$\mathrm{(c)}$] $\Psi(\Inj(\C))=s_{*}(\C)^{\perp_{1}}$ and $\Phi(\Proj(\C))={}^{\perp_{1}}s_{*}(\C).$ 

\item[$\mathrm{(d)}$] $\Proj(\Rep(Q,\C))=\Add_{\leq|Q_{0}|}(f_{*}(\Proj(\C)))$ and  we also have that\\ $\Inj(\Rep(Q,\C))=\Prod_{\leq|Q_{0}|}(g_{*}(\Inj(\C))).$ 

\item[$\mathrm{(e)}$] Let $\C$ be with enough injectives. Then $\Rep(Q,\C)$ has enough injectives,  $\Inj(\Rep(Q,\C))=\Psi(\Inj(\mathcal{C}))$ and $\Psi(\Inj(\mathcal{C}))\cap\Z$ is a $\Z$-injective relative cogenerator
in $\Z.$ 

\item[$\mathrm{(f)}$] Let $\C$ be with enough projectives. Then $\Rep(Q,\C)$ has enough projectives,  $\Proj(\Rep(Q,\C))=\Phi(\Proj(\mathcal{C}))$ and  $\Phi(\Proj(\mathcal{C}))\cap\Z$ is a $\Z$-projective relative
generator in $\Z.$ 

\item[$\mathrm{(g)}$] $^{\bot}s_{*}(\mathcal{B})={}{}^{\bot}f_{*}(\mathcal{B})=\Phi(\mathcal{A})\;$ and $\,s_{*}(\mathcal{A})^{\bot}=g_{*}(\mathcal{A})^{\bot}=\Psi(\mathcal{B}).$
\end{itemize}
\end{cor}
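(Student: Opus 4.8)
The plan is to derive Corollary \ref{fcsq-hcp} entirely by feeding the hypotheses into the already-proved machinery of Sections 3 and 4, together with Theorems \ref{ThmX} and \ref{ThmY} and Proposition \ref{Descrip-Phi-Psi}. First I would record the structural facts about a finite-cone-shape quiver $Q$ that make everything go through: by the discussion after Definition 2.9, $Q$ is strongly locally finite, acyclic and rooted, and $\tccn(Q)\leq\aleph_0$ (hence also $\ccn(Q)\leq\aleph_0$ and $\mcn(Q)\leq\aleph_0$, and $Q$ is support finite with $\alpha(Q)\leq\aleph_0$). In particular, taking $\kappa:=\aleph_0$, any abelian category $\C$ is automatically $AB3(\kappa)$, $AB3^*(\kappa)$, $AB4(\kappa)$ and $AB4^*(\kappa)$, and $f_i,g_i$ exist and are exact for all $i\in Q_0$ by Remark \ref{AB3+Inj=AB4} (c). Finally, by Lemma \ref{Rk-fcsq} (a),(b), since $Q$ is both left and right support finite, $\Rep^{fb}(Q,\C)=\Rep^{f}(Q,\C)=\Rep^{ft}(Q,\C)$, so the relative results of Theorems \ref{ThmX} and \ref{ThmY} apply verbatim to $\Z=\Rep^f(Q,\C)$.

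With those observations in hand, the individual items are then mostly citations. For (a): applying Theorem \ref{ThmY} (a0) and Theorem \ref{ThmX} (a0) with $\kappa=\aleph_0$ (legitimate since $Q$ is left- and right-rooted and $\C$ is $AB4(\aleph_0)$ and $AB4^*(\aleph_0)$) gives that $(\Phi(\A),\Rep(Q,\B))$ and $(\Rep(Q,\A),\Psi(\B))$ are complete hereditary cotorsion pairs in $\Rep(Q,\C)$; the $\Z$-completeness and $\Z$-heredity come from parts (a1) of those theorems. Item (b) is parts (a2) of Theorems \ref{ThmX} and \ref{ThmY}. Item (c) combines parts (a3) of both theorems applied to the cotorsion pair $(\C,\Inj(\C))$ and to $(\Proj(\C),\C)$ — note $\rmcn(Q),\lmcn(Q)\leq\aleph_0$. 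Item (d) is Example \ref{exa: proyectivos/inyectivos} (2) (equivalently parts (b2) of Theorems \ref{ThmX} and \ref{ThmY} do not directly apply since $\C$ need not have enough projectives/injectives, so one invokes parts (c) of those theorems with $\kappa'=\aleph_0$, which is exactly what Example \ref{exa: proyectivos/inyectivos} (2) records). Items (e) and (f) are parts (b1),(b3) of Theorems \ref{ThmX} and \ref{ThmY}, once one notes that $\C$ having enough injectives (resp. projectives) forces $\Rep(Q,\C)$ to have them by Proposition \ref{prop:f y g vs clases ortogonales} (b) and its dual. Item (g) is exactly Proposition \ref{Descrip-Phi-Psi} (a),(b): its hypotheses — $Q(x,-)$ and $Q(-,x)$ finite for all $x$, $\kappa\geq\tccn(Q)$, $Q$ left- and right-rooted, $\C$ being $AB4(\kappa)$ and $AB4^*(\kappa)$ — are all satisfied with $\kappa=\aleph_0$ for a finite-cone-shape quiver.

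Concretely, the proof I would write is short: ``Since $Q$ is finite-cone-shape, it is strongly locally finite, acyclic and rooted, and $\tccn(Q)\leq\aleph_0$, so we may take $\kappa:=\aleph_0$ throughout; in particular $\C$ is $AB4(\kappa)$ and $AB4^*(\kappa)$, the functors $f_i$ and $g_i$ exist and are exact for all $i\in Q_0$ (Remark \ref{AB3+Inj=AB4} (c)), and $\Rep^{fb}(Q,\C)=\Rep^f(Q,\C)=\Rep^{ft}(Q,\C)$ by Lemma \ref{Rk-fcsq}. Now (a)--(c),(e),(f) follow from Theorems \ref{ThmX} and \ref{ThmY} (parts (a0),(a1),(a2),(a3),(b1),(b2),(b3),(c)) applied to $\p$, to $(\C,\Inj(\C))$, and to $(\Proj(\C),\C)$; (d) is Example \ref{exa: proyectivos/inyectivos} (2); and (g) is Proposition \ref{Descrip-Phi-Psi}.''

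The only place requiring a little genuine care — the step I expect to be the main obstacle — is checking that the cardinality side-conditions of the cited theorems are genuinely met by $\kappa=\aleph_0$, and in particular tracking which of the mesh/cone-shape/thick-cone-shape cardinals appear in which hypothesis. For (g) one must make sure $Q(x,-)$ and $Q(-,x)$ are literally finite for every $x$ (this is the definition of finite-cone-shape, so it is immediate) and that $\ltccn(Q),\rtccn(Q)\leq\aleph_0$; for (c) one needs $\kappa'\geq\rccn(Q)$ resp. $\lccn(Q)$, again satisfied by $\aleph_0$. For (e) and (f) one additionally needs that ``$\C$ has enough injectives'' implies $\Rep(Q,\C)$ does, which is the dual of Proposition \ref{prop:f y g vs clases ortogonales} (b). None of this is deep, but it is the part where a careless application of the earlier theorems could go wrong, so the write-up should spell out the choice $\kappa=\aleph_0$ and the chain $\mcn(Q)\leq\ccn(Q)\leq\tccn(Q)\leq\aleph_0$ once, explicitly, and then let every item inherit it.
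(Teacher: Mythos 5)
Your proposal is correct and follows essentially the same route as the paper, which likewise observes that a finite-cone-shape quiver satisfies $\tccn(Q)\le\aleph_0$, is rooted, and has $\Rep^{fb}(Q,\C)=\Rep^{f}(Q,\C)=\Rep^{ft}(Q,\C)$, and then derives all items from Theorems \ref{ThmX} and \ref{ThmY} and Proposition \ref{Descrip-Phi-Psi} with $\kappa=\aleph_0$. One small remark: for item (c) you need not (and, absent enough injectives or projectives, should not) invoke the theorems with the pairs $(\C,\Inj(\C))$ or $(\Proj(\C),\C)$ --- parts (a3) of Theorems \ref{ThmX} and \ref{ThmY} do not involve $\A$ or $\B$ at all, so they apply directly with the given pair $\p$.
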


\section*{Acknowledgements}

The research presented in this paper was conducted while the first
named author was on a post-doctoral fellowship at Centro de Ciencias
Matem\'aticas, UNAM Campus Morelia, funded by the DGAPA-UNAM. The
first named author would like to thank all the academic and administrative
staff of this institution for their warm hospitality, and in particular
Dr. Raymundo Bautista (CCM, UNAM) for all his support.

\bibliographystyle{plain}

\end{document}